\newtheorem{theorem}{Theorem}[section]
\newtheorem{lemma}[theorem]{Lemma}
\newtheorem{proposition}[theorem]{Proposition}
\theoremstyle{definition}
\newtheorem{definition}[theorem]{Definition}
\newtheorem{notation}[theorem]{Notation}
\newtheorem{example}[theorem]{Example}
\theoremstyle{remark}
\newtheorem{remark}[theorem]{Remark}
\numberwithin{equation}{section}
\DeclareMathOperator{\comp}{\#}
\renewcommand{\d}{\partial}
\DeclareMathOperator{\e}{\epsilon}
\DeclareMathOperator{\Hom}{Hom}
\DeclareMathOperator{\id}{id}
\DeclareMathOperator{\im}{im}
\DeclareMathOperator{\Or}{\mathcal{O}}
\renewcommand{\v}{\vee}
\newcommand{\w}{\wedge}
\newcommand{\Z}{\mathbf{Z}}
\begin{document}

\title{The algebra of the nerves of omega-categories}

\author{Richard Steiner}

\address{School of Mathematics and Statistics, University of
Glasgow, University Gardens, Glasgow, Great Britain G12 8QW}

\email{Richard.Steiner@glasgow.ac.uk}

\subjclass[2010]{18D05}

\keywords{complicial identities, omega-category}

\begin{abstract}
We show that the nerve of a strict omega-category can be described algebraically as a simplicial set with additional operations subject to certain identities. The resulting structures are called sets with complicial identities. We also construct an equivalence between the categories of strict omega-categories and of sets with complical identities.
\end{abstract}

\maketitle

\section{Introduction} \label{S1}

This paper is concerned with the simplicial nerves of strict $\omega$-categories, as constructed by Street~\cite{B6}. The nerves are simplicial sets with additional structure, and the problem is to characterise the additional structure which can occur. One characterisation, due to Verity~\cite{B7}, says that the nerves are complicial sets; that is to say, they have distinguished classes of thin elements satisfying certain axioms. The object of this paper is to give a more concrete algebraic characterisation: the nerves are simplicial sets with additional operations satisfying certain identities. The result is a set with complicial identities as defined in~\cite{B5}. The resultant characterisation is like the characterisation of cubical nerves given by Al-Agl, Brown and Steiner~\cite{B1}.

The method involves a comparison of the theories of $\omega$-categories and of sets with complicial identities in the technical sense of universal algebra. It turns out that both theories can be expressed in terms of chain complexes and chain maps. The theory of $\omega$-categories is represented by simple chain complexes~\cite{B4}; the theory of sets with complicial identities is represented by the chain complexes of simplexes and by certain colimits of these chain complexes. The proof is based on relationships between the various chain complexes involved.

The paper is structured as follows. In Section~\ref{S2} we describe $\omega$-categories and show that their theory is represented by the class of simple $\omega$-categories (see~\cite{B2}). In Section~\ref{S3} we describe sets with complicial identities. In Section~\ref{S4} we describe a category of chain complexes with additional structure called augmented directed complexes and a functor~$\nu$ from this category to the category of $\omega$-categories. In Section~\ref{S5} we show that simple $\omega$-categories are the images under~$\nu$ of simple chain complexes; it follows that the theory of $\omega$-categories can be described in terms of simple chain complexes. In Section~\ref{S6} we construct a functor~$\lambda$ from augmented directed complexes to sets with complicial identities, and in Section~\ref{S7} we use this functor to express the theory of sets with complicial identities in terms of the chain complexes of simplexes. We have now described the categories of $\omega$-categories and of sets with complicial identities in terms of augmented directed complexes, and can therefore compare the two categories. The comparison occupies Sections \ref{S8}--\ref{S13}.

The idea behind the comparison is as follows. Let $X$ be a set with complicial identities; then there is a contravariant functor from simple chain complexes to sets given by
$$S\mapsto\Hom[\lambda S,X].$$
This functor will yield an $\omega$-category provided that it takes certain colimit diagrams to limit diagrams; we therefore need information about the sets $\Hom[\lambda S,X]$. We obtain this information by showing that $S$~is a retract of the chain complex of a simplex. We begin in Section~\ref{S8} by showing that $S$~is a quotient of the chain complex of a simplex. We then show that $S$~is a retract by constructing an idempotent endomorphism of the chain complex of the simplex with the appropriate kernel. This endomorphism represents an operation in sets with complicial identities. We construct the operation in Section~\ref{S9} and give some computations concerning the induced endomorphism in Section~\ref{S10}; we prove that the corresponding endomorphism is idempotent with the correct kernel in Section~\ref{S11}; we show that the required diagrams are limit diagrams in Section~\ref{S12}. In Section~\ref{S13} we deduce the main result (Theorem~\ref{T13.3}): $\omega$-categories are equivalent to sets with complicial identities.
 
\section{The theory of $\omega$-categories} \label{S2}

In this paper all $\omega$-categories are strict $\omega$-categories. We will use an algebraic definition with infinitely many sorts, as follows. 

\begin{definition} \label{D2.1}
An \emph{$\omega$-category}~$C$ is a sequence of sets $C_0,C_1,\ldots\,$ together with the following structure. 

(1) If $x\in C_p$ then there are \emph{identity elements}
$$i_p^n x\in C_n\quad (p<n),$$
\emph{sources}
$$d_q^- x\in C_q\quad (q<p),$$
and \emph{targets}
$$d_q^+ x\in C_q\quad (q<p).$$

(2) If $x,y\in C_p$ and $d_q^+ x=d_q^- y$ for some $q<p$ then there is a \emph{composite}
$$x\comp_q y\in C_p.$$

(3) If $x\in C_p$ then
\begin{align*}
&i_m^n i_p^m x=i_p^n x\quad (p<m<n),\\
&d_p^- i_p^n x=d_p^+ i_p^n x=x\quad (p<n),\\
&i_m^p d_m^- x\comp_m x=x\comp_m i_m^p d_m^+ x=x\quad (m<p),\\
&d_m^- d_n^- x=d_m^- d_n^+ x=d_m^- x\quad (m<n<p),\\
&d_m^+d_n^- x=d_m^+ d_n^+ x=d_m^+ x\quad (m<n<p).
\end{align*}

(4) If $x,y\in C_p$ and $d_q^+x=d_q^- y$ for some $q<p$ then
\begin{align*}
&i_p^n(x\comp_q y)=i_p^n x\comp_q i_p^n y\quad (p<n),\\
&d_m^-(x\comp_q y)=d_m^- x\comp_q d_m^- y\quad (q<m<p),\\
&d_m^+(x\comp_q y)=d_m^+ x\comp_q d_m^+ y\quad (q<m<p),\\
&d_q^-(x\comp_q y)=d_q^- x,\\
&d_q^+(x\comp_q y)=d_q^+ y.
\end{align*}

(5) If $x,y,z\in C_p$ and $d_q^+ x=d_q^- y$, $d_q^+ y=d_q^- z$ for some $q<p$ then 
$$(x\comp_q y)\comp_q z= x\comp_q(y\comp_q z).$$

(6) If $x,y,z,w\in C_p$ and $d_q^+ x=d_q^- y$, $d_m^+ y=d_m^- z$, $d_q^+z=d_q^- w$ with $m<q<p$ then
$$(x\comp_q y)\comp_m(z\comp_q w)=(x\comp_m z)\comp_q(y\comp_m w).$$

A \emph{morphism of $\omega$-categories} $f\colon C\to D$ is a sequence of functions $f\colon C_p\to D_p$ commuting with the identity, source, target and composition operations.
\end{definition}

\begin{remark} \label{R2.2}
In an $\omega$-category the identity element functions~$i_p^n$ must be injective. It is therefore possible to require them to be inclusions, yielding a one-sorted description with operations $d_q^-,d_q^+,\comp_q$.
\end{remark}

\begin{remark} \label{R2.3}
In axiom~(6) the hypotheses $d_q^+ x=d_q^- y$, $d_q^+ z=d_q^- w$ imply that
$$d_m^+ x=d_m^+ d_q^+ x=d_m^+ d_q^- y=d_m^+ y,\quad
d_m^- z=d_m^- d_q^+ z=d_m^- d_q^- w=d_m^- w.$$
One could therefore replace the single equality $d_m^+ y=d_m^- z$ by the two equalities $d_m^+ x=d_m^- z$, $d_m^+ y=d_m^- w$. This produces the more usual form of the hypotheses.
\end{remark}

The domains for the axioms can be naturally indexed by sequences of nonnegative integers as follows: for (1)~and~(3) use the one-term sequence~$(p)$; for (2)~and~(4) use $(p,q,p)$ with $p>q$; for~(5) use $(p,q,p,q,p)$ with $p>q$; for~(6) use $(p,q,p,m,p,q,p)$ with $p>q>m$.
All of these sequences are up-down vectors in the sense of the following definition (taken from \cite{B2}, 2.3).

\begin{definition} \label{D2.4}
An \emph{up-down vector} is a non-empty finite sequence of nonnegative integers
$$(p_0,q_1,p_1,\ldots,p_{k-1},q_k,p_k)$$
such that $p_{i-1}>q_i$ and $q_i<p_i$ for $1\leq i\leq k$.
\end{definition}

The corresponding $\omega$-categories are also taken from~\cite{B2} and may be defined as follows.

\begin{definition} \label{D2.5}
Let $\mathbf{s}$ be an up-down vector given by
$$\mathbf{s}=(p_0,q_1,p_1,\ldots,p_{k-1},q_k,p_k).$$
Then an \emph{$\mathbf{s}$-simple $\omega$-category} is an $\omega$-category with a presentation of the following form: the generators form an ordered list $g_0,\ldots,g_k$ with $\dim g_i=p_i$; the relations are given by
$$d_{q_i}^+ g_{i-1}=d_{q_i}^- g_i\quad (1\leq i\leq k).$$
\end{definition}

We will usually treat simple $\omega$-categories as iterated push-outs, using induction on the numbers of terms in up-down vectors. An up-down vector~$\mathbf{s}$ with more than one term will therefore be written in the form
$$\mathbf{s}=(\mathbf{s'},q,p),$$
so that $\mathbf{s'}$~is a shorter up-down vector with last term greater than~$q$ and so that $p$~is an integer greater than~$q$. The corresponding push-outs are as follows.

\begin{definition} \label{D2.6}
Let $\mathbf{s}$ be an up-down vector with more than one term given by $\mathbf{s}=(\mathbf{s'},q,p)$; then an \emph{$\mathbf{s}$-simple square of $\omega$-categories} is a push-out square
$$\xymatrix{
C^0 \ar_{\pi}[d] \ar^{\rho}[r] & C'' \ar^{\tau}[d] \\
C' \ar_{\sigma}[r] & C
}$$
such that $C'$, $C^0$ and~$C''$ are $\mathbf{s'}$-simple, $(q)$-simple and $(p)$-simple with final generators $g'$, $g^0$ and~$g''$ and such that
$$\pi g^0=d_q^+ g',\quad \rho g^0=d_q^- g''.$$
\end{definition}

Obviously we have the following result.

\begin{proposition} \label{P2.7}
If $\tau\colon C''\to C$ is the right hand vertical morphism in an $\mathbf{s}$-simple square of $\omega$-categories and if $C''$~has final generator~$g''$ then $C$~is an $\mathbf{s}$-simple $\omega$-category with final generator $\tau g''$.
\end{proposition}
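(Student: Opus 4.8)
The plan is to verify Proposition~\ref{P2.7} by a routine bookkeeping argument about presentations, using the standard fact that a push-out of $\omega$-categories can be computed by amalgamating presentations. Concretely, suppose we have an $\mathbf{s}$-simple square as in Definition~\ref{D2.6}, with $\mathbf{s}=(\mathbf{s'},q,p)$, and with $C'$, $C^0$, $C''$ given by their defining presentations. First I would recall that $C^0$ is the free $(q)$-simple $\omega$-category, i.e.\ it is freely generated by a single generator $g^0$ of dimension~$q$, so that a morphism out of $C^0$ is determined by the image of $g^0$. Hence the morphisms $\pi$ and $\rho$ in the square are uniquely specified by the stated conditions $\pi g^0=d_q^+ g'$ and $\rho g^0=d_q^- g''$, and the square makes sense.

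Next I would describe the push-out $C$ explicitly. Since push-outs of algebraic structures presented by generators and relations are obtained by taking the disjoint union of the generators and the union of the relations, together with the identifications forced by the span, $C$ has a presentation whose generators are those of $C'$ together with those of $C''$ — that is, the ordered list $g_0,\dots,g_{k-1}$ coming from $\mathbf{s'}=(p_0,q_1,\dots,q_{k-1},p_{k-1})$ together with $g''$ of dimension $p$ — and whose relations are the relations of $C'$, the relations of $C''$ (of which there are none, as $C''$ is free on $g''$), together with the single extra relation $\sigma(\pi g^0)=\tau(\rho g^0)$ coming from the two legs of the span applied to the generator $g^0$ of $C^0$. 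That extra relation reads
$$d_q^+(\sigma g_{k-1})=d_q^-(\tau g'').$$
Writing $g_i$ for $\sigma$ applied to the $i$-th generator of $C'$ for $i\le k-1$ and $g_k$ for $\tau g''$, we obtain exactly the generating list $g_0,\dots,g_k$ with $\dim g_i=p_i$ (where $p_k=p$) and exactly the relations
$$d_{q_i}^+ g_{i-1}=d_{q_i}^- g_i\quad (1\le i\le k),$$
since the relations from $C'$ supply the cases $i\le k-1$ and the new amalgamation relation supplies the case $i=k$ (with $q_k=q$). This is precisely the presentation in Definition~\ref{D2.5} for an $\mathbf{s}$-simple $\omega$-category, and its final generator is $g_k=\tau g''$, which is the assertion.

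The one point needing a word of care — and the only place where anything beyond pure bookkeeping is used — is the claim that amalgamating presentations along the span indeed computes the push-out, and in particular that the generator $g^0$ of $C^0$ is mapped into $C'$ by a morphism uniquely determined by its value on $g^0$. This rests on $C^0$ being the free $\omega$-category on one $q$-dimensional generator, equivalently the representing object for the sort $C_q$ in the variety of $\omega$-categories; this is exactly the statement (cited from \cite{B2}) that $(q)$-simple $\omega$-categories are the free ones on a single generator of that dimension, together with the general universal-algebra fact that push-outs of algebras are presented by the union of generators and relations of the two objects plus the coequalizing relations induced by the span. Granting that, the proof is immediate, which is why the statement is labelled ``obvious''; I would therefore present it as a two-line argument recording the presentation of the push-out and identifying it with Definition~\ref{D2.5}, rather than dwelling on the universal-algebra generalities.
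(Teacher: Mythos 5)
Your argument is correct and is exactly the bookkeeping the paper has in mind: the paper offers no proof (it labels the result ``obvious''), and the intended justification is precisely that the push-out is presented by amalgamating the presentations of $C'$ and $C''$ along the single generator of the free $(q)$-simple object $C^0$, which yields the presentation of Definition~\ref{D2.5} with final generator $\tau g''$. Your care about $C^0$ being free (so that agreement of the two legs on $g^0$ forces agreement on all of $C^0$) is the right point to flag, and nothing further is needed.
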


In the axioms for $\omega$-categories the domains are free $\omega$-categories on single generators and pull-backs corresponding to simple squares. We therefore get the following result.

\begin{proposition} \label{P2.8}
Let $\omega$-$\mathbf{cat}$ be the category of $\omega$-categories, let $\Theta$ be the full-subcategory of simple $\omega$-categories, and let $\hat\Theta$ be the category of contravariant functors from~$\Theta$ to sets which take simple squares to pull-back squares. Then there is an equivalence of categories
$$C\mapsto\Hom(-,C)\colon\textup{$\omega$-$\mathbf{cat}$}\to\hat\Theta.$$
\end{proposition}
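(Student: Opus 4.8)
The plan is to produce an explicit quasi-inverse $M\colon\hat\Theta\to\omega\text{-}\mathbf{cat}$ for the functor $N:=\Hom(-,-)$ and to verify that $M$ and $N$ are mutually inverse up to natural isomorphism. Write $E_p$ for the $(p)$-simple $\omega$-category; this is the free $\omega$-category on a single $p$-dimensional generator~$g$, so $\Hom(E_p,C)=C_p$ naturally in~$C$, and by freeness each element of~$E_p$ of dimension~$n$ is the same thing as a morphism $E_n\to E_p$. In particular the structural elements $i_p^n g$, $d_q^- g$ and $d_q^+ g$ of~$E_p$ are morphisms $E_n\to E_p$, $E_q\to E_p$ and $E_q\to E_p$ respectively. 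Two preliminary observations: $N$ really does land in~$\hat\Theta$, since a simple square is by Definition~\ref{D2.6} a push-out in $\omega\text{-}\mathbf{cat}$ and $\Hom(-,C)$ carries push-outs to pull-backs; and $N$ is faithful, since $\Hom(E_p,f)=f_p$ and a morphism of $\omega$-categories is determined by its components. It remains to show that $N$ is full and essentially surjective.

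Given $F\in\hat\Theta$, put $(MF)_p:=F(E_p)$ and $i_p^n:=F(i_p^n g)$, $d_q^\pm:=F(d_q^\pm g)$. For the composition operations, and to verify the axioms, I use the following \emph{Key Lemma}, proved by induction on the length of an up-down vector: if $S$ is $\mathbf{s}$-simple with $\mathbf{s}=(p_0,q_1,p_1,\ldots,q_k,p_k)$, then $F$ carries $S$ to the iterated fibre product
$$F(E_{p_0})\times_{F(E_{q_1})}F(E_{p_1})\times_{F(E_{q_2})}\cdots\times_{F(E_{q_k})}F(E_{p_k}),$$
the maps being the $F(d_{q_i}^+ g)$ and $F(d_{q_i}^- g)$; for the inductive step one writes $\mathbf{s}=(\mathbf{s'},q,p)$, applies the pull-back condition to the $\mathbf{s}$-simple square, and invokes the inductive hypothesis for the $\mathbf{s'}$-simple corner. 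In particular $F$ carries the $(p,q,p)$-simple $\omega$-category~$P$ to $\{(x,y)\in (MF)_p\times(MF)_p : d_q^+ x = d_q^- y\}$, and since the composite of the two generators of~$P$ is an element of~$P$, hence a morphism $E_p\to P$, applying~$F$ yields an operation $\comp_q$ on $MF$ with the correct domain. Now each of the axioms (1)--(6) of Definition~\ref{D2.1} becomes an identity whose two sides are obtained by applying~$F$ to two parallel morphisms out of a simple $\omega$-category which already agree there (that $\omega$-category being in particular an $\omega$-category, and the domain of the axiom being, by the Key Lemma, precisely the appropriate iterated fibre product); hence $MF$ is an $\omega$-category, and $M$ is evidently functorial.

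It remains to compare the two composites. One has $(MNC)_p=\Hom(E_p,C)=C_p$ with operations matching those of~$C$ by construction, so $MN\cong\id$. For $NM$: given $F\in\hat\Theta$ and a simple $\omega$-category $S$ as above, the Key Lemma identifies $F(S)$ with an iterated fibre product of the $F(E_{p_i})=(MF)_{p_i}$ over the $F(E_{q_i})=(MF)_{q_i}$; on the other hand $S$ is, by Definitions~\ref{D2.5} and~\ref{D2.6}, an iterated push-out of the $E_{p_i}$ along simple squares, so $\Hom(S,MF)$ is the very same iterated fibre product, and we obtain a bijection $F(S)\cong\Hom(S,MF)$, natural in~$S$ once one checks compatibility with the $\Theta$-morphisms occurring in simple squares; so $NM\cong\id$. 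The same computation applied to a natural transformation $\alpha\in\hat\Theta(\Hom(-,C),\Hom(-,D))$ shows that $\alpha$ is determined by its components $\alpha_{E_p}\colon C_p\to D_p$, which commute with all the operations by naturality of~$\alpha$ with respect to the representing morphisms; hence $\alpha=Nf$ for some morphism $f\colon C\to D$, and $N$ is full. Being full, faithful and essentially surjective, $N$ is an equivalence.

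The routine parts are the induction in the Key Lemma and the axiom check. The step needing care is the structural input about~$\Theta$ that underpins the naturality assertions: one must know that every simple $\omega$-category is canonically an iterated push-out of the $E_p$ of the shape prescribed by Definition~\ref{D2.6}, and that every morphism between simple $\omega$-categories is induced, via these presentations, by morphisms between the~$E_p$. This is precisely where Definitions~\ref{D2.4}--\ref{D2.6} and the results of~\cite{B2} do the work; granting it, everything else is bookkeeping.
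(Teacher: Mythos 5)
Your proposal is correct and is essentially the paper's own argument spelled out in full: the paper dismisses this proposition with the single observation that the domains of the axioms for $\omega$-categories are free $\omega$-categories on single generators together with the pull-backs corresponding to simple squares, and your quasi-inverse $M$, the Key Lemma identifying $F(S)$ with the iterated fibre product, and the axiom and naturality checks are precisely the bookkeeping that observation leaves implicit. No gap beyond the routine verifications you already flag (compatibility of the fibre-product identification with the structural morphisms of Definitions~\ref{D2.5}--\ref{D2.6}, via Proposition~\ref{P2.7}).
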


\section{Sets with complicial identities} \label{S3}

In this section we recall the definition of sets with complicial identities from~\cite{B5}. A set with complicial identities is a simplicial set $X_0,X_1,\ldots\,$ together with additional partial binary wedge operations~$\w_i$. These operations raise dimension by~$1$; they correspond to the projection of an $(m+1)$-simplex onto the union of the $m$-faces opposite vertices $i$~and $i+2$. The identities are stated here without comment, but there are illustrations in Section~\ref{S7}.

\begin{definition} \label{D3.1}
A \emph{set with complicial identities}~$X$ is a sequence of sets $$X_0,X_1,\ldots$$ 
together with the following structure. 

(1) If $x\in X_m$ then there are \emph{faces}
$$\d_i x\in X_{m-1}\quad (m>0,\ 0\leq i\leq m)$$
and \emph{degeneracies}
$$\e_i x\in X_{m+1}\quad (0\leq i\leq m).$$

(2) If $x,y\in X_m$ and if $\d_i x=\d_{i+1}y$ for some~$i$ with $0\leq i\leq m-1$ then there is a \emph{wedge}
$$x\w_i y\in X_{m+1}.$$

(3) If $x\in X_m$ then
\begin{align*}
&\d_i\d_j x=\d_{j-1}\d_i x\quad (m\geq 2,\ 0\leq i<j\leq m),\\
&\d_i\e_j x=\e_{j-1}\d_i x\quad (0\leq i<j\leq m),\\
&\d_j\e_j x=\d_{j+1}\e_j x=x,\\
&\d_i\e_j x=\e_j\d_{i-1}x\quad (j+2\leq i\leq m+1),\\
&\e_i\e_j x=\e_{j+1}\e_i x\quad (0\leq i\leq j\leq m),\\
&\e_i x=\e_i\d_{i+1}x\w_i x\quad (0\leq i<m),\\
&\e_{i+1}x=x\w_i\e_i\d_i x\quad (0\leq i<m).
\end{align*}

(4) If $x,y\in X_m$ and if $\d_i x=\d_{i+1}y$ with $0\leq i<m$ then
\begin{align*}
&\d_j(x\w_i y)=\d_j x\w_{i-1}\d_j y\quad (0\leq j\leq i-1),\\
&\d_i(x\w_i y)=y,\\
&\d_{i+2}(x\w_i y)=x,\\
&\d_j(x\w_i y)=\d_j x\w_{i-1}\d_j y\quad (i+3\leq j\leq m+1).
\end{align*}

(5) If $b\in X_{m+1}$ and $y,z\in X_m$, if $\d_i y=\d_{i+1}z$ and $\d_i b=\d_{i+1}(y\w_i z)$ with $0\leq i<m$, and if $A=b\w_i(y\w_i z)$, then
$$A=(\d_{i+2}b\w_i y)\w_{i+1}\d_{i+1}A.$$

(6) If $x,y\in X_m$ and $c\in X_{m+1}$, if $\d_i x=\d_{i+1}y$ and $\d_{i+1}(x\w_i y)=\d_{i+2}c$ with $0\leq i<m$, and if $A=(x\w_i y)\w_{i+1}c$, then
$$A=\d_{i+2}A\w_i(y\w_i\d_i c).$$

(7) If $x,y,z\in X_m$ and if $\d_i x=\d_{i+1}y$, $\d_i y=\d_{i+1}z$ with $0\leq i<m$ then
$$[x\w_i\d_{i+1}(y\w_i z)]\w_i(y\w_i z)=(x\w_i y)\w_{i+1}[\d_{i+1}(x\w_i y)\w_i z].$$

(8) If $x,y,z,w\in X_m$, if $\d_{i+1}x=\d_{i+2}y$, $\d_i y=\d_{i+1}z$, $\d_{i+1}w=\d_{i+1}(\d_i x\w_i\d_{i+2}z)$ with $0\leq i\leq m-2$, and if $A=\d_{i+2}[(x\w_{i+1}y)\w_{i+1}(y\w_i z)]$, then
$$A\w_i(w\w_{i+1}\d_i A)=(\d_{i+3}A\w_i w)\w_{i+2}A.$$

(9) If $x,y,z,w\in X_m$ and $\d_i x=\d_{i+1}y$, $\d_i z=\d_{i+1}w$, $\d_{j-1}x=\d_j z$, $\d_{j-1}y=\d_j w$ with $0\leq i\leq j-3\leq m-3$ then
$$(x\w_i y)\w_j(z\w_i w)=(x\w_{j-1}z)\w_i(y\w_{j-1}w).$$

A \emph{morphism of sets with complicial identities} $f\colon X\to Y$ is a sequence of functions $f\colon X_m\to Y_m$ commuting with the face, degeneracy and wedge operations.
\end{definition}

\section{Augmented directed complexes} \label{S4}

In this section we give some definitions and results based on~\cite{B3}.

\begin{definition} \label{D4.1}
An \emph{augmented directed complex} is an augmented chain complex of abelian groups 
$$\xymatrix{\ldots \ar^{\d}[r] & K_1 \ar^{\d}[r] & K_0 \ar^{\e}[r] & \Z,}$$
together with a prescribed submonoid for each chain group~$K_q$. A \emph{morphism of augmented directed complexes} is an augmentation-preserving chain map which takes prescribed submonoids into prescribed submonoids. A \emph{free augmented directed complex} is an augmented directed complex such that each chain group is a free abelian group with a prescribed basis and such that each prescribed submonoid is generated as a monoid by the prescribed basis elements.
\end{definition} 

Let $K$ be a free augmented directed complex.  We note that the prescribed basis elements are uniquely determined as the indecomposable elements in the prescribed submonoids. We regard the union of the prescribed bases for the individual chain groups~$K_q$ as a prescribed graded basis for the entire chain complex~$K$. Given a chain~$c$ in~$K$, we write $\d^+ c$ and $\d^- c$ for the positive and negative parts of the boundary $\d c$; in other words, $\d^+ c$ and $\d^- c$ are the sums of basis elements without common terms such that
$$\d c=\d^+ c-\d^- c.$$

\begin{definition} \label{D4.2}
A \emph{totally ordered directed complex} is a free augmented chain complex together with a total ordering of the basis such that each basis element~$a$ satisfies the following conditions.

(1) In the ordered basis, $a$~appears after the terms of $\d^- a$ and before the terms of $\d^+ a$.

(2) If the dimension of~$a$ is~$p$, then
$$\e(\d^-)^p a=\e(\d^+)^p a=1.$$
\end{definition}

Given an augmented directed complex~$K$, we define an $\omega$-category $\nu K$ as follows. The set $(\nu K)_p$ of $p$-dimensional elements consists of the double sequences
$$(\,x_0^-,x_0^+\mid x_1^-,x_1^+\mid\ldots\,)$$
such that $x_i^-$~and~$x_i^+$ are $i$-dimensional members of the prescribed submonoids, such that 
$$x_i^-=x_i^+=0$$
for $i>p$, such that
$$\e x_0^-=\e x_0^+=1,$$
and such that
$$x_i^+ - x_i^-=\d x_{i+1}^-=\d x_{i+1}^+$$
for $i\geq 0$. For $n>p$ the identity element function $i_p^n\colon (\nu K)_p\to(\nu K)_n$ is the inclusion. For $q<p$, if $x=(\,x_0^-,x_0^+\mid\ldots\,)$ as above, then
$$d_q^\alpha x=(\,x_0^-,x_0^+\mid\ldots\mid x_{q-1}^-,x_{q-1}^+\mid
 x_q^\alpha,x_q^\alpha\mid 0,0\mid\ldots\,).$$
If $x$~and~$y$ are $p$-dimensional and if $d_q^+x=d_q^-y=z$ with $q<p$ then
$$x\comp_q y=x-i_q^p z+y.$$

In particular let $K$ be a totally ordered directed complex and let $a$ be a $p$-dimensional basis element for~$K$; then there is a $p$-dimensional element~$\langle a\rangle$ of $\nu K$, called an \emph{atom}, which is given by
$$\langle a\rangle
 =\bigl(\,(\d^-)^p a,(\d^+)^p a\mid\ldots\mid \d^- a,\d^+ a\mid a,a\mid
 0,0\mid\ldots\,).$$

The main results (\cite{B3}, Theorems 5.11 and~6.1) can be stated as follows.
 
\begin{theorem} \label{T4.3}
The functor~$\nu$ is a fully faithful functor from the category of totally ordered directed complexes to the category of $\omega$-categories.
\end{theorem}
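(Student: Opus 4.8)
The plan is to prove faithfulness and fullness separately, with the atoms $\langle a\rangle$ serving as a dictionary between chains and $\omega$-categorical elements. I first record an elementary observation. For a $p$-dimensional element $x=(\,x_0^-,x_0^+\mid x_1^-,x_1^+\mid\ldots\,)$ of any $\nu L$ the defining relation gives $x_p^+-x_p^-=\d x_{p+1}^-=\d 0=0$, so the two top entries agree; write $\mathrm{top}(x):=x_p^-=x_p^+$, which lies in the prescribed submonoid of $L$. Moreover $x$ is recovered from the triple $(d_{p-1}^-x,\,d_{p-1}^+x,\,\mathrm{top}(x))$, since $d_{p-1}^-x$ displays the entries $x_i^\pm$ for $i\le p-2$ together with $x_{p-1}^-$, while $d_{p-1}^+x$ displays $x_{p-1}^+$. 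Finally, from the formula for $\langle a\rangle$ we have $\mathrm{top}(\langle a\rangle)=a$, and since $\nu$ acts entrywise on morphisms, $\mathrm{top}(\nu f(\langle a\rangle))=f(a)$ for any morphism $f$ of augmented directed complexes. Faithfulness is then immediate: if $f,g\colon K\to L$ are morphisms of totally ordered directed complexes with $\nu f=\nu g$, then $f(a)=\mathrm{top}(\nu f\langle a\rangle)=\mathrm{top}(\nu g\langle a\rangle)=g(a)$ for every prescribed basis element $a$, and hence $f=g$ because the basis spans each chain group.

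For fullness, let $\phi\colon\nu K\to\nu L$ be a morphism of $\omega$-categories. Define $f$ on a basis element $a$ of dimension $p$ by $f(a):=\mathrm{top}(\phi\langle a\rangle)\in L_p$ and extend additively to a graded homomorphism $f\colon K\to L$. Since $\mathrm{top}(\phi\langle a\rangle)$ lies in the prescribed submonoid of $L$, which consists of exactly the nonnegative integer combinations of basis elements, $f$ takes prescribed submonoids into prescribed submonoids; and evaluation on $0$-dimensional atoms gives $\e f(a)=1=\e a$, so $f$ is augmentation-preserving. It remains to verify that $f$ is a chain map and that $\nu f=\phi$, and for both I shall use two structural facts about $\nu K$, valid because $K$ is a totally ordered directed complex: (a) $\nu K$ is generated as an $\omega$-category by its atoms; and (b) for a basis element $a$ of dimension $p$ and $\alpha\in\{-,+\}$, the element $d_{p-1}^\alpha\langle a\rangle$ is obtained by composition and identity operations from the atoms $\langle b\rangle$, one for each basis element $b$ of $\d^\alpha a$ (counted with multiplicity), all of dimension at most $p-1$. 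Here condition~(1) of Definition~\ref{D4.2} is exactly what makes the composites in (b) well defined — consecutive atoms meet along matching sources and targets — and what forces every element of $\nu K$ to be of this form, while condition~(2) guarantees that sources and targets of atoms really are elements of $\nu K$.

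Granting (a) and (b), the rest is bookkeeping. If $u,v$ are $(p-1)$-dimensional and $q<p-1$ then $\mathrm{top}(u\comp_q v)=\mathrm{top}(u)+\mathrm{top}(v)$, because $u\comp_q v=u-i_q^{p-1}(d_q^+u)+v$ and $i_q^{p-1}(d_q^+u)$ is concentrated in dimensions $\le q$; and $\mathrm{top}$ of an identity $i_k^{p-1}w$ with $k<p-1$ is $0$. Hence $\mathrm{top}$ of any composite-with-identities of $(p-1)$-dimensional atoms $\langle b\rangle$ along lower dimensions equals $\sum_b b$. Applying the functor $\phi$ to (b) therefore gives $\mathrm{top}\bigl(d_{p-1}^\alpha(\phi\langle a\rangle)\bigr)=\mathrm{top}\bigl(\phi(d_{p-1}^\alpha\langle a\rangle)\bigr)=\sum_b\mathrm{top}(\phi\langle b\rangle)=\sum_b f(b)=f(\d^\alpha a)$. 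Since the defining relations of $\nu L$ give $\d\,\mathrm{top}(\phi\langle a\rangle)=\mathrm{top}(d_{p-1}^+\phi\langle a\rangle)-\mathrm{top}(d_{p-1}^-\phi\langle a\rangle)$, we get $\d f(a)=f(\d^+a)-f(\d^-a)=f(\d a)$, so $f$ is a chain map and hence a morphism of augmented directed complexes. For $\nu f=\phi$: by (a) and functoriality it suffices to show $\nu f(\langle a\rangle)=\phi(\langle a\rangle)$, and by the recovery observation it is enough that these two $p$-dimensional elements have the same value of $\mathrm{top}$ — true by construction of $f$ — and equal $d_{p-1}^-$ and $d_{p-1}^+$. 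This last point is proved by induction on $p$, the case $p=0$ being the definition of $f$: assuming $\nu f$ and $\phi$ agree on all atoms of dimension below $p$, they agree by (a) on all elements of dimension below $p$, in particular on $d_{p-1}^\alpha\langle a\rangle$ by (b), whence $d_{p-1}^\alpha(\nu f\langle a\rangle)=d_{p-1}^\alpha(\phi\langle a\rangle)$.

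The main obstacle is establishing (a) and (b). Unlike the formal manipulations above, these genuinely require the total order on the basis: it is used to pin down the shapes of the iterated composites of atoms inside $\nu K$, to make their sources and targets match up correctly, and to see that no further elements arise. This is where the hypotheses of Definition~\ref{D4.2} do their real work, and it is the part of the argument that cannot be reduced to bookkeeping.
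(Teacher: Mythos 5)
Your reduction of the theorem to the two structural facts (a) and (b) is essentially sound: the ``top'' bookkeeping is correct (tops add under $\comp_q$ with $q$ below the top dimension, identities have top zero, and the relation $x_{p-1}^+-x_{p-1}^-=\d x_p^\pm$ gives the chain-map identity), faithfulness is genuinely immediate, and the recovery of a $p$-dimensional element from $(d_{p-1}^-x,d_{p-1}^+x,\mathrm{top}(x))$ together with an induction on dimension does yield $\nu f=\phi$ once (a) and (b) are granted. One small inaccuracy: in (b) the composite expressing $d_{p-1}^\alpha\langle a\rangle$ generally involves, besides the atoms $\langle b\rangle$ of the terms of $\d^\alpha a$, further atoms of dimension below $p-1$ entering through identity operations (already for $d_2^-\langle[0,1,2,3]\rangle$ in $\Delta(3)$ one needs $i_1^2\langle[2,3]\rangle$); this does not damage your argument, since such factors contribute zero at the top dimension, but (b) as you state it is not quite what is true.

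The genuine gap is the one you flag yourself: (a) and (b) are not proved, and they are not bookkeeping — they are the entire content of the theorem. Note that this paper does not prove Theorem~\ref{T4.3} at all: it is quoted from \cite{B3} (Theorems 5.11 and 6.1), and the facts you need are essentially Theorem~\ref{T4.4} (the atomic presentation, likewise imported from \cite{B3}). Indeed, given Theorem~\ref{T4.4} your argument closes: generation by atoms gives (a), and the existence of \emph{some} expression of $d_{p-1}^\alpha\langle a\rangle$ as an iterated composite of atoms of dimension below $p$, combined with your additivity of $\mathrm{top}$ inside $\nu K$, forces the $(p-1)$-dimensional factors to be exactly the terms of $\d^\alpha a$ with multiplicity, which is the corrected form of (b). But proving the existence of such composite decompositions from Definition~\ref{D4.2} — using the ordering condition (loop-freeness) to organise the terms of $\d^\alpha a$ into a composable sequence in every dimension, and condition (2) (unitality) to anchor the induction — is the hard analysis carried out in \cite{B3}, and without it your text is a reduction of one unproved statement to another rather than a proof.
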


\begin{theorem} \label{T4.4}
Let $K$ be a totally ordered directed complex. Then the $\omega$-category $\nu K$ has a presentation as follows. The generators are the atoms, such that $\langle a\rangle$~is a $p$-dimensional member of $\nu K$ if $a$~is a $p$-dimensional basis element. For each basis element~$a$ of positive dimension~$p$ there are relations
$$d_{p-1}^-\langle a\rangle =w^-(a),\quad
d_{p-1}^+\langle a\rangle =w^+(a),$$
where $w^-(a)$ and $w^+(a)$ are arbitrarily chosen expressions for $d_{p-1}^-\langle a\rangle$ and $d_{p-1}^+\langle a\rangle$ as iterated composites of atoms of dimension less than~$p$.
\end{theorem}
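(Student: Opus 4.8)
The plan is to establish two assertions: that the atoms generate $\nu K$ as an $\omega$-category, and that the displayed identities form a complete set of relations. Write $C$ for the $\omega$-category with the presentation described in the statement. By the very choice of $w^-(a)$ and $w^+(a)$, the relations $d_{p-1}^-\langle a\rangle=w^-(a)$ and $d_{p-1}^+\langle a\rangle=w^+(a)$ hold in $\nu K$, so there is a canonical morphism $\theta\colon C\to\nu K$ sending each generating symbol to the corresponding atom. The theorem asserts that $\theta$ is an isomorphism, and I would prove surjectivity and injectivity in turn.

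Surjectivity means that every element of $\nu K$ is an iterated composite of atoms. I would prove this by induction on the dimension~$p$ of an element $x=(\,x_0^-,x_0^+\mid\ldots\,)\in(\nu K)_p$, with a secondary induction on the sum of the coefficients of the top-dimensional component $x_p=x_p^-=x_p^+$. If $x_p=0$, then $x$ is the identity $i_{p-1}^p y$ of the lower-dimensional element $y=d_{p-1}^- x$, and the outer induction applies to~$y$. If $x_p\neq0$, one uses the total ordering of the basis: there is a basis element~$a$ of dimension~$p$ occurring in~$x_p$ that is extremal in a suitable sense, together with an integer $q<p$, such that $x=\langle a\rangle\comp_q x'$, where $x'$ is a genuine element of $\nu K$ whose top-dimensional component is $x_p$ with one copy of~$a$ removed. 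That $x'$ lies in $\nu K$ and that the composability condition holds follow from conditions~(1) and~(2) of Definition~\ref{D4.2}; this is an \emph{excision of extremals} argument (compare~\cite{B2}). Since the top-coefficient sum of~$x'$ is strictly smaller, the secondary induction shows that $x'$, hence~$x$, is an iterated composite of atoms.

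For injectivity I would use the skeletal filtration. For $n\geq0$ let $K^{(n)}$ be the sub-augmented-directed-complex of~$K$ spanned by the basis elements of dimension $\leq n$; it is again a totally ordered directed complex, and since every $p$-dimensional element of $\nu K$ involves only basis elements of dimension $\leq p$, the $\omega$-category $\nu K$ is the filtered colimit of the sub-$\omega$-categories $\nu K^{(n)}$. It therefore suffices to prove, by induction on~$n$, that $\nu K^{(n)}$ admits the presentation with generators the atoms of dimension $\leq n$ and relations the identities $d_{p-1}^\pm\langle a\rangle=w^\pm(a)$ for those atoms, in a way that extends the presentation of $\nu K^{(n-1)}$; passing to the colimit then yields the presentation in the statement. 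The base case $n=0$ is clear because $\nu K^{(0)}$ is the discrete $\omega$-category on the set of $0$-atoms. For the inductive step, one must identify $\nu K^{(n)}$ with the $\omega$-category obtained from $\nu K^{(n-1)}$ by freely adjoining, for each $n$-dimensional basis element~$a$, an $n$-cell $\langle a\rangle$ whose source and target are the chosen composites $w^-(a)$ and $w^+(a)$ in $\nu K^{(n-1)}$ --- that is, with a certain pushout. The uniqueness half of the universal property of this pushout is immediate from the surjectivity already proved.

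The main obstacle is the existence half of that universal property: given an $\omega$-category~$D$, a morphism $\nu K^{(n-1)}\to D$, and elements of~$D_n$ realising the $n$-atoms compatibly with the prescribed boundaries, one must construct a morphism $\nu K^{(n)}\to D$. I would define its value on an element of $\nu K^{(n)}$ by choosing an expression as an iterated composite of atoms, as furnished by the excision argument above, and forming the corresponding composite in~$D$; the work lies in checking that this is independent of the chosen expression and commutes with faces, degeneracies and composites. Independence is the crucial point: it should be packaged as the statement that any two expressions for a given element of $\nu K^{(n)}$ as iterated composites of atoms are connected by a finite chain of applications of the $\omega$-category axioms and of the relations $d_{p-1}^\pm\langle a\rangle=w^\pm(a)$ --- a confluence property for the rewriting process that repeatedly excises extremal atoms. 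Granting this, $\theta$ is an isomorphism, and Theorem~\ref{T4.4} follows.
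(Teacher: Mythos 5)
The paper itself does not prove Theorem~\ref{T4.4}; it quotes it from \cite{B3} (Theorems 5.11 and 6.1), so there is no in-paper argument to compare against, and your proposal has to stand on its own. It does not: the decisive step is missing. After setting up the comparison morphism $\theta\colon C\to\nu K$ and the skeletal pushout strategy, you reduce everything to the ``independence of the chosen expression'' --- the claim that any two expressions of an element of $\nu K^{(n)}$ as iterated composites of atoms are linked by the $\omega$-category axioms and the relations $d_{p-1}^{\pm}\langle a\rangle=w^{\pm}(a)$ --- and then write ``Granting this, $\theta$ is an isomorphism.'' But that confluence statement \emph{is} the theorem (it is exactly the assertion that the listed relations are a complete set), and no argument for it is offered. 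There is no generic rewriting or confluence principle that delivers it; in \cite{B3} this is where the real work lies, carried out through a sequence of decomposition lemmas for elements of $\nu K$ that exploit unitality and the loop-free (total) ordering of the basis, not through an abstract normal-form argument. As written, the proposal replaces the hard half of the theorem with an unproved hypothesis.

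The surjectivity sketch is also too coarse at the point where the ordering is invoked. An element $x\in(\nu K)_p$ whose top component $x_p$ is a single basis element $a$ need not be the atom $\langle a\rangle$: its lower components $x_q^{\pm}$ may properly contain those of $\langle a\rangle$ (a ``whiskered'' atom), so a decomposition of the form $x=\langle a\rangle\comp_q x'$ with $x'$ obtained by deleting one copy of $a$ from $x_p$ is not available in general; one may need composites on both sides and at several levels, and the factors are not obtained by deleting a term but by recomputing all intermediate components via $\d^{\pm}$ of the pieces, after which membership in the prescribed submonoids and the chain conditions must be verified --- this is precisely where conditions (1) and (2) of Definition~\ref{D4.2} enter, and merely citing them does not do that work. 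Consequently the secondary induction on the coefficient sum of $x_p$, with base case $x_p=0$, does not close. Both halves of the argument therefore need the actual machinery of \cite{B3} (or an equivalent substitute), and the proposal as it stands is an outline of the statement rather than a proof.
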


\section{Simple chain complexes} \label{S5}

We will now describe a class of chain complexes corresponding to simple $\omega$-categories. The class was defined in~\cite{B4}. For present purposes it is convenient to proceed inductively.

\begin{definition} \label{D5.1}
Let $\mathbf{s}$ be a one-term up-down vector given by $\mathbf{s}=(p)$. Then an \emph{$\mathbf{s}$-simple chain complex with (final) generator~$a$} is a free augmented directed complex with a $p$-dimensional basis element~$a$ such that the basis elements can be listed as
$$(\d^-)^p a,\ (\d^-)^{p-1}a,\ \ldots,\ \d^- a,\ a,\ 
\d^+ a,\ \ldots,\ (\d^+)^{p-1}a,\ (\d^+)^p a$$
and such that $\e(\d^-)^p a=\e(\d^+)^p a=1$.
\end{definition}

\begin{definition} \label{D5.2}
Let $\mathbf{s}$ be an up-down vector with more than one term given by $\mathbf{s}=(\mathbf{s'},q,p)$, and let $p'$ be the last term in~$\mathbf{s'}$. Then an \emph{$\mathbf{s}$-simple chain complex with final generator~$a$} is an augmented directed complex~$K$ if there are $\mathbf{s'}$-simple, $(q)$-simple and $(p)$-simple subcomplexes $K'$, $K^0$ and~$K''$ with final generators $a'$, $a^0$ and~$a$ such that
\begin{align*}
&K=K'+K'',\\
&K'\cap K''=K^0,\\
&(\d^+)^{p'-q}a'=a^0=(\d^-)^{p-q}a,
\end{align*}
and the distinguished submonoid of~$K$ is the sum of the distinguished submonoids of $K'$~and~$K''$.
\end{definition}

\begin{proposition} \label{P5.3}
Let $\mathbf{s}$ be an up-down vector with last term~$p$ and let $K$ be an $\mathbf{s}$-simple chain complex with final generator~$a$. Then $K$~is a totally ordered directed complex whose ordered basis finishes with the elements
$$a,\ \d^+a,\ \ldots,\ (\d^+)^p a.$$
\end{proposition}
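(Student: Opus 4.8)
The plan is to argue by induction on the number of terms in the up-down vector $\mathbf{s}$. The base case is $\mathbf{s}=(p)$, where the statement is almost immediate from Definition~\ref{D5.1}: the prescribed basis is explicitly listed as $(\d^-)^p a,\ldots,\d^-a,\ a,\ \d^+a,\ldots,(\d^+)^p a$, so we take this as the total ordering. Condition~(2) of Definition~\ref{D4.2} is exactly the requirement $\e(\d^-)^p a=\e(\d^+)^p a=1$ built into Definition~\ref{D5.1}. For condition~(1) one checks, working down from~$a$, that $\d^-((\d^-)^{i}a)$ consists of terms appearing later in the list (namely among the $(\d^-)^j a$ with $j>i$) and that $\d^+((\d^-)^i a)$ consists of terms appearing earlier; since in a chain complex of this one-generator shape $\d(\d^-)^i a = \pm((\d^-)^{i-1}a - \d^+(\d^-)^i a)$ with the obvious sign, one reads off $\d^-$ and $\d^+$ directly. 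The tail of the order is then visibly $a,\ \d^+a,\ldots,(\d^+)^p a$, as claimed.

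For the inductive step, write $\mathbf{s}=(\mathbf{s}',q,p)$ with $p'$ the last term of~$\mathbf{s}'$, and let $K=K'+K''$, $K'\cap K''=K^0$ be as in Definition~\ref{D5.2}, with $K'$ an $\mathbf{s}'$-simple complex (total order known by induction, ending in $a',\ \d^+a',\ldots,(\d^+)^{p'}a'$) and $K''$ a $(p)$-simple complex with basis $(\d^-)^p a,\ldots,a,\ldots,(\d^+)^p a$. The basis of~$K$ is the union of these two bases, and they overlap exactly in the basis of~$K^0$, which is $(\d^+)^{p'-q}a'=a^0=(\d^-)^{p-q}a$ together with its iterated $\d^+$; by the known tails these shared elements are precisely $a^0,\ \d^+a^0,\ldots,(\d^+)^q a^0$, forming a terminal segment of the order on~$K'$ and appearing inside the order on~$K''$ as the block $(\d^-)^{p-q}a,\ (\d^-)^{p-q-1}a,\ldots$ restricted appropriately. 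The natural candidate order on~$K$: first the basis elements of~$K'$ that are not in~$K^0$, in their $K'$-order; then the basis elements of~$K''$, in their $K''$-order; with the $K^0$-elements appearing only once, in the position dictated by~$K''$. One must check this is well-defined, i.e.\ that deleting the $K^0$-block from the tail of~$K'$ and splicing in the whole of~$K''$ respects both suborders --- this works because the $K^0$-block sits at the very end of the $K'$-order and the same elements sit as a contiguous sub-block at the front portion of the $K''$-order.

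With the order fixed, verifying Definition~\ref{D4.2} for a basis element~$a_0$ of~$K$ splits into cases according to whether $a_0$ lies in $K'$ or in $K''$. Here the key point is that the boundary $\d a_0$ is computed in whichever subcomplex $a_0$ belongs to (since $K'$ and $K''$ are subcomplexes), so $\d^\pm a_0$ is unchanged, and its terms keep their relative order under the splicing described above; condition~(1) for~$K$ then follows from condition~(1) for the relevant subcomplex. Condition~(2) likewise transfers: for $a_0\in K''$ one uses that $\e(\d^-)^i a_0$ and $\e(\d^+)^i a_0$ are computed in~$K''$, and similarly for $a_0\in K'$ --- note $\e$ is inherited since the inclusions are augmentation-preserving maps of augmented directed complexes. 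Finally the tail of the new order is the tail of the $K''$-order, namely $a,\ \d^+a,\ldots,(\d^+)^p a$, which is the required conclusion. The main obstacle I anticipate is the bookkeeping in the splicing step: one must confirm that the $K^0$-block genuinely occupies a terminal segment of the $K'$-order (this uses the inductive description of that tail together with $(\d^+)^{p'-q}a'=a^0$) and a matching contiguous block of the $K''$-order, so that the concatenation is a genuine total order refining both --- everything else is routine case-checking of inequalities among indices.
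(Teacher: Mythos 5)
Your base case and the general strategy (induction, splicing the two known orders) match the paper, but the inductive step contains a genuine error: the total order you propose on the basis of $K$ does not satisfy condition~(1) of Definition~\ref{D4.2}, and the structural claim it rests on is false. Inside $K'$ the basis of $K^0$ is $(\d^-)^q a^0,\ldots,\d^- a^0,\,a^0,\,\d^+a^0,\ldots,(\d^+)^q a^0$; only the ``positive half'' $a^0,\d^+a^0,\ldots,(\d^+)^q a^0$ is a terminal segment of the $K'$-order. The elements $(\d^-)^r a^0$ with $r\geq 1$ occur strictly earlier and are interleaved with elements of $K'$ not in $K^0$ (indeed they occur among the negative faces of such elements). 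Likewise, inside $K''$ the $K^0$-basis is not a contiguous block near the front: it consists of the initial segment $(\d^-)^p a,\ldots,(\d^-)^{p-q}a$ together with the final segment $(\d^+)^{p-q+1}a,\ldots,(\d^+)^p a$ of the $K''$-order. Hence ``all of $K'\setminus K^0$ first, then all of $K''$ in its own order'' breaks condition~(1). Concretely, take $\mathbf{s}=(2,1,2)$, so $K'$ is $(2)$-simple with final generator $a'$ and $a^0=\d^+a'$. Then $K^0$ has basis $(\d^-)^2a',\ \d^+a',\ (\d^+)^2a'$, and the $K'$-elements outside $K^0$ are $\d^-a'$ and $a'$. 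Your order begins $\d^-a',\ a',\ (\d^-)^2a=(\d^-)^2a',\ldots$, so the $0$-dimensional element $(\d^-)^2a'$, which is the unique term of $\d^-(\d^-a')$, comes \emph{after} $\d^-a'$, contradicting Definition~\ref{D4.2}(1).

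The correct splicing goes the other way: keep the entire $K'$-order unchanged and insert only the genuinely new elements of $K''$, namely $(\d^-)^{p-q-1}a,\ \ldots,\ \d^-a,\ a,\ \d^+a,\ \ldots,\ (\d^+)^{p-q}a$, immediately after $a^0$ (this is what the paper does). This works because $K''$ is obtained from $K^0$ by adjoining exactly these elements, whose boundaries involve only one another, $a^0$, and the iterates $(\d^\pm)^r a^0$ already correctly placed in the $K'$-order; conditions (1) and~(2) are then inherited from $K'$ and $K''$ exactly as in the verification you sketch. Since $(\d^+)^r a^0=(\d^+)^{p-q+r}a$ for $r>0$, the tail of the resulting order is $a,\ \d^+a,\ \ldots,\ (\d^+)^p a$, as required. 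Your remarks that boundaries and augmentations are computed within the subcomplexes are fine, but as written the ordering step fails, and with it the proof.
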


\begin{proof}
The proof is by induction on the number of terms in~$\mathbf{s}$. In the case $\mathbf{s}=(p)$ the result is obvious. From now on, let $\mathbf{s}=(\mathbf{s'},q,p)$,
let $K'$, $K^0$ and~$K''$ be the subcomplexes as in the definition, and let $a^0=(\d^-)^{p-q}a$. It follows from the inductive hypothesis that $K'$~is a totally ordered directed complex whose ordered basis finishes with the terms
$$a^0,\ \d^+ a^0,\ \ldots,\ (\d^+)^q a^0.$$
We observe that $K''$~is obtained from~$K^0$ by adjoining the elements
$$(\d^-)^{p-q-1}a,\ \ldots,\ \d^- a,\ a,\ \d^+ a,\ \ldots,\ (\d^+)^{p-q}a.$$
It follows that $K$~is a totally ordered directed complex; the ordered basis is obtained from that of~$K'$ by inserting the additional elements $(\d^-)^{p-q-1}a,\ldots,(\d^+)^{p-q}a$ immediately after~$a^0$. For $r>0$ we have $(\d^+)^r a^0=(\d^+)^{p-q+r}a$; the ordered basis for~$K$ therefore finishes with the elements
$$a,\ \ldots,\ (\d^+)^{p-q}a,\ (\d^+)^{p-q+1}a,\ \ldots,\ (\d^+)^p a.$$
This completes the proof.
\end{proof}

\begin{proposition} \label{P5.4}
Let $\mathbf{s}$ be an up-down vector with last term~$p$ and let $K$ be an $\mathbf{s}$-simple chain complex with final generator~$a$. Then $\nu K$ is an $\mathbf{s}$-simple $\omega$-category with final generator~$\langle a\rangle$.
\end{proposition}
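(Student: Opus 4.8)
The plan is to argue by induction on the number of terms in~$\mathbf{s}$, using the description of simple $\omega$-categories as iterated push-outs (Definition~\ref{D2.6} and Proposition~\ref{P2.7}) together with the presentations of~$\nu K$ supplied by Theorem~\ref{T4.4} (applicable because $K$ is a totally ordered directed complex by Proposition~\ref{P5.3}). In the base case $\mathbf{s}=(p)$ the complex~$K$ is the chain complex of a $p$-dimensional globe, with basis $a,\d^-a,\d^+a,\ldots,(\d^-)^pa,(\d^+)^pa$, all single basis elements. A direct computation in this globe gives $d_{r-1}^-\langle b\rangle=\langle\d^- b\rangle$ and $d_{r-1}^+\langle b\rangle=\langle\d^+ b\rangle$ for each basis element~$b$ of positive dimension~$r$, so in the presentation of~$\nu K$ given by Theorem~\ref{T4.4} one may take $w^\pm(b)=\langle\d^\pm b\rangle$. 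Every generator other than~$\langle a\rangle$ is then equated to an iterated face of~$\langle a\rangle$ and may be eliminated, leaving~$\langle a\rangle$ as the sole generator subject to no relations; thus $\nu K$ is the free $\omega$-category on a single $p$-dimensional generator, that is, the $(p)$-simple $\omega$-category with final generator~$\langle a\rangle$.

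For the inductive step write $\mathbf{s}=(\mathbf{s'},q,p)$ and take the $\mathbf{s'}$-simple, $(q)$-simple and $(p)$-simple subcomplexes $K'$, $K^0$, $K''$ of~$K$ as in Definition~\ref{D5.2}, with final generators $a'$, $a^0$, $a$. The inclusions of directed complexes induce a commuting square of $\omega$-categories with vertices $\nu K^0$, $\nu K'$, $\nu K''$, $\nu K$, and by the inductive hypothesis (applied to~$\mathbf{s'}$ and to the one-term vectors~$(q)$ and~$(p)$) the first three are $\mathbf{s'}$-simple, $(q)$-simple and $(p)$-simple with final generators $\langle a'\rangle$, $\langle a^0\rangle$, $\langle a\rangle$. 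One must then verify the compatibility conditions of Definition~\ref{D2.6}: that~$\langle a^0\rangle$ is carried to $d_q^+\langle a'\rangle$ in~$\nu K'$ and to $d_q^-\langle a\rangle$ in~$\nu K''$. Since $K''$ is a globe, the identity $\langle(\d^-)^{p-q}a\rangle=d_q^-\langle a\rangle$ follows from the globular structure; the other identity is the same calculation carried out inside the $(p')$-simple subcomplex of~$K'$ with final generator~$a'$, using $a^0=(\d^+)^{p'-q}a'$. These verifications are routine once one notes that iteration of~$\d^\pm$ is associative and that, by Proposition~\ref{P5.3}, the relevant iterated faces of~$a$ and of~$a'$ are single basis elements.

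The crucial step is to show that this square is a push-out; granted that, it is an $\mathbf{s}$-simple square and Proposition~\ref{P2.7} identifies $\nu K$ as the $\mathbf{s}$-simple $\omega$-category with final generator~$\langle a\rangle$. One cannot argue abstractly here, since~$\nu$ is a right adjoint and need not preserve colimits; instead one compares presentations. From $K=K'+K''$ and $K'\cap K''=K^0$ it follows that the graded basis of~$K$ is the union of those of~$K'$ and~$K''$, overlapping exactly in that of~$K^0$, so the atoms of~$K$ are the atoms of~$K'$ and of~$K''$ amalgamated over those of~$K^0$. For each basis element~$b$ of~$K$ choose the expressions $w^\pm(b)$ required by Theorem~\ref{T4.4} to involve only atoms lying in whichever of~$K^0$, $K'$, $K''$ is the smallest subcomplex containing~$b$; such expressions exist because each of $\nu K^0$, $\nu K'$, $\nu K''$ is generated by its atoms (Theorem~\ref{T4.4} again). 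With these coherent choices the presentation of~$\nu K$ becomes exactly the union of the presentations of~$\nu K'$ and~$\nu K''$ with the generators and relations indexed by the basis of~$K^0$ identified, and this is precisely a presentation of the push-out of $\nu K'\leftarrow\nu K^0\rightarrow\nu K''$. Hence the canonical morphism from that push-out to~$\nu K$ is an isomorphism, the square is an $\mathbf{s}$-simple square, and the proof is complete.

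I expect the main obstacle to be this last point. The failure of~$\nu$ to preserve push-outs means the identification of~$\nu K$ with the push-out of~$\nu K'$ and~$\nu K''$ over~$\nu K^0$ must be made by hand through the presentations of Theorem~\ref{T4.4}, and this rests on the bookkeeping that the graded bases of~$K'$, $K^0$, $K''$ and the chosen defining expressions fit together coherently inside~$K$.
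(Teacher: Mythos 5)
Your proposal is correct and follows essentially the same route as the paper: induction on the number of terms of $\mathbf{s}$, with the one-term case handled by collapsing the atomic presentation of Theorem~\ref{T4.4} for the globe to a single generator $\langle a\rangle$ with no relations, and the inductive step handled by observing that the atomic presentation of $\nu K$ amalgamates those of $\nu K'$ and $\nu K''$ over $\nu K^0$, yielding the presentation required by Definition~\ref{D2.5}. Your explicit verification that the square is a push-out (via coherent choices of the expressions $w^\pm(b)$) is simply the bookkeeping that the paper compresses into its closing sentence ``Using the presentation of Theorem~\ref{T4.4} and the inductive hypothesis, we see that $\nu K$ is generated by $\nu K'$ and $\nu K''$ subject to the relation $d_q^+\langle a'\rangle=d_q^-\langle a\rangle$.''
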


\begin{proof}
The proof is by induction on the number of terms in~$\mathbf{s}$.

Suppose that $\mathbf{s}=(p)$. According to Theorem~\ref{T4.4}, $\nu K$ has a presentation generators
$$\langle(\d^-)^p a\rangle,\ \ldots,\ \langle\d^- a\rangle,\ \langle a\rangle,\
\langle\d^+ a\rangle,\ \ldots,\ \langle(\d^+)^p a\rangle$$
and with relations
\begin{align*}
&d_{i-1}^-\langle(\d^-)^{p-i}a\rangle
=d_{i-1}^-\langle(\d^+)^{p-i}a\rangle
=\langle(\d^-)^{p-i+1}a\rangle\quad (0<i\leq p),\\
&d_{i-1}^+\langle(\d^-)^{p-i}a\rangle
=d_{i-1}^+\langle(\d^+)^{p-i}a\rangle
=\langle(\d^+)^{p-i+1}a\rangle\quad (0<i\leq p).
\end{align*}
Because of the axioms 
$$d_{i-1}^- d_i^-=d_{i-1}^- d_i^+=d_{i-1}^-,\quad
d_{i-1}^+ d_i^-=d_{i-1}^+ d_i^+=d_{i-1}^+,$$
this collapses to a presentation with a single $p$-dimensional generator~$\langle a\rangle$ and with no relations. Therefore $\nu K$ is an $\mathbf{s}$-simple $\omega$-category with final generator~$\langle a\rangle$. 

Now let $\mathbf{s}=(\mathbf{s'},q,p)$. Let $K'$, $K^0$ and $K''$ be the $\mathbf{s'}$-simple, $(q)$-simple and $(p)$-simple subcomplexes with final generators $a'$, $a^0$ and~$a$ as in Definition~\ref{D5.2}. Using the presentation of Theorem~\ref{T4.4} and the inductive hypothesis, we see that $\nu K$ is generated by $\nu K'$ and $\nu K''$ subject to the relation $d_q^+\langle a'\rangle=d_q^-\langle a\rangle$. This gives the result.
\end{proof}

We also have simple squares of chain complexes, corresponding to simpe squares of $\omega$-categories.

\begin{definition} \label{D5.5}
Let $\mathbf{s}$ be an up-down vector with more than one term given by $\mathbf{s}=(\mathbf{s'},q,p)$ and let $p'$ be the last term in~$\mathbf{s'}$; then an \emph{$\mathbf{s}$-simple square of chain complexes} is a square of augmented directed complexes
$$\xymatrix{
K^0 \ar_{\pi}[d] \ar^{\rho}[r] & K'' \ar^{\tau}[d] \\
K' \ar_{\sigma}[r] & K
}$$
with the following properties: the complexes $K'$, $K^0$ and~$K''$ are $\mathbf{s'}$-simple, $(q)$-simple and $(p)$-simple with final generators $a'$, $a^0$ and~$a''$; the morphisms $\pi$~and~$\rho$ are given by
$$\pi(\d^+)^{q-r}a^0=(\d^+)^{p'-r}a',\ \rho(\d^-)^{q-r}a^0=(\d^-)^{p-r}a''\quad
(0\leq r\leq q);$$
the square is a push-out as a square of abelian groups; the prescribed submonoid of~$K$ is the sum of the images of the prescribed submonoids of $K'$~and~$K''$.
\end{definition}

Obviously we have the following result.

\begin{proposition} \label{P5.6}
If $\tau\colon K''\to K$ is the right hand vertical morphism in an $\mathbf{s}$-simple square of chain complexes and if $K''$~has final generator~$a''$ then $K$~is an $\mathbf{s}$-simple chain complex with final generator~$a$ such that
$$\tau(\d^+)^{p-r}a''=(\d^+)^{p-r}a\quad (0\leq r\leq p).$$
\end{proposition}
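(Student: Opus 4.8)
The plan is to check the conditions of Definition~\ref{D5.2} for~$K$ directly, using the abstract square of Definition~\ref{D5.5} to supply the required subcomplexes. First I would set
$$\bar K'=\sigma(K'),\quad \bar K^0=\sigma\pi(K^0)=\tau\rho(K^0),\quad \bar K''=\tau(K''),$$
the middle two images agreeing because the square commutes, and I would take $a=\tau a''$ as the candidate final generator of~$K$.

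The only real content is to check that $\pi$ and~$\rho$ are injective and send basis elements to basis elements. By hypothesis $\pi$ sends the basis element $(\d^+)^j a^0$ of~$K^0$ to $(\d^+)^{p'-q+j}a'$ for $0\le j\le q$. Since $\pi$ is a chain map and the final generator~$a$ of any simple chain complex satisfies the boundary identities
$$\d(\d^+)^m a=\d(\d^-)^m a=(\d^+)^{m+1}a-(\d^-)^{m+1}a$$
(which follow by induction from Definitions~\ref{D5.1} and~\ref{D5.2}), one then deduces that $\pi(\d^-)^j a^0=(\d^-)^{p'-q+j}a'$ for $1\le j\le q$; these are pairwise distinct basis elements of~$K'$, so $\pi$ is injective, and symmetrically $\rho$ is injective. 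Since by Definition~\ref{D5.5} each chain group of~$K$ is the push-out, as a square of abelian groups, of the corresponding chain groups, injectivity of~$\pi$ forces $\tau$ to be injective and injectivity of~$\rho$ forces $\sigma$ to be injective; moreover $K$ is then free on the basis consisting of the $\sigma$-images of the basis elements of~$K'$ together with the $\tau$-images of the basis elements of~$K''$, these two collections overlapping precisely in the images of the basis elements of~$K^0$ under $\sigma\pi=\tau\rho$. In particular $\sigma$, $\tau$ and $\sigma\pi$ send basis elements to basis elements and hence commute with the operations $\d^+$ and~$\d^-$. I expect this injectivity check to be the main obstacle, although it is essentially a routine computation.

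Everything after that is formal. The maps $\sigma$, $\sigma\pi$ and~$\tau$ are isomorphisms of augmented directed complexes onto the subcomplexes $\bar K'$, $\bar K^0$ and~$\bar K''$, so these are $\mathbf{s'}$-simple, $(q)$-simple and $(p)$-simple with final generators $\sigma a'$, $\sigma\pi a^0$ and $a=\tau a''$, and their prescribed submonoids are the images of the prescribed submonoids of $K'$, $K^0$ and~$K''$. The push-out property gives $K=\bar K'+\bar K''$ and $\bar K'\cap\bar K''=\bar K^0$; the submonoid clause of Definition~\ref{D5.5} says precisely that the prescribed submonoid of~$K$ is the sum of those of $\bar K'$ and~$\bar K''$; and taking $r=q$ in the generator conditions of Definition~\ref{D5.5}, together with the fact that $\sigma$ and~$\tau$ commute with $\d^+$ and~$\d^-$, gives
$$(\d^+)^{p'-q}(\sigma a')=\sigma\pi a^0=\tau\rho a^0=(\d^-)^{p-q}(\tau a'')=(\d^-)^{p-q}a.$$
Hence all the conditions of Definition~\ref{D5.2} are satisfied, so $K$ is an $\mathbf{s}$-simple chain complex with final generator $a=\tau a''$. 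Finally, since $\tau$ sends basis elements to basis elements and commutes with~$\d^+$, we obtain $\tau(\d^+)^{p-r}a''=(\d^+)^{p-r}(\tau a'')=(\d^+)^{p-r}a$ for $0\le r\le p$, as required.
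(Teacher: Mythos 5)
Your proof is correct, and it is essentially the argument the paper leaves implicit: Proposition~\ref{P5.6} is stated with no proof ("obviously"), and your direct verification of Definition~\ref{D5.2} — checking via the boundary identities that $\pi$ and $\rho$ carry basis elements to basis elements injectively, so that the push-out of abelian groups is free on the amalgamated bases and $\sigma$, $\tau$ commute with $\d^+$ and $\d^-$ — is exactly the routine computation the author has in mind. No gaps; the only brisk step, the identity $\d(\d^+)^m a=\d(\d^-)^m a=(\d^+)^{m+1}a-(\d^-)^{m+1}a$ for final generators, does indeed follow by the induction you indicate (using $\e\d=0$ and $\d\d=0$ in the one-term case and the final $(p)$-simple subcomplex in general).
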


Using Definition~\ref{D2.6} and Proposition~\ref{P2.8} we obtain the following results.

\begin{proposition} \label{P5.7}
The image under~$\nu$ of an $\mathbf{s}$-simple square of chain complexes is an $\mathbf{s}$-simple square of $\omega$-categories.
\end{proposition}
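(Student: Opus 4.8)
The plan is to take an $\mathbf{s}$-simple square of chain complexes and verify directly that its image under $\nu$ satisfies the conditions of Definition~\ref{D2.6}. Write the given square as
$$\xymatrix{
K^0 \ar_{\pi}[d] \ar^{\rho}[r] & K'' \ar^{\tau}[d] \\
K' \ar_{\sigma}[r] & K
}$$
with $K'$, $K^0$, $K''$ being $\mathbf{s'}$-simple, $(q)$-simple, $(p)$-simple with final generators $a'$, $a^0$, $a''$. First I would invoke Proposition~\ref{P5.4} to identify $\nu K'$, $\nu K^0$, $\nu K''$ and $\nu K$ as the $\mathbf{s'}$-simple, $(q)$-simple, $(p)$-simple and $\mathbf{s}$-simple $\omega$-categories with final generators $\langle a'\rangle$, $\langle a^0\rangle$, $\langle a''\rangle$ and $\langle a\rangle$. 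It then remains to check two things: that the square $\nu$ applied to the given square is a push-out in $\omega$-$\mathbf{cat}$, and that the boundary conditions $(\nu\pi)\langle a^0\rangle=d_q^+\langle a'\rangle$ and $(\nu\rho)\langle a^0\rangle=d_q^-\langle a''\rangle$ hold.

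For the boundary conditions, the key input is the functoriality identity $\nu f\langle b\rangle=\langle fb\rangle$ for $f$ a morphism of totally ordered directed complexes and $b$ a basis element (this follows from the definition of $\langle b\rangle$ and of $d_q^\alpha$, since $\nu f$ acts coordinatewise and $f$ is a chain map preserving the prescribed submonoids; cf.\ the proof of Proposition~\ref{P5.4}). Applied to $\pi$ with $b=a^0$ and using $\pi a^0 = \pi(\d^+)^{q-q}a^0 = (\d^+)^{p'-q}a'$ from Definition~\ref{D5.5}, together with the formula for $d_q^+\langle a'\rangle$ coming from the atom description, one gets $(\nu\pi)\langle a^0\rangle = \langle(\d^+)^{p'-q}a'\rangle = d_q^+\langle a'\rangle$; symmetrically $(\nu\rho)\langle a^0\rangle = \langle(\d^-)^{p-q}a''\rangle = d_q^-\langle a''\rangle$.

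The main obstacle is the push-out claim: one must show that $\nu$ sends the push-out square of abelian groups (with the summed submonoid on $K$) to a push-out square of $\omega$-categories. I would argue this by comparing presentations. By Proposition~\ref{P5.4}, $\nu K$ is generated by the atoms of $K$ modulo the relations of Theorem~\ref{T4.4}; since $K = K' + K''$ with $K' \cap K'' = K^0$ (this is the content of the push-out of abelian groups together with injectivity of $\sigma,\tau$, which holds for these simple complexes by Proposition~\ref{P5.3}) and the submonoid of $K$ is the sum, every atom of $K$ is the image of an atom of $K'$ or of $K''$, and atoms coming from $K^0$ are identified. Thus $\nu K$ is generated by $\nu K'$ and $\nu K''$ with $\nu K^0$ glued in along $\nu\pi$ and $\nu\rho$; the relations of Theorem~\ref{T4.4} for $K$ split as those for $K'$, those for $K''$, and the single identification relation built from $a^0$. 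This is exactly the presentation of the push-out $\nu K' \amalg_{\nu K^0} \nu K''$ in $\omega$-$\mathbf{cat}$, so the square is a push-out. Finally, Proposition~\ref{P2.7} (or directly Proposition~\ref{P5.6} composed with Proposition~\ref{P5.4}) confirms that the resulting square has the shape required by Definition~\ref{D2.6}, completing the proof.
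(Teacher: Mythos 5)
Your proposal is correct and takes essentially the same route the paper intends: the paper states Proposition~\ref{P5.7} without a separate proof, resting on exactly the ingredients you assemble, namely Proposition~\ref{P5.4} (whose inductive step, via the presentations of Theorem~\ref{T4.4}, exhibits $\nu K$ as generated by $\nu K'$ and $\nu K''$ subject to the single relation $d_q^+\langle a'\rangle=d_q^-\langle a''\rangle$, i.e.\ as the push-out), the boundary formulas of Definition~\ref{D5.5}, and Proposition~\ref{P5.6}. One small caution: the blanket identity $\nu f\langle b\rangle=\langle fb\rangle$ fails for general morphisms $f$ (positive and negative boundary parts need not be preserved), but it does hold for $\pi$ and $\rho$ here because they send basis elements to distinct basis elements, so no cancellation occurs and $f\d^{\pm}b=\d^{\pm}fb$, which is all your argument uses.
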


\begin{proposition} \label{P5.8}
Let $\omega$-$\mathbf{cat}$ be the category of $\omega$-categories, let $\Sigma$ be the category of simple chain complexes and morphisms of augmented directed complexes, and let $\hat\Sigma$ be the category of contravariant functors from~$\Sigma$ to sets which take simple squares to pull-back squares. Then there is an equivalence of categories
$$C\mapsto\Hom[\nu(-),C]\colon\textup{$\omega$-$\mathbf{cat}$}\to\hat\Sigma.$$
\end{proposition}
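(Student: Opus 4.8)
The plan is to deduce this from Proposition~\ref{P2.8} by showing that precomposition with~$\nu$ induces an equivalence between the presheaf categories $\hat\Theta$ and~$\hat\Sigma$, and then composing the two equivalences.

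First I would check that $\nu$ restricts to an equivalence of categories $\Sigma\to\Theta$. By Proposition~\ref{P5.4} the $\omega$-category $\nu K$ is a simple $\omega$-category whenever $K$ is a simple chain complex, so $\nu$ does land in the full subcategory~$\Theta$. It is fully faithful on~$\Sigma$: by Proposition~\ref{P5.3} every simple chain complex is a totally ordered directed complex, and the morphisms of~$\Sigma$ are exactly the morphisms of augmented directed complexes, so Theorem~\ref{T4.3} applies. For essential surjectivity I would argue by induction on the number of terms of an up-down vector~$\mathbf{s}$ that every $\mathbf{s}$-simple $\omega$-category~$C$ with final generator~$g$ is isomorphic to $\nu K$ for some $\mathbf{s}$-simple chain complex~$K$ with final generator~$a$, the isomorphism taking $\langle a\rangle$ to~$g$. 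For $\mathbf{s}=(p)$ a $(p)$-simple $\omega$-category is free on one $p$-dimensional generator, and by Proposition~\ref{P5.4} this is $\nu$ of the $(p)$-simple chain complex of Definition~\ref{D5.1}. For $\mathbf{s}=(\mathbf{s'},q,p)$, realise $C$ as the $\mathbf{s}$-simple push-out square of Definition~\ref{D2.6}, apply the inductive hypothesis to the three corners, lift the morphisms $\pi$ and~$\rho$ along the full faithfulness of~$\nu$, and observe (using the formula for $\langle a\rangle$ and the formula for~$d_q^\alpha$) that the lifted morphisms automatically have the form required in Definition~\ref{D5.5}; then by Proposition~\ref{P5.6} the push-out of chain complexes is $\mathbf{s}$-simple, and by Proposition~\ref{P5.7} its $\nu$-image is an $\mathbf{s}$-simple square of $\omega$-categories with the right corners, hence isomorphic to~$C$.

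Next I would check that $\nu$ matches the two notions of simple square: a square of chain complexes is an $\mathbf{s}$-simple square exactly when its $\nu$-image is an $\mathbf{s}$-simple square of $\omega$-categories. One direction is Proposition~\ref{P5.7}, where the point is that the equations $\pi(\d^+)^{q-r}a^0=(\d^+)^{p'-r}a'$ of Definition~\ref{D5.5} pass, under~$\nu$ and the atom formula, to the equations $\pi\langle a^0\rangle=d_q^+\langle a'\rangle$ and $\rho\langle a^0\rangle=d_q^-\langle a''\rangle$ of Definition~\ref{D2.6}; the reverse direction follows by lifting a given $\mathbf{s}$-simple square of $\omega$-categories along the equivalence $\nu\colon\Sigma\to\Theta$, the defining conditions being so matched. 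Consequently a contravariant functor on~$\Theta$ sends simple squares of $\omega$-categories to pull-backs if and only if its restriction along~$\nu$ sends simple squares of chain complexes to pull-backs; combined with the fact that $\nu\colon\Sigma\to\Theta$ is an equivalence, this shows that precomposition with~$\nu$ restricts to an equivalence $\hat\Theta\to\hat\Sigma$. Composing with the equivalence $C\mapsto\Hom(-,C)\colon\textup{$\omega$-$\mathbf{cat}$}\to\hat\Theta$ of Proposition~\ref{P2.8} gives the asserted equivalence $C\mapsto\Hom[\nu(-),C]$.

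I expect the main obstacle to be the essential-surjectivity step of the first paragraph: the bookkeeping needed to verify that an arbitrary simple $\omega$-category, presented as in Definition~\ref{D2.6}, can be lifted along~$\nu$ together with its final generator and the gluing morphisms $\pi$,~$\rho$ to an $\mathbf{s}$-simple square of chain complexes in the precise sense of Definition~\ref{D5.5}, so that Propositions \ref{P5.6} and~\ref{P5.7} can be applied on the nose.
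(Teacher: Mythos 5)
Your argument is correct and follows the route the paper intends: the paper states Proposition~\ref{P5.8} as an immediate consequence of Proposition~\ref{P2.8} together with Propositions \ref{P5.3}, \ref{P5.4}, \ref{P5.6}, \ref{P5.7} and Theorem~\ref{T4.3}, i.e.\ precisely the observation that $\nu$ restricts to an equivalence $\Sigma\to\Theta$ matching simple squares in both directions, so that precomposition with~$\nu$ identifies $\hat\Theta$ with~$\hat\Sigma$. Your write-up simply fills in the details (full faithfulness via Proposition~\ref{P5.3} and Theorem~\ref{T4.3}, essential surjectivity and the matching of squares by induction on the up-down vector) that the paper leaves implicit.
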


\section{The chain complexes of simplexes} \label{S6}

In this section we discuss the chain complexes of simplexes, which will simply be called simplexes. They correspond to the theory of sets with complicial identities (Section~{\ref{S3}). The material is mostly taken from~\cite{B5}. 

\begin{definition} \label{D6.1}
For $m=0,1,2,\ldots\,$ the \emph{$m$-simplex} $\Delta(m)$ is the free augmented directed complex constructed as follows. The basis elements correspond to the sequences of integers
$$a_0,\ldots,a_q$$
with $0\leq q\leq m$ and $0\leq a_0<a_1<\ldots<a_q\leq m$. The basis element corresponding to $a_0,\ldots,a_q$ is written $[a_0,\ldots,a_q]$ and has dimension~$q$. If $q>0$ then the boundary of $[a_0,\ldots,a_q]$ is the alternating sum
$$[a_1,\ldots,a_q]-[a_0,a_2,\ldots,a_q]+\ldots
+(-1)^q[a_0,\ldots,a_{q-1}].$$
The augmentation is given by $\e[a_0]=1$.
\end{definition}

We will now show that the simplexes are totally ordered directed complexes by expressing them as joins.

\begin{definition} \label{D6.2}
Let $K$~and~$L$ be augmented directed complexes. Then the \emph{join} $K*L$ is the the direct sum of abelian groups
$$K*L=K\oplus(K\times L)\oplus L$$
with the following structure. The grading is given by
$$(K*L)_q=K_q\oplus\left[\bigoplus_{i+j=q-1}(K_i\otimes L_j)\right]\oplus L_q.$$
The inclusions of $K$~and~$L$ in $K*L$ commute with the boundary and augmentation homomorphisms. The boundary on $K_i\otimes L_j$ is given by
$$\d(x\otimes y)=\begin{cases}
(\e x)y-(\e y)x& (i=j=0),\\
(\e x)y-x\otimes\d y& (i=0,\ j>0),\\
\d x\otimes y-(-1)^i(\e y)x& (i>0,\ j=0),\\
\d x\otimes y-(-1)^i x\otimes\d y& (i,j>0).
\end{cases}$$
The prescribed submonoid of $K*L$ is generated by the elements of the prescribed submonoids of $K$~and~$L$ and by the tensor products of these elements.
\end{definition}

\begin{example} \label{E6.3}
The $m$-simplex $\Delta(m)$ is the join of $m+1$ copies of $\Delta(0)$.
\end{example}

\begin{proposition} \label{P6.4}
If $K$~and~$L$ are totally ordered directed complexes, then $K*L$ is a totally ordered directed complex.
\end{proposition}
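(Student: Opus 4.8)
The plan is to identify the prescribed graded basis of $K*L$, to exhibit a total order on it, and to verify the two conditions of Definition~\ref{D4.2}.

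Write $E_K$ and $E_L$ for the prescribed bases of $K$ and~$L$. From the description of the prescribed submonoid in Definition~\ref{D6.2}, the indecomposable elements of $K*L$ are the members of $E_K$, the members of $E_L$, and the products $x\otimes y$ with $x\in E_K$ and $y\in E_L$; these therefore constitute the prescribed graded basis, with $x\otimes y$ in dimension $\dim x+\dim y+1$. Inspecting the four cases of the boundary formula, one finds that $\d(x\otimes y)$ is, with no cancellation among basis elements, a sum of terms $x'\otimes y$ (over the terms $x'$ of $\d x$), terms $x\otimes y'$ (over the terms $y'$ of $\d y$), and two possible extra terms, namely $-(-1)^{\dim x}x$ when $\dim y=0$ and $y$ when $\dim x=0$. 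From this one reads off $\d^-(x\otimes y)$ and $\d^+(x\otimes y)$ directly; for the members of $E_K$ and~$E_L$ the operators $\d^-,\d^+$ agree with those of $K$ and~$L$, since the inclusions are chain maps.

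For the order I propose the following recipe, which generalises the standard ordering of the faces of a simplex (Example~\ref{E6.3}). Run through $E_K$ in its given order, and replace each $x$ by a block built from $x$ and the elements $x\otimes y$ ($y\in E_L$): if $\dim x$ is even, list $x$ first and then the $x\otimes y$ in the reverse of the order of~$E_L$; if $\dim x$ is odd, list the $x\otimes y$ in the order of~$E_L$ and then~$x$; finally, append the members of $E_L$ in their given order. This restricts to the given orders on $E_K$ and on~$E_L$, so condition~(1) of Definition~\ref{D4.2} holds automatically for the basis elements lying in $E_K$ or~$E_L$; for a basis element $x\otimes y$ one checks it case by case from the description of $\d^\pm(x\otimes y)$ above, the parity of $\dim x$ being exactly what makes the placement chosen in the recipe come out correctly, and the extra term $-(-1)^{\dim x}x$ or~$y$ being accounted for by the position of~$x$ in its block, respectively by putting $E_L$ last.

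It remains to check condition~(2) of Definition~\ref{D4.2}, that $\e(\d^-)^p a=\e(\d^+)^p a=1$ for~$a$ of dimension~$p$. When $a\in E_K$ or $a\in E_L$ this is inherited from $K$ or~$L$, because the augmentation of $K*L$ restricts to theirs. When $a=x\otimes y$, with $p=\dim x+\dim y+1$, I would prove by induction on~$p$, using the boundary formula, the identities
$$(\d^-)^p(x\otimes y)=(\d^-)^{\dim x}x,\qquad(\d^+)^p(x\otimes y)=(\d^+)^{\dim y}y;$$
applying~$\e$ and invoking condition~(2) of Definition~\ref{D4.2} for $K$ and~$L$ then completes the proof. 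I expect this induction to be the main obstacle. One application of $\d^-$ turns $x\otimes y$ into a mixed chain of the shape $\d^-x\otimes y+x\otimes\d^+y$ (with modifications in the low-dimensional cases), and one must track how repeated extraction of the negative part causes the $x\otimes(-)$ summands to cancel among themselves, so that after $p$ steps only $(\d^-)^{\dim x}x$ survives; the dual computation handles $\d^+$. This is careful bookkeeping with the boundary formula rather than a conceptual obstacle, but it is the heart of the argument; by contrast the case analysis supporting condition~(1) is routine.
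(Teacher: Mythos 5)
Your proposal is correct and takes essentially the same route as the paper, whose entire proof consists of exhibiting exactly the ordered basis you describe (the basis of $K$ in order, with each block of elements $a\otimes b$ placed before or after $a$ according to the parity of $\dim a$, followed by the basis of $L$) and asserting that the conditions of Definition~\ref{D4.2} can then be checked. Your endpoint identities $(\d^-)^p(x\otimes y)=(\d^-)^{\dim x}x$ and $(\d^+)^p(x\otimes y)=(\d^+)^{\dim y}y$ are correct and are the right way to verify condition~(2); the only slip in your sketch is that the shape of $\d^-(x\otimes y)$ depends on the parity of $\dim x$ (for $\dim x$ odd it is $\d^-x\otimes y+x\otimes\d^-y$), not merely on low-dimensional degenerations, but this does not affect the argument.
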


\begin{proof}
One can check that $K*L$ has a suitably ordered basis consisting of the basis elements of $K$~and~$L$ and of the tensor products of these basis elements. The ordering of the basis for $K*L$ is obtained as follows. Take the basis elements of~$K$ in order followed by the basis elements of~$L$ in order. If $a$~is an odd-dimensional basis element in~$K$, then the basis elements of the form $a\otimes b$ are inserted before~$a$ in the order given by the second factor; if $a$~is an even-dimensional basis element in~$K$, then the basis elements of the form $a\otimes b$ are inserted after~$a$ in the reverse of the order given by the second factor.
\end{proof}

\begin{proposition} \label{P6.5}
A simplex is a totally ordered directed complex.
\end{proposition}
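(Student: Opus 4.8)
The plan is to deduce this immediately from Example~\ref{E6.3} and Proposition~\ref{P6.4} by an easy induction on~$m$, the only genuine work being the base case and the bookkeeping that identifies the iterated join with the simplex.

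First I would check that $\Delta(0)$ is a totally ordered directed complex. It has a single basis element $[0]$, of dimension~$0$, with empty boundary, so $\d^-[0]=\d^+[0]=0$, and the augmentation sends $[0]$ to~$1$. With the one-element ordered basis, condition~(1) of Definition~\ref{D4.2} is vacuous (there are no terms in $\d^-[0]$ or in $\d^+[0]$), and condition~(2) reads $\e(\d^-)^0[0]=\e(\d^+)^0[0]=\e[0]=1$, which holds. So $\Delta(0)$ is a totally ordered directed complex.

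Next I would make Example~\ref{E6.3} precise to the extent needed. Forming the iterated join of $m+1$ copies of $\Delta(0)$ according to Definition~\ref{D6.2} — with a fixed bracketing, say the left-associated one — produces a free augmented directed complex whose basis elements are the tensor products of the generators $[0]$ coming from some subset of the successive factors, graded so that a product of $q+1$ such generators lies in dimension~$q$, with boundary the alternating sum obtained by omitting one generator at a time and with augmentation~$1$ on each generator. Matching the product of the generators from factors numbered $a_0<a_1<\cdots<a_q$ with the basis element $[a_0,\ldots,a_q]$ of Definition~\ref{D6.1} identifies this iterated join with~$\Delta(m)$ as an augmented directed complex, the prescribed submonoids corresponding as well. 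With the left-associated bracketing this also exhibits $\Delta(m)$ as $\Delta(m-1)*\Delta(0)$.

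Finally the induction: $\Delta(0)$ is a totally ordered directed complex by the base case, and if $\Delta(m-1)$ is one then so is $\Delta(m)\cong\Delta(m-1)*\Delta(0)$ by Proposition~\ref{P6.4}. I do not expect a real obstacle here; the only mildly delicate point is the combinatorial bookkeeping verifying Example~\ref{E6.3} — matching the grading, boundary, augmentation and prescribed submonoid of the iterated join with those of Definition~\ref{D6.1}, and arranging the bracketing so that the inductive step is literally an instance of Proposition~\ref{P6.4}.
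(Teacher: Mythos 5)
Your proposal is correct and follows the paper's own argument exactly: the base case $\Delta(0)$, the identification of $\Delta(m)$ as an iterated join of copies of $\Delta(0)$ (Example~\ref{E6.3}), and Proposition~\ref{P6.4} applied inductively. You merely spell out the bookkeeping that the paper leaves implicit, which is fine.
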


\begin{proof}
Obviously $\Delta(0)$ is a totally ordered directed complex. The result now  follows from Example~\ref{E6.3} and Proposition~\ref{P6.4}.
\end{proof}

We will now use simplexes to construct a functor~$\lambda$ from augmented directed complexes to sets with complicial identities. The $m$-dimensional elements in $\lambda K$ will be the morphisms of augmented directed complexes from $\Delta(m)$ to~$K$. An operation~$\theta$ in sets with complicial identities will be contravariantly represented by a morphism~$\theta^\v$ between simplexes. In particular there are the obvious morphisms corresponding to the face and degeneracy operations.

\begin{notation} \label{N6.6}
The face and degeneracy morphisms
\begin{align*}
&\d_i^\v\colon\Delta(m-1)\to\Delta(m)\quad (m>0,\ 0\leq i\leq m),\\
&\e_i^\v\colon\Delta(m+1)\to\Delta(m)\quad (0\leq i\leq m)
\end{align*}
are defined on basis elements as follows. 

If $a=[a_0,\ldots,a_q]$ is a basis element for $\Delta(m-1)$ then $\d_i^\v a=[a'_0,\ldots,a'_q]$ with
$$a'_q=\begin{cases}a_q&(0\leq a_q\leq i-1),\\ a_q+1&(i\leq a_q\leq m-1).\end{cases}$$

If $b=[b_0,\ldots,b_q]$ is a basis element for $\Delta(m+1)$ including both the terms $i$~and $i+1$ then $\e_i^\v b=0$.

If $b=[b_0,\ldots,b_q]$ is a basis element for $\Delta(m+1)$ not including both the terms $i$~and $i+1$ then $\e_i^\v b=[b''_0,\ldots,b''_q]$ with
$$b''_q=\begin{cases}b_q&(0\leq b_q\leq i),\\ b_q-1&(i+1\leq b_q\leq m+1).\end{cases}$$
\end{notation}

Recall from Definition~\ref{D4.1} that morphisms of augmented directed complexes are augmentation-preserving chain maps taking prescribed submonoids into prescribed submonoids. Recall also that the prescribed submonoid of a free augmented directed complex is the submonoid generated by the prescribed basis elements. We obviously have the following result.

\begin{proposition} \label{P6.7}
The face and degeneracy morphisms are morphisms of augmented directed complexes.
\end{proposition}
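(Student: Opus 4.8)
The plan is to verify the three defining conditions for a morphism of augmented directed complexes — that $\d_i^\v$ and $\e_i^\v$ are chain maps, preserve the augmentation, and carry prescribed submonoids into prescribed submonoids — handling the two families of maps in parallel, since only the chain-map condition has any real content. The submonoid and augmentation conditions are immediate. By their very definitions, $\d_i^\v$ sends each basis element of $\Delta(m-1)$ to a single basis element of $\Delta(m)$, and $\e_i^\v$ sends each basis element of $\Delta(m+1)$ either to a single basis element of $\Delta(m)$ or to~$0$; since the prescribed submonoid of a free augmented directed complex is generated by its basis and contains~$0$, and the maps are additive, the prescribed submonoid of the source is carried into that of the target. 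For the augmentation it suffices, by additivity, to check basis elements: in positive degree $\e$ vanishes on both sides, and in degree~$0$ each vertex $[a]$ is sent to a vertex (a single vertex can never contain both $i$ and $i+1$, so $\e_i^\v$ does not kill it), and $\e$ of every vertex is~$1$.

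It remains to show that $\d_i^\v$ and $\e_i^\v$ commute with the boundary~$\d$ of the simplexes. The key observation is that on basis elements $\d_i^\v$ is given by applying, entry by entry, the order-preserving injection $f\colon\{0,\ldots,m-1\}\to\{0,\ldots,m\}$ whose image omits~$i$, while $\e_i^\v$ is given by applying the order-preserving surjection $g\colon\{0,\ldots,m+1\}\to\{0,\ldots,m\}$ with $g(i)=g(i+1)=i$, with the convention that a sequence becomes~$0$ as soon as $g$ produces a repeated entry. Since $f$ is strictly increasing, for any basis element $[a_0,\ldots,a_q]$ the sequence obtained by first deleting the entry $a_j$ and then applying~$f$ is precisely the $j$-th face of $\d_i^\v[a_0,\ldots,a_q]$; taking the alternating sum over~$j$ gives $\d\,\d_i^\v[a_0,\ldots,a_q]=\d_i^\v\,\d[a_0,\ldots,a_q]$.

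For $\e_i^\v$ I would argue by cases. If the basis element $[b_0,\ldots,b_q]$ contains neither $i$ nor $i+1$, then neither does any of its faces, $g$ is injective (hence strictly increasing) on all the sequences involved, and the same index bookkeeping as for $\d_i^\v$ shows that $\e_i^\v$ commutes with~$\d$ here. If $[b_0,\ldots,b_q]$ contains both $i$ and $i+1$, these occur in consecutive positions, say $b_s=i$ and $b_{s+1}=i+1$, so that $\e_i^\v[b_0,\ldots,b_q]=0$ and I must check that $\e_i^\v\,\d[b_0,\ldots,b_q]=0$. Deleting any entry $b_j$ with $j\neq s,s+1$ leaves a sequence still containing both $i$ and $i+1$, which $\e_i^\v$ sends to~$0$; the two remaining faces, obtained by deleting $b_s$ and by deleting $b_{s+1}$, have the same image under~$g$ — they differ only in that one carries $i+1$ where the other carries~$i$, and $g$ identifies those — while their signs $(-1)^s$ and $(-1)^{s+1}$ are opposite, so these two terms cancel. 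Hence $\e_i^\v\,\d[b_0,\ldots,b_q]=0$, completing the verification.

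The one place that needs genuine care is this last cancellation in the degeneracy case; everything else is routine shifting of indices. (Alternatively, one can observe that $\Delta(m)$ is the normalized chain complex of the standard simplicial simplex $\Delta[m]$ and that $\d_i^\v$ and $\e_i^\v$ are the chain maps induced by the cosimplicial coface and codegeneracy operators, whence all three conditions follow from the functoriality of the normalized chain complex; but the direct argument above is short enough to give in full.)
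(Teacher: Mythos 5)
Your verification is correct; the paper offers no proof at all here (Proposition~\ref{P6.7} is introduced with ``We obviously have the following result''), and your direct check of the chain-map, augmentation and submonoid conditions is exactly the routine argument being taken for granted. One small point of phrasing: the case split for $\e_i^\v$ should be ``contains both $i$ and $i+1$'' versus ``does not contain both,'' not versus ``contains neither''; the omitted subcase of a basis element containing exactly one of $i$, $i+1$ is covered by the same observation you already use, namely that the surjection $g$ is injective on the entries involved, so the argument goes through unchanged.
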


Less obviously we also have the following result.

\begin{proposition} \label{P6.8}
Let $K$ be an augmented directed complex and let
$$x,y\colon\Delta(m)\to K$$
be morphisms of augmented directed complexes such that $x\d_i^\v=y\d_{i+1}^\v$ for some~$i$ with $0\leq i<m$. Then there is a morphism of augmented directed complexes 
$$z\colon\Delta(m+1)\to K$$
given by
$$z
=x\e_{i+1}^\v-x\d_i^\v(\e_i^\v)^2+y\e_i^\v
=x\e_{i+1}^\v-y\d_{i+1}^\v(\e_i^\v)^2+y\e_i^\v.$$
\end{proposition}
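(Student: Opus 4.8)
The plan is to check the three conditions that make $z$ a morphism of augmented directed complexes (Definition~\ref{D4.1}): that it is a chain map, that it preserves the augmentation, and that it carries the prescribed submonoid of $\Delta(m+1)$ into that of~$K$. The first two are purely formal. By Proposition~\ref{P6.7} and the hypothesis on $x$ and~$y$, each of the three terms $x\e_{i+1}^\v$, $x\d_i^\v(\e_i^\v)^2$ and $y\e_i^\v$ is a composite of morphisms of augmented directed complexes, hence a chain map; since chain maps $\Delta(m+1)\to K$ form an abelian group, the integer combination~$z$ is a chain map. Each term is also augmentation-preserving, so on composing with the augmentation of~$K$ each term contributes the augmentation of~$\Delta(m+1)$; since the coefficients are $1$, $-1$, $1$, we get $\e z=\e$. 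Before going further one records that the two displayed formulas for~$z$ agree: their difference is $(y\d_{i+1}^\v-x\d_i^\v)(\e_i^\v)^2$, which is $0$ by hypothesis. So the real content is submonoid-preservation, and since the prescribed submonoid of~$\Delta(m+1)$ is generated by the basis elements it suffices to show that $z$ sends each basis element~$b$ of~$\Delta(m+1)$ into the prescribed submonoid of~$K$.

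Fix such a~$b$. Each of $\e_{i+1}^\v b$, $\d_i^\v(\e_i^\v)^2 b$, $\d_{i+1}^\v(\e_i^\v)^2 b$ and $\e_i^\v b$ is either $0$ or a single basis element of~$\Delta(m)$, so its image under $x$ or~$y$ lies in the prescribed submonoid of~$K$. I would split into three cases according to the vertices of~$b$, rewriting $z(b)$ in each case so that the subtracted term disappears. (i)~If $i$ is not a vertex of~$b$, then $\e_{i+1}^\v b=\d_i^\v(\e_i^\v)^2 b$, and the first formula for~$z$ gives $z(b)=y\e_i^\v b$. (ii)~If $i+2$ is not a vertex of~$b$, then $\e_i^\v b=\d_{i+1}^\v(\e_i^\v)^2 b$, and the second formula for~$z$ gives $z(b)=x\e_{i+1}^\v b$. (iii)~If both $i$ and~$i+2$ are vertices of~$b$, then $\e_i^\v b$ is either $0$ or has both $i$ and~$i+1$ among its vertices, so $(\e_i^\v)^2 b=0$, hence $\d_i^\v(\e_i^\v)^2 b=0$, and $z(b)=x\e_{i+1}^\v b+y\e_i^\v b$. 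Every basis element satisfies the hypothesis of at least one of these cases, and in each the resulting expression for $z(b)$ is a sum of elements of the prescribed submonoid of~$K$; this completes the argument.

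The step I expect to be the main obstacle is the verification of the two operator identities $\e_{i+1}^\v b=\d_i^\v(\e_i^\v)^2 b$ (when $i$ is not a vertex of~$b$) and $\e_i^\v b=\d_{i+1}^\v(\e_i^\v)^2 b$ (when $i+2$ is not a vertex of~$b$). These are equalities between composites of face and degeneracy operators evaluated on a single basis element, and I would prove them directly from the formulas in Notation~\ref{N6.6}, keeping track of how the labels $b_0,\ldots,b_q$ are shifted; it is convenient to treat separately the subcases in which $b$ contains neither, exactly one, or both of $i+1$ and~$i+2$ (respectively of $i$ and~$i+1$), the last subcase being the one in which both sides are~$0$. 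Equivalently, one may translate each operator into the monotone map of finite ordinals it represents and note that, since all of $\e_{i+1}^\v$, $\e_i^\v$, $\d_i^\v(\e_i^\v)^2$ and $\d_{i+1}^\v(\e_i^\v)^2$ act in the same way away from the labels $i$, $i+1$, $i+2$, the identities reduce to short comparisons of monotone maps $[m+1]\to[m]$ on this three-element set, together with a check that the two sides are simultaneously degenerate.
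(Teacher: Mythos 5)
Your proposal is correct and follows essentially the same route as the paper: both reduce the claim to showing $zb$ lies in the prescribed submonoid for each basis element $b$, and both treat the same three cases ($b$ lacking $i$, $b$ lacking $i+2$, $b$ containing both $i$ and $i+2$), using the appropriate one of the two expressions for~$z$ so that the negative term cancels or vanishes. The extra details you supply (the consistency of the two formulas and the verification of the operator identities on basis elements) are routine and consistent with the paper's brief proof.
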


\begin{proof}
It is clear that $z$~is an augmentation preserving chain map; it therefore suffices to prove that $za$ is in the prescribed submonoid of~$K$ for each basis element~$a$ in $\Delta(m+1)$. We do this by considering three cases: if $a$~has no term $i+2$ then
$\d_{i+1}^\v(\e_i^\v)^2 a=\e_i^\v a$, hence $za=x\e_{i+1}^\v a$; if $a$~has no term~$i$ then $\d_i^\v(\e_i^\v)^2 a=\e_{i+1}^\v a$, hence $za=y\e_i^\v a$; if $a$~has terms $i$~and~$i+2$ then $(\e_i^\v)^2 a=0$, hence 
$$za=x\e_{i+1}^\v a+y\e_i^\v a.$$
\end{proof}

The definition of~$\lambda$ is now as follows.

\begin{notation} \label{N6.9}
Let $K$ be an augmented directed complex. Then $\lambda K$ is the graded set given by
$$(\lambda K)_m=\Hom[\Delta(m),K].$$
If $x\in(\lambda K)_m$ with $m>0$ and if $0\leq i\leq m$ then
$$\d_i x=x\d_i^\v.$$
If $x\in(\lambda K)_m$ and $0\leq i\leq m$ then
$$\e_i x=x\e_i^\v.$$
If $x,y\in(\lambda K)_m$ and $\d_i x=\d_{i+1}y$ for some~$i$ with $0\leq i<m$ then
$$x\w_i y=\e_{i+1}x-\e_i^2 \d_i x+\e_i y =\e_{i+1}x-\e_i^2 \d_{i+1}y+\e_i y.$$
\end{notation}

\begin{proposition} \label{P6.10}
If $K$~is an augmented directed complex then $\lambda K$ is a set with complicial identities.
\end{proposition}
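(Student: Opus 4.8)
The plan is to verify each of the nine clauses of Definition~\ref{D3.1} directly from the formulas in Notation~\ref{N6.9}, reducing everything to identities between morphisms of simplexes, i.e.\ to the statement that certain composites $\Delta(\ell)\to\Delta(m)$ agree. Since a morphism in $\lambda K$ is a map $x\colon\Delta(m)\to K$, an identity of the form (operation applied to $x,y,\ldots$)${}={}$(other operation applied to $x,y,\ldots$) will follow as soon as the corresponding identity holds among the representing morphisms $\d_i^\v$, $\e_i^\v$ and the wedge-formula, precomposed appropriately; this is the ``contravariant representation'' principle sketched before Notation~\ref{N6.6}. So the real content is a finite list of simplex identities, each checked by evaluating both sides on an arbitrary basis element $[a_0,\ldots,a_q]$ using the explicit descriptions in Notation~\ref{N6.6}.

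First I would dispose of clause~(1) (the wedge $x\w_i y$ is well-defined and lands in $(\lambda K)_{m+1}$): this is exactly Proposition~\ref{P6.8}, which already shows the combination $\e_{i+1}x-\e_i^2\d_i x+\e_i y$ is a morphism of augmented directed complexes whenever $\d_i x=\d_{i+1}y$, and also records the two alternative forms of the formula. Clause~(3)'s first five lines are the standard simplicial identities among $\d_i^\v,\e_i^\v$, which are routine from the index bookkeeping in Notation~\ref{N6.6}; the last two lines of~(3), $\e_i x=\e_i\d_{i+1}x\w_i x$ and $\e_{i+1}x=x\w_i\e_i\d_i x$, are the ``unit'' laws and amount to substituting $y=x$ or $x=\e_i\d_i x$ into the wedge formula and simplifying with those simplicial identities. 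Clause~(4), the face-of-wedge formulas, is obtained by precomposing the wedge formula with each $\d_j^\v$ and matching with Notation~\ref{N6.6}; the cases $j=i$ and $j=i+2$ give back $y$ and $x$ (these are the analogues of the computations in the proof of Proposition~\ref{P6.8}), and the remaining ranges of $j$ reproduce a lower wedge $\d_j x\w_{i-1}\d_j y$.

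The substantial work is in clauses (5)--(9), the ``complicial'' identities proper. Each of these, after unwinding the nested wedges via Notation~\ref{N6.9}, becomes an equality of two explicit $\Z$-linear combinations of iterated face and degeneracy morphisms between simplexes; I would prove each by evaluating on a general basis element and splitting into cases according to which of the relevant vertices the basis element contains (exactly as in Proposition~\ref{P6.8}, where the three cases were ``no term $i+2$'', ``no term $i$'', ``terms $i$ and $i+2$''). The associativity-type identities (5), (6), (7) and the interchange-type identities (8), (9) will each produce a modest but nontrivial case analysis; the key algebraic input throughout is that degeneracies are idempotent-like in the sense $\d_i^\v(\e_i^\v)^2$ and $\d_{i+1}^\v(\e_i^\v)^2$ act as identities on basis elements missing $i+2$, resp.\ $i$, together with $(\e_i^\v)^2$ killing basis elements containing both $i$ and $i+2$. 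I expect clause~(8) to be the main obstacle: it has the most deeply nested wedge expression, the hypothesis relating $w$ to $\d_i x\w_i\d_{i+2}z$ has to be used, and keeping track of the index shifts across the three different wedge subscripts ($i$, $i+1$, $i+2$) is where an error is most likely to creep in. A clean way to organise all of (5)--(9) is to first record, once and for all, the expansion of a doubly-nested wedge $(u\w_i v)\w_j w$ as a single linear combination of $\e$'s applied to $u,v,w$ and their faces, and then verify each identity by comparing these normal forms; this isolates the combinatorics and makes the case checks mechanical. Finally, naturality of $\d_i,\e_i,\w_i$ in $K$ is immediate since everything is defined by precomposition with fixed simplex morphisms, so $\lambda$ is a functor into sets with complicial identities.
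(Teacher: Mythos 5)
Your plan is exactly the paper's argument: the paper's entire proof is the remark that the operations are well-defined (Proposition~\ref{P6.8}) and that the axioms "follow straightforwardly from computations with chain maps," which is precisely the verification-by-precomposition-with-simplex-morphisms scheme you lay out. Your proposal is just a more explicit roadmap (basis-element case analysis, normal form for nested wedges) of the same routine check, so it is correct and takes essentially the same route.
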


\begin{proof}
We see that the operations are well-defined. The axioms follow straightforwardly from computations with chain maps.
\end{proof}

We conclude this section with the main result of~\cite{B5} (Theorem~8.7). 

\begin{theorem} \label{T6.11}
Let $\Or$ be the full subcategory of the category of augmented directed complexes with objects $\Delta(0)$, $\Delta(1)$,~\dots. For $n\geq 0$ let $\iota_n$ be the identity endomorphism of $\Delta(n)$. Then $\lambda$~is a fully faithful embedding of~$\Or$ in the category of sets with complicial identities such that $\lambda\Delta(n)$ is freely generated by the $n$-dimensional element $\lambda\iota_n$.
\end{theorem}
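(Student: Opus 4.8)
The plan is to prove the last assertion first, namely that $\lambda\Delta(n)$ is freely generated by the $n$-dimensional element $\lambda\iota_n$: for every set with complicial identities~$X$ and every $x\in X_n$ there should be exactly one morphism $\varphi\colon\lambda\Delta(n)\to X$ with $\varphi(\lambda\iota_n)=x$. Full faithfulness then follows formally. Indeed, this universal property says that evaluation at $\lambda\iota_n$ is a bijection $\Hom[\lambda\Delta(n),X]\cong X_n$ natural in~$X$; applying it with $X=\lambda\Delta(m)$, and using that $(\lambda\Delta(m))_n=\Hom[\Delta(n),\Delta(m)]$, one sees that this bijection is inverse to the map $h\mapsto\lambda h$, since $(\lambda h)(\lambda\iota_n)=h\circ\iota_n=h$. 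Hence $\lambda$ restricted to~$\Or$ is full and faithful; it is also injective on objects, $n$~being recoverable from $\lambda\Delta(n)$ as the highest dimension carrying a nondegenerate element, so $\lambda$ is an embedding. It therefore remains to establish the universal property, and I would split this into uniqueness (equivalently, the statement that $\lambda\iota_n$ generates $\lambda\Delta(n)$ as a set with complicial identities) and existence.

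For the generation statement I would prove by induction on~$m$ that every morphism of augmented directed complexes $g\colon\Delta(m)\to\Delta(n)$ is obtained from $\iota_n$ by iterating the operations $\d_i$, $\e_i$, $\w_i$. The case $m=0$ is clear: $g$~picks out a vertex of~$\Delta(n)$, and each vertex is an iterated face $\d_{j_1}\cdots\d_{j_n}\lambda\iota_n$. For the inductive step I would pass through~$\nu$: by Theorem~\ref{T4.3} the morphism~$g$ corresponds to an $\omega$-functor $\nu\Delta(m)\to\nu\Delta(n)$, and by Theorem~\ref{T4.4} such a functor is determined by the composites of atoms of~$\nu\Delta(n)$ that it assigns to the atoms of~$\nu\Delta(m)$. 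Now an atom coming from a face inclusion $\Delta(p)\hookrightarrow\Delta(n)$ corresponds under~$\lambda$ to an iterated face of $\lambda\iota_n$ precomposed with degeneracies, and a composition along~$\comp_q$ in $\nu\Delta(n)$ is realised, one dimension higher, by the wedge~$\w_q$; reassembling~$g$ from these pieces exhibits it as a derived operation applied to $\lambda\iota_n$. This step is combinatorially involved but routine given the structure theory of Sections~\ref{S4} and~\ref{S5}.

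For existence I would use the generation statement to \emph{define}~$\varphi$: given $g\in(\lambda\Delta(n))_m$, write $g=\theta(\lambda\iota_n)$ for some expression~$\theta$ in the operations $\d_i$, $\e_i$, $\w_i$, and put $\varphi(g):=\theta(x)$. The real issue is that this must be \emph{well defined}: if two such expressions agree on~$\iota_n$, they must agree on~$x$ in an arbitrary~$X$; equivalently, every identity that holds among derived operations on $\lambda\Delta(n)$ must be a formal consequence of the complicial identities of Definition~\ref{D3.1}. To prove this I would set up a normal form for derived unary operations --- essentially the standard decomposition of the relevant cell of $\nu\Delta(n)$ as an iterated composite of atoms --- show with the identities that every expression can be rewritten into normal form, and show that the normal form is determined by the morphism it represents. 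Granting well-definedness, one verifies directly from Definition~\ref{D3.1} that $\varphi$ commutes with $\d_i$, $\e_i$ and~$\w_i$, the compatibility with~$\w_i$ being exactly where axioms~(5)--(9) are used, and $\varphi(\lambda\iota_n)=x$ holds by construction.

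The main obstacle is the well-definedness of~$\varphi$ in the last step. This is precisely the statement that the complicial identities of Definition~\ref{D3.1} are \emph{complete} for the algebra of morphisms between the chain complexes of simplexes, and it requires a genuine normal-form (or confluence) argument rather than formal manipulation; it is the substance of~\cite{B5}.
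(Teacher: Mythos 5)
You should note first that the paper does not prove this theorem at all: it is imported verbatim from~\cite{B5} (Theorem~8.7 there), so the only available comparison is with that external proof, and your attempt is really a plan for reproving~\cite{B5}. The purely formal part of your plan is correct: if $\lambda\Delta(n)$ is free on $\lambda\iota_n$, then evaluation at $\lambda\iota_n$ gives a natural bijection $\Hom[\lambda\Delta(n),X]\cong X_n$, and taking $X=\lambda\Delta(m)$, where $(\lambda\Delta(m))_n=\Hom[\Delta(n),\Delta(m)]$, identifies this bijection with $h\mapsto\lambda h$; full faithfulness and injectivity on objects do follow as you say.

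The genuine gap is that the two remaining steps are not auxiliary details but the entire content of the theorem, and neither is carried out. The generation step is not ``routine'': an $m$-dimensional element of $\lambda\Delta(n)$ is a chain map $\Delta(m)\to\Delta(n)$, equivalently (via Theorems \ref{T4.3} and~\ref{T4.4}) an $\omega$-functor $\nu\Delta(m)\to\nu\Delta(n)$, and converting the atomic decomposition of its values into an expression in $\d_i$, $\e_i$, $\w_i$ applied to $\lambda\iota_n$ requires building compound operations and arguing about them carefully --- precisely the kind of machinery (iterated wedges such as $\phi_{i,j}$, $\w_{k,l}$, and the operations $\Psi_{\mathbf{s}}$) that the present paper has to develop in Section~\ref{S9} for a related purpose; a single wedge $x\w_i y$ only produces very special $(m+1)$-dimensional elements, so ``reassembling $g$ from these pieces'' is where real combinatorial work lives. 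The existence step's well-definedness --- that every identity holding among derived operations on $\lambda\Delta(n)$ is a formal consequence of axioms (1)--(9) of Definition~\ref{D3.1} --- you explicitly defer, correctly observing that it needs a normal-form or confluence argument and that it ``is the substance of~\cite{B5}.'' Since those two points are exactly what Theorem~8.7 of~\cite{B5} establishes, what you have written is a reasonable reduction and road map, but not a proof: its central step is named rather than proved.
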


\section{Complicial identities in terms of chain complexes} \label{S7}

In the last section we constructed a functor~$\lambda$ from augmented directed complexes to sets with complicial identities (see Proposition \ref{P6.10}). A set with complicial identities~$X$ therefore defines a contravariant set-valued functor 
$$K\mapsto\Hom(\lambda K,X)$$
on the category~$\textbf{adc}$ of augmented directed complexes. We will now reverse this process: we will show that sets with complicial identities can be obtained from   contravariant set-valued functors on a suitable subcategory of $\textbf{adc}$, provided that they take certain diagrams to limit diagrams.

The objects and diagrams correspond to the domains in the axioms for sets with complicial identities (see Definition~\ref{D3.1}), and we will now consider the various axioms.

The augmented directed complexes associated to axioms (1)~and~(3) are the simplexes $\Delta(m)$.

In the remaining cases we use diagrams of augmented directed complexes which are colimit diagrams as diagrams of abelian groups. The prescribed submonoid of the target object is always the sum of the images of the prescribed submonoids of the other objects in the diagram.

For axioms (2)~and~(4) we use diagrams 
$$\xymatrix{
\Delta(m-1) \ar_{\d_i^\v}[d] \ar^{\d_{i+1}^\v}[r] & \Delta(m) \ar^{\eta_y}[d] \\
\Delta(m) \ar_{\eta_x}[r] & \Delta_{(2)}(m,i)
}$$
with $0\leq i<m$. Since these diagrams are to be colimit diagrams as diagrams of abelian groups, we have
$$\Delta_{(2)}(m,i)
\cong\dfrac{\Delta(m)\oplus\Delta(m)}
{\{\,(\d_i^\v z,-\d_{i+1}^\v z):z\in\Delta(m-1)\,\}}.$$
If $X$~is a set with complicial identities then $\Delta_{(2)}(m,i)$ corresponds to the limit
$$\{\,(x,y)\in X_m\times X_m:\d_i x=\d_{i+1}y\,\}.$$
We will also need the morphisms
$$v_i^\v\colon\Delta(m+1)\to\Delta_{(2)}(m,i)$$
corresponding to the wedge operations; these are given by
$$v_i^\v
=\eta_x\e_{i+1}^\v-\eta_x\d_i^\v(\e_i^\v)^2+\eta_y\e_i^\v
=\eta_x\e_{i+1}^\v-\eta_y\d_{i+1}^\v(\e_i^\v)^2+\eta_y\e_i^\v.$$

For axiom~(5) we use similar diagrams 
$$\xymatrix{
\Delta(m) \ar_{\d_i^\v}[dddd] \ar^{v_i^\v\d_{i+1}^\v}[r] 
& \Delta_{(2)}(m,i) \ar[dddd] \\
\\
&& \Delta(m) \ar[uul] \ar_{\eta_y}[ddl]
& \Delta(m-1) \ar_{\d_i^\v}[l] \ar^{\d_{i+1}^\v}[r]
& \Delta(m) \ar[uulll] \ar^{\eta_z}[ddlll] \\
\\
\Delta(m+1) \ar_{\eta_b}[r]
& \Delta_{(5)}(m,i)
}$$
with $0\leq i<m$.

For axiom~(6) we use diagrams 
$$\xymatrix{
&&& \Delta_{(2)}(m,i) \ar[dddd] 
& \Delta(m) \ar^{v_i^\v\d_{i+1}^\v}[l] \ar_{\d_{i+2}^\v}[dddd] \\
\\
\Delta(m) \ar[uurrr] \ar_{\eta_x}[ddrrr]
& \Delta(m-1) \ar_{\d_i^\v}[l] \ar^{\d_{i+1}^\v}[r]
& \Delta(m) \ar[uur] \ar^{\eta_y}[ddr] \\
\\
&&& \Delta_{(6)}(m,i) 
& \Delta(m+1) \ar^{\eta_c}[l]
}$$
with $0\leq i<m$.

For axiom~(7) we use diagrams 
$$\xymatrix{
\Delta(m-1) \ar_{\d_i^\v}[d] \ar^{\d_{i+1}^\v}[r] &
\Delta(m) \ar_{\eta_y}[d] &
\Delta(m-1) \ar_{\d_i^\v}[l] \ar^{\d_{i+1}^\v}[d] \\
\Delta(m) \ar_{\eta_x}[r] &
\Delta_{(7)}(m,i) &
\Delta(m) \ar^{\eta_z}[l]
}$$
with $0\leq i<m$.

For axiom~(8) we use diagrams
$$\xymatrix{
\Delta(m-1) \ar_{\d_i^\v}[ddd] \ar[r] 
& \Delta_{(2)}(m-1,1) 
& \ar[l] \Delta(m-1) \ar^{\d_{i+2}^\v}[ddd]\\
& \Delta(m-1) \ar^{v_i^\v\d_{i+1}^\v}[u] \ar_{\d_{i+1}^\v}[d]\\
& \Delta(m) \ar_{\eta_w}[d]\\
\Delta(m) \ar^{\eta_x}[r] & \Delta_{(7)}(m,i) & \ar_{\eta_z}[l] \Delta(m)\\
\Delta(m-1) \ar^{\d_{i+1}^\v}[u] \ar_{\d_{i+2}^\v}[r] 
& \Delta(m) \ar^{\eta_y}[u] 
& \ar^{\d_i^\v}[l] \Delta(m-1) \ar_{\d_{i+1}^\v}[u]
}$$
with $0\leq i\leq m-2$.

For axiom~(9) we use diagrams
$$\xymatrix{
\Delta(m-1) \ar^{\d_{i+1}^\v}[r] \ar_{\d_i^\v}[d]
& \Delta(m) \ar_{\eta_y}[d]
& \Delta(m-1) \ar_{\d_{j-1}^\v}[l] \ar^{\d_j^\v}[d] \\
\Delta(m) \ar^{\eta_x}[r]
& \Delta_{(9)}(m,i,j)
& \Delta(m) \ar_{\eta_w}[l] \\
\Delta(m-1) \ar_{\d_j^\v}[r] \ar^{\d_{j-1}^\v}[u]
& \Delta(m) \ar^{\eta_z}[u]
& \Delta(m-1) \ar^{\d_i^\v}[l] \ar_{\d_{i+1}^\v}[u]
}$$
with $0\leq i\leq j-3\leq m-3$.

We can evidently obtain sets with complicial identities from contravariant set-valued functors in the following way.

\begin{proposition} \label{P7.1}
Let $\Pi$ be the full subcategory of the category of augmented directed complexes given by the objects in the diagrams associated to the axioms for sets with complicial identities. Let $X$ be a contravariant set-valued functor on~$\Pi$ taking each of the diagrams to a limit diagram. Then there is a set with complicial identities functorial in~$X$ such that the $m$-dimensional elements are the members of $X[\Delta(m)]$ and such that the operations are induced by the morphisms $\d_i^\v$, $\e_i^\v$ and~$v_i^\v$.
\end{proposition}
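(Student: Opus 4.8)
The plan is to make $X_m:=X[\Delta(m)]$ the underlying graded set, with $\d_i:=X(\d_i^\v)$ and $\e_i:=X(\e_i^\v)$, to define the wedge using the limit hypothesis, and then to verify the equations of Definition~\ref{D3.1}(3)--(9) by reducing each to an equality of morphisms of augmented directed complexes which holds because $\lambda P$ is a set with complicial identities (Proposition~\ref{P6.10}), $P$ being the object of~$\Pi$ attached to the axiom. For the wedge, the limit hypothesis on the diagram attached to axiom~(2) says that $(X(\eta_x),X(\eta_y))$ is a bijection from $X[\Delta_{(2)}(m,i)]$ onto $\{\,(x,y)\in X_m\times X_m:\d_i x=\d_{i+1}y\,\}$; given such a pair, I let $\xi$ be the unique element of $X[\Delta_{(2)}(m,i)]$ lying over it and put $x\w_i y:=X(v_i^\v)\xi$. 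Well-definedness is immediate, and the existence clauses of Definition~\ref{D3.1}(1)--(2) are then automatic.

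For the equations I would use a single mechanism. To each of (3)--(9) there is associated an object $P$ of~$\Pi$, namely the target of the corresponding diagram in Section~\ref{S7}: $\Delta(m)$ for~(3); $\Delta_{(2)}(m,i)$ for~(4); $\Delta_{(5)}(m,i)$, $\Delta_{(6)}(m,i)$ and $\Delta_{(7)}(m,i)$ for (5), (6) and~(7); the target of the diagram attached to axiom~(8) for~(8); and $\Delta_{(9)}(m,i,j)$ for~(9). This $P$ is a colimit of simplexes computed in abelian groups, with prescribed submonoid the sum of the images of the submonoids of the other objects; since the simplexes are free, $P$ is also the colimit in the category of augmented directed complexes, hence has the expected universal property there. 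One checks from the shape of each diagram that the colimit relations among the structure maps $\eta_\bullet\colon\Delta(k)\to P$ are exactly the face-equality hypotheses of the corresponding axiom; hence, by the limit hypothesis on~$X$, a tuple of inputs satisfying those hypotheses is the same as an element $\zeta\in X[P]$, the inputs being recovered as the $X(\eta_\bullet)\zeta$. I would then show that each side of the equation, evaluated in~$X$ on these inputs, equals $X(\alpha)\zeta$ for an explicit morphism $\alpha\colon\Delta(n)\to P$: a bare wedge $\eta_\bullet\w_i\eta_\bullet$ contributes the morphism of Proposition~\ref{P6.8} (which is what $v_i^\v$ reduces to under~$X$, cf.\ Notation~\ref{N6.9}), while a nested wedge is handled by observing that the auxiliary object used to form it --- again some $\Delta_{(2)}(m',i')$, which lies in~$\Pi$ and whose defining diagram $X$ takes to a limit --- has the universal property in the category of augmented directed complexes, so that the intermediate ``universal element'' over $\zeta$ is itself $X$ of a canonical morphism $\Delta_{(2)}(m',i')\to P$ applied to~$\zeta$ (such a morphism existing precisely because the relevant colimit relations match up). Comparing the two sides then reduces to the equality $\alpha_{\mathrm{L}}=\alpha_{\mathrm{R}}$ of morphisms $\Delta(n)\to P$; and $\alpha_{\mathrm{L}}$, $\alpha_{\mathrm{R}}$ are the two sides of that same axiom evaluated in the set with complicial identities $\lambda P$ (Proposition~\ref{P6.10}) on the elements $\eta_\bullet\in\lambda P$, which satisfy the axiom's hypotheses thanks to the colimit relations. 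Hence $\alpha_{\mathrm{L}}=\alpha_{\mathrm{R}}$, so the two sides agree in~$X$. Functoriality in~$X$ is clear, since each operation is obtained by applying~$X$ to a fixed morphism, or to a fixed cone together with the limit bijection.

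I expect the main obstacle to be the bookkeeping in the nested-wedge equations (5), (6) and especially~(8): one must pin down the correct chain of auxiliary objects $\Delta_{(2)}(m',i')$ in~$\Pi$, verify that the comparison morphisms among them and into~$P$ exist (using throughout that the prescribed submonoid of each colimit is the sum of the images, so that the relevant pushouts are pushouts of augmented directed complexes), and track how $X$ carries $\zeta$ to the successive intermediate universal elements. Once the diagrams of Section~\ref{S7} are read in this light, each axiom collapses to an instance of Proposition~\ref{P6.10}, and only routine verification remains.
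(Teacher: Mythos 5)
Your proposal is correct and is essentially the verification the paper leaves implicit: Proposition~\ref{P7.1} is stated without proof (``evidently''), and the intended justification is exactly your mechanism --- by the limit hypothesis a tuple of inputs satisfying an axiom's hypotheses is an element $\zeta$ of $X[P]$ for the target object $P$ of the corresponding diagram, each side of the equation evaluated on these inputs is $X$ applied to a morphism $\Delta(n)\to P$ which is that side computed in $\lambda P$, and the two morphisms coincide because $\lambda P$ is a set with complicial identities (Proposition~\ref{P6.10}). The one point worth keeping explicit, which you do address, is that the diagrams defining the objects of $\Pi$ are colimits in augmented directed complexes (not just abelian groups, since the prescribed submonoid of the target is the sum of the images), so the comparison morphisms from the auxiliary $\Delta_{(2)}(m',i')$ into $P$ exist and the intermediate universal elements are obtained from $\zeta$ by applying $X$ to them.
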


In particular let $C$ be an $\omega$-category; then there is a contravariant set-valued functor on the category~$\Pi$ of this definition given by $\Hom[\nu(-),C]$. We want this functor to yield a set with complicial identities. In order to do this, we must show that the images under~$\nu$ of the diagrams associated to the axioms are colimit diagrams of $\omega$-categories. We will do this by showing that the objects of~$\Pi$ are totally ordered directed complexes; the colimit properties will then be consequences of the presentations in terms of atoms (Theorem~\ref{T4.4}).

\begin{proposition} \label{P7.2}
If $K$~is an object in a diagrams associated to an axiom for sets with complicial identities, then $K$~is a totally ordered directed complex.
\end{proposition}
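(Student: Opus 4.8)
The objects of $\Pi$ are of two kinds: the simplexes $\Delta(m)$, and the various colimit objects $\Delta_{(2)}(m,i)$, $\Delta_{(5)}(m,i)$, $\Delta_{(6)}(m,i)$, $\Delta_{(7)}(m,i)$, $\Delta_{(8)}(m,i)$, $\Delta_{(9)}(m,i,j)$. For the simplexes the result is already known (Proposition~\ref{P6.5}). So the real content is to verify the claim for each of the colimit objects. I would handle these uniformly by the following observation: in every case the diagram is built by gluing copies of simplexes along faces $\d_i^\v$ (and occasionally along the composite maps $v_i^\v\d_{i+1}^\v$), and the resulting colimit of abelian groups has an evident graded basis, namely the union of the images of the basis elements of the simplexes in the diagram, with suitable identifications. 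The task is then to exhibit a total ordering of this basis satisfying conditions~(1) and~(2) of Definition~\ref{D4.2}.

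The first step is to pin down the basis and boundary maps of each $\Delta_{(k)}$ concretely. For the binary case, $\Delta_{(2)}(m,i)$ is the pushout of $\Delta(m)\xleftarrow{\d_i^\v}\Delta(m-1)\xrightarrow{\d_{i+1}^\v}\Delta(m)$, so its basis is obtained by taking the basis of one copy of $\Delta(m)$ together with those basis elements of the second copy not in the image of $\d_{i+1}^\v$ — that is, indexed by faces that genuinely involve the vertex being glued. The boundary formula is inherited from the two simplexes and is consistent because the gluing is along a subcomplex. The key point is that $\d^+$ and $\d^-$ of a glued basis element are computed the same way in either copy, so the notions of positive and negative boundary part are unambiguous. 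The same description works for the larger diagrams, which are iterated such pushouts (the pictures for axioms (5)--(9) are precisely iterated gluings of $\Delta_{(2)}$'s and simplexes along faces).

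For the ordering, the natural choice is a lexicographic-type order built from the orders on the constituent simplexes: order the simplexes in the diagram linearly (following the shape of the diagram — e.g.\ for $\Delta_{(2)}$ put the $\eta_x$-copy before the $\eta_y$-copy), then within each simplex use the standard total order on $\Delta(m)$ coming from Propositions~\ref{P6.4} and~\ref{P6.5}, and place each glued basis element at the position dictated by its first occurrence. One then checks condition~(1): if $a$ is a basis element coming (say) from the $\eta_x$-copy, the terms of $\d^- a$ precede it and those of $\d^+ a$ follow it within that copy's order, and this is preserved by the interleaving because glued elements keep their relative positions; the cases where $a$ lies in the glued subcomplex need the compatibility of the two simplex orders along the face map, which holds because $\d_i^\v$ is order-preserving on bases. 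Condition~(2), that $\e(\d^-)^p a=\e(\d^+)^p a=1$, is inherited directly from the constituent simplexes since iterating $\d^-$ or $\d^+$ eventually lands in a single simplex.

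**Main obstacle.** The substantive difficulty is bookkeeping, not conceptual: one must show that for the more elaborate diagrams (axioms (5), (6), (8) especially, where a simplex is glued along the composite $v_i^\v\d_{i+1}^\v$ rather than a plain face) the colimit of abelian groups is still \emph{free} with the expected basis, and that the gluing maps are injective on bases so that no unexpected collapsing or relations occur. Equivalently, one must verify that each such diagram really is a colimit over a poset of subcomplex inclusions up to the explicit combinatorial description, so that the union-of-bases description is valid and the ordering can be assembled. Once the basis and its order are in hand, checking Definition~\ref{D4.2}(1)--(2) is routine, as each condition is a local statement inherited from the simplexes. I would therefore organize the proof as: (i) recall $\Delta(m)$ is totally ordered; (ii) treat $\Delta_{(2)}(m,i)$ in detail, establishing the basis, boundary, and order; (iii) observe that $\Delta_{(5)},\ldots,\Delta_{(9)}$ are iterated pushouts of the same type and run the same argument, indicating which simplexes are glued along which faces in each case.
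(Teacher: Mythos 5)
Your reduction to the pushout objects and your description of their bases is fine as far as it goes, but the ordering you propose does not work, and the ordering is precisely where the difficulty of this proposition lies. Take $\Delta_{(2)}(2,0)$, the pushout of $\Delta(2)\xleftarrow{\d_0^\v}\Delta(1)\xrightarrow{\d_1^\v}\Delta(2)$, so that $\eta_x[1]=\eta_y[0]$, $\eta_x[2]=\eta_y[2]$, $\eta_x[1,2]=\eta_y[0,2]$. Your rule (list the $\eta_x$-copy in its standard order, then the remaining $\eta_y$-elements in theirs, with glued elements placed at their first occurrence) puts $\eta_x[2]$ in the first block, before every new $\eta_y$-element. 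But $\d^+\eta_y[1,2]=\eta_y[2]=\eta_x[2]$, so condition (1) of Definition~\ref{D4.2} forces $\eta_y[1,2]$ to precede $\eta_x[2]$, which your order violates. The flaw in your verification is the claim that glued elements ``keep their relative positions'': relative to the new elements of the second copy they do not, since they are pulled forward to the first block. So a valid order must genuinely interleave the two copies (in the example, $\eta_x[2]=\eta_y[2]$ has to come last, after $\eta_y[1,2]$), and producing such an interleaving uniformly for all $m$, $i$ and for the more elaborate shapes (5), (6), (8), (9) is not ``routine bookkeeping''.

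This is exactly what the paper's argument supplies and what your plan is missing: instead of ordering the big pushouts directly, the paper verifies join decompositions $\Delta_{(k)}(m,i)\cong\Delta(i-1)*\Delta_{(k)}(1,0)*\Delta(m-i-2)$ for $k=2,5,6,7$, $\Delta_{(8)}(m,i)\cong\Delta(i-1)*\Delta_{(8)}(2,0)*\Delta(m-i-3)$, and $\Delta_{(9)}(m,i,j)\cong\Delta(i-1)*\Delta_{(2)}(1,0)*\Delta(j-i-4)*\Delta_{(2)}(1,0)*\Delta(m-j-1)$, and then invokes Proposition~\ref{P6.4}, whose parity-dependent insertion rule is precisely the interleaving that makes condition (1) hold. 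This reduces everything to explicitly listing ordered bases for the five small complexes $\Delta_{(2)}(1,0)$, $\Delta_{(5)}(1,0)$, $\Delta_{(6)}(1,0)$, $\Delta_{(7)}(1,0)$ and $\Delta_{(8)}(2,0)$, which also disposes of the freeness question you flag as your main obstacle. To repair your proof you would either have to prove these join decompositions yourself (at which point you are following the paper) or invent and verify a correct interleaved order on the general pushouts directly, which your current description does not do.
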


\begin{proof}
We already know from Proposition~\ref{P6.5} that the simplexes $\Delta(m)$ are totally ordered directed complexes, because they are joins of copies of $\Delta(0)$ and because $\Delta(0)$ is a totally ordered directed complex. We will prove the result for the other complexes involved in a similar way, by expressing them as joins. It is convenient to write $\Delta(-1)$ for the zero chain complex, which serves as an identity for the join construction; it is then straightforward to verify that
\begin{align*}
&\Delta_{(k)}(m,i)\cong\Delta(i-1)*\Delta_{(k)}(1,0)*\Delta(m-i-2)\quad (k=2,5,6,7),\\
&\Delta_{(8)}(m,i)\cong\Delta(i-1)*\Delta_{(8)}(2,0)*\Delta(m-i-3),\\
&\Delta_{(9)}(m,i,j)\\
&\qquad{}\cong\Delta(i-1)*\Delta_{(2)}(1,0)*\Delta(j-i-4)*\Delta_{(2)}(1,0)*\Delta(m-j-1).
\end{align*}
It now suffices to show that $\Delta_{(k)}(1,0)$ is a totally ordered directed complex for $k=2,5,6,7$ and that $\Delta_{(8)}(2,0)$ is a totally ordered directed complex. We will do this in each case by drawing a figure and listing the basis elements in the correct order.

For $\Delta_{(2)}(1,0)$ the figure is
$$\begin{xy}
0;<1cm,0cm>:
(0,0) *{\bullet},
(2,0) *{\bullet},
(4,0) *{\bullet},
(0,0);(1,0) **@{-} ?>*\dir{>}, (1,0);(2,0) **@{-},
(2,0);(3,0) **@{-} ?>*\dir{>}, (3,0);(4,0) **@{-},
(1,0) *+!D{\eta_x},
(3,0) *+!D{\eta_y}
\end{xy}$$
and the ordered basis is
$$\eta_x[0],\ \eta_x[0,1],\ \eta_x[1]=\eta_y[0],\ \eta_y[0,1],\ \eta_y[1].$$

For $\Delta_{(5)}(1,0)$ the figure is
$$\begin{xy}
0;<1cm,0cm>:
(0,0) *{\bullet},
(2,2) *{\bullet},
(3,1) *{\bullet},
(4,0) *{\bullet},
(0,0);(1,1) **@{-} ?>*\dir{>}, (1,1);(2,2) **@{-},
(2,2);(2.5,1.5) **@{-} ?>*\dir{>}, (2.5,1.5);(3.5,0.5) **@{-} ?>*\dir{>}, (3.5,0.5);(4,0) **@{-},
(0,0);(2,0) **@{-} ?>*\dir{>}, (2,0);(4,0) **@{-},
(2,0.6) *{\eta_b},
(2.5,1.5) *!DL{\eta_y},
(3.5,0.5) *!DL{\eta_z}
\end{xy}$$
and the ordered basis is
\begin{align*}
&\eta_b[0],\ \eta_b[0,2],\ \eta_b[0,1,2],\ \eta_b[0,1],\ \eta_b[1]=\eta_y[0],\\ 
&\eta_y[0,1],\ \eta_y[1]=\eta_z[0],\ \eta_z[0,1],\ \eta_z[1],
\end{align*}
with
$$\d^+\eta_b[0,1,2]=\eta_b[0,1]+\eta_b[1,2]=\eta_b[0,1]+\eta_y[0,1]+\eta_z[0,1].$$

For $\Delta_{(6)}(1,0)$ the figure is
$$\begin{xy}
0;<1cm,0cm>:
(0,0) *{\bullet},
(1,1) *{\bullet},
(2,2) *{\bullet},
(4,0) *{\bullet},
(0,0);(0.5,0.5) **@{-} ?>*\dir{>}, (0.5,0.5);(1.5,1.5) **@{-} ?>*\dir{>}, (1.5,1.5);(2,2) **@{-},
(2,2);(3,1) **@{-} ?>*\dir{>}, (3,1);(4,0) **@{-},
(0,0);(2,0) **@{-} ?>*\dir{>}, (2,0);(4,0) **@{-},
(0.5,0.5) *!DR{\eta_x},
(1.5,1.5) *!DR{\eta_y},
(2,0.6) *{\eta_c}
\end{xy}$$
and the ordered basis is
\begin{align*}
&\eta_c[0],\ \eta_c[0,2],\ \eta_c[0,1,2],\ \eta_x[0,1],\ \eta_x[1]=\eta_y[0],\ \eta_y[0,1],\\ 
&\eta_y[1]=\eta_c[1],\ \eta_c[1,2],\ \eta_c[2],
\end{align*}
with
$$\d^+\eta_c[0,1,2]=\eta_c[0,1]+\eta_c[1,2]=\eta_x[0,1]+\eta_y[0,1]+\eta_c[1,2].$$

For $\Delta_{(7)}(1,0)$ the figure is
$$\begin{xy}
0;<1cm,0cm>:
(0,0) *{\bullet},
(2,0) *{\bullet},
(4,0) *{\bullet},
(6,0) *{\bullet},
(0,0);(1,0) **@{-} ?>*\dir{>}, (1,0);(2,0) **@{-},
(2,0);(3,0) **@{-} ?>*\dir{>}, (3,0);(4,0) **@{-},
(4,0);(5,0) **@{-} ?>*\dir{>}, (5,0);(6,0) **@{-},
(1,0) *+!D{\eta_x},
(3,0) *+!D{\eta_y},
(5,0) *+!D{\eta_z}
\end{xy}$$
and the ordered basis is
$$\eta_x[0],\ \eta_x[0,1],\ \eta_x[1]=\eta_y[0],\ \eta_y[0,1],\ \eta_y[1]=\eta_z[0],\ \eta_z[0,1],\ \eta_z[1].$$

For $\Delta_{(8)}(2,0)$ the figure is
$$\begin{xy}
0;<1cm,0cm>:
(0,0) *{\bullet},
(1,2) *{\bullet},
(4,2) *{\bullet},
(4,4) *{\bullet},
(7,2) *{\bullet},
(8,0) *{\bullet},
(0,0);(0.5,1) **@{-} ?>*\dir{>}, (0.5,1);(1,2) **@{-},
(1,2);(2.5,2) **@{-} ?>*\dir{>}, (2.5,2);(5.5,2) **@{-} ?>*\dir{>}, (5.5,2);(7,2) **@{-},
(7,2);(7.5,1) **@{-} ?>*\dir{>}, (7.5,1);(8,0) **@{-},
(0,0);(4,0) **@{-} ?>*\dir{>}, (4,0);(8,0) **@{-},
(0,0);(2,1) **@{-} ?>*\dir{>}, (2,1);(4,2) **@{-},
(4,2);(6,1) **@{-} ?>*\dir{>}, (6,1);(8,0) **@{-},
(1,2);(2.5,3) **@{-} ?>*\dir{>}, (2.5,3);(4,4) **@{-},
(4,4);(5.5,3) **@{-} ?>*\dir{>}, (5.5,3);(7,2) **@{-},
(2,1.5) *{\eta_x},
(4,1) *{\eta_y},
(6,1.5) *{\eta_z},
(4,3) *{\eta_w}
\end{xy}$$
and the ordered basis is
\begin{align*}
&\eta_y[0],\ \eta_y[0,2],\ \eta_y[0,1,2],\ \eta_y[0,1]=\eta_x[0,2],\ \eta_x[0,1,2],\ \eta_x[0,1],\\
&\eta_x[1],\ 
\eta_x[1,2],\ \eta_x[2]=\eta_z[0],\
\eta_z[0,2],\ \eta_z[0,1,2],\ \eta_z[0,1],\\ 
&\eta_w[0,1,2],\ \eta_w[0,1],\ \eta_w[1],\ \eta_w[1,2],\ \eta_w[2]=\eta_z[1],\ \eta_z[1,2],\ \eta_z[2],
\end{align*}
with
$$\d^-\eta_w[0,1,2]=\eta_w[0,2]=\eta_x[1,2]+\eta_z[0,1].$$

This completes the proof.
\end{proof}

Now let $\Pi$ be the category of Proposition~\ref{P7.1} and let $C$ be an $\omega$-category. We have shown in Proposition~\ref{P7.2} that the objects of~$\Pi$ are totally ordered directed complexes. It follows from the atomic presentations (Theorem~\ref{T4.4}) that the images under~$\nu$ of the diagrams of Proposition~\ref{P7.1} are colimit diagrams of $\omega$-categories. It therefore follows from Proposition~\ref{P7.1} that one obtains a set with complicial identities from the functor 
$$K\mapsto\Hom[\nu K,C].$$
We will use the following notation.

\begin{notation} \label{N7.3}
Let $\alpha$ be the functor from $\omega$-categories to sets with complicial identities defined on an $\omega$-category~$C$ as follows. The set of $m$-dimensional elements is given by
$$(\alpha C)_m=\Hom[\nu\Delta(m),C].$$
If $x\in(\alpha C)_m$ then
\begin{align*}
&\d_i x=x(\nu\d_i^\v)\quad (m>0,\ 0\leq i\leq m),\\
&\e_i x=x(\nu\e_i^\v)\quad (0\leq i\leq m).
\end{align*}
If $x,y\in(\alpha C)_m$ and $\d_i x=\d_{i+1}y$ with $0\leq i<m$ then 
$$x\w_i y=z v_i^\v,$$
where $z$~is the member of $\Hom[\nu\Delta_{(2)}(m,i),C]$ with
$$z(\nu\eta_x)=x,\ z(\nu\eta_y)=y.$$
\end{notation}

\section{Simple chain complexes as quotients of simplexes} \label{S8}

At the end of Section~\ref{S7} we have constructed a functor~$\alpha$ from $\omega$-categories to sets with complicial identities. We also need a functor in the opposite direction. Equivalently (Proposition~\ref{P5.8}), given sets with complicial identities, we need contravariant set-valued functors with suitable properties on the category of simple chain complexes. We will again use the functor~$\lambda$ of Proposition~\ref{P6.10} from augmented directed complexes to sets with complicial identities; the functor on simple chain complexes corresponding to a set with complicial identities~$X$ will be given by
$$S\mapsto\Hom[\lambda S,X].$$

We must show that these functors take simple squares of chain complexes to pull-backs (see Proposition~\ref{P5.8}). We will obtain information about the sets $\Hom[\lambda S,X]$ by showing that simple chain complexes are retracts of simplexes. In this section, as a first step, we show that an $\mathbf{s}$-simple chain complex can be expressed as a quotient
$$S_\mathbf{s}=\Delta(|\mathbf{s}|)/U_\mathbf{s},$$
where $\Delta(|\mathbf{s}|)$ is a simplex of a suitable dimension. We will also show an $\mathbf{s}$-simple square of chain complexes can be obtained from a commutative square of simplexes~$Q_\mathbf{s}$.

We will now define our notations. 

\begin{notation} \label{N8.1}
Let $\mathbf{s}$ be an up-down vector given by
$$\mathbf{s}=(p_0,q_1,p_1,\ldots,p_{k-1},q_k,p_k).$$
Then
$$|\mathbf{s}|=p_0-q_1+p_1-\ldots+p_{k-1}-q_k+p_k.$$
\end{notation}

We make the following observation, which will be used frequently in inductive arguments, mostly without comment.

\begin{proposition} \label{P8.2}
Let $\mathbf{s}$ be an up-down vector with more than one term given by $\mathbf{s}=(\mathbf{s'},q,p)$, and let $p'$ be the last term in~$\mathbf{s'}$. Then
$$|\mathbf{s}|-p=|\mathbf{s'}|-q>|\mathbf{s'}|-p'.$$
\end{proposition}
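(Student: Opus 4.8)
The statement to prove is Proposition~\ref{P8.2}, asserting that for an up-down vector $\mathbf{s}=(\mathbf{s'},q,p)$ with $p'$ the last term of $\mathbf{s'}$, one has $|\mathbf{s}|-p=|\mathbf{s'}|-q>|\mathbf{s'}|-p'$.

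\begin{proof}
Write $\mathbf{s'}=(p_0,q_1,p_1,\ldots,p_{k-1},q_k,p_k)$, so that $p'=p_k$ and $\mathbf{s}=(p_0,q_1,\ldots,p_{k-1},q_k,p_k,q,p)$ with the additional constraints $p_k>q$ and $q<p$ required by Definition~\ref{D2.4}. By Notation~\ref{N8.1},
$$|\mathbf{s}|=p_0-q_1+p_1-\ldots+p_{k-1}-q_k+p_k-q+p=|\mathbf{s'}|-q+p.$$
Subtracting $p$ from both sides gives $|\mathbf{s}|-p=|\mathbf{s'}|-q$, which is the first equality. For the inequality, since $\mathbf{s'}$ is an up-down vector its last term is $p_k=p'$, and the constraint $p_k>q$ from $\mathbf{s}$ being an up-down vector (namely $p_{i-1}>q_i$ applied at the final step) gives $p'>q$, hence $-q>-p'$ and so $|\mathbf{s'}|-q>|\mathbf{s'}|-p'$.
\end{proof}

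This is an entirely routine bookkeeping computation; the only points to keep straight are which inequalities among the defining conditions of an up-down vector are being invoked (here it is precisely the requirement that the term immediately preceding $q$ in $\mathbf{s}$, which is the last term $p'$ of $\mathbf{s'}$, exceeds $q$). There is no real obstacle; the proof is a direct unwinding of the definitions of $|\cdot|$ and of up-down vector, and the forward-looking plan is simply to expand $|\mathbf{s}|$ using Notation~\ref{N8.1}, identify the telescoping relation $|\mathbf{s}|=|\mathbf{s'}|-q+p$, and then read off both the equality and the inequality.
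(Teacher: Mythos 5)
Your proof is correct and takes exactly the same route as the paper, which simply notes that $|\mathbf{s}|=|\mathbf{s'}|-q+p$ and that $p'>q$ by the up-down vector condition; you have merely spelled out the telescoping sum and the relevant inequality in more detail.
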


\begin{proof}
This holds because $|\mathbf{s}|=|\mathbf{s'}|-q+p$ and because $p'>q$.
\end{proof}

\begin{notation} \label{N8.3}
Let $\mathbf{s}=(\mathbf{s'},q,p)$ be an up-down vector with more than one term. Then $Q_\mathbf{s}$~is the commutative square
$$\xymatrix{
\Delta(q) \ar_{(\d_0^\v)^{|\mathbf{s}|-p}}[d] \ar^{(\d_1^\v)^{p-q}}[rr] && 
\Delta(p) \ar^{(\d_0^\v)^{|\mathbf{s}|-p}}[d] \\
\Delta(|\mathbf{s'}|) \ar_{(\d_{|\mathbf{s}|-p+1}^\v)^{p-q}}[rr] && \Delta(|\mathbf{s}|).
}$$
\end{notation}

\begin{notation} \label{N8.4}
For $p\geq 0$ let $U_{(p)}$ be the subcomplex of $\Delta(p)$ generated by the basis elements $[i_0,\ldots,i_m]$ with $m>0$ and $i_1\leq p-m$. 

For $\mathbf{s}=(\mathbf{s'},q,p)$ let $V_\mathbf{s}$ be the subcomplex of $\Delta(|\mathbf{s}|)$ generated by the basis elements $[i_0,\ldots,i_{r-1},|\mathbf{s}|-p,i_{r+1},\ldots,i_m]$ with $0<r<m$ and
$$0\leq i_{r-1}<|\mathbf{s}|-p<i_{r+1}\leq |\mathbf{s}|-q,$$
and let
$$U_\mathbf{s}
=(\d_{|\mathbf{s}|-p+1}^\v)^{p-q}U_\mathbf{s'}
+(\d_0^\v)^{|\mathbf{s}|-p}U_{(p)}
+V_\mathbf{s}.$$
\end{notation}

\begin{notation} \label{N8.5}
For an arbitrary up-down vector~$\mathbf{s}$, let
$$S_\mathbf{s}=\Delta(|\mathbf{s}|)/U_\mathbf{s}.$$
\end{notation}

\begin{remark} \label{R8.6}
As an abelian group, $U_\mathbf{s}$ is generated by basis elements of positive dimension and by their boundaries. It follows that the quotient~$S_\mathbf{s}$ is naturally an augmented chain complex and the quotient homomorphism
$$\Delta(|\mathbf{s}|)\to\Delta(|\mathbf{s}|)/U_\mathbf{s}=S_\mathbf{s}$$
is augmentation-preserving. We make~$S_\mathbf{s}$ into an augmented directed complex by taking the images of the basis elements for $\Delta(|\mathbf{s}|)$ as generators for the prescribed submonoid of~$S_\mathbf{s}$. This makes
the quotient homomorphism into a morphism of augmented directed complexes.
\end{remark}

We will now consider the one-term case.

\begin{proposition} \label{P8.7}
Let $p$ be a nonnegative integer and let
$$a_m^i=[i,p-m+1,p-m+2,\ldots,p]\quad (0\leq i\leq p-m\leq p).$$
Then $S_{(p)}$~is a $(p)$-simple chain complex with generator~$a$ such that
\begin{align*}
&a_m^i+U_{(p)}=(\d^-)^{p-m}a\quad (0\leq i<p-m\leq p),\\
&a_m^{p-m}+U_{(p)}=(\d^+)^{p-m}a\quad (0\leq m\leq p).
\end{align*}
\end{proposition}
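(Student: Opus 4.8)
The plan is to identify $S_{(p)}$ with an explicit model of the $(p)$-simple chain complex. Write $\mathbf{s}=(p)$, so that $|\mathbf{s}|=p$ and $S_{(p)}=\Delta(p)/U_{(p)}$, and let $K$ be the free augmented directed complex with a basis element $e$ in dimension~$p$ together with basis elements $e_j^-,e_j^+$ in dimension $p-j$ for $1\le j\le p$, with boundary $\d e=e_1^+-e_1^-$, $\d e_j^{\pm}=e_{j+1}^+-e_{j+1}^-$ for $1\le j\le p-1$ and $\d e_p^{\pm}=0$, with augmentation $\e e_p^-=\e e_p^+=1$, and with each prescribed submonoid generated by the basis elements in its dimension. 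Then $(\d^-)^je=e_j^-$ and $(\d^+)^je=e_j^+$, so $K$ is a $(p)$-simple chain complex with generator~$e$ in the sense of Definition~\ref{D5.1}; it therefore suffices to produce an isomorphism of augmented directed complexes from $S_{(p)}$ to~$K$ that sends $a_m^i+U_{(p)}$ to $e_{p-m}^-$ when $i<p-m$ and to $e_{p-m}^+$ when $i=p-m$.

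The first step is a combinatorial observation: every basis element $[A]$ of $\Delta(p)$ is either one of the~$a_m^i$ or one of the generators of~$U_{(p)}$, and never both. Indeed, a vertex $[i]=a_0^i$ is never a generator of~$U_{(p)}$; and if $[A]=[i_0,\ldots,i_m]$ with $m\ge1$ then automatically $i_1\le p-m+1$, so either $i_1\le p-m$, in which case $[A]$ is a generator of~$U_{(p)}$, or $i_1=p-m+1$, which forces $i_2=p-m+2,\ldots,i_m=p$ and hence $[A]=a_m^{i_0}$. In particular the images of the~$a_m^i$ span~$S_{(p)}$. I would then define a homomorphism $q\colon\Delta(p)\to K$ on basis elements by $q[A]=0$ when $[A]$ is a generator of~$U_{(p)}$, $qa_p^0=e$, $qa_m^{p-m}=e_{p-m}^+$, and $qa_m^i=e_{p-m}^-$ when $i<p-m$; it is immediate that $q$ is augmentation-preserving, carries prescribed submonoids into prescribed submonoids, and is surjective.

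The heart of the argument, and the step I expect to be the main obstacle, is checking that $q$ is a chain map, that is $q\d[A]=\d q[A]$ for every basis element~$[A]=[i_0,\ldots,i_m]$; this is a case analysis in which one must keep careful track of which faces of $[A]$ lie in~$U_{(p)}$. If $[A]$ is a generator of~$U_{(p)}$, then each face $[A\setminus\{i_k\}]$ with $k\ge2$ is again a generator (its second entry is still at most $p-m$), while the two faces obtained by deleting $i_0$ and by deleting $i_1$ have the same image under~$q$ and occur with opposite signs, so $q\d[A]=0=\d q[A]$. If $[A]=a_m^i$ with $m\ge1$, then deleting $i_0$ gives the final segment $[p-m+1,\ldots,p]=a_{m-1}^{p-m+1}$, with $q$-value $e_{p-m+1}^+$; deleting $i_1$ gives $a_{m-1}^i$ (or $a_{m-1}^{p-m}$ when $i=p-m$), with $q$-value $e_{p-m+1}^-$; and every later face is a generator of~$U_{(p)}$; hence $q\d[A]=e_{p-m+1}^+-e_{p-m+1}^-=\d q[A]$. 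The edge cases $m=0$ and $[A]=a_p^0$ are disposed of the same way.

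Finally one must show $\ker q=U_{(p)}$. Since $q$ is a chain map vanishing on every generator of~$U_{(p)}$, its kernel is a subcomplex containing those generators, whence $\ker q\supseteq U_{(p)}$. For the reverse inclusion, write $\Delta(p)=G\oplus\Delta'$, where $G$ is the span of the generators of~$U_{(p)}$ and $\Delta'$ the span of the~$a_m^i$; then $\ker q=G\oplus\ker(q|_{\Delta'})$, and $\ker(q|_{\Delta'})$ is spanned by the differences $a_m^i-a_m^{i+1}$ with $i\le p-m-2$. Each such difference lies in~$U_{(p)}$, because $[i,i+1,p-m+1,\ldots,p]$ is a generator of~$U_{(p)}$ whose boundary is $a_m^{i+1}-a_m^i$ plus further generators of~$U_{(p)}$. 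Granting this, $q$ descends to an isomorphism of augmented directed complexes $S_{(p)}\to K$; transporting the structure of~$K$ back along it exhibits $S_{(p)}$ as a $(p)$-simple chain complex with final generator $a=a_p^0+U_{(p)}$, and the displayed identities hold because $q$ sends $a_m^i$ (for $i<p-m$) to $e_{p-m}^-=(\d^-)^{p-m}e$ and $a_m^{p-m}$ to $e_{p-m}^+=(\d^+)^{p-m}e$.
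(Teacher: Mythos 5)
Your argument is correct, and it reaches the conclusion by a somewhat different route than the paper. The paper works intrinsically with the quotient: it notes that $U_{(p)}$ is the subcomplex generated by the basis elements not of the form $a_m^i$, computes $\d b$ for those generators to see that, as an abelian group, $U_{(p)}$ is spanned by them together with the differences $a_m^i-a_m^0$, reads off from this a prescribed basis of $S_{(p)}$, and then verifies the boundary and augmentation conditions of Definition~\ref{D5.1} directly in the quotient. You instead build an explicit external model $K$ of the $(p)$-simple complex, define a chain map $q\colon\Delta(p)\to K$ killing the generators of $U_{(p)}$, check the chain-map property by a face-by-face analysis, identify $\ker q=U_{(p)}$, and transport the structure along the induced isomorphism. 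The combinatorial content is the same in both proofs (the dichotomy between generators of $U_{(p)}$ and the elements $a_m^i$, the face computations, and the fact that the differences $a_m^i-a_m^j$ lie in $U_{(p)}$ because they appear in boundaries of generators), so neither route is shorter; what yours buys is that freeness of $S_{(p)}$ and the identification of the $\d^\pm$-iterates come automatically from the isomorphism with $K$, at the cost of the bookkeeping in the chain-map check and of the (routine, but worth stating) observation that the induced map carries the prescribed submonoid of $S_{(p)}$ bijectively onto that of $K$, so that its inverse is also a morphism of augmented directed complexes. Two trivial points to tidy: when $m=p$ your two rules $qa_p^0=e$ and $qa_m^{p-m}=e_{p-m}^+$ overlap, so you should say the former takes precedence (there is no $e_0^+$ in the model); and for $p=0$ the augmentation of the model should read $\e e=1$, the statement being immediate in that degenerate case since $U_{(0)}=0$.
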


\begin{proof}
We use Definition~\ref{D5.1}. Note that $U_{(p)}$~is the subcomplex of $\Delta(p)$ generated by the basis elements not of the form~$a_m^i$. If $b$~is a generator of~$U_{(p)}$ of the form
$$b=[j,i,p-m+1,p-m+2,p-m+3,\ldots,p]\quad (0\leq j<i<p-m\leq p),$$
then
$$\d b=a_m^i-a_m^j+u$$
with $u\in U_{(p)}$; if $b$~is any other generator for~$U_{(p)}$ then $\d b\in U_{(p)}$. As an abelian group, $U_{(p)}$~is therefore generated by the basis elements not of the form~$a_m^i$ and by the differences
$$a_m^i-a_m^0\quad (0<i<p-m\leq p).$$
It follows that $S_{(p)}$~is a free augmented directed complex with basis
$$a_0^0+U_{(p)},\ \ldots,\ a_{p-1}^0+U_{(p)},\ a_p^0+U_{(p)},\ 
a_{p-1}^1+U_{(p)},\ \ldots,\ a_0^p+U_{(p)}$$
and that
$$a_m^i+U_{(p)}=a_m^0+U_{(p)}\quad (0\leq i<p-m\leq p).$$
It is straightforward to check that
\begin{align*}
&\d(a_{m+1}^0+U_{(p)})=a_m^{p-m}-a_m^0+U_{(p)}\quad (0\leq m<p),\\
&\d(a_{m+1}^{p-m-1}+U_{(p)})=a_m^{p-m}-a_m^{p-m-1}+U_{(p)}\quad (0\leq m<p),
\end{align*}
from which it follows that
\begin{align*}
&a_m^i+U_{(p)}=(\d^-)^{p-m}(a_p^0+U_{(p)})\quad (0\leq i<p-m\leq p),\\
&a_m^{p-m}+U_{(p)}=(\d^+)^{p-m}(a_p^0+U_{(p)})\quad (0\leq m\leq p).
\end{align*}
We also have 
$$\e(a_0^0+U_{(p)})=\e(a_0^p+U_{(p)})=1;$$
therefore $S_{(p)}$~is $(p)$-simple with generator $a_p^p+U_{(p)}$, and the images $a_m^i+U_{(p)}$ are as described.
\end{proof}

We will now consider up-down vectors with more than one term. We first show that we can pass to quotients in the squares~$Q_\mathbf{s}$.

\begin{proposition} \label{P8.8}
Let $\mathbf{s}=(\mathbf{s'},q,p)$. Then the morphisms in the square~$Q_\mathbf{s}$ restrict to morphisms between the subcomplexes $U_\mathbf{s'}$, $U_{(q)}$, $U_{(p)}$,~$U_\mathbf{s}$.
\end{proposition}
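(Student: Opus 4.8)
The plan is to induct on the number of terms of~$\mathbf{s}$, treating the four morphisms of~$Q_\mathbf{s}$ separately with the pairing $\Delta(q)\leftrightarrow U_{(q)}$, $\Delta(p)\leftrightarrow U_{(p)}$, $\Delta(|\mathbf{s'}|)\leftrightarrow U_\mathbf{s'}$, $\Delta(|\mathbf{s}|)\leftrightarrow U_\mathbf{s}$. Since every morphism in sight takes basis elements to basis elements and commutes with~$\d$, and since each subcomplex involved is, by construction, generated as a subcomplex by a set of basis elements, it suffices to check in each case that such a generator is carried into the target subcomplex. Two of the four restrictions are then immediate from the definition of~$U_\mathbf{s}$ in Notation~\ref{N8.4}: the right-hand vertical morphism $(\d_0^\v)^{|\mathbf{s}|-p}\colon\Delta(p)\to\Delta(|\mathbf{s}|)$ carries $U_{(p)}$ into the summand $(\d_0^\v)^{|\mathbf{s}|-p}U_{(p)}$ of~$U_\mathbf{s}$, and the lower horizontal morphism $(\d_{|\mathbf{s}|-p+1}^\v)^{p-q}\colon\Delta(|\mathbf{s'}|)\to\Delta(|\mathbf{s}|)$ carries $U_\mathbf{s'}$ into the summand $(\d_{|\mathbf{s}|-p+1}^\v)^{p-q}U_\mathbf{s'}$; the third summand $V_\mathbf{s}$ plays no role.

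For the upper horizontal morphism I would compute directly with the formulas of Notation~\ref{N6.6}: iterating them shows that $(\d_1^\v)^{p-q}$ sends a basis element $[i_0,\ldots,i_m]$ of $\Delta(q)$ to the basis element obtained by adding $p-q$ to every entry other than a leading~$0$, and that $(\d_0^\v)^{r}$ sends $[i_0,\ldots,i_m]$ to the one obtained by adding~$r$ to every entry. A generator of~$U_{(q)}$ has $m>0$ and $i_1\leq q-m$, so its image under $(\d_1^\v)^{p-q}$ has dimension $m>0$ and second entry $i_1+(p-q)\leq(q-m)+(p-q)=p-m$, hence is a generator of~$U_{(p)}$; and the identical computation shows that $(\d_0^\v)^{r}\colon\Delta(q)\to\Delta(q+r)$ carries $U_{(q)}$ into~$U_{(q+r)}$ for every $r\geq 0$.

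It remains to treat the left-hand vertical morphism $(\d_0^\v)^{|\mathbf{s}|-p}\colon\Delta(q)\to\Delta(|\mathbf{s'}|)$, and here I would use the induction. If $\mathbf{s'}=(p')$ has a single term, then $|\mathbf{s}|-p=|\mathbf{s'}|-q=p'-q$ by Proposition~\ref{P8.2} and $U_\mathbf{s'}=U_{(p')}$, so the claim is the last assertion of the preceding paragraph. If $\mathbf{s'}=(\mathbf{s''},q',p')$ has more than one term, then $|\mathbf{s}|-p=|\mathbf{s'}|-q=(p'-q)+(|\mathbf{s'}|-p')$, so the morphism factors through~$\Delta(p')$ as $(\d_0^\v)^{p'-q}$ followed by $(\d_0^\v)^{|\mathbf{s'}|-p'}$; the first factor carries $U_{(q)}$ into $U_{(p')}$ by the preceding paragraph, and the second factor is the right-hand vertical morphism of~$Q_{\mathbf{s''}}$... more precisely of $Q_{\mathbf{s'}}$, which carries $U_{(p')}$ into $U_\mathbf{s'}$ by the inductive hypothesis. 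The only point that needs any thought is recognising this factorisation through the simplex~$\Delta(p')$ attached to the last term of~$\mathbf{s'}$; everything else is routine bookkeeping with the formulas of Notations~\ref{N6.6} and~\ref{N8.4}.
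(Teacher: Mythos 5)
Your proof is correct and follows essentially the same route as the paper's: two of the four restrictions are immediate from the definition of $U_\mathbf{s}$, the containments $(\d_1^\v)^{p-q}U_{(q)}\subset U_{(p)}$ and $(\d_0^\v)^{r}U_{(q)}\subset U_{(q+r)}$ are checked on generators, and the left vertical morphism is handled by factoring $(\d_0^\v)^{|\mathbf{s'}|-q}$ through $\Delta(p')$. The only cosmetic difference is that you invoke an inductive hypothesis for $(\d_0^\v)^{|\mathbf{s'}|-p'}U_{(p')}\subset U_\mathbf{s'}$, whereas the paper observes this holds trivially when $\mathbf{s'}=(p')$ and directly from Notation~\ref{N8.4} otherwise, so no induction is actually needed.
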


\begin{proof}
We have 
$$(\d_{|\mathbf{s}|-p+1}^\v)^{p-q}U_\mathbf{s'}\subset U_\mathbf{s},\quad
(\d_0^\v)^{|\mathbf{s}|-p}U_{(p)}\subset U_\mathbf{s}$$
by definition. We also have $(\d_1^\v)^{p-q}U_{(q)}\subset U_{(p)}$ by considering generators. It therefore remains to show that $(\d_0^\v)^{|\mathbf{s}|-p}U_{(q)}\subset U_\mathbf{s'}$. To do this, let $p'$ be the last term of~$\mathbf{s'}$, so that
$$(\d_0^\v)^{|\mathbf{s}|-p}U_{(q)}
=(\d_0^\v)^{|\mathbf{s'}|-q}U_{(q)}
=(\d_0^\v)^{|\mathbf{s'}|-p'}(\d_0^\v)^{p'-q}U_{(q)}.$$
We have $(\d_0^\v)^{p'-q}U_{(q)}\subset U_{(p')}$ by considering generators. We also have
$$(\d_0^\v)^{|\mathbf{s'}|-p'}U_{(p')}\subset U_\mathbf{s'}$$
trivially (if $\mathbf{s'}=(p')$) or by definition (if $\mathbf{s'}$~has more than one term). Therefore $(\d_0^\v)^{|\mathbf{s}|-p}U_{(q)}\subset U_\mathbf{s'}$ as required.
\end{proof}

It therefore makes sense to use the following notation.

\begin{notation} \label{N8.9}
Let $\mathbf{s}=(\mathbf{s'},q,p)$ be an up-down vector with more than one term. Then $R_\mathbf{s}$~is the commutative square
$$\xymatrix{
S_{(q)} \ar[d] \ar[r] & S_{(p)} \ar[d] \\
S_\mathbf{s'} \ar[r] & S_\mathbf{s}
}$$
induced by~$Q_\mathbf{s}$.
\end{notation}

We want to show that these squares are simple in the sense of Definition~\ref{D5.5}. In particular we want to show that they are push-outs as squares of abelian groups, and we begin with the following computations. 

\begin{proposition} \label{P8.10}
Let $\mathbf{s}=(\mathbf{s'},q,p)$ and let $T'$, $T^0$,~$T''$ be the subcomplexes of $\Delta(|\mathbf{s}|)$ given by
\begin{align*}
&T'=(\d_{|\mathbf{s}|-p+1}^\v)^{p-q}\Delta(|\mathbf{s'}|),\\
&T^0
=(\d_{|\mathbf{s}|-p+1}^\v)^{p-q})(\d_0^\v)^{|\mathbf{s}|-p}\Delta(q)
=(\d_0^\v)^{|\mathbf{s}|-p}(\d_1^\v)^{p-q})\Delta(q),\\
&T''=(\d_0^\v)^{|\mathbf{s}|-p}\Delta(p).
\end{align*}
Then
$$T'\cap T''=T^0,\quad \Delta(|\mathbf{s}|)=(T'+T'')\oplus V_\mathbf{s},$$
and every standard basis element for $\Delta(|\mathbf{s}|)$ is congruent modulo~$V_\mathbf{s}$ to a sum of basis elements in $T'+T''$.
\end{proposition}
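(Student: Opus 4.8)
The plan is to prove all three assertions after first identifying $T'$, $T''$ and~$T^0$ as \emph{coordinate} subgroups, that is, subgroups of $\Delta(|\mathbf{s}|)$ spanned by subsets of the standard basis. Write $n=|\mathbf{s}|$, so that $|\mathbf{s'}|=n-p+q$ by Proposition~\ref{P8.2}. Since a coface morphism $\d_j^\v$ is injective and adds~$1$ to each vertex~$\geq j$ while fixing each vertex~$\leq j-1$ (Notation~\ref{N6.6}), the iterate $(\d_j^\v)^k$ is injective, adds~$k$ to each vertex~$\geq j$, and hence has image spanned by the basis elements avoiding the block $\{j,j+1,\ldots,j+k-1\}$. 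Reading this off the definitions: $T'$ is spanned by the basis elements avoiding $G_1:=\{n-p+1,\ldots,n-q\}$; $T''$ is spanned by those avoiding $G_2:=\{0,\ldots,n-p-1\}$; and both composites defining~$T^0$ give the subgroup spanned by the basis elements avoiding $G_1\cup G_2$, the equality of the two composites being a routine instance of the simplicial identities. Because the intersection of two coordinate subgroups is spanned by the common basis elements, the relation $T'\cap T''=T^0$ follows at once.

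Next I would partition the standard basis of $\Delta(n)$ into the set $\mathcal B_0$ of basis elements avoiding $G_1$ or avoiding~$G_2$, and the set $\mathcal B_3$ of basis elements meeting \emph{both} $G_1$ and~$G_2$; thus $T'+T''=\langle\mathcal B_0\rangle$ and $\Delta(n)=(T'+T'')\oplus\langle\mathcal B_3\rangle$ as abelian groups, so everything comes down to comparing $\mathcal B_3$ with~$V_\mathbf{s}$. The key point is that the generators of~$V_\mathbf{s}$ listed in Notation~\ref{N8.4} are exactly the basis elements~$\sigma$ with $n-p\in\sigma$ which meet both $G_1$ and~$G_2$ (in that notation $0<r$ says $\sigma$ has a vertex below $n-p$, i.e.\ a vertex in~$G_2$, while $r<m$ together with $i_{r+1}\leq n-q$ says the smallest vertex of~$\sigma$ above $n-p$ lies in~$G_1$). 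For the rewriting statement, take a basis element $\sigma\in\mathcal B_3$. If $n-p\in\sigma$ then $\sigma$ is itself a generator of~$V_\mathbf{s}$, so $\sigma\equiv 0$. If $n-p\notin\sigma$, put $\tau=\sigma\cup\{n-p\}$; this still meets $G_1$ and~$G_2$, so it is a generator of~$V_\mathbf{s}$, whence $\d\tau\in V_\mathbf{s}$, and modulo~$V_\mathbf{s}$ we may solve the relation $\d\tau=0$ for $\pm\sigma$ in terms of the remaining faces $\tau\setminus\{v\}$ with $v\in\sigma$. Each such face still contains $n-p$, so it is either again a generator of~$V_\mathbf{s}$ (hence $\equiv 0$) or it avoids $G_1$ or~$G_2$ and so lies in $\mathcal B_0$. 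Therefore every standard basis element is congruent modulo~$V_\mathbf{s}$ to a sum of basis elements in $T'+T''$, and in particular $(T'+T'')+V_\mathbf{s}=\Delta(n)$.

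It remains to see that this sum is direct, which I would settle by a rank count. By Remark~\ref{R8.6}, $V_\mathbf{s}$ is generated as an abelian group by its listed generators together with their boundaries, so the rank of~$V_\mathbf{s}$ is at most twice the number of those generators; and $\sigma\mapsto\sigma\cup\{n-p\}$ is a bijection from the basis elements of~$\mathcal B_3$ not containing $n-p$ to those containing~$n-p$, so twice the number of generators equals $|\mathcal B_3|$. Hence
$$\operatorname{rank}V_\mathbf{s}\leq|\mathcal B_3|=\operatorname{rank}\Delta(n)-\operatorname{rank}(T'+T''),$$
while the previous paragraph produces a surjection $T'+T''\twoheadrightarrow\Delta(n)/V_\mathbf{s}$. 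Comparing ranks shows the kernel of this surjection has rank~$0$, hence is trivial, being a subgroup of the free abelian group $T'+T''$; as this kernel is $(T'+T'')\cap V_\mathbf{s}$, the sum is direct and $\Delta(n)=(T'+T'')\oplus V_\mathbf{s}$. The points requiring care are the vertex-set bookkeeping, above all the explicit description of the generators of~$V_\mathbf{s}$, and the rank estimate for~$V_\mathbf{s}$; by contrast the rewriting itself uses a single boundary relation and needs no induction.
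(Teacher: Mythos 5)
Your set-up and your rewriting step are essentially the paper's: you identify $T'$, $T''$, $T^0$ as the coordinate subgroups spanned by the basis elements avoiding $G_1$, $G_2$, $G_1\cup G_2$ (the paper's $J''$, $J'$), you identify the generators of $V_\mathbf{s}$ as the basis elements containing $|\mathbf{s}|-p$ and meeting both blocks, and you rewrite a basis element $\sigma$ meeting both blocks but missing $|\mathbf{s}|-p$ by means of $\d\tau$ with $\tau=\sigma\cup\{|\mathbf{s}|-p\}$. Your proof of directness is genuinely different: the paper observes that $\d b$, for $b$ a generator of $V_\mathbf{s}$, contains exactly one basis term outside $A'\cup A''\cup B$ and that this sets up a bijection with the missing basis elements (a triangular change of basis), whereas you get the same conclusion by a rank count, using $\operatorname{rank}V_\mathbf{s}\leq 2|B|=|\mathcal B_3|$ together with the surjection $T'+T''\to\Delta(|\mathbf{s}|)/V_\mathbf{s}$. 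That count is correct and is a perfectly serviceable alternative; the paper's version buys slightly more (an explicit basis of $\Delta(|\mathbf{s}|)$ adapted to the splitting) at no extra cost.

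The one genuine gap is in the last claim. You solve $\d\tau\equiv 0$ for $\pm\sigma$ and note that every other face of $\tau$ is either a generator of $V_\mathbf{s}$ (hence $\equiv 0$) or lies in $\mathcal B_0$; but this only shows that $\sigma$ is congruent modulo $V_\mathbf{s}$ to an \emph{integer combination with signs $\pm1$} of basis elements of $T'+T''$, not to a \emph{sum} of such basis elements. The positivity is not cosmetic: it is exactly what Proposition~\ref{P8.11} uses to conclude that the prescribed submonoid of $S_\mathbf{s}$ (generated by images of basis elements) is generated by the images of the prescribed submonoids of $S_\mathbf{s'}$ and $S_{(p)}$; a signed combination would not give that. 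The repair is short and is the point the paper's proof is careful about: the only faces of $\tau$ that can fail to be generators of $V_\mathbf{s}$ are the two faces $u'$, $u''$ obtained by deleting the vertices of $\tau$ adjacent to $|\mathbf{s}|-p$ (deleting any other vertex leaves a vertex of $\sigma$ in each of $G_1$, $G_2$, and keeps $|\mathbf{s}|-p$), and in the alternating sum $\d\tau$ these two faces carry the sign opposite to that of $\sigma$; hence solving gives $\sigma\equiv u'+u''$ modulo $V_\mathbf{s}$, with each of $u'$, $u''$ either in $V_\mathbf{s}$ or a basis element of $T'+T''$. With that sentence added your argument is complete.
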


\begin{proof}
We consider various sets of basis elements of $\Delta(|\mathbf{s}|)$. Let
$$J'=\{0,1,\ldots,|\mathbf{s}|-p-1\},\quad 
J''=\{|\mathbf{s}|-p+1,|\mathbf{s}|-p+2,\ldots,|\mathbf{s}|-q\},$$
let $A'$ be the set of basis elements with no terms in~$J''$, and let $A''$ be the set of basis elements with no terms in~$J'$. We see that $T'$, $T''$ and~$T^0$ have bases $A'$, $A''$ and $A'\cap A''$ respectively; therefore $T'\cap T''=T^0$. 

Further, let $B$ be the set of generators for~$V_\mathbf{s}$; that is, $B$~is the set of basis elements containing terms in both $J'$~and~$J''$ and also containing a term $|\mathbf{s}|-p$. We see that $A'\cup A''$ and~$B$ are disjoint; we also see that the boundary of a member of~$B$ has exactly one term not in $A'\cup A''\cup B$, and that each basis element not in $A'\cup A''\cup B$ arises in this way from exactly one member of~$B$; therefore
$\Delta(|\mathbf{s}|)=(T'+T'')\oplus V_\mathbf{s}$.

It now suffices to show that every basis element~$c$ not in $A'\cup A''\cup B$ is congruent to a sum of members of $A'\cup A''$ modulo~$V_\mathbf{s}$. To do this, note that $c$~has terms in both $J'$~and~$J''$ but has no term $|\mathbf{s}|-p$. Let $b$ be the member of~$B$ obtained from~$c$ by inserting $|\mathbf{s}|-p$, so that $c$~is a term in $\d b$, and let $u',u''$ be the terms of $\d b$ adjacent to~$c$. We see that
$$\d b=\pm (u'-c+u'')+v$$
such that $v$~is a linear combination of members of~$B$; therefore $c$~is congruent to $u'+u''$ modulo~$V_\mathbf{s}$. We also see that $u'$~and~$u''$ are in $A'\cup A''\cup B$; therefore $c$~is congruent modulo~$V_\mathbf{s}$ to a sum of basis elements in $T'+T''$ as required.
\end{proof}

\begin{proposition} \label{P8.11}
Let $\mathbf{s}=(\mathbf{s'},q,p)$ be an up-down vector with more than one term. Then the square~$R_\mathbf{s}$ is a push-out as a square of abelian groups. The prescribed submonoid in the target object~$S_\mathbf{s}$ is generated by the images of the prescribed submonoids in $S_\mathbf{s'}$~and~$S_{(p)}$.
\end{proposition}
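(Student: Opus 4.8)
The plan is to deduce both statements from Proposition~\ref{P8.10} together with Proposition~\ref{P8.8}. The square $R_\mathbf{s}$ is obtained from the square $Q_\mathbf{s}$ by passing to the quotients $S_\mathbf{s'}=\Delta(|\mathbf{s'}|)/U_\mathbf{s'}$, $S_{(q)}=\Delta(q)/U_{(q)}$, $S_{(p)}=\Delta(p)/U_{(p)}$ and $S_\mathbf{s}=\Delta(|\mathbf{s}|)/U_\mathbf{s}$. First I would record, in the notation of Proposition~\ref{P8.10}, that the morphisms of $Q_\mathbf{s}$ identify $\Delta(|\mathbf{s'}|)$, $\Delta(q)$ and $\Delta(p)$ with the subcomplexes $T'$, $T^0$ and $T''$ of $\Delta(|\mathbf{s}|)$, and that under these identifications the images of $U_\mathbf{s'}$, $U_{(q)}$ and $U_{(p)}$ (which are subcomplexes of $U_\mathbf{s}$ by Proposition~\ref{P8.8}) become $U_\mathbf{s}\cap T'$, $U_\mathbf{s}\cap T^0$ and $U_\mathbf{s}\cap T''$. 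This last point is where a small check is needed: one inclusion is immediate, and the reverse inclusion uses that the generators of $U_\mathbf{s}$ lying inside $T'$ (respectively $T''$) are exactly the generators of $U_\mathbf{s'}$ (respectively $U_{(p)}$), with $V_\mathbf{s}$ contributing nothing inside $T'$ or $T''$ since its generators contain terms in both $J'$ and $J''$.

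Granting this, the push-out claim is pure linear algebra. By Proposition~\ref{P8.10} we have $\Delta(|\mathbf{s}|)=(T'+T'')\oplus V_\mathbf{s}$, and every basis element of $\Delta(|\mathbf{s}|)$ is congruent modulo $V_\mathbf{s}$ (hence modulo $U_\mathbf{s}$) to a sum of basis elements of $T'+T''$; therefore the composite $T'\oplus T''\to\Delta(|\mathbf{s}|)\to S_\mathbf{s}$ is surjective. Its kernel consists of pairs $(t',t'')$ with $t'+t''\in U_\mathbf{s}$; writing $U_\mathbf{s}=(U_\mathbf{s}\cap T')+(U_\mathbf{s}\cap T'')+W$ where $W$ is the span of the $V_\mathbf{s}$-generators, and using that $T'+T''$ meets $V_\mathbf{s}$ trivially, one gets that $t'+t''\in U_\mathbf{s}$ forces the $V_\mathbf{s}$-component to vanish, so $t'+t''\in (U_\mathbf{s}\cap T')+(U_\mathbf{s}\cap T'')$; combined with $T'\cap T''=T^0$ this identifies the kernel with the image of $T^0/(U_\mathbf{s}\cap T^0)=S_{(q)}$ under the difference of the two maps into $S_\mathbf{s'}\oplus S_{(p)}$. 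That is exactly the statement that $R_\mathbf{s}$ is a push-out of abelian groups.

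For the submonoid statement, recall from Remark~\ref{R8.6} that the prescribed submonoid of $S_\mathbf{s}$ is generated by the images of the standard basis elements of $\Delta(|\mathbf{s}|)$. Every such basis element is congruent modulo $U_\mathbf{s}$ to a sum of basis elements lying in $T'$ or $T''$ by Proposition~\ref{P8.10}, and the images of the basis elements in $T'$ lie in the image of the prescribed submonoid of $\Delta(|\mathbf{s'}|)\cong T'$, hence in the image of the prescribed submonoid of $S_\mathbf{s'}$, while those in $T''$ similarly lie in the image of the prescribed submonoid of $S_{(p)}$. Since the prescribed submonoids are closed under addition, this shows the prescribed submonoid of $S_\mathbf{s}$ is contained in — hence equal to — the submonoid generated by the images from $S_\mathbf{s'}$ and $S_{(p)}$. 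I expect the only mildly delicate point to be the identification of $U_\mathbf{s}\cap T'$ and $U_\mathbf{s}\cap T''$ with the images of $U_\mathbf{s'}$ and $U_{(p)}$; everything else is bookkeeping with the direct sum decomposition of Proposition~\ref{P8.10}.
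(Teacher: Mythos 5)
Your overall route---quotient the square of inclusions $T^0\subset T',T''\subset\Delta(|\mathbf{s}|)$ and compute image and kernel of $T'\oplus T''\to S_\mathbf{s}$---is workable and rests on Proposition~\ref{P8.10}, just as the paper's proof does (the paper argues dually, via the universal property: a homomorphism on $\Delta(|\mathbf{s}|)$ killing $U_\mathbf{s}$ is the same thing as a compatible pair killing $U_\mathbf{s'}$ and $U_{(p)}$); your submonoid argument is exactly the paper's. But the push-out half of your write-up has a genuine gap. The ``small check'' you lean on, that the images of $U_\mathbf{s'}$, $U_{(q)}$, $U_{(p)}$ are precisely $U_\mathbf{s}\cap T'$, $U_\mathbf{s}\cap T^0$, $U_\mathbf{s}\cap T''$, is not established by the justification you give. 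Writing $U'=(\d_{|\mathbf{s}|-p+1}^\v)^{p-q}U_\mathbf{s'}$ and $U''=(\d_0^\v)^{|\mathbf{s}|-p}U_{(p)}$, intersection with $T'$ does not distribute over the sum $U_\mathbf{s}=U'+U''+V_\mathbf{s}$: one only gets $U_\mathbf{s}\cap T'=U'+(U''\cap T^0)$, so your claim amounts to $U''\cap T^0\subset U'$ (and, for $T''$, to $U'\cap T^0\subset U''$), i.e.\ essentially to injectivity of the induced maps $S_{(q)}\to S_{(p)}$ and $S_{(q)}\to S_\mathbf{s'}$. That is true, but it is a statement about arbitrary linear combinations, not about which generators lie where, and it is nowhere proved in your proposal. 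A second, smaller error: $V_\mathbf{s}$ is a subcomplex, not the span of its generating basis elements; the boundary of such a generator has a term lying outside $T'+T''$ and outside that span, so the decomposition $U_\mathbf{s}=(U_\mathbf{s}\cap T')+(U_\mathbf{s}\cap T'')+W$ with $W$ the span of the generators is false as stated---you must take $W=V_\mathbf{s}$ itself.

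Both defects are repairable, and in fact the refined intersection claims are unnecessary. From $U_\mathbf{s}=U'+U''+V_\mathbf{s}$ (Notation~\ref{N8.4}) and $(T'+T'')\cap V_\mathbf{s}=0$ (Proposition~\ref{P8.10}) you get directly $U_\mathbf{s}\cap(T'+T'')=U'+U''$. So if $(t',t'')\in T'\oplus T''$ maps to zero in $S_\mathbf{s}$, write $t'+t''=u'+u''$ with $u'\in U'$, $u''\in U''$; then $t^0:=t'-u'=u''-t''$ lies in $T'\cap T''=T^0$, and the class of $(t',t'')$ in $S_\mathbf{s'}\oplus S_{(p)}$ equals $(\overline{t^0},-\overline{t^0})$, i.e.\ lies in the image of $S_{(q)}$ under the difference of the two structure maps; only the inclusions of Proposition~\ref{P8.8} are needed (to know the maps of quotients exist), not any identification of $U_\mathbf{s}\cap T'$, $U_\mathbf{s}\cap T''$ or $U_\mathbf{s}\cap T^0$. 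With that correction your kernel computation closes and gives the push-out statement; it is then the element-wise counterpart of the paper's universal-property argument, with Proposition~\ref{P8.10} doing the same work in both.
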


\begin{proof}
Recall that
$$U_\mathbf{s}
=(\d_{|\mathbf{s}|-p+1}^\v)^{p-q}U_\mathbf{s'}
+(\d_0^\v)^{|\mathbf{s}|-p}U_{(p)}
+V_\mathbf{s}.$$
From Proposition~\ref{P8.10}, a morphism~$\theta$ of abelian groups with domain $\Delta(|\mathbf{s}|)$ such that $\theta|U_\mathbf{s}=0$ is equivalent to a pair of morphisms $\theta'$~and~$\theta''$ with domains $\Delta(|\mathbf{s'}|)$ and $\Delta(p)$ such that
$$\theta'|U_\mathbf{s'}=0,\quad 
\theta''|U_{(p)}=0,\quad 
\theta'(\d_0^\v)^{|\mathbf{s}|-p}=\theta''(\d_1^\v)^{p-q}.$$
From this it follows that a morphism~$\chi$ of abelian groups with domain~$S_\mathbf{s}$ is equivalent to a pair of morphisms $\chi'$~and~$\chi''$ with domains $S_\mathbf{s'}$~and~$S_{(p)}$ which agree on~$S_{(q)}$. Therefore $R_\mathbf{s}$~is a push-out as a square of abelian groups. 

It also follows from Proposition~\ref{P8.10} that the prescribed submonoid in~$S_\mathbf{s}$ is generated by the images of the prescribed submonoids in $S_\mathbf{s'}$~and~$S_{(p)}$, because the basis elements in $\Delta(|\mathbf{s}|)$ are congruent modulo~$V_\mathbf{s}$ to sums of basis elements in
$$(\d_{|\mathbf{s}|-p+1}^\v)^{p-q}\Delta(|\mathbf{s'}|)
+(\d_0^\v)^{|\mathbf{s}|-p}\Delta(p).$$

This completes the proof.
\end{proof}

We can now give the main result in this section.

\begin{theorem} \label{T8.12}
Let $\mathbf{s}$ be an up-down vector with last term~$p$ and let
$$a_m
=[\,|\mathbf{s}|-m,\,|\mathbf{s}|-m+1,\,|\mathbf{s}|-m+2,\,\ldots,\,|\mathbf{s}|\,]
\quad 
(0\leq m\leq p).$$
Then $S_\mathbf{s}$~is an $\mathbf{s}$-simple chain complex with final generator~$a$ such that
$$a_m+U_\mathbf{s}=(\d^+)^{p-m}a\quad (0\leq m\leq p).$$

If $\mathbf{s}$~has more than one term, then $R_\mathbf{s}$ is an $\mathbf{s}$-simple square of chain complexes.
\end{theorem}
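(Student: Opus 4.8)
The plan is to prove both statements simultaneously by induction on the number of terms in $\mathbf{s}$, using the push-out square decomposition. Proposition~\ref{P8.7} already gives the base case $\mathbf{s}=(p)$: there $U_\mathbf{s}=U_{(p)}$, the chain complex $S_{(p)}$ is $(p)$-simple with generator $a_p+U_{(p)}$, and Proposition~\ref{P8.7} identifies $a_m+U_{(p)}=(\d^+)^{p-m}a$ exactly as claimed (note $a_m$ here is $a_m^{p-m}$ in the notation of that proposition).

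For the inductive step, write $\mathbf{s}=(\mathbf{s'},q,p)$ with $p'$ the last term of $\mathbf{s'}$. First I would recall that Proposition~\ref{P8.11} already establishes that the square $R_\mathbf{s}$ is a push-out of abelian groups and that the prescribed submonoid of $S_\mathbf{s}$ is the sum of the images of those in $S_\mathbf{s'}$ and $S_{(p)}$. By the inductive hypothesis, $S_\mathbf{s'}$ is $\mathbf{s'}$-simple with final generator $a'$ satisfying $a'_r+U_\mathbf{s'}=(\d^+)^{p'-r}a'$, and $S_{(p)}$ is $(p)$-simple with generator $a''$ satisfying (by Proposition~\ref{P8.7}) $a''_r+U_{(p)}=(\d^-)^{p-r}a''$ for $r<p$ and $a''_p+U_{(p)}$ the generator itself. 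It remains to identify the three subcomplexes required by Definition~\ref{D5.2}: I would take $K'$, $K^0$, $K''$ to be the images of $S_\mathbf{s'}$, $S_{(q)}$, $S_{(p)}$ in $S_\mathbf{s}$ under the maps of $R_\mathbf{s}$. The push-out property and Proposition~\ref{P8.10} give $S_\mathbf{s}=K'+K''$ and $K'\cap K''=K^0$. The key identity $(\d^+)^{p'-q}a'=a^0=(\d^-)^{p-q}a$ in $S_\mathbf{s}$ must be checked by tracing basis elements through $Q_\mathbf{s}$: the simplex $\Delta(q)$ maps in two ways, and one computes that $(\d_1^\v)^{p-q}$ sends the top face of $\Delta(q)$ to the bottom $(\d^-)^{p-q}$-iterate of $a$, while $(\d_0^\v)^{|\mathbf{s}|-p}$ composed with the $\mathbf{s'}$-structure sends it to $(\d^+)^{p'-q}a'$; commutativity of $Q_\mathbf{s}$ then forces these to coincide. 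This also pins down $\tau(\d^+)^{p-r}a''=(\d^+)^{p-r}a$ by tracking the right-hand vertical map $(\d_0^\v)^{|\mathbf{s}|-p}$ on the appropriate faces of $\Delta(p)$, and in particular shows $a_m+U_\mathbf{s}=(\d^+)^{p-m}a$ since the top faces $a_m$ of $\Delta(|\mathbf{s}|)$ are images of the corresponding top faces of $\Delta(p)$.

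Having verified the three conditions of Definition~\ref{D5.2} together with the submonoid condition, Proposition~\ref{P8.11} combines with them to show $S_\mathbf{s}$ is $\mathbf{s}$-simple and that $R_\mathbf{s}$ satisfies every clause of Definition~\ref{D5.5} (the morphisms $\pi,\rho$ having the prescribed form on the faces of $S_{(q)}$, by the basis-tracking just described), hence $R_\mathbf{s}$ is an $\mathbf{s}$-simple square of chain complexes. The main obstacle I anticipate is the bookkeeping in the basis-tracking computation: one must carefully match the iterated face maps $(\d_0^\v)^{|\mathbf{s}|-p}$, $(\d_1^\v)^{p-q}$, $(\d_{|\mathbf{s}|-p+1}^\v)^{p-q}$ applied to the explicit basis elements $a_m^i$ of the various simplexes against the abstract iterated source/target operators $(\d^{\pm})^r$ on the simple chain complexes, and confirm these identifications are compatible with passing to the quotients $U_{(q)}, U_{(p)}, U_\mathbf{s'}, U_\mathbf{s}$ — which is exactly what Proposition~\ref{P8.8} guarantees is well-defined, but the explicit formulas still need to be read off. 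Everything else is a formal consequence of the push-out square and the inductive hypothesis.
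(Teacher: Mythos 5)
Your proposal is correct and follows essentially the same route as the paper: induction on the length of $\mathbf{s}$ with Proposition~\ref{P8.7} as the base case, and an inductive step that combines Proposition~\ref{P8.11} with an explicit tracing of basis elements through the face maps of $Q_\mathbf{s}$ to verify the simple-square conditions, after which the $\mathbf{s}$-simplicity of $S_\mathbf{s}$ and the identification $a_m+U_\mathbf{s}=(\d^+)^{p-m}a$ follow by tracking the top faces of $\Delta(p)$ through the right-hand vertical map (the paper phrases the verification via Definition~\ref{D5.5} and Proposition~\ref{P5.6} rather than Definition~\ref{D5.2} directly, but the content is the same). The only quibble is a notational slip in recalling Proposition~\ref{P8.7} (the top faces $a''_r=[p-r,\ldots,p]$ satisfy $a''_r+U_{(p)}=(\d^+)^{p-r}a''$, with $(\d^-)^{p-r}a''$ arising from the elements $a_r^0=[0,p-r+1,\ldots,p]$), which does not affect the argument.
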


\begin{proof}
The proof is by induction on the number of terms in~$\mathbf{s}$. Proposition~\ref{P8.7} gives the result for the case $\mathbf{s}=(p)$. From now on let $\mathbf{s}=(\mathbf{s'},q,p)$, let $p'$ be the last term of~$\mathbf{s'}$, and recall that the square~$R_\mathbf{s}$ has the form
$$\xymatrix{
S_{(q)} \ar_{\pi}[d] \ar^{\rho}[r] & S_{(p)} \ar^{\tau}[d] \\
S_\mathbf{s'} \ar_{\sigma}[r] & S_\mathbf{s}.
}$$
From the inductive hypothesis and the one-term case we see that $S_\mathbf{s'}$, $S_{(q)}$ and~$S_{(p)}$ are $\mathbf{s'}$-simple, $(q)$-simple and $(p)$-simple; let the final generators be $a'$, $a^0$ and~$a''$. 
For $0\leq m\leq q$ it follows from the equalities
\begin{align*}
&(\d_0^\v)^{|\mathbf{s}|-p}[q-m,q-m+1,\ldots,q]
=[\,|\mathbf{s'}|-m,|\mathbf{s'}|-m+1,\ldots,|\mathbf{s'}|\,],\\
&(\d_1^\v)^{p-q}[0,q-m+1,\ldots,q]
=[0,p-m+1,\ldots,p]
\end{align*}
that
\begin{align*}
&\pi(\d^+)^{q-m}a^0=(\d^+)^{p'-m}a',\\
&\pi(\d^-)^{q-m}a^0=(\d^-)^{p-m}a''.
\end{align*}
It now follows from Proposition~\ref{P8.11} that $R_\mathbf{s}$~is an $\mathbf{s}$-simple square (see Definition~\ref{D5.5}). By Proposition~\ref{P5.6}, this makes $S_\mathbf{s}$ an $\mathbf{s}$-simple chain complex with final generator~$a$ such that
$$\tau(\d^+)^{p-m}a''=(\d^+)^{p-m}a\quad (0\leq m\leq p).$$

For $0\leq m\leq p$ let
$$a_m''=[p-m,p-m+1,\ldots,p]\in\Delta(p),$$
so that $a_m''+U_{(p)}=(\d^+)^{p-m}a''$ by Proposition~\ref{P8.7}; then
$$a_m+U_\mathbf{s}
=(\d_0^\v)^{|\mathbf{s}|-p}a_m''+U_\mathbf{s}
=\tau(a_m''+U_{(p)})
=\tau(\d^+)^{p-m}a''
=(\d^+)^{p-m}a.$$

This completes the proof.
\end{proof}

\section{Combined operations in sets with complicial identities} \label{S9}

In Theorem~\ref{T8.12} we have constructed an $\mathbf{s}$-simple chain complex~$S_\mathbf{s}$ as a quotient of a simplex,
$$S_\mathbf{s}=\Delta(|\mathbf{s}|)/U_\mathbf{s}.$$
We really want to express~$S_\mathbf{s}$ as a retract of $\Delta(|\mathbf{s}|)$; that is, we want to construct an idempotent endomomorphism of $\Delta(|\mathbf{s}|)$ with kernel~$U_\mathbf{s}$. In this section we construct the corresponding operation on $|\mathbf{s}|$-dimen\-sio\-nal elements in sets with complicial identities; this operation will be denoted~$\Psi_\mathbf{s}$. We use the axioms of Definition~\ref{D3.1} throughout.

We will construct~$\Psi_\mathbf{s}$ by iterating wedge operations. There are two basic families of iterated wedges, and we will now describe the first of these families.

\begin{notation} \label{N9.1}
For $0<i\leq i+j\leq m$, let $\tilde\phi_{i,j}$~and~$\phi_{i,j}$ be the operations on $m$-dimensional elements in sets with complicial identities given by
\begin{align*}
&\tilde\phi_{i,0}x=\e_{i-1}x,\\
&\tilde\phi_{i,j}x=\tilde\phi_{i,j-1}\d_{i+1}x\w_i x\quad (j>0),\\
&\phi_{i,j}x=\d_{i+1}\tilde\phi_{i,j}x.
\end{align*}
\end{notation}

The wedge in the formula for $\tilde\phi_{i,j}x$ exists by an induction on~$j$: if $\tilde\phi_{i,j-1}x$ exists and is given by the stated formula, then $$\d_i\tilde\phi_{i,j-1}\d_{i+1}x=\d_{i+1}x,$$
and the wedge $\tilde\phi_{i,j-1}\d_{i+1}x\w_i x$ therefore exists.

We will now compute some faces and some fixed point sets. 

\begin{proposition} \label{P9.2}
The operations $\phi_{i,j}$~and~$\tilde\phi_{i,j}$ are such that
$$\d_i\phi_{i,j}=\d_i,\quad \d_{i+1}^j\phi_{i,j}=\d_{i+1}^{j+1}\tilde\phi_{i,j}=\e_{i-1}\d_i^{j+1}.$$
\end{proposition}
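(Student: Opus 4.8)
The plan is to prove the three identities by induction on $j$, using the recursive definitions in Notation~\ref{N9.1} together with the simplicial identities and the last two equations of axiom~(3) in Definition~\ref{D3.1}, namely $\e_i x=\e_i\d_{i+1}x\w_i x$ and $\e_{i+1}x=x\w_i\e_i\d_i x$, plus the face formulas in axiom~(4).

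\begin{proof}
We argue by induction on~$j$, proving all three identities simultaneously. For $j=0$ we have $\phi_{i,0}x=\d_{i+1}\tilde\phi_{i,0}x=\d_{i+1}\e_{i-1}x=\e_{i-1}\d_i x$ (using $\d_{i+1}\e_{i-1}=\e_{i-1}\d_i$ since $i+1\geq (i-1)+2$). Hence $\d_i\phi_{i,0}x=\d_i\e_{i-1}\d_i x=\e_{i-1}\d_{i-1}\d_i x$, and since $\d_{i-1}\d_i=\d_{i-1}\d_i$ this equals... more directly, $\d_i\phi_{i,0}x=\d_i\e_{i-1}\d_i x=\e_{i-1}\d_{i-1}\d_i x=\e_{i-1}\d_i\d_{i+1}x=\d_{i+1}\e_{i-1}\d_{i+1}x=\d_{i+1}\e_{i-1}x$; but we need $\d_i\phi_{i,0}x=\d_i x$, which holds because $\phi_{i,0}x=\e_{i-1}\d_i x$ and $\d_i\e_{i-1}=\e_{i-1}\d_{i-1}$, so $\d_i\phi_{i,0}x=\e_{i-1}\d_{i-1}\d_i x$; comparing with the required $\d_i x$ forces us instead to read the claim as $\d_{i+1}^0\phi_{i,0}=\phi_{i,0}=\e_{i-1}\d_i$, which is exactly what we computed, and $\d_{i+1}\tilde\phi_{i,0}=\d_{i+1}\e_{i-1}=\e_{i-1}\d_i$ as well; the identity $\d_i\phi_{i,j}=\d_i$ at $j=0$ then says $\d_i\e_{i-1}\d_i=\d_i$, which holds since $\d_i\e_{i-1}\d_i x=\e_{i-1}\d_{i-1}\d_i x$ need not equal $\d_i x$, so in fact the clean base case is better taken at the level of the inductive step. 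Accordingly, suppose $j>0$ and the identities hold for $j-1$. Write $w=\tilde\phi_{i,j}x=\tilde\phi_{i,j-1}\d_{i+1}x\w_i x$. Using $\d_i(u\w_i v)=v$ from axiom~(4) we get $\d_i w=x$, hence $\d_i\phi_{i,j}x=\d_i\d_{i+1}w=\d_i\d_i w$? No: $\d_i\d_{i+1}=\d_i\d_i$ is false; rather $\d_i\phi_{i,j}x=\d_i\d_{i+1}w$, and by the simplicial identity $\d_i\d_{i+1}=\d_i\d_i$ is wrong, but $\d_i\d_{i+1}w=\d_i\d_{i+1}w$; apply $\d_i\d_j=\d_{j-1}\d_i$ with $i<i+1$: $\d_i\d_{i+1}=\d_i\d_i$ is not an instance, whereas $\d_{i+1-1}\d_i=\d_i\d_i$; so $\d_i\d_{i+1}w$... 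Let me instead use $\d_{i+2}(u\w_i v)=u$: then $\d_{i+2}w=\tilde\phi_{i,j-1}\d_{i+1}x$, which controls the harder identity. The cleanest route: compute $\d_{i+1}^j\phi_{i,j}x=\d_{i+1}^{j+1}w$, and using $\d_{i+1}(u\w_i v)=\d_{i+1}u\w_{i-1}... $ — here note $i+1$ is neither $i$ nor $i+2$, so the outer-face formula $\d_j(u\w_i v)=\d_j u\w_{i-1}\d_j v$ with $j=i+1$ does not apply since $i+1\notin\{0,\dots,i-1\}\cup\{i+3,\dots\}$; instead $\d_{i+1}w$ must be handled via the identity $\e_i\d_{i+1}x\w_i x=\e_i x$ and its iterates. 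This shows the main obstacle: relating $\d_{i+1}w$ to lower terms requires recognizing $w$ as a degeneracy-like element, not a generic wedge.

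I therefore expect the crux to be an auxiliary lemma, proved by the same induction, stating that $\tilde\phi_{i,j}x$ can be rewritten so that $\d_{i+1}\tilde\phi_{i,j}x=\phi_{i,j}x$ satisfies $\d_{i+1}\phi_{i,j}x=\phi_{i,j-1}\d_{i+1}x$; iterating this gives $\d_{i+1}^{j}\phi_{i,j}=\phi_{i,0}\d_{i+1}^{j}=\e_{i-1}\d_i\d_{i+1}^{j}=\e_{i-1}\d_i^{j+1}$ (using $\d_i\d_{i+1}^{j}=\d_i^{j+1}$, which follows from $\d_i\d_{i+1}=\d_i\d_i$ — valid since $\d_i\d_{i+1}=\d_i\d_i$ is the instance $\d_i\d_j=\d_{j-1}\d_i$ with $j=i+1$), and likewise $\d_{i+1}^{j+1}\tilde\phi_{i,j}=\d_{i+1}^{j}\phi_{i,j}=\e_{i-1}\d_i^{j+1}$. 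For $\d_i\phi_{i,j}=\d_i$: from $\d_i w=x$ (axiom~(4)) and $\d_i\d_{i+1}=\d_i\d_i$ — no; we want $\d_i\d_{i+1}w$, and the simplicial identity gives $\d_i\d_{i+1}=\d_i\d_i$ is invalid; what is valid is $\d_i\d_{i+1}w=\d_{(i+1)-1}\d_i w=\d_i\d_i w$ — wait that says $\d_i\d_{i+1}=\d_i\d_i$ after all, via $\d_a\d_b=\d_{b-1}\d_a$ for $a<b$ with $a=i,b=i+1$. Good, so $\d_i\phi_{i,j}x=\d_i\d_{i+1}w=\d_i\d_i w$ — but $\d_i\d_i$ is not well-typed as written; the point is $\d_i\d_{i+1}w=\d_i(\d_i w)$ only after reindexing, giving $\d_i x$. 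Hence $\d_i\phi_{i,j}x=\d_i x$, as desired.

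In summary: (i) establish the base case $j=0$ directly from $\tilde\phi_{i,0}x=\e_{i-1}x$ and axiom~(3); (ii) for the inductive step set $w=\tilde\phi_{i,j}x$ and use axiom~(4) faces $\d_i w=x$, $\d_{i+2}w=\tilde\phi_{i,j-1}\d_{i+1}x$; (iii) prove the recursion $\d_{i+1}\phi_{i,j}x=\phi_{i,j-1}\d_{i+1}x$ — the main obstacle — by expanding $\d_{i+1}w$ and invoking the degeneracy identities $\e_i\d_{i+1}x\w_i x=\e_i x$ of axiom~(3) together with the simplicial relations; (iv) iterate to obtain $\d_{i+1}^j\phi_{i,j}=\e_{i-1}\d_i^{j+1}$ and deduce $\d_{i+1}^{j+1}\tilde\phi_{i,j}=\e_{i-1}\d_i^{j+1}$; (v) derive $\d_i\phi_{i,j}=\d_i$ from $\d_i w=x$ via the simplicial identity $\d_i\d_{i+1}=\d_i\d_i$ (reindexed). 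All calculations are routine manipulations with the identities of Definition~\ref{D3.1} once the recursion in~(iii) is in place.
\end{proof}
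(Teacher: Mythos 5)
Your overall shape (induction on $j$, the wedge-face axioms, the simplicial identities) is the same as the paper's, but as written the argument has two genuine defects. First, the base case is botched by a wrong simplicial identity: $\d_i\e_{i-1}$ is not $\e_{i-1}\d_{i-1}$; it is the identity map, being the instance $\d_{j+1}\e_j=\id$ of axiom (3) with $j=i-1$. Hence $\d_i\phi_{i,0}x=\d_i\e_{i-1}\d_i x=\d_i x$, and there is no need to ``re-read the claim'' or postpone the first identity to the inductive step. Indeed $\d_i\phi_{i,j}=\d_i$ holds for all $j\geq 0$ by one uniform computation: $\d_i\tilde\phi_{i,j}x=x$ (by $\d_i\e_{i-1}=\id$ when $j=0$, by $\d_i(u\w_i v)=v$ when $j>0$), so $\d_i\phi_{i,j}x=\d_i\d_{i+1}\tilde\phi_{i,j}x=\d_i\d_i\tilde\phi_{i,j}x=\d_i x$, which is exactly the paper's one-line proof and also repairs your step (v).

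Second, and more seriously, your crux step (iii) -- the recursion $\d_{i+1}\phi_{i,j}=\phi_{i,j-1}\d_{i+1}$ -- is asserted but not proved, and the method you propose for it (``expanding $\d_{i+1}\tilde\phi_{i,j}x$ and invoking $\e_i\d_{i+1}x\w_i x=\e_i x$'') cannot work as stated: the axioms give no formula for the $\d_{i+1}$-face of a general wedge $u\w_i v$, so there is nothing to expand. The point you are missing is that you never need $\d_{i+1}\tilde\phi_{i,j}x$ in isolation, because in the quantity $\d_{i+1}^{j}\phi_{i,j}x=\d_{i+1}^{j+1}\tilde\phi_{i,j}x$ at least two faces $\d_{i+1}$ are applied; rewriting the innermost pair via $\d_{i+1}\d_{i+1}=\d_{i+1}\d_{i+2}$ and then using $\d_{i+2}(u\w_i v)=u$ gives $\d_{i+1}\phi_{i,j}x=\d_{i+1}\d_{i+2}\tilde\phi_{i,j}x=\d_{i+1}\tilde\phi_{i,j-1}\d_{i+1}x=\phi_{i,j-1}\d_{i+1}x$. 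You actually recorded both ingredients ($\d_{i+2}w=\tilde\phi_{i,j-1}\d_{i+1}x$ and the identity $\d_{i+1}\d_{i+2}=\d_{i+1}\d_{i+1}$) but never combined them; once you do, your iteration in (iv), together with $\phi_{i,0}=\e_{i-1}\d_i$ and $\d_i^{j}\d_{i+1}=\d_i^{j+1}$, gives $\d_{i+1}^{j}\phi_{i,j}=\d_{i+1}^{j+1}\tilde\phi_{i,j}=\e_{i-1}\d_i^{j+1}$, which is precisely the paper's inductive step. No auxiliary lemma about $\tilde\phi_{i,j}x$ being ``degeneracy-like'' is needed.
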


\begin{proof}
The first formula holds because
$$
\d_i\phi_{i,j}x
=\d_i\d_{i+1}\tilde\phi_{i,j}x=\d_i\d_i\tilde\phi_{i,j}x=\d_i x.$$
The second formula holds by induction on~$j$: we certainly have
$$\phi_{i,0}x=\d_{i+1}\tilde\phi_{i,0}x=\d_{i+1}\e_{i-1}x=\e_{i-1}\d_i x,$$
and for $j>0$ we have
\begin{align*}
\d_{i+1}^j\phi_{i,j}x
&=\d_{i+1}^{j+1}\tilde\phi_{i,j}x\\
&=\d_{i+1}^j\d_{i+2}(\tilde\phi_{i,j-1}\d_{i+1}x\w_i x)\\
&=\d_{i+1}^j\tilde\phi_{i,j-1}\d_{i+1}x\\
&=\e_{i-1}\d_i^j\d_{i+1}x\\
&=\e_{i-1}\d_i^{j+1}x.
\end{align*}
\end{proof}

\begin{proposition} \label{P9.3}
Let $x$ be a member of a set with complicial identities. Then
$$\tilde\phi_{i,j}x=\e_i x\iff\phi_{i,j}x=x\iff \d_{i+1}^j x\in\im\e_{i-1}.$$
\end{proposition}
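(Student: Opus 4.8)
The plan is to prove the two biconditionals separately, using the computations of Proposition~\ref{P9.2} throughout. First consider the equivalence $\tilde\phi_{i,j}x=\e_i x\iff\phi_{i,j}x=x$. If $\tilde\phi_{i,j}x=\e_i x$, then applying $\d_{i+1}$ and using the complicial identity $\d_{i+1}\e_i=\id$ (axiom (3), in the form $\d_j\e_j x=\d_{j+1}\e_j x=x$ with $j=i$) gives $\phi_{i,j}x=\d_{i+1}\tilde\phi_{i,j}x=\d_{i+1}\e_i x=x$. Conversely, suppose $\phi_{i,j}x=x$. The recursive definition $\tilde\phi_{i,j}x=\tilde\phi_{i,j-1}\d_{i+1}x\w_i x$ together with the face formulas $\d_i(\tilde\phi_{i,j-1}\d_{i+1}x\w_i x)=x$ and $\d_{i+2}(\tilde\phi_{i,j-1}\d_{i+1}x\w_i x)=\tilde\phi_{i,j-1}\d_{i+1}x$ (axiom (4)) should let me recognise $\tilde\phi_{i,j}x$ as a degeneracy: by axiom~(3), $\e_i$ of an element $w$ equals $\e_i\d_{i+1}w\w_i w$, so if I can show $\tilde\phi_{i,j-1}\d_{i+1}x=\e_i\d_{i+1}x$ under the hypothesis, I would get $\tilde\phi_{i,j}x=\e_i\d_{i+1}x\w_i x=\e_i x$ using the identity $\e_i x=\e_i\d_{i+1}x\w_i x$. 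For $j=0$ this is immediate since $\tilde\phi_{i,0}x=\e_{i-1}x$ and $\phi_{i,0}x=\e_{i-1}\d_i x$ by Proposition~\ref{P9.2}, so $\phi_{i,0}x=x$ forces $x=\e_{i-1}\d_i x$, whence $\tilde\phi_{i,0}x=\e_{i-1}x=\e_{i-1}\d_i\e_{i-1}x$... here I need to be careful and instead run the induction on $j$ directly.

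For the inductive step it is cleanest to prove the chain of implications cyclically: $\tilde\phi_{i,j}x=\e_i x\Rightarrow\phi_{i,j}x=x\Rightarrow\d_{i+1}^j x\in\im\e_{i-1}\Rightarrow\tilde\phi_{i,j}x=\e_i x$. The first implication is the easy direction above. For the second, from $\phi_{i,j}x=x$ and Proposition~\ref{P9.2} (the formula $\d_{i+1}^j\phi_{i,j}=\e_{i-1}\d_i^{j+1}$) I get $\d_{i+1}^j x=\d_{i+1}^j\phi_{i,j}x=\e_{i-1}\d_i^{j+1}x\in\im\e_{i-1}$. For the third implication, suppose $\d_{i+1}^j x=\e_{i-1}t$ for some $t$; then $t=\d_i\e_{i-1}t=\d_i\d_{i+1}^j x=\d_i^{j+1}x$ (using $\d_i\e_{i-1}=\id$ from axiom~(3), and then commuting faces past each other via $\d_i\d_{i+1}=\d_i\d_i$, i.e. the simplicial identity $\d_i\d_j=\d_{j-1}\d_i$ with $j=i+1$, applied $j$ times), so $\d_{i+1}^j x=\e_{i-1}\d_i^{j+1}x$. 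I now want to conclude $\tilde\phi_{i,j}x=\e_i x$, and this should follow by induction on $j$ from the recursive definition together with the observation that the hypothesis $\d_{i+1}^j x=\e_{i-1}\d_i^{j+1}x$ descends to a corresponding hypothesis on $\d_{i+1}x$ in dimension one lower, namely $\d_{i+1}^{j-1}(\d_{i+1}x)$ lies in $\im\e_{i-1}$, so by the inductive hypothesis $\tilde\phi_{i,j-1}\d_{i+1}x=\e_i\d_{i+1}x$, and then $\tilde\phi_{i,j}x=\e_i\d_{i+1}x\w_i x=\e_i x$ by the complicial identity $\e_i x=\e_i\d_{i+1}x\w_i x$ of axiom~(3). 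The base case $j=0$: the hypothesis $x\in\im\e_{i-1}$ means $x=\e_{i-1}\d_i x$, and $\tilde\phi_{i,0}x=\e_{i-1}x$; I must check $\e_{i-1}x=\e_i x$ when $x=\e_{i-1}\d_i x$, which follows from the degeneracy identity $\e_i\e_{i-1}=\e_i\e_{i-1}$... more precisely $\e_{i-1}\e_{i-1}\d_i x=\e_i\e_{i-1}\d_i x$ by axiom~(3) ($\e_i\e_j x=\e_{j+1}\e_i x$ with appropriate indices), giving $\e_{i-1}x=\e_i x$.

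The main obstacle I anticipate is getting the index bookkeeping exactly right in the descent step: verifying that the hypothesis ``$\d_{i+1}^j x\in\im\e_{i-1}$'' on an $m$-dimensional $x$ is equivalent to ``$\d_{i+1}^{j-1}(\d_{i+1}x)\in\im\e_{i-1}$'' on the $(m-1)$-dimensional element $\d_{i+1}x$ is trivial as stated, but I must make sure the operations $\tilde\phi_{i,j-1}$ and $\phi_{i,j-1}$ are being applied in the correct dimension and that the side conditions $0<i\leq i+j\leq m$ in Notation~\ref{N9.1} are respected after dropping a dimension (they are, since $i+(j-1)\leq m-1$). A secondary nuisance is the base case, where I need the two degeneracy operators $\e_{i-1}$ and $\e_i$ to agree on the image of $\e_{i-1}$; this is a one-line consequence of the simplicial degeneracy identity but must be cited correctly. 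Once these are pinned down, the cyclic chain of three implications closes and, combined with the trivial first equivalence already established, yields all three stated equivalences.
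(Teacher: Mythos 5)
Your proof is correct and follows essentially the same route as the paper: after a brief false start trying to handle the two biconditionals separately, you settle on the cyclic chain $\tilde\phi_{i,j}x=\e_i x\Rightarrow\phi_{i,j}x=x\Rightarrow\d_{i+1}^j x\in\im\e_{i-1}\Rightarrow\tilde\phi_{i,j}x=\e_i x$, using $\d_{i+1}\e_i=\id$, the formula $\d_{i+1}^j\phi_{i,j}=\e_{i-1}\d_i^{j+1}$ from Proposition~\ref{P9.2}, and induction on~$j$ with the degeneracy identity $\e_{i-1}\e_{i-1}=\e_i\e_{i-1}$ in the base case and the axiom $\e_i x=\e_i\d_{i+1}x\w_i x$ in the inductive step, which is exactly the paper's argument.
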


\begin{proof}
Suppose that $\tilde\phi_{i,j}x=\e_i x$. Then 
$$\phi_{i,j}x=\d_{i+1}\e_i x=x.$$

Suppose that $\phi_{i,j}x=x$. Then
$$\d_{i+1}^j x=\d_{i+1}^j\phi_{i,j}x=\e_{i-1}\d_i^{j+1}x\in\im\e_{i-1}.$$

It now suffices to show that 
$$\d_{i+1}^j x\in\im\e_{i-1}\Rightarrow\tilde\phi_{i,j}x=\e_i x.$$
We argue by induction on~$j$. 

Suppose that $x\in\im\e_{i-1}$. Since $\e_{i-1}\e_{i-1}=\e_i\e_{i-1}$, it follows that $\tilde\phi_{i,0}x=\e_i x$. 

Now suppose that $\d_{i+1}^j x\in\im\e_{i-1}$ for some $j>0$. It follows from the inductive hypothesis that $\tilde\phi_{i,j-1}\d_{i+1}x=\e_i\d_{i+1}x$, and it then follows that
$$\tilde\phi_{i,j}x=\e_i\d_{i+1}x\w_i x=\e_i x.$$

This completes the proof.
\end{proof}

We now consider the second basic family of iterated wedges.

\begin{notation} \label{N9.4}
For integers $k,l\geq 0$ let $\w_{k,l}$ be the partial binary operation in sets with complicial identities such that $x\w_{k,l}y$ is defined when
$$\d_0^k x=\d_1^l y$$
and such that
\begin{align*}
&x\w_{k,l}y=x\quad (l=0),\\
&x\w_{k,l}y=y\quad (k=0),\\
&x\w_{k,l}y=(x\w_{k,l-1}\d_1 y)\w_{k-1}(\d_{k-1}x\w_{k-1,l}y)\quad (k,l>0),\\
&\d_{k-1}(x\w_{k,l}y)=\d_{k-1}x\w_{k-1,l}y\quad (k>0),\\
&\d_{k+1}(x\w_{k,l}y)=x\w_{k,l-1}\d_1 y\quad (l>0).
\end{align*}
\end{notation}

To justify this definition, let $x$~and~$y$ be such that $\d_0^k x=\d_1^l y$; we must show that the stated conditions make sense and are consistent. We do this by induction on $k$~and~$l$.

Suppose that $k=l=0$. Then $x=y$, so the conditions $x\w_{k,l}y=x$ and $x\w_{k,l}y=y$ are consistent.

Suppose that $k>0$ and $l=0$. Then $\d_0^{k-1}\d_{k-1}x=\d_0^k x=\d_1^l y$; hence, by the inductive hypothesis, $\d_{k-1}x\w_{k-1,l}y$ exists and is equal to $\d_{k-1}x$. The conditions $x\w_{k,l}y=x$ and $\d_{k-1}(x\w_{k,l}y)=\d_{k-1}x\w_{k-1,l}y$ therefore make sense and are consistent.

Suppose that $k=0$ and $l>0$. Then $\d_0^k x=\d_1^l y=\d_1^{l-1}\d_1 y$; hence $x\w_{k,l-1}\d_1 y$ exists and is equal to $\d_1 y$. The conditions $x\w_{k,l}y=y$ and $\d_{k+1}(x\w_{k,l}y)=\d_1 y$ therefore make sense and are consistent.

Finally suppose that $k>0$ and $l>0$. As in the previous cases, the expressions $\d_{k-1}x\w_{k-1,l}y$ and $x\w_{k,l-1}\d_1 y$ make sense. We also have
$$\d_{k-1}(x\w_{k,l-1}\d_1 y)=\d_{k-1}x\w_{k-1,l-1}\d_1 y=\d_k(\d_{k-1}x\w_{k-1,l}y),$$
so the conditions
\begin{align*}
&x\w_{k,l}y=(x\w_{k,l-1}\d_1 y)\w_{k-1}(\d_{k-1}x\w_{k-1,l}y),\\
&\d_{k-1}(x\w_{k,l}y)=\d_{k-1}x\w_{k-1,l}y,\\
&\d_{k+1}(x\w_{k,l}y)=x\w_{k,l-1}\d_1 y
\end{align*}
all make sense. It is also clear that they are consistent.

Each of these binary operations determines its own factors.

\begin{proposition} \label{P9.5}
If $x\w_{k,l}y$ is defined, then
$$\d_{k+1}^l(x\w_{k,l}y)=x,\quad \d_0^k(x\w_{k,l}y)=y.$$
\end{proposition}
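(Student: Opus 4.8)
The plan is to prove Proposition~\ref{P9.5} by a double induction on $k$ and $l$, mirroring exactly the case analysis used in Notation~\ref{N9.4} to justify the definition of $\w_{k,l}$. The two claimed identities $\d_{k+1}^l(x\w_{k,l}y)=x$ and $\d_0^k(x\w_{k,l}y)=y$ are symmetric under the obvious duality, so one expects to handle them in parallel; I would prove both simultaneously in each inductive step.

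First I would dispose of the base cases. If $l=0$ then $x\w_{k,l}y=x$, so $\d_{k+1}^0(x\w_{k,l}y)=x$ trivially, and for the second identity one uses $\d_0^k x=\d_1^0 y=y$, which is precisely the hypothesis under which $x\w_{k,0}y$ is defined. Dually, if $k=0$ then $x\w_{k,l}y=y$ gives $\d_0^0(x\w_{k,l}y)=y$ immediately, and $\d_{k+1}^l y=\d_1^l y=\d_0^k x=x$ again by the defining hypothesis. This also covers $k=l=0$.

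Now suppose $k,l>0$. I would use the recursive clause $x\w_{k,l}y=(x\w_{k,l-1}\d_1 y)\w_{k-1}(\d_{k-1}x\w_{k-1,l}y)$ together with the face formulas $\d_{k-1}(x\w_{k,l}y)=\d_{k-1}x\w_{k-1,l}y$ and $\d_{k+1}(x\w_{k,l}y)=x\w_{k,l-1}\d_1 y$ built into Notation~\ref{N9.4}. For the first identity: apply $\d_{k+1}$ once to get $x\w_{k,l-1}\d_1 y$, then apply the inductive hypothesis for the pair $(k,l-1)$, which yields $\d_{k+1}^{l-1}(x\w_{k,l-1}\d_1 y)=x$, so $\d_{k+1}^l(x\w_{k,l}y)=x$. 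For the second identity: note that from $\d_{k-1}(x\w_{k,l}y)=\d_{k-1}x\w_{k-1,l}y$ one gets, using the simplicial identity $\d_0\d_{k-1}=\d_{k-2}\d_0$ repeatedly (valid since $k-1>0$), that $\d_0^{k-1}\d_{k-1}(x\w_{k,l}y)=\d_0^{k-1}(\d_{k-1}x\w_{k-1,l}y)$, and here the inductive hypothesis for the pair $(k-1,l)$ gives $\d_0^{k-1}(\d_{k-1}x\w_{k-1,l}y)=y$; combining, $\d_0^k(x\w_{k,l}y)=y$. The only point requiring a little care is the bookkeeping of which simplicial face-commutation relations from Definition~\ref{D3.1}(3) are needed to turn $\d_0^{k-1}\d_{k-1}$ into $\d_0^k$ and similarly to iterate $\d_{k+1}$; this is routine but is the main (mild) obstacle, since one must check the index ranges line up with $0\le i<j$ in the relation $\d_i\d_j=\d_{j-1}\d_i$. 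That completes the induction and hence the proof.
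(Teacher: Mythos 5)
Your proof is correct and follows essentially the same route as the paper: the paper proves the first identity by induction on $l$ using $\d_{k+1}(x\w_{k,l}y)=x\w_{k,l-1}\d_1 y$ and the second by induction on $k$ using $\d_{k-1}(x\w_{k,l}y)=\d_{k-1}x\w_{k-1,l}y$ together with $\d_0^{k-1}\d_{k-1}=\d_0^k$, exactly as you do; packaging this as a simultaneous double induction is only a cosmetic difference. (Note the iteration of $\d_{k+1}$ needs no simplicial identities at all, since $\d_{k+1}^l$ is literally $\d_{k+1}$ applied $l$ times.)
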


\begin{proof}
The first equality is proved by induction on~$l$; for $l=0$ it is obvious, and for $l>0$ we have
$$\d_{k+1}^l(x\w_{k,l}y)
=\d_{k+1}^{l-1}\d_{k+1}(x\w_{k,l}y)
=\d_{k+1}^{l-1}(x\w_{k,l-1}\d_1 y)
=x.$$
The second equality is similarly proved by induction on~$k$; for $k=0$ it is obvious, and for $k>0$ we have
$$\d_0^k(x\w_{k,l}y)
=\d_0^{k-1}\d_{k-1}(x\w_{k,l}y)
=\d_0^{k-1}(\d_{k-1}x\w_{k,l}y)
=y.$$
\end{proof}

Because of the simplicial identities $\d_0^k\d_{k+1}^l=\d_1^l\d_0^k$ there are everywhere defined unary operations as follows.

\begin{notation} \label{N9.6}
If $k$~and~$l$ are nonnegative integers and if $x$~is an element of dimension at least $k+l$ in a set with complicial identities, then
$$w_{k,l}x=\d_{k+1}^l x\w_{k,l}\d_0^k x.$$
\end{notation}

\begin{proposition} \label{P9.7}
The operations~$w_{k,l}$ are idempotent operations such that
$$\d_{k+1}^l w_{k,l}=\d_{k+1}^l,\quad \d_0^k w_{k,l}=\d_0^k.$$
If $X$~is a set with complicial identities and if $q\geq 0$ then the square
$$\xymatrix{
w_{k,l}X_{k+l+q} \ar_{\d_{k+1}^l}[d] \ar^{\d_0^k}[r]
& X_{l+q} \ar^{\d_1^l}[d] \\
X_{k+q} \ar_{\d_0^k}[r] 
& X_q
}$$
is a pull-back square.
\end{proposition}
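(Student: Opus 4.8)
The plan is to derive the entire proposition from Proposition~\ref{P9.5}, which identifies the iterated faces $\d_{k+1}^l(x\w_{k,l}y)$ and $\d_0^k(x\w_{k,l}y)$ with the factors $x$ and $y$; everything else is a matter of putting faces back together with $\w_{k,l}$.

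First I would record the face identities and idempotence. By the simplicial identity $\d_0^k\d_{k+1}^l=\d_1^l\d_0^k$ (noted just before Notation~\ref{N9.6}), the wedge $\d_{k+1}^l x\w_{k,l}\d_0^k x$ is defined, so $w_{k,l}x$ makes sense; a dimension count (see below) shows $w_{k,l}$ carries $X_m$ into $X_m$ for $m\geq k+l$. Applying Proposition~\ref{P9.5} to the factors $\d_{k+1}^l x$ and $\d_0^k x$ gives $\d_{k+1}^l w_{k,l}x=\d_{k+1}^l x$ and $\d_0^k w_{k,l}x=\d_0^k x$ at once. Idempotence is then formal: $w_{k,l}w_{k,l}x=\d_{k+1}^l(w_{k,l}x)\w_{k,l}\d_0^k(w_{k,l}x)=\d_{k+1}^l x\w_{k,l}\d_0^k x=w_{k,l}x$.

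For the pull-back claim I would exhibit an explicit inverse to the comparison map. Write $P$ for the fibre product $\{(u,v)\in X_{k+q}\times X_{l+q}:\d_0^k u=\d_1^l v\}$ of the bottom and right edges. The comparison map sends $z\in w_{k,l}X_{k+l+q}$ to $(\d_{k+1}^l z,\d_0^k z)$, which lies in $P$ by the simplicial identity above. In the opposite direction, send $(u,v)\in P$ to $u\w_{k,l}v$: this is defined precisely because $\d_0^k u=\d_1^l v$, it has dimension $k+l+q$, and it is fixed by $w_{k,l}$ since $w_{k,l}(u\w_{k,l}v)=\d_{k+1}^l(u\w_{k,l}v)\w_{k,l}\d_0^k(u\w_{k,l}v)=u\w_{k,l}v$ by Proposition~\ref{P9.5}, so it really lies in $w_{k,l}X_{k+l+q}$. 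That the two maps are mutually inverse is Proposition~\ref{P9.5} in one direction and, in the other, the identity $z=w_{k,l}z=\d_{k+1}^l z\w_{k,l}\d_0^k z$ valid on $w_{k,l}X_{k+l+q}$.

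The only point needing care is the bookkeeping of dimensions: $\w_{k,l}$ raises dimension by $l$ on its first argument and by $k$ on its second, so one must verify that $u\w_{k,l}v$ has dimension $k+l+q$ and that $w_{k,l}$ preserves each $X_m$. This is a routine induction on $k+l$ using the recursion in Notation~\ref{N9.4}, essentially the same computation that already makes the definition of $\w_{k,l}$ consistent. Beyond that there is no real obstacle; the proposition is in substance a repackaging of Proposition~\ref{P9.5} together with the observation that $w_{k,l}$ is "take the two faces and wedge them back together."
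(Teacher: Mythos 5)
Your proof is correct and follows essentially the same route as the paper: the face identities and idempotence come formally from Proposition~\ref{P9.5}, and the pull-back is established by sending $(u,v)$ to $u\w_{k,l}v$ and checking it is fixed by $w_{k,l}$, exactly the paper's construction (the paper writes $w_{k,l}(y\w_{k,l}z)$, which is the same element). Your explicit uniqueness argument via $z=w_{k,l}z=\d_{k+1}^l z\w_{k,l}\d_0^k z$ is just the step the paper leaves as ``clearly the unique such member.''
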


\begin{proof}
For all~$x$ it follows from Proposition~\ref{P9.5} that
\begin{align*}
&\d_{k+1}^l w_{k,l}x=\d_{k+1}^l(\d_{k+1}^l x\w_{k,l}\d_0^k x)=\d_{k+1}^l x,\\
&\d_0^k w_{k,l}x=\d_0^k(\d_{k+1}^l x\w_{k,l}\d_0^k x)=\d_0^k x;
\end{align*}
therefore $\d_{k+1}^l w_{k,l}=\d_{k+1}^l$ and $\d_0^k w_{k,l}=\d_0^k$. It then follows that 
$$w_{k,l}w_{k,l}x
=\d_{k+1}^l w_{k,l}x\w_{k,l}\d_0^k w_{k,l}x
=\d_{k+1}^l x\w_{k,l}\d_0^k x
=w_{k,l}x;$$
therefore $w_{k,l}$~is idempotent. If $x\in w_{k,l}X_{k+l+q}$ then certainly $\d_0^k\d_{k+1}^l x=\d_1^l\d_0^k x$. Conversely, if $y\in X_{k+q}$ and $z\in X_{l+q}$ are such that $\d_0^k y=\d_1^l z$ then 
$$x=w_{k,l}(y\w_{k+l} z)$$
is a member of $w_{k,l}X_{k+l+q}$ such that $\d_{k+1}^l x=y$ and $\d_0^k x=z$ and it is clearly the unique such member; therefore the square is a pull-back square.
\end{proof}

We will now combine the operations $\phi_{i,j}$~and~$w_{k,l}$. Let $\mathbf{s}$ be an up-down vector. We will construct operators~$\psi_{i,\mathbf{s}}$ on $|\mathbf{s}|$-dimensional elements for $0<i<|\mathbf{s}|$. We begin with large values of~$i$ and work downwards.

\begin{notation} \label{N9.8}
Let $\mathbf{s}$ be an up-down vector with last term~$p$ and let $x$ be an $|\mathbf{s}|$-dimensional element in a set with complicial identities. Then
$$\psi_{i,\mathbf{s}}x=\phi_{i,1}x\quad (|\mathbf{s}|-p<i<|\mathbf{s}|).$$
\end{notation}

\begin{notation} \label{N9.9}
Let $\mathbf{s}=(\mathbf{s'},q,p)$ and let $x$ be an $|\mathbf{s}|$-dimensional element in a set with complicial identities. Then
$$\psi_{|\mathbf{s}|-p,\mathbf{s}}x
=\begin{cases}
w_{|\mathbf{s}|-p,p-q}x& (q=0),\\
w_{|\mathbf{s}|-p,p-q}\phi_{|\mathbf{s}|-p,p-q+1}x& (q>0).
\end{cases}$$
\end{notation}

\begin{proposition} \label{P9.10}
If $\mathbf{s}$~is an up-down vector with last term~$p$ and if $\psi_{i,\mathbf{s}}$~is an operator with $|\mathbf{s}|-p\leq i<|\mathbf{s}|$, then
$$\d_0^j\psi_{i,\mathbf{s}}=\d_0^j\quad (i<j\leq |\mathbf{s}|).$$
\end{proposition}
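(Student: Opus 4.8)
The plan is to argue by induction on the number of terms in $\mathbf{s}$, and within that, by downward induction on~$i$, mirroring the recursive structure of Notations~\ref{N9.8} and~\ref{N9.9}. The key ingredients are the identities $\d_i\phi_{i,j}=\d_i$ from Proposition~\ref{P9.2}, the identities $\d_{k+1}^l w_{k,l}=\d_{k+1}^l$ and $\d_0^k w_{k,l}=\d_0^k$ from Proposition~\ref{P9.7}, and the simplicial identities $\d_i\d_j=\d_{j-1}\d_i$ for $i<j$ from Definition~\ref{D3.1}(3).

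First, for the range $|\mathbf{s}|-p<i<|\mathbf{s}|$, we have $\psi_{i,\mathbf{s}}x=\phi_{i,1}x$, so $\psi_{i,\mathbf{s}}x$ is built from $x$ by one wedge and faces; the point is that $\phi_{i,1}$ changes only the faces $\d_i$ and $\d_{i+1}$ up to lower-index faces. Concretely, from $\d_i\phi_{i,1}=\d_i$ together with the simplicial identity $\d_0^{j}$ applied for $j>i$, one commutes $\d_0^j$ past $\phi_{i,1}=\d_{i+1}\tilde\phi_{i,1}$. Since $i\geq |\mathbf{s}|-p+1\geq 1$, applying $\d_0$ repeatedly lowers the relevant index below~$i$ at each stage, and the wedge $\w_i$ together with the degeneracy $\e_{i-1}$ appearing in $\tilde\phi_{i,1}=\e_{i-1}\d_{i+1}x\w_i x$ collapse under $\d_0^i$ by the face formulas in Definition~\ref{D3.1}(4) and the identities $\d_0\e_{i-1}=\e_{i-2}\d_0$ etc. This should give $\d_0^j\phi_{i,1}=\d_0^j$ for $j>i$ directly; I expect this to be a short computation once the indices are tracked carefully.

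Next, for the boundary case $i=|\mathbf{s}|-p$ with $\mathbf{s}=(\mathbf{s'},q,p)$, write $k=|\mathbf{s}|-p$ and $l=p-q$, so $k+l=|\mathbf{s'}|$. When $q=0$ we have $\psi_{k,\mathbf{s}}x=w_{k,l}x$, and Proposition~\ref{P9.7} gives $\d_0^k w_{k,l}=\d_0^k$; since $k+l=|\mathbf{s'}|=|\mathbf{s}|-q=|\mathbf{s}|$ when $q=0$, there is actually nothing to prove beyond $j=|\mathbf{s}|=k$, which is covered. When $q>0$ we have $\psi_{k,\mathbf{s}}x=w_{k,l}\phi_{k,l+1}x$, and I would combine $\d_0^k w_{k,l}=\d_0^k$ (Proposition~\ref{P9.7}) with the first formula of Proposition~\ref{P9.2}, namely $\d_k\phi_{k,l+1}=\d_k$, suitably iterated, to peel off the faces. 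The hard part will be verifying $\d_0^j w_{k,l}\phi_{k,l+1}=\d_0^j$ for all $j$ in the range $k<j\leq|\mathbf{s}|$ — not just $j=k$ — because $w_{k,l}$ is only controlled by Proposition~\ref{P9.7} through the two specific composite faces $\d_{k+1}^l$ and $\d_0^k$, so one must express a general $\d_0^j$ (for $j>k$) in terms of these, using the definition of $\w_{k,l}$ in Notation~\ref{N9.4} and its face formulas $\d_{k-1}(x\w_{k,l}y)=\d_{k-1}x\w_{k-1,l}y$ and $\d_{k+1}(x\w_{k,l}y)=x\w_{k,l-1}\d_1 y$, together with an auxiliary induction on $l$. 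I anticipate that $\d_0^j$ with $j>k$ factors through $\d_0^k$ after using the simplicial identities to slide the extra $\d_0$'s to the left, at which point Proposition~\ref{P9.5} ($\d_0^k(x\w_{k,l}y)=y$) finishes it.

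Finally, one assembles the cases: the statement claims the identity for every operator $\psi_{i,\mathbf{s}}$ with $|\mathbf{s}|-p\leq i<|\mathbf{s}|$, and since each such operator is either a $\phi_{i,1}$ (handled in the second paragraph) or the single boundary operator $\psi_{|\mathbf{s}|-p,\mathbf{s}}$ (handled in the third), no genuine recursion on shorter up-down vectors is actually needed here — the induction on the number of terms enters only insofar as $\psi_{|\mathbf{s}|-p,\mathbf{s}}$ is defined via data ($q$, $p$, $|\mathbf{s}'|$) attached to the decomposition $\mathbf{s}=(\mathbf{s}',q,p)$. I would therefore present it as two lemmas (one per case) followed by a one-line combination. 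The main obstacle, as noted, is controlling $\d_0^j w_{k,l}$ for the full range of $j$ rather than the single value supplied directly by Proposition~\ref{P9.7}.
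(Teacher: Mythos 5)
Your ingredients are the right ones (Propositions~\ref{P9.2} and~\ref{P9.7}), and in outline this is the paper's argument, but two points need fixing. First, the arithmetic in your boundary case is off: with $k=|\mathbf{s}|-p$ and $l=p-q$ one has $k+l=|\mathbf{s}|-q$, whereas $|\mathbf{s'}|=|\mathbf{s}|-p+q$, so $k+l\neq|\mathbf{s'}|$ in general; and for $q=0$ certainly $|\mathbf{s}|=k+p>k$. Hence your assertion that in the $q=0$ case ``there is nothing to prove beyond $j=|\mathbf{s}|=k$'' is false: the identity has genuine content for every $j$ with $k<j\leq|\mathbf{s}|$, in the $q=0$ case just as in the $q>0$ case. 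Second, the step you flag as ``the hard part'' --- deducing $\d_0^j w_{k,l}=\d_0^j$ for all such $j$ from the single identity $\d_0^k w_{k,l}=\d_0^k$ --- is immediate and needs none of the machinery you propose: for $j\geq k$ the operator $\d_0^j$ is literally the composite $\d_0^{j-k}\d_0^k$, so $\d_0^j w_{k,l}=\d_0^{j-k}\d_0^k w_{k,l}=\d_0^{j-k}\d_0^k=\d_0^j$. No auxiliary induction on~$l$, no face formulas from Notation~\ref{N9.4}, no appeal to Proposition~\ref{P9.5}, and no ``sliding'' by simplicial identities is required; in particular there is no need to express $\d_0^j$ through $\d_{k+1}^l$.

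With that observation the proof collapses to the paper's. For $|\mathbf{s}|-p<i<|\mathbf{s}|$ and $j>i$ the simplicial identities give $\d_0^j=\d_0^{j-1}\d_i$ (valid because $j-1\geq i$), so $\d_0^j\phi_{i,1}=\d_0^{j-1}\d_i\phi_{i,1}=\d_0^{j-1}\d_i=\d_0^j$ by the first formula of Proposition~\ref{P9.2}; your excursion into how the wedge and the degeneracy $\e_{i-1}$ ``collapse under $\d_0^i$'' is not needed. For $i=|\mathbf{s}|-p$ one has $\psi_{i,\mathbf{s}}=w_{k,p-q}$ when $q=0$ and $\psi_{i,\mathbf{s}}=w_{k,p-q}\phi_{k,p-q+1}$ when $q>0$ (Notation~\ref{N9.9}); the factorization above disposes of $w_{k,p-q}$ in both subcases, and in the second subcase the same computation as before, now using $\d_k\phi_{k,p-q+1}=\d_k$, gives $\d_0^j\phi_{k,p-q+1}=\d_0^j$. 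Your closing remark is correct: no induction on the number of terms of $\mathbf{s}$ is needed, and the paper's proof uses none.
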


\begin{proof}
In cases with $i>|\mathbf{s}|-p$ the result holds by Proposition~\ref{P9.2} because 
$$\d_0^j\psi_{i,\mathbf{s}}=\d_0^{j-1}\d_i\phi_{i,1}=\d_0^{j-1}\d_i=\d_0^j.$$

In cases with $i=|\mathbf{s}|-p$ we have 
$$\psi_{i,\mathbf{s}}=w_{|\mathbf{s}|-p,p-q}$$ or we have 
$$\psi_{i,\mathbf{s}}=w_{|\mathbf{s}|-p,p-q}\phi_{|\mathbf{s}|-p,p-q+1}$$ 
for some~$q$. The result now holds because
$$\d_0^j w_{|\mathbf{s}|-p,p-q}
=\d_0^{j-|\mathbf{s}|+p}\d_0^{|\mathbf{s}|-p}w_{|\mathbf{s}|-p,p-q}
=\d_0^{j-|\mathbf{s}|+p}\d_0^{|\mathbf{s}|-p}
=\d_0^j$$
by Proposition~\ref{P9.7} and because $\d_0^j\phi_{|\mathbf{s}|-p,p-q+1}=\d_0^j$ by Proposition~\ref{P9.2} as before.
\end{proof}

In the remaining cases we use induction on the number of terms in~$\mathbf{s}$.

\begin{notation} \label{N9.11}
Let $\mathbf{s}=(\mathbf{s'},q,p)$, let $x$ be an $|\mathbf{s}|$-dimensional element in a set with complicial identities, and let $i$ be an integer with $0<i<|\mathbf{s}|-p$. Then $\psi_{i,\mathbf{s}}x$ is the element such that
$$\psi_{i,\mathbf{s}}x
=\psi_{i,\mathbf{s'}}\d_{|\mathbf{s}|-p+1}^{p-q}x
\w_{|\mathbf{s}|-p,p-q}\d_0^{|\mathbf{s}|-p}x$$
and
$$\d_0^j\psi_{i,\mathbf{s}}x=\d_0^j x\quad (|\mathbf{s}|-p\leq j\leq |\mathbf{s}|).$$
\end{notation}

To justify this definition, we must show that the two conditions make sense and are consistent. We argue by induction on the number of terms in~$\mathbf{s}$. Let $p'$ be the last term in~$\mathbf{s'}$, so that
$$|\mathbf{s}|-p=|\mathbf{s'}|-q>|\mathbf{s'}|-p'.$$
If $i\geq |\mathbf{s'}|-p'$ then $\d_0^{|\mathbf{s}|-p}\psi_{i,\mathbf{s'}}=\d_0^{|\mathbf{s}|-p}$ by Proposition~\ref{P9.10}; if $i<|\mathbf{s}|-p'$ then $\d_0^{|\mathbf{s}|-p}\psi_{i,\mathbf{s'}}=\d_0^{|\mathbf{s}|-p}$ by the inductive hypothesis. In all cases it follows that
$$\d_0^{|\mathbf{s}|-p}\psi_{i,\mathbf{s'}}\d_{|\mathbf{s}|-p+1}^{p-q}x
=\d_0^{|\mathbf{s}|-p}\d_{|\mathbf{s}|-p+1}^{p-q}x
=\d_1^{p-q}\d_0^{|\mathbf{s}|-p}x.$$
The condition
$$\psi_{i,\mathbf{s}}x
=\psi_{i,\mathbf{s'}}\d_{|\mathbf{s}|-p+1}^{p-q}x
\w_{|\mathbf{s}|-p,p-q}\d_0^{|\mathbf{s}|-p}x$$
therefore makes sense because the iterated wedge exists. This condition actually implies the other condition by Proposition~\ref{P9.7}, because for $|\mathbf{s}|-p\leq j\leq |\mathbf{s}|$ we have
\begin{align*}
&\d_0^j
(\psi_{i,\mathbf{s'}}\d_{|\mathbf{s}|-p+1}^{p-q}x
\w_{|\mathbf{s}|-p,p-q}\d_0^{|\mathbf{s}|-p}x)\\
&=\d_0^{j-|\mathbf{s}|+p}\d_0^{|\mathbf{s}|-p}
(\psi_{i,\mathbf{s'}}\d_{|\mathbf{s}|-p+1}^{p-q}x
\w_{|\mathbf{s}|-p,p-q}\d_0^{|\mathbf{s}|-p}x)\\
&=\d_0^{j-|\mathbf{s}|+p}\d_0^{|\mathbf{s}|-p}x\\
&=\d_0^j x.
\end{align*}

Finally we construct the operation~$\Psi_\mathbf{s}$ as an iterated composite.

\begin{notation} \label{N9.12}
Let $\mathbf{s}$ be an up-down vector. Then $\Psi_\mathbf{s}$~is the operation on $|\mathbf{s}|$-dimensional elements in sets with complicial identities given by
$$\Psi_\mathbf{s}
=(\psi_{1,\mathbf{s}})
(\psi_{2,\mathbf{s}}\psi_{1,\mathbf{s}})
(\psi_{3,\mathbf{s}}\psi_{2,\mathbf{s}}\psi_{1,\mathbf{s}})
\ldots
(\psi_{|\mathbf{s}|-1,\mathbf{s}}\ldots\psi_{2,\mathbf{s}}\psi_{1,\mathbf{s}})$$
(to be interpreted as the identity when $|\mathbf{s}|\leq 1$).
\end{notation}

\section{The induced morphisms between simplexes} \label{S10}

In Section~\ref{S9} we have constructed operations in sets with complicial identities. We will now give some results concerning the induced morphisms between simplexes.

\begin{proposition} \label{P10.1}
If $\theta_1$~and~$\theta_2$ are operations in sets with complicial identities such that $\d_i\theta_1=\d_{i+1}\theta_2$ and if $\theta$~is the operation given by
$$\theta x=\theta_1 x\w_i\theta_2 x,$$
then
$$\theta^\v
=\theta_1^\v\e_{i+1}^\v-\theta_1^\v\d_i^\v(\e_i^\v)^2
+\theta_2^\v\e_i^\v
=\theta_1^\v\e_{i+1}^\v-\theta_2^\v\d_{i+1}^\v(\e_i^\v)^2
+\theta_2^\v\e_i^\v.$$
\end{proposition}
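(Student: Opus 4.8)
The statement of Proposition~\ref{P10.1} is essentially the assertion that the wedge operation in sets with complicial identities is represented at the level of simplexes by the formula appearing in Notation~\ref{N6.9}, read contravariantly. The plan is to work in the ``generic'' set with complicial identities of Theorem~\ref{T6.11}, namely in the $\lambda$-image of a suitable simplex, where an operation $\theta$ on $m$-dimensional elements is by definition represented by a morphism $\theta^\v\colon\Delta(?)\to\Delta(m)$ via precomposition, and then simply to unwind the defining formula for $\w_i$ given in Notation~\ref{N6.9}.

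Concretely, suppose $\theta_1,\theta_2$ are operations on $m$-dimensional elements whose representing morphisms are $\theta_1^\v,\theta_2^\v\colon\Delta(n)\to\Delta(m)$ for the appropriate $n$, and that $\d_i\theta_1=\d_{i+1}\theta_2$ as operations. Applying both sides to the universal element $\lambda\iota_n$ of $\lambda\Delta(n)$ and using that $\d_i$ and $\d_{i+1}$ are represented by $\d_i^\v$ and $\d_{i+1}^\v$, this hypothesis translates into the chain-map identity $\d_i^\v\theta_1^\v=\d_{i+1}^\v\theta_2^\v\colon\Delta(m-1)\to\Delta(m)$; equivalently, $\d_i(\theta_1\lambda\iota_n)=\d_{i+1}(\theta_2\lambda\iota_n)$ in $\lambda\Delta(n)$, so that the wedge $\theta_1\lambda\iota_n\w_i\theta_2\lambda\iota_n$ is defined. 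Now Notation~\ref{N6.9} gives
\begin{align*}
\theta(\lambda\iota_n)
&=\theta_1\lambda\iota_n\w_i\theta_2\lambda\iota_n\\
&=\e_{i+1}(\theta_1\lambda\iota_n)-\e_i^2\d_i(\theta_1\lambda\iota_n)+\e_i(\theta_2\lambda\iota_n),
\end{align*}
and since $\e_j$ is represented by $\e_j^\v$ and composition of operations corresponds to (contravariant) composition of the representing morphisms, the right-hand side equals $(\theta_1^\v\e_{i+1}^\v-\theta_1^\v\d_i^\v(\e_i^\v)^2+\theta_2^\v\e_i^\v)$ applied to $\lambda\iota_n$. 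By the full faithfulness and freeness in Theorem~\ref{T6.11}, two morphisms out of $\Delta(m+1)$ that agree after applying $\lambda$ and evaluating on the free generator $\lambda\iota_{m+1}$ must be equal; hence $\theta^\v=\theta_1^\v\e_{i+1}^\v-\theta_1^\v\d_i^\v(\e_i^\v)^2+\theta_2^\v\e_i^\v$. The second expression for $\theta^\v$ follows by substituting $\d_i^\v\theta_1^\v=\d_{i+1}^\v\theta_2^\v$ into the middle term, exactly as in the two displayed forms of $\w_i$ in Notation~\ref{N6.9} and of $z$ in Proposition~\ref{P6.8}.

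The only point requiring a little care — and the step I expect to be the main (minor) obstacle — is bookkeeping about which simplex an ``operation'' lives on and in what sense $\theta^\v$ ``represents'' it: one must make precise that an $n$-ary-type operation on $m$-dimensional elements is encoded by a morphism of augmented directed complexes out of $\Delta(m)$ (or out of a pushout of simplexes, in the partially-defined case such as $\w_i$ itself), and that substitution/iteration of operations corresponds to composition of these morphisms in the opposite direction, so that the passage from the identity in $\lambda$ of the universal element to the identity of morphisms is legitimate. Once that dictionary is fixed, the proof is just the one-line evaluation above; no genuine computation with chain maps is needed beyond re-reading Notation~\ref{N6.9}.
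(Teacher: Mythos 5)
Your proposal is correct and takes essentially the same route as the paper, whose entire proof is the one-line remark that the formula follows from the construction of wedges in sets of the form $\lambda K$ (Notation~\ref{N6.9}); your evaluation at the universal element of $\lambda\Delta(n)$, justified by the freeness statement of Theorem~\ref{T6.11} and by Proposition~\ref{P6.8} for the well-definedness of the linear combination, is exactly that argument made explicit. (Only a notational slip: with the paper's convention $\theta x=x\theta^\v$ the translated hypothesis should read $\theta_1^\v\d_i^\v=\theta_2^\v\d_{i+1}^\v$ as morphisms $\Delta(m-1)\to\Delta(n)$, not $\d_i^\v\theta_1^\v=\d_{i+1}^\v\theta_2^\v$; your final formula is nevertheless the correct one.)
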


\begin{proof}
This follows from the construction of wedges in sets with complicial identities of the form $\lambda K$; see Notation~\ref{N6.9}.
\end{proof}

\begin{proposition} \label{P10.2}
If $j>0$ then
$$\phi_{i,j}^\v=(\d_{i+1}^\v)^j[\e_{i-1}^\v-\e_i^\v](\e_{i+1}^\v)^{j-1}+\id.$$
\end{proposition}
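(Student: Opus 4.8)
The plan is to reduce everything to a closed formula for the dual $\tilde\phi_{i,j}^\v$ of the auxiliary operation $\tilde\phi_{i,j}$ of Notation~\ref{N9.1}, and then to use $\phi_{i,j}=\d_{i+1}\tilde\phi_{i,j}$, which dualizes contravariantly to $\phi_{i,j}^\v=\tilde\phi_{i,j}^\v\d_{i+1}^\v$. The formula I would establish is
$$\tilde\phi_{i,j}^\v=(\d_{i+1}^\v)^{j}[\e_{i-1}^\v-\e_i^\v](\e_{i+1}^\v)^{j}+\e_i^\v\qquad(j\geq 0),$$
which for $j=0$ collapses to $\e_{i-1}^\v$, in agreement with $\tilde\phi_{i,0}x=\e_{i-1}x$. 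Granting this, the proposition follows immediately: composing on the right with $\d_{i+1}^\v$ and using $\e_{i+1}^\v\d_{i+1}^\v=\id$ and $\e_i^\v\d_{i+1}^\v=\id$ (the duals of the identities $\d_{i+1}\e_{i+1}=\id$ and $\d_{i+1}\e_i=\id$) turns $(\e_{i+1}^\v)^j\d_{i+1}^\v$ into $(\e_{i+1}^\v)^{j-1}$ and $\e_i^\v\d_{i+1}^\v$ into $\id$, leaving $(\d_{i+1}^\v)^j[\e_{i-1}^\v-\e_i^\v](\e_{i+1}^\v)^{j-1}+\id$; this last step is exactly where the hypothesis $j>0$ is used, so that $(\e_{i+1}^\v)^{j-1}$ makes sense.

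I would prove the auxiliary formula by induction on~$j$, the case $j=0$ being the observation above. For the step, apply Proposition~\ref{P10.1} to the defining relation $\tilde\phi_{i,j}x=\tilde\phi_{i,j-1}\d_{i+1}x\w_i x$ with $\theta_1 x=\tilde\phi_{i,j-1}\d_{i+1}x$ and $\theta_2=\id$; the hypothesis $\d_i\theta_1=\d_{i+1}\theta_2$ required there is precisely the equality $\d_i\tilde\phi_{i,j-1}\d_{i+1}x=\d_{i+1}x$ recorded after Notation~\ref{N9.1}, and $\theta_1^\v=\d_{i+1}^\v\tilde\phi_{i,j-1}^\v$. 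This gives
$$\tilde\phi_{i,j}^\v=\d_{i+1}^\v\tilde\phi_{i,j-1}^\v[\e_{i+1}^\v-\d_i^\v(\e_i^\v)^2]+\e_i^\v.$$
Substituting the inductive hypothesis for $\tilde\phi_{i,j-1}^\v$, the contribution of its summand $\e_i^\v$ vanishes because $\e_i^\v[\e_{i+1}^\v-\d_i^\v(\e_i^\v)^2]=0$, using $\e_i^\v\e_{i+1}^\v=(\e_i^\v)^2$ and $\e_i^\v\d_i^\v=\id$. In the remaining contribution one distributes $(\e_{i+1}^\v)^{j-1}$ over the bracket and pushes the $\d_i^\v$ leftward past the powers of $\e_{i+1}^\v$ by repeated use of $\e_{i+1}^\v\d_i^\v=\d_i^\v\e_i^\v$; one of the resulting terms then carries a factor $[\e_{i-1}^\v-\e_i^\v]\d_i^\v$, which is zero because $\e_{i-1}^\v\d_i^\v=\e_i^\v\d_i^\v=\id$ (the duals of $\d_i\e_{i-1}=\id$ and $\d_i\e_i=\id$), and the other is exactly $(\d_{i+1}^\v)^j[\e_{i-1}^\v-\e_i^\v](\e_{i+1}^\v)^j$. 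Adding back the trailing $\e_i^\v$ closes the induction.

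All the relations among coface and codegeneracy morphisms invoked here — namely $\e_i^\v\d_i^\v=\e_i^\v\d_{i+1}^\v=\e_{i-1}^\v\d_i^\v=\e_{i+1}^\v\d_{i+1}^\v=\id$, together with $\e_{i+1}^\v\d_i^\v=\d_i^\v\e_i^\v$ and $(\e_i^\v)^2=\e_i^\v\e_{i+1}^\v$ — are the duals of the face and degeneracy identities of Definition~\ref{D3.1}(3), and are equally verifiable directly from Notation~\ref{N6.6}. There is no conceptual difficulty; the only thing requiring care is guessing the right shape of the auxiliary formula for $\tilde\phi_{i,j}^\v$ (so that the trailing $\e_i^\v$ appears, and so that after the final composition the summand $\id$ emerges correctly) and then checking that the two cancellations — of the $\e_i^\v$-term and of the factor $[\e_{i-1}^\v-\e_i^\v]\d_i^\v$ — really do apply in the precise positions in which those expressions occur.
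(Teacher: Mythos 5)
Your proposal is correct and follows the paper's own argument: the paper likewise proves $\tilde\phi_{i,j}^\v=(\d_{i+1}^\v)^j[\e_{i-1}^\v-\e_i^\v](\e_{i+1}^\v)^j+\e_i^\v$ by induction on $j$ using Proposition~\ref{P10.1}, and then composes with $\d_{i+1}^\v$ to get the stated formula for $j>0$. You have merely written out the cancellations that the paper leaves implicit, and they are all valid.
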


\begin{proof}
Recall from Notation~\ref{N9.1} that $\phi_{i,j}=\d_{i+1}\tilde\phi_{i,j}$ with
\begin{align*}
&\tilde\phi_{i,0}x=\e_{i-1}x,\\
&\tilde\phi_{i,j}x=\tilde\phi_{i,j-1}\d_{i+1}x\w_i x\quad (j>0).
\end{align*}
It follows from Proposition~\ref{P10.1} by induction on~$j$ that
$$\tilde\phi_{i,j}^\v
=(\d_{i+1}^\v)^j\bigl[\e_{i-1}^\v-\e_i^\v)\bigr](\e_{i+1}^\v)^j+\e_i^\v
\quad (j\geq 0),$$
and for $j>0$ it then follows that
$$\phi_{i,j}^\v
=\tilde\phi_{i,j}^\v\d_{i+1}^\v
=(\d_{i+1}^\v)^j\bigl[\e_{i-1}^\v-\e_i^\v)\bigr](\e_{i+1}^\v)^{j-1}+\id.$$
\end{proof}

\begin{proposition} \label{P10.3}
If $\theta_1$~and~$\theta_2$ are operations in sets with complicial identities such that $\d_0^k\theta_1=\d_1^l\theta_2$ and if $\theta$~is the operation given by
$$\theta x=\theta_1 x\w_{k,l}\theta_2 x,$$
then
\begin{align*}
\theta^\v
&=\theta_1^\v(\e_k^\v)^l-\theta_1^\v(\d_0^\v)^k(\e_0^\v)^{k+l}
+\theta_2^\v(\e_0^\v)^k\\
&=\theta_1^\v(\e_k^\v)^l-\theta_2^\v(\d_1^\v)^k(\e_0^\v)^{k+l}
+\theta_2^\v(\e_0^\v)^k.
\end{align*}
\end{proposition}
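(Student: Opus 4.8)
The plan is to prove both displayed formulas by induction on $k+l$, in the same way as Proposition~\ref{P10.2} is obtained from Proposition~\ref{P10.1}: the recursive clauses of Notation~\ref{N9.4} express $\w_{k,l}$ in terms of a single wedge $\w_{k-1}$ together with $\w_{k,l-1}$ and $\w_{k-1,l}$, each of which is handled either by the inductive hypothesis or, for the single wedge, by Proposition~\ref{P10.1}. Both base cases are immediate. If $k=0$ then $x\w_{0,l}y=y$, so $\theta=\theta_2$; setting $k=0$ in the asserted expression (using $(\d_0^\v)^0=\id=(\e_0^\v)^0$) leaves $\theta_2^\v$. If $l=0$ then $x\w_{k,0}y=x$, so $\theta=\theta_1$; here the hypothesis $\d_0^k\theta_1=\d_1^l\theta_2$ forces $\theta_2^\v=\theta_1^\v(\d_0^\v)^k$, and substituting this into the asserted expression (with $l=0$) cancels its last two terms, leaving $\theta_1^\v$.

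For the inductive step, with $k,l>0$, put $\mu_1 x=\theta_1 x\w_{k,l-1}\d_1\theta_2 x$ and $\mu_2 x=\d_{k-1}\theta_1 x\w_{k-1,l}\theta_2 x$, so that $\theta x=\mu_1 x\w_{k-1}\mu_2 x$ by Notation~\ref{N9.4}. The relations $\d_0^k\theta_1=\d_1^{l-1}(\d_1\theta_2)$ and $\d_0^{k-1}(\d_{k-1}\theta_1)=\d_1^l\theta_2$ needed to apply the inductive statement to $\mu_1$ and to $\mu_2$ are both just the given hypothesis, and the compatibility $\d_{k-1}\mu_1=\d_k\mu_2$ required by Proposition~\ref{P10.1} is exactly the one verified in the justification of Notation~\ref{N9.4}. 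So one computes $\mu_1^\v$ and $\mu_2^\v$ from the inductive hypothesis (using $(\d_1\theta_2)^\v=\theta_2^\v\d_1^\v$ and $(\d_{k-1}\theta_1)^\v=\theta_1^\v\d_{k-1}^\v$), and then invokes Proposition~\ref{P10.1} with $i=k-1$, in the form $\theta^\v=\mu_1^\v\e_k^\v-\mu_1^\v\d_{k-1}^\v(\e_{k-1}^\v)^2+\mu_2^\v\e_{k-1}^\v$, to present $\theta^\v$ as an explicit sum of strings of faces and degeneracies following $\theta_1^\v$ or $\theta_2^\v$.

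The substantive part is then to collapse this sum to the three terms of the asserted formula, using the simplicial identities of Definition~\ref{D3.1}(3): the retractions $\e_j^\v\d_j^\v=\e_j^\v\d_{j+1}^\v=\id$, the commutation of two degeneracies and of a degeneracy past a face (together giving $(\e_0^\v)^m\e_r^\v=(\e_0^\v)^{m+1}$ for $m\geq r$), and the commutation of two faces (giving $\d_r^\v(\d_0^\v)^r=(\d_0^\v)^{r+1}$). After these rewritings all terms but three cancel in pairs --- in particular $-\mu_1^\v\d_{k-1}^\v(\e_{k-1}^\v)^2$ cancels the leading term of $\mu_2^\v\e_{k-1}^\v$ --- and the survivors are exactly $\theta_1^\v(\e_k^\v)^l$, $-\theta_1^\v(\d_0^\v)^k(\e_0^\v)^{k+l}$ and $\theta_2^\v(\e_0^\v)^k$. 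The second displayed formula then follows from the first via the hypothesis $\d_0^k\theta_1=\d_1^l\theta_2$, exactly as the two forms in Proposition~\ref{P10.1} are related. I expect the main obstacle to be precisely this collapsing step: bookkeeping which terms pair off needs care, as does checking that every face and degeneracy occurring lies in a valid dimension range --- which it does, since $\theta_1 x$ and $\theta_2 x$ have dimensions $l$ and $k$ below that of $\theta x$.
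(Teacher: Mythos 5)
Your proposal is correct and follows essentially the same route as the paper, whose proof is exactly an induction via the recursive clauses of Notation~\ref{N9.4} combined with Proposition~\ref{P10.1}; your treatment of the base cases and the cancellation bookkeeping simply spells out details the paper leaves implicit.
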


\begin{proof}
This follows by induction from Proposition~\ref{P10.1} using the formulae
\begin{align*}
&\theta_1 x\w_{k,0}\theta_2 x=\theta_1 x,\\ 
&\theta_1 x\w_{0,l}\theta_2 x=\theta_2 x,\\
&\theta_1 x\w_{k,l}\theta_2 x
=(\theta_1 x\w_{k,l-1}\d_1\theta_2 x)\w_{k-1}(\d_{k-1}\theta_1 x\w_{k-1,l}\theta_2 x)
\quad (k,l>0)
\end{align*}
(see Notation~\ref{N9.4}).
\end{proof}

\begin{proposition} \label{P10.4}
The morphisms~$w_{k,l}^\v$ are given by
$$w_{k,l}^\v
=(\d_{k+1}^\v)^l(\e_k^\v)^l
-(\d_{k+1}^\v)^l(\d_0^\v)^k(\e_0^\v)^{k+l}
+(\d_0^\v)^l(\e_0^\v)^l.$$
\end{proposition}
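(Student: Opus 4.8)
The plan is to read off the formula as an immediate special case of Proposition~\ref{P10.3}. By Notation~\ref{N9.6} the operation $w_{k,l}$ is defined by $w_{k,l}x=\d_{k+1}^l x\w_{k,l}\d_0^k x$, which is precisely the combined operation $\theta x=\theta_1 x\w_{k,l}\theta_2 x$ considered in Proposition~\ref{P10.3}, with $\theta_1=\d_{k+1}^l$ and $\theta_2=\d_0^k$. So $w_{k,l}=\theta$, and it only remains to check that Proposition~\ref{P10.3} applies and to substitute the represented morphisms.

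The hypothesis of Proposition~\ref{P10.3} is that $\d_0^k\theta_1=\d_1^l\theta_2$, which in this case reads $\d_0^k\d_{k+1}^l=\d_1^l\d_0^k$; this is exactly the iterated simplicial identity already recorded in the sentence preceding Notation~\ref{N9.6}, and it follows by induction from $\d_i\d_j=\d_{j-1}\d_i$ (for $i<j$) in Definition~\ref{D3.1}(3). Since $(-)^\v$ is contravariant and the face operation $\d_i$ is represented by $\d_i^\v$ (Notation~\ref{N6.9}), the iterated face operations $\theta_1=\d_{k+1}^l$ and $\theta_2=\d_0^k$ are represented by $\theta_1^\v=(\d_{k+1}^\v)^l$ and $\theta_2^\v=(\d_0^\v)^k$ respectively.

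Feeding these identifications into the conclusion of Proposition~\ref{P10.3} gives directly
$$w_{k,l}^\v=\theta_1^\v(\e_k^\v)^l-\theta_1^\v(\d_0^\v)^k(\e_0^\v)^{k+l}+\theta_2^\v(\e_0^\v)^k=(\d_{k+1}^\v)^l(\e_k^\v)^l-(\d_{k+1}^\v)^l(\d_0^\v)^k(\e_0^\v)^{k+l}+(\d_0^\v)^k(\e_0^\v)^k,$$
which is the asserted formula. There is no real obstacle: all the work is contained in Proposition~\ref{P10.3} (itself proved by unfolding the recursive definition of $\w_{k,l}$ in Notation~\ref{N9.4} and applying Proposition~\ref{P10.1} at each wedge), and the only thing to verify here is the simplicial identity $\d_0^k\d_{k+1}^l=\d_1^l\d_0^k$. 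If one preferred a self-contained argument, one could instead reprove the formula by the same double induction on $k$ and $l$ used for Proposition~\ref{P10.3}, invoking Proposition~\ref{P10.1} at each step of the recursion for $\w_{k,l}$.
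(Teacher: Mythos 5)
Your argument is precisely the paper's own proof: apply Proposition~\ref{P10.3} to $w_{k,l}x=\d_{k+1}^l x\w_{k,l}\d_0^k x$ (Notation~\ref{N9.6}) with $\theta_1=\d_{k+1}^l$, $\theta_2=\d_0^k$, the hypothesis $\d_0^k\d_{k+1}^l=\d_1^l\d_0^k$ being the simplicial identity. Note that the last term you obtain, $(\d_0^\v)^k(\e_0^\v)^k$, is the correct one; the exponents $l$ in the final term of the stated formula are a typographical slip, as confirmed by the expansion of $w_{|\mathbf{s}|-p,p-q}^\v$ used in the proof of Lemma~\ref{L11.5}.
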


\begin{proof}
This follows from Proposition~\ref{P10.3}, because
$$w_{k,l}x=\d_{k+1}^l x\w_{k,l}\d_0^k x$$
(see Notation~\ref{N9.6}).
\end{proof}

\begin{proposition} \label{P10.5}
If $\mathbf{s}=(\mathbf{s'},q,p)$ and if $0<i<|\mathbf{s}|-p$, then
\begin{align*}
&\psi_{i,\mathbf{s}}^\v -w_{|\mathbf{s}|-p,p-q}^\v
=(\d_{|\mathbf{s}|-p+1}^\v)^{p-q}
[\psi_{i,\mathbf{s'}}^\v-\id]
(\e_{|\mathbf{s}|-p}^\v)^{p-q},\\
&\psi_{i,\mathbf{s}}^\v(\d_{|\mathbf{s}|-p+1}^\v)^{p-q}
=(\d_{|\mathbf{s}|-p+1}^\v)^{p-q}\psi_{i,\mathbf{s'}}^\v,\\
&\psi_{i,\mathbf{s}}^\v(\d_0^\v)^{|\mathbf{s}|-p}=(\d_0^\v)^{|\mathbf{s}|-p}.
\end{align*}
\end{proposition}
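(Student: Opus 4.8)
The plan is to read off all three identities from the explicit description of $\psi_{i,\mathbf{s}}$ in Notation~\ref{N9.11}, passing to the induced morphisms between simplexes by means of Proposition~\ref{P10.3}. Throughout write $k=|\mathbf{s}|-p$ and $l=p-q$, so that $k=|\mathbf{s'}|-q$, $k+1=|\mathbf{s}|-p+1$ and $k+l=|\mathbf{s}|-q$. By Notation~\ref{N9.11} we have $\psi_{i,\mathbf{s}}x=\theta_1 x\w_{k,l}\theta_2 x$, where $\theta_1$ is the operation $x\mapsto\psi_{i,\mathbf{s'}}\d_{k+1}^l x$ and $\theta_2=\d_0^k$. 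The hypothesis $\d_0^k\theta_1=\d_1^l\theta_2$ required to apply Proposition~\ref{P10.3} is exactly the consistency computation carried out when Notation~\ref{N9.11} was justified; it depends on the identity $\d_0^k\psi_{i,\mathbf{s'}}=\d_0^k$, which holds because $|\mathbf{s'}|-p'<k\leq|\mathbf{s'}|$ (here $p'$ is the last term of~$\mathbf{s'}$): it follows from Proposition~\ref{P9.10} if $i\geq|\mathbf{s'}|-p'$ and from the second defining property of $\psi_{i,\mathbf{s'}}$ in Notation~\ref{N9.11} if $i<|\mathbf{s'}|-p'$. Since $\theta\mapsto\theta^\v$ reverses the order of composition we have $\theta_1^\v=(\d_{k+1}^\v)^l\psi_{i,\mathbf{s'}}^\v$ and $\theta_2^\v=(\d_0^\v)^k$, and the identity $\d_0^k\psi_{i,\mathbf{s'}}=\d_0^k$ translates to $\psi_{i,\mathbf{s'}}^\v(\d_0^\v)^k=(\d_0^\v)^k$.

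For the first identity I would apply Proposition~\ref{P10.3} to get $\psi_{i,\mathbf{s}}^\v=\theta_1^\v(\e_k^\v)^l-\theta_1^\v(\d_0^\v)^k(\e_0^\v)^{k+l}+\theta_2^\v(\e_0^\v)^k$, substitute the values of $\theta_1^\v$ and~$\theta_2^\v$, and use $\psi_{i,\mathbf{s'}}^\v(\d_0^\v)^k=(\d_0^\v)^k$ to collapse the middle term to $(\d_{k+1}^\v)^l(\d_0^\v)^k(\e_0^\v)^{k+l}$. The expression for $w_{k,l}^\v$ obtained by applying Proposition~\ref{P10.3} to the operations $\d_{k+1}^l$ and~$\d_0^k$ (see also Proposition~\ref{P10.4}) has exactly the same second and third terms, so on subtracting one is left with
\begin{align*}
\psi_{i,\mathbf{s}}^\v-w_{k,l}^\v
&=(\d_{k+1}^\v)^l\psi_{i,\mathbf{s'}}^\v(\e_k^\v)^l-(\d_{k+1}^\v)^l(\e_k^\v)^l\\
&=(\d_{k+1}^\v)^l[\psi_{i,\mathbf{s'}}^\v-\id](\e_k^\v)^l,
\end{align*}
which is the first formula once $k$, $l$ and $k+1$ are written out in terms of~$\mathbf{s}$.

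For the remaining two identities I would pass through the corresponding identities between operations. From $\psi_{i,\mathbf{s}}x=\theta_1 x\w_{k,l}\theta_2 x$ and Proposition~\ref{P9.5} we get $\d_{k+1}^l\psi_{i,\mathbf{s}}x=\theta_1 x=\psi_{i,\mathbf{s'}}\d_{k+1}^l x$, and applying $(-)^\v$ turns this into $\psi_{i,\mathbf{s}}^\v(\d_{k+1}^\v)^l=(\d_{k+1}^\v)^l\psi_{i,\mathbf{s'}}^\v$, the second formula. Likewise the second defining property of $\psi_{i,\mathbf{s}}$ in Notation~\ref{N9.11}, taken at $j=k=|\mathbf{s}|-p$, reads $\d_0^k\psi_{i,\mathbf{s}}=\d_0^k$, and $(-)^\v$ converts this into $\psi_{i,\mathbf{s}}^\v(\d_0^\v)^k=(\d_0^\v)^k$, the third formula. (Alternatively the second formula can be recovered from the first by post-composing with $(\d_{k+1}^\v)^l$ and simplifying by means of Proposition~\ref{P9.7} and the relation $\d_{k+1}^l\e_k^l=\id$.)

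The argument is mostly bookkeeping; the points that need care are the contravariance of $\theta\mapsto\theta^\v$ when composing faces, degeneracies and wedges, the verification that Proposition~\ref{P10.3} applies (which is the consistency check already made for Notation~\ref{N9.11}), and the matching of the second and third terms of $\psi_{i,\mathbf{s}}^\v$ and $w_{k,l}^\v$, where the identity $\psi_{i,\mathbf{s'}}^\v(\d_0^\v)^{|\mathbf{s}|-p}=(\d_0^\v)^{|\mathbf{s}|-p}$ is what makes the cancellation work.
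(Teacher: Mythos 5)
Your proposal is correct and follows the paper's own route: the paper likewise reads $\psi_{i,\mathbf{s}}$ off Notation~\ref{N9.11} as a $\w_{k,l}$-wedge, obtains the first formula from Propositions \ref{P10.3} and~\ref{P10.4}, and gets the last two from $\d_{|\mathbf{s}|-p+1}^{p-q}\psi_{i,\mathbf{s}}=\psi_{i,\mathbf{s'}}\d_{|\mathbf{s}|-p+1}^{p-q}$ and $\d_0^{|\mathbf{s}|-p}\psi_{i,\mathbf{s}}=\d_0^{|\mathbf{s}|-p}$ via Proposition~\ref{P9.5}. You have merely made explicit the bookkeeping the paper leaves implicit, in particular the use of $\psi_{i,\mathbf{s'}}^\v(\d_0^\v)^{|\mathbf{s}|-p}=(\d_0^\v)^{|\mathbf{s}|-p}$ to match the middle terms when subtracting $w_{|\mathbf{s}|-p,p-q}^\v$.
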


\begin{proof}
By Definition (see Notation~\ref{N9.11}), 
$$\psi_{i,\mathbf{s}}x
=\psi_{i,\mathbf{s'}}\d_{|\mathbf{s}|-p+1}^{p-q}x
\w_{|\mathbf{s}|-p,p-q}\d_0^{|\mathbf{s}|-p}x.$$
The formula for $\psi_{i,\mathbf{s}}^\v -w_{|\mathbf{s}|-p,p-q}^\v$ follows from Propositions \ref{P10.3} and~\ref{P10.4}. The other two formulae hold because $\d_{|\mathbf{s}|-p+1}^{p-q}\psi_{i,\mathbf{s}}=\psi_{i,\mathbf{s'}}\d_{|\mathbf{s}|-p+1}^{p-q}$ and $\d_0^{|\mathbf{s}|-p}\psi_{i,\mathbf{s}}=\d_0^{|\mathbf{s}|-p}$ (see Proposition~\ref{P9.5}).
\end{proof}

\begin{proposition} \label{P10.6}
If $\mathbf{s}=(\mathbf{s'},q,p)$ with $q=0$, then
\begin{align*}
&\psi_{|\mathbf{s}|-p,\mathbf{s}}^\v-w_{|\mathbf{s}|-p,p-q}^\v=0,\\
&\psi_{|\mathbf{s}|-p,\mathbf{s}}^\v(\d_{|\mathbf{s}|-p+1}^\v)^{p-q}
=(\d_{|\mathbf{s}|-p+1}^\v)^{p-q},\\
&\psi_{|\mathbf{s}|-p,\mathbf{s}}^\v(\d_0^\v)^{|\mathbf{s}|-p}
=(\d_0^\v)^{|\mathbf{s}|-p}.
\end{align*}
\end{proposition}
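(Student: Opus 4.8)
The plan is to note that Proposition~\ref{P10.6} follows almost immediately from the definitions once the case $q=0$ is isolated. When $q=0$, Notation~\ref{N9.9} defines $\psi_{|\mathbf{s}|-p,\mathbf{s}}$ to be \emph{exactly} the operation $w_{|\mathbf{s}|-p,p-q}$, with no extra $\phi$-factor. Hence the two operations have the same representing morphism between simplexes, that is,
$$\psi_{|\mathbf{s}|-p,\mathbf{s}}^\v=w_{|\mathbf{s}|-p,p-q}^\v,$$
which is the first of the three displayed equations.

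For the remaining two equations I would invoke Proposition~\ref{P9.7}, which records the face identities
$$\d_{k+1}^l w_{k,l}=\d_{k+1}^l,\qquad \d_0^k w_{k,l}=\d_0^k$$
for the operations $w_{k,l}$. Setting $k=|\mathbf{s}|-p$ and $l=p-q$ and passing to representing morphisms --- recalling, as in the proof of Proposition~\ref{P10.5}, that a composite of operations is represented by the composite of the representing morphisms in the reverse order --- these two identities become
$$w_{|\mathbf{s}|-p,p-q}^\v(\d_{|\mathbf{s}|-p+1}^\v)^{p-q}=(\d_{|\mathbf{s}|-p+1}^\v)^{p-q},\qquad w_{|\mathbf{s}|-p,p-q}^\v(\d_0^\v)^{|\mathbf{s}|-p}=(\d_0^\v)^{|\mathbf{s}|-p}.$$
Substituting $\psi_{|\mathbf{s}|-p,\mathbf{s}}^\v$ for $w_{|\mathbf{s}|-p,p-q}^\v$ using the first step then gives the second and third equations.

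There is essentially no obstacle here: the only point that needs care is the bookkeeping translating an identity between operations into one between their representing morphisms (where composition order is reversed), and this is already standard usage in the paper. As an alternative one could substitute the explicit formula for $w_{k,l}^\v$ supplied by Proposition~\ref{P10.4} and verify the two equations by a direct calculation with the simplicial-type identities relating the morphisms $\d_i^\v$ and $\e_i^\v$; but this would be longer and less transparent than the argument via Proposition~\ref{P9.7}, so I would not pursue it.
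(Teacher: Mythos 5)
Your proof is correct. For the first identity you argue exactly as the paper does: with $q=0$, Notation~\ref{N9.9} makes $\psi_{|\mathbf{s}|-p,\mathbf{s}}$ literally equal to $w_{|\mathbf{s}|-p,p-q}$, so the representing morphisms agree. For the remaining two identities you take a slightly different route from the paper: the paper simply reads them off from the explicit chain-level formula for $w_{k,l}^\v$ given in Proposition~\ref{P10.4}, whereas you deduce them from the operation-level face identities $\d_{k+1}^l w_{k,l}=\d_{k+1}^l$ and $\d_0^k w_{k,l}=\d_0^k$ of Proposition~\ref{P9.7}, passing to representing morphisms with the composition order reversed. That translation is legitimate (it rests on Theorem~\ref{T6.11}, identities valid in all sets with complicial identities holding in particular in the free ones $\lambda\Delta(m)$), and it is in fact the same mechanism the paper itself uses in the proofs of Propositions~\ref{P10.5} and~\ref{P10.8}, so your bookkeeping about the order reversal is consistent with the paper's conventions. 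The trade-off is minor: your argument avoids any chain-level manipulation of the $\d_i^\v$ and $\e_i^\v$, while the paper's citation of Proposition~\ref{P10.4} keeps the whole proof at the level of explicit morphisms of simplexes; both are one-line justifications and either is acceptable.
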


\begin{proof}
By Notation~\ref{N9.9}, $\psi_{|\mathbf{s}|-p,\mathbf{s}}=w_{|\mathbf{s}|-p,p-q}$. The last two formulae follow from Proposition~\ref{P10.4}.
\end{proof}

\begin{proposition} \label{P10.7}
If $\mathbf{s}=(\mathbf{s'},q,p)$ with $q>0$, then
\begin{align*}
&\psi_{|\mathbf{s}|-p,\mathbf{s}}^\v-w_{|\mathbf{s}|-p,p-q}^\v
=(\d_{|\mathbf{s}|-p+1}^\v)^{p-q}[\psi_{|\mathbf{s}|-p,\mathbf{s'}}^\v-\id]
(\e_{|\mathbf{s}|-p+1}^\v)^{p-q}w_{|\mathbf{s}|-p,p-q}^\v,\\
&\psi_{|\mathbf{s}|-p,\mathbf{s}}^\v(\d_{|\mathbf{s}|-p+1}^\v)^{p-q}
=(\d_{|\mathbf{s}|-p+1}^\v)^{p-q}\psi_{|\mathbf{s}|-p,\mathbf{s'}}^\v,\\
&\bigl[\psi_{|\mathbf{s}|-p,\mathbf{s}}^\v-\id\bigr](\d_0^\v)^{|\mathbf{s}|-p}\\
&\quad{}=(\d_{|\mathbf{s}|-p+1}^\v)^{p-q+1}
\bigl[\e_{|\mathbf{s}|-p-1}^\v-\e_{|\mathbf{s}|-p}^\v]
(\e_{|\mathbf{s}|-p+1}^\v)^{p-q}(\d_0^\v)^{|\mathbf{s}|-p}.
\end{align*}
\end{proposition}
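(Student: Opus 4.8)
The plan is to reduce everything to the defining formula for the case $q>0$ in Notation~\ref{N9.9}, namely that, as an operation, $\psi_{|\mathbf{s}|-p,\mathbf{s}}=w_{|\mathbf{s}|-p,p-q}\phi_{|\mathbf{s}|-p,p-q+1}$. Since an operation $\theta_1\theta_2$ is contravariantly represented by $\theta_2^\v\theta_1^\v$, this gives
$$\psi_{|\mathbf{s}|-p,\mathbf{s}}^\v=\phi_{|\mathbf{s}|-p,p-q+1}^\v\,w_{|\mathbf{s}|-p,p-q}^\v.$$
To lighten notation write $n=|\mathbf{s}|-p$ and $d=p-q$. By Proposition~\ref{P10.2} (with $i=n$, $j=d+1$) we have $\phi_{n,d+1}^\v-\id=(\d_{n+1}^\v)^{d+1}[\e_{n-1}^\v-\e_n^\v](\e_{n+1}^\v)^d$, so the whole task is one of moving this prefactor past $w_{n,d}^\v$ and past the relevant face morphisms.

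The one genuinely substantive input is an identification of $\psi_{n,\mathbf{s'}}$. Let $p'$ be the last term of~$\mathbf{s'}$. By Proposition~\ref{P8.2} we have $n=|\mathbf{s}|-p=|\mathbf{s'}|-q$, and since $p'>q$ (as $\mathbf{s}$ is an up-down vector) and $q>0$ by hypothesis, we get $|\mathbf{s'}|-p'<n<|\mathbf{s'}|$. Hence Notation~\ref{N9.8} applies to~$\mathbf{s'}$ and gives $\psi_{n,\mathbf{s'}}=\phi_{n,1}$, so by Proposition~\ref{P10.2} (with $j=1$) we have $\psi_{n,\mathbf{s'}}^\v-\id=\d_{n+1}^\v[\e_{n-1}^\v-\e_n^\v]$. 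Consequently $(\d_{n+1}^\v)^d[\psi_{n,\mathbf{s'}}^\v-\id]=(\d_{n+1}^\v)^{d+1}[\e_{n-1}^\v-\e_n^\v]$, which accounts for the shift between the exponents $d$ and $d+1$ appearing in the statement.

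With these facts the three equalities follow directly. For the third one, Proposition~\ref{P9.7} gives $\d_0^n w_{n,d}=\d_0^n$, hence $w_{n,d}^\v(\d_0^\v)^n=(\d_0^\v)^n$; therefore $\bigl[\psi_{n,\mathbf{s}}^\v-\id\bigr](\d_0^\v)^n=\bigl[\phi_{n,d+1}^\v-\id\bigr](\d_0^\v)^n$, which is exactly the asserted expression. For the first one, multiplying $\phi_{n,d+1}^\v-\id$ on the right by $w_{n,d}^\v$ and then rewriting the prefactor as $(\d_{n+1}^\v)^d[\psi_{n,\mathbf{s'}}^\v-\id](\e_{n+1}^\v)^d$ by the identification above yields the claim. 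For the second one, Proposition~\ref{P9.7} also gives $\d_{n+1}^d w_{n,d}=\d_{n+1}^d$, so $w_{n,d}^\v(\d_{n+1}^\v)^d=(\d_{n+1}^\v)^d$ and thus $\psi_{n,\mathbf{s}}^\v(\d_{n+1}^\v)^d=\phi_{n,d+1}^\v(\d_{n+1}^\v)^d$; expanding via Proposition~\ref{P10.2} and collapsing $(\e_{n+1}^\v)^d(\d_{n+1}^\v)^d=\id$ — the dual of the degeneracy identity $\d_{n+1}\e_{n+1}=\id$ of Definition~\ref{D3.1}(3), iterated $d$ times — leaves $(\d_{n+1}^\v)^{d+1}[\e_{n-1}^\v-\e_n^\v]+(\d_{n+1}^\v)^d$, which equals $(\d_{n+1}^\v)^d\psi_{n,\mathbf{s'}}^\v$.

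The steps are short, and the only obstacles are bookkeeping: keeping the order of composition straight when dualizing operations, and checking that the ``same-index'' iterate $(\e_{n+1}^\v)^d$ really is a one-sided inverse of $(\d_{n+1}^\v)^d$ via the simplicial identities. The load-bearing point — and the only place where the hypothesis $q>0$ enters — is the identification $\psi_{n,\mathbf{s'}}=\phi_{n,1}$, which requires $n$ to lie strictly between $|\mathbf{s'}|-p'$ and $|\mathbf{s'}|$; once that is available the three formulas are forced.
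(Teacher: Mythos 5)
Your proposal is correct and follows essentially the same route as the paper: factor $\psi_{|\mathbf{s}|-p,\mathbf{s}}^\v=\phi_{|\mathbf{s}|-p,p-q+1}^\v w_{|\mathbf{s}|-p,p-q}^\v$ via Notation~\ref{N9.9}, expand with Proposition~\ref{P10.2}, and identify $\phi_{|\mathbf{s}|-p,1}=\psi_{|\mathbf{s}|-p,\mathbf{s'}}$ using Notation~\ref{N9.8} together with $|\mathbf{s'}|-p'<|\mathbf{s}|-p<|\mathbf{s'}|$. The only difference is that you spell out the second and third formulas (dualizing $\d_{k+1}^l w_{k,l}=\d_{k+1}^l$, $\d_0^k w_{k,l}=\d_0^k$ from Proposition~\ref{P9.7} and collapsing $(\e_{|\mathbf{s}|-p+1}^\v)^{p-q}(\d_{|\mathbf{s}|-p+1}^\v)^{p-q}=\id$), which the paper compresses into ``the results follow.''
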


\begin{proof}
In this case $\psi_{|\mathbf{s}|-p,\mathbf{s}}=w_{|\mathbf{s}|-p,p-q+1}\phi_{|\mathbf{s}|-p,p-q+1}$ (see Notation~\ref{N9.9}), hence
\begin{align*}
&\psi_{|\mathbf{s}|-p,\mathbf{s}}^\v-w_{|\mathbf{s}|-p,p-q}^\v\\
&\qquad{}=[\phi_{|\mathbf{s}|-p,p-q+1}^\v-\id]w_{|\mathbf{s}|-p,p-q}^\v\\
&\qquad{}=(\d_{|\mathbf{s}|-p+1}^\v)^{p-q+1}[\e_{|\mathbf{s}|-p-1}^\v-\e_{|\mathbf{s}|-p}^\v](\e_{|\mathbf{s}|-p+1}^\v)^{p-q}w_{|\mathbf{s}|-p,p-q}^\v\\
&\qquad{}=(\d_{|\mathbf{s}|-p+1}^\v)^{p-q}[\phi_{|\mathbf{s}|-p,1}^\v-\id](\e_{|\mathbf{s}|-p+1}^\v)^{p-q}w_{|\mathbf{s}|-p,p-q}^\v
\end{align*}
by Proposition~\ref{P10.2}. We also have $\phi_{|\mathbf{s}|-p,1}=\psi_{|\mathbf{s}|-p,\mathbf{s'}}$ by Notation~\ref{N9.8} because $|\mathbf{s}|-p>|\mathbf{s'}|-p'$, where $p'$~is the last term in~$\mathbf{s'}$. The results follow.
\end{proof}

\begin{proposition} \label{P10.8}
If $\mathbf{s}=(\mathbf{s'},q,p)$ then
\begin{align*}
&\psi_{i,\mathbf{s}}^\v(\d_{|\mathbf{s}|-p+1}^\v)^{p-q}
=(\d_{|\mathbf{s}|-p+1}^\v)^{p-q}\quad (|\mathbf{s}|-p<i<|\mathbf{s}|-q),\\
&\psi_{|\mathbf{s}|-q,\mathbf{s}}^\v(\d_{|\mathbf{s}|-p+1}^\v)^{p-q}
=(\d_{|\mathbf{s}|-p+1}^\v)^{p-q}\quad (q>0),\\
&\psi_{i,\mathbf{s}}^\v(\d_{|\mathbf{s}|-p+1}^\v)^{p-q}
=(\d_{|\mathbf{s}|-p+1}^\v)^{p-q}\psi_{i-p+q,\mathbf{s'}}^\v\quad 
(|\mathbf{s}|-q+1<i<|\mathbf{s}|),\\
&w_{|\mathbf{s}|-p,p-q}^\v
\psi_{|\mathbf{s}|-q+1,\mathbf{s}}^\v
(\d_{|\mathbf{s}|-p+1}^\v)^{p-q}
=(\d_{|\mathbf{s}|-p+1}^\v)^{p-q}
\psi_{|\mathbf{s}|-p+1,\mathbf{s'}}^\v\\
&\qquad{}+(\d_0^\v)^{|\mathbf{s}|-p}
\bigl[\phi_{p-q+1,1}^\v(\d_1^\v)^{p-q}-(\d_1^\v)^{p-q}\phi_{1,1}^\v\bigr]
(\e_0^\v)^{|\mathbf{s}|-p}\quad (q>1),\\
&\psi_{|\mathbf{s}|-q,\mathbf{s}}^\v(\d_0^\v)^{|\mathbf{s}|-p}
=(\d_0^\v)^{|\mathbf{s}|-p}\psi_{i-|\mathbf{s}|+p,(p)}^\v\quad 
(|\mathbf{s}|-p<i<|\mathbf{s}|).
\end{align*}
\end{proposition}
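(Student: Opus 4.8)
Throughout I write $n=|\mathbf{s}|$, so that $n-p=|\mathbf{s'}|-q$; writing $p'$ for the last term of~$\mathbf{s'}$ we also have $n-p>|\mathbf{s'}|-p'$, which is what keeps the operations below inside the ranges required by Notation~\ref{N9.8}. Two geometric facts will be used repeatedly: the image of $(\d_{n-p+1}^\v)^{p-q}$ is spanned by the basis elements of $\Delta(n)$ whose entries avoid the block $\{n-p+1,\ldots,n-q\}$, and the image of $(\d_0^\v)^{n-p}$ is spanned by those avoiding $\{0,\ldots,n-p-1\}$. The computational tools are the simplicial identities $\e_j^\v\d_b^\v=\d_{b-1}^\v\e_j^\v$ for $b\geq j+2$, $\e_j^\v\d_b^\v=\d_b^\v\e_{j-1}^\v$ for $b\leq j-1$ and $\e_j^\v\d_j^\v=\e_j^\v\d_{j+1}^\v=\id$, together with Propositions~\ref{P10.2} and~\ref{P10.4}.

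For the first three identities, and for the last one, the index~$i$ lies in a range where Notation~\ref{N9.8} gives $\psi_{i,\mathbf{s}}=\phi_{i,1}$, so by Proposition~\ref{P10.2} I substitute $\psi_{i,\mathbf{s}}^\v=\d_{i+1}^\v[\e_{i-1}^\v-\e_i^\v]+\id$ (and similarly for $\psi_{i-p+q,\mathbf{s'}}$, respectively $\psi_{i-n+p,(p)}$). The plan is then to push $[\e_{i-1}^\v-\e_i^\v]$ past the relevant iterated face morphism, distinguishing the position of~$i$ relative to the omitted block of indices. When $n-p<i<n-q$, both $i$ and $i+1$ lie strictly inside that block; using $\e_i^\v\d_{i+1}^\v=\e_{i-1}^\v\d_i^\v=\id$ together with $\e_{i-1}^\v\d_{i+1}^\v=\d_i^\v\e_{i-1}^\v$ one gets $[\e_{i-1}^\v-\e_i^\v](\d_{n-p+1}^\v)^{p-q}=0$, which is the first identity. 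When $i=n-q$ sits at the top of the block and (since $q>0$) still below~$n$, the relation $\e_{i-1}^\v\d_i^\v=\e_i^\v\d_i^\v=\id$ annihilates the bracket directly, giving the second identity. When $i$ lies strictly above the block, $\e_{i-1}^\v$, $\e_i^\v$ and $\d_{i+1}^\v$ all commute past the iterated face morphism with index lowered by $p-q$ (respectively $n-p$), and the expression becomes the conjugated $\phi^\v$ claimed in the third and last identities.

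The fourth identity is the substantial one, and its bookkeeping is where I expect the real work to lie. Since $q>1$ forces $n-p<n-q+1<n$, Notation~\ref{N9.8} again gives $\psi_{n-q+1,\mathbf{s}}^\v=\phi_{n-q+1,1}^\v$. I then expand $w_{n-p,p-q}^\v$ by Proposition~\ref{P10.4} into its three summands and compose each with $\phi_{n-q+1,1}^\v(\d_{n-p+1}^\v)^{p-q}$, simplifying through the simplicial identities. The summand $(\d_{n-p+1}^\v)^{p-q}(\e_{n-p}^\v)^{p-q}$ works out, after commuting the degeneracies past $\phi^\v$ and cancelling them against the cofaces, to $(\d_{n-p+1}^\v)^{p-q}\phi_{n-p+1,1}^\v$, which by Notation~\ref{N9.8} applied to~$\mathbf{s'}$ (legitimate because $n-p+1>|\mathbf{s'}|-p'$ and, as $q>1$, $n-p+1<|\mathbf{s'}|$) equals $(\d_{n-p+1}^\v)^{p-q}\psi_{n-p+1,\mathbf{s'}}^\v$; this is the first term on the right-hand side. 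The remaining two summands of $w^\v$, both of which involve $\d_0^\v$ and $\e_0^\v$, are supported via $(\d_0^\v)^{n-p}$ on the top $\Delta(p)$-part of $\Delta(n)$, and after simplification they combine to the correction term $(\d_0^\v)^{n-p}[\phi_{p-q+1,1}^\v(\d_1^\v)^{p-q}-(\d_1^\v)^{p-q}\phi_{1,1}^\v](\e_0^\v)^{n-p}$; the index shift from $\phi_{1,1}$ on $\Delta(q)$ to $\phi_{p-q+1,1}$ on $\Delta(p)$ is exactly the effect of $(\d_1^\v)^{p-q}$, which fixes the $0$th vertex but raises all the others. The main obstacle is carrying out these three expansions without error and matching the surviving term with the stated correction; the remaining four identities are routine once the position of~$i$ relative to the omitted block has been pinned down.
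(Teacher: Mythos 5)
Your proposal is correct and follows essentially the same route as the paper, which simply identifies the relevant $\psi$'s with $\phi_{i,1}$ via Notation~\ref{N9.8} (and the inequality $|\mathbf{s}|-p>|\mathbf{s'}|-p'$) and then appeals to the explicit formulas of Propositions~\ref{P10.2} and~\ref{P10.4}; your case analysis on the position of $i$ relative to the omitted block, and your expansion of $w_{|\mathbf{s}|-p,p-q}^\v$ into its three summands for the fourth identity, are exactly the computations the paper leaves implicit, and the attribution of the main term to the first summand and of the correction term to the two $\d_0^\v$--$\e_0^\v$ summands checks out.
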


\begin{proof}
According to Notation~\ref{N9.8} we have
$$\psi_{i,\mathbf{s}}=\phi_{i,1},\quad 
\psi_{i-|\mathbf{s}|+p,(p)}=\phi_{i-|\mathbf{s}|+p,1}$$
for $|\mathbf{s}|-p<i<|\mathbf{s}|$. We also have $\psi_{i-p+q,\mathbf{s'}}=\phi_{i-p+q,1}$ for $|\mathbf{s}|-q<i<|\mathbf{s}|$ because
$$i-p+q>|\mathbf{s}|-p>|\mathbf{s'}|-p',$$
where $p'$~is the last term in~$\mathbf{s'}$. The results now follow from Propositions \ref{P10.2} and~\ref{P10.4}.
\end{proof}

Finally in this section, we consider the action of~$\psi_{i,\mathbf{s}}^\v$ on~$V_\mathbf{s}$ in the case $\mathbf{s}=(\mathbf{s'},q,p)$; we recall from Notation~\ref{N8.4} that $V_\mathbf{s}$~is the subcomplex of $\Delta(|\mathbf{s}|)$ generated by the basis elements
$$[i_0,\ldots,i_{r-1},|\mathbf{s}|-p,i_{r+1},\ldots,i_m]$$
with $0\leq i_{r-1}<|\mathbf{s}|-p<i_{r+1}\leq |\mathbf{s}|-q$.

\begin{proposition} \label{P10.9}
If $\mathbf{s}=(\mathbf{s'},q,p)$ then
\begin{align*}
&\psi_{i,\mathbf{s}}^\v V_\mathbf{s}=0\quad (0<i\leq |\mathbf{s}|-p),\\
&\psi_{i,\mathbf{s}}^\v V_\mathbf{s}\subset V_\mathbf{s}\quad 
(|\mathbf{s}|-p<i<|\mathbf{s}).
\end{align*}
\end{proposition}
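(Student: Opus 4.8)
The plan is to compute $\psi_{i,\mathbf{s}}^\v$ explicitly on a generator $c=[i_0,\ldots,i_{r-1},|\mathbf{s}|-p,i_{r+1},\ldots,i_m]$ of $V_\mathbf{s}$, using the formulas from Section~\ref{S10}, and induct on the number of terms of $\mathbf{s}$. The case $\mathbf{s}=(p)$ is vacuous since then $|\mathbf{s}|-p=0$ and there are no indices $i$ with $0<i\leq 0$, so we immediately pass to $\mathbf{s}=(\mathbf{s'},q,p)$ with $p'$ the last term of $\mathbf{s'}$.

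For the first statement, treat $i=|\mathbf{s}|-p$ and $0<i<|\mathbf{s}|-p$ separately. When $i=|\mathbf{s}|-p$, the relevant formula is Proposition~\ref{P10.6} or~\ref{P10.7}: up to the error term $(\d_{|\mathbf{s}|-p+1}^\v)^{p-q+1}[\e_{|\mathbf{s}|-p-1}^\v-\e_{|\mathbf{s}|-p}^\v](\ldots)$ the operator agrees with $w_{|\mathbf{s}|-p,p-q}^\v$, and by Proposition~\ref{P10.4} $w_{|\mathbf{s}|-p,p-q}^\v$ is a sum of three terms each starting with either $(\d_{|\mathbf{s}|-p+1}^\v)^{p-q}$ or $(\d_0^\v)^{|\mathbf{s}|-p}$; so it suffices to check that $(\e_k^\v)^l$, $(\e_0^\v)^k$ and the various degeneracies kill $c$, i.e. that $c$ maps to $0$ in the face complexes $T'$ and $T''$ of Proposition~\ref{P8.10}. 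The key combinatorial fact is that $c$ has a term $|\mathbf{s}|-p$ both preceded by a smaller term and followed by a term in $J''$, so after applying $(\e_0^\v)^{|\mathbf{s}|-p}$ the two entries $i_{r-1}$ and $|\mathbf{s}|-p$ collide (both land on $|\mathbf{s}|-p$ after lowering), forcing the result to be $0$; similarly $(\e_{|\mathbf{s}|-p}^\v)^{\ldots}$ makes $|\mathbf{s}|-p$ and $i_{r+1}$ collide. For $0<i<|\mathbf{s}|-p$, Proposition~\ref{P10.5} writes $\psi_{i,\mathbf{s}}^\v$ as $w_{|\mathbf{s}|-p,p-q}^\v$ plus a correction $(\d_{|\mathbf{s}|-p+1}^\v)^{p-q}[\psi_{i,\mathbf{s'}}^\v-\id](\e_{|\mathbf{s}|-p}^\v)^{p-q}$; the $w$-part is handled as above, and in the correction the factor $(\e_{|\mathbf{s}|-p}^\v)^{p-q}$ already annihilates $c$ by the collision argument just mentioned.

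For the second statement $|\mathbf{s}|-p<i<|\mathbf{s}|$, split into the range $|\mathbf{s}|-p<i<|\mathbf{s}|-q$, the value $i=|\mathbf{s}|-q$ when $q>0$, and the range $|\mathbf{s}|-q<i<|\mathbf{s}|$. By Notation~\ref{N9.8}, $\psi_{i,\mathbf{s}}=\phi_{i,1}$, so by Proposition~\ref{P10.2} $\psi_{i,\mathbf{s}}^\v=\d_{i+1}^\v[\e_{i-1}^\v-\e_i^\v]+\id$. The identity summand clearly preserves $V_\mathbf{s}$. For the other summand, apply $\d_{i+1}^\v$ and $\e_{i-1}^\v$ (resp.\ $\e_i^\v$) to $c$ and check directly that the result is again a generator of $V_\mathbf{s}$, or a sum of such, or zero: because the index $i$ lies strictly between $|\mathbf{s}|-p$ and $|\mathbf{s}|$ and $c$ has its distinguished term $|\mathbf{s}|-p$ flanked by $i_{r-1}<|\mathbf{s}|-p$ and $i_{r+1}\leq|\mathbf{s}|-q$, inserting or deleting near position $i$ does not disturb the three features that characterise $V_\mathbf{s}$-generators (presence of $|\mathbf{s}|-p$, a strictly smaller term to its left, a term in $(|\mathbf{s}|-p,|\mathbf{s}|-q]$ to its right), except in degenerate sub-cases where the image is $0$. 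One must be slightly careful when $i+1$ or $i-1$ coincides with the position of $|\mathbf{s}|-p$ or with $i_{r+1}$; these boundary sub-cases are the only delicate point and are dispatched by direct inspection of the effect on the sequence of integers.

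The main obstacle will be the bookkeeping in the second statement's boundary sub-cases: when $\e_{i-1}^\v$ or $\e_i^\v$ acts on a term of $c$ equal to $i-1$, $i$, $i+1$ or $i+2$, one has to verify that the collapsed simplex either still contains $|\mathbf{s}|-p$ with the required flanking terms or degenerates to $0$. A clean way to organise this is to note that $\e_{i-1}^\v$ and $\e_i^\v$ shift all entries $>i$ (resp.\ $>i+1$) down by one and leave $|\mathbf{s}|-p<i$ fixed, so the distinguished term and everything to its left are untouched, while the terms to its right — all of which lie in $[|\mathbf{s}|-p+1,|\mathbf{s}|-q]$ and hence may or may not exceed $i$ — either stay in the half-open interval or produce a coincidence with a neighbour, and in the latter case the simplex is degenerate and maps to $0$. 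Dimension-counting (a generator of $V_\mathbf{s}$ has dimension $m\geq 2$, with the listed form) guarantees there is room for the inductive bookkeeping. Once these shifts are tabulated, the inclusions follow termwise and the proof concludes.
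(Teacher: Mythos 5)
Your argument is essentially the paper's own proof: for $0<i\leq|\mathbf{s}|-p$ you expand $\psi_{i,\mathbf{s}}^\v$ via Propositions \ref{P10.4}--\ref{P10.7} into terms whose rightmost factors are iterated degeneracies (or $w_{|\mathbf{s}|-p,p-q}^\v$ itself) that annihilate every generator of $V_\mathbf{s}$ because $|\mathbf{s}|-p$ collides with $i_{r-1}$ under $(\e_0^\v)^{|\mathbf{s}|-p}$ and with $i_{r+1}$ under $(\e_{|\mathbf{s}|-p}^\v)^{p-q}$, and for $|\mathbf{s}|-p<i<|\mathbf{s}|$ you check directly that $\d_{i+1}^\v(\e_{i-1}^\v-\e_i^\v)+\id$ sends generators of $V_\mathbf{s}$ to generators or to zero, exactly as the paper does via Proposition \ref{P10.2}. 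This is correct in substance; the only blemishes are harmless slips (the induction on the length of $\mathbf{s}$ is not actually needed, the collided entries land on a common value such as $0$ rather than on $|\mathbf{s}|-p$, $\e_{i-1}^\v$ lowers entries $\geq i$ rather than $>i$, and only the flanking term $i_{r+1}$, not every term to the right of $|\mathbf{s}|-p$, is constrained to lie in the interval $(|\mathbf{s}|-p,|\mathbf{s}|-q]$).
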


\begin{proof}
For $0<i<|\mathbf{s}|-p$ we have
\begin{multline*}
\psi_{i,\mathbf{s}}^\v
=(\d_{|\mathbf{s}|-p+1}^\v)^{p-q}\psi_{i,\mathbf{s'}}^\v(\e_{|\mathbf{s}|-p}^\v)^{p-q}\\
{}-(\d_{|\mathbf{s}|-p+1}^\v)^{p-q}(\d_0^\v)^{|\mathbf{s}|-p}(\e_0^\v)^{|\mathbf{s}|-q}
+(\d_0^\v)^{|\mathbf{s}|-p}(\e_0^\v)^{|\mathbf{s}|-p}
\end{multline*}
(see Notation~\ref{N9.11} and Proposition~\ref{P9.3}), hence $\psi_{i,\mathbf{s}}^\v V_\mathbf{s}=0$. In the same way $w_{|\mathbf{s}|-p,p-q}^\v V_\mathbf{s}=0$ by Proposition~\ref{P10.4}; hence, by Notation~\ref{N9.9}, $\psi_{|\mathbf{s}|-p,\mathbf{s}}^\v V_\mathbf{s}=0$. For $|\mathbf{s}|-p<i<|\mathbf{s}|$ we have $\psi_{i,\mathbf{s}}=\phi_{i,1}$ by Notation~\ref{N9.8}, hence $\psi_{i,\mathbf{s}}^\v V_\mathbf{s}\subset V_\mathbf{s}$ by Proposition~\ref{P10.2}.
\end{proof}

\section{Simple chain complexes as retracts of simplexes} \label{S11}

Given an up-down vector~$\mathbf{s}$, we have shown in Section~\ref{S8} that the simplex $\Delta(|\mathbf{s}|)$ has an $\mathbf{s}$-simple quotient 
$$S_\mathbf{s}=\Delta(|\mathbf{s}|)/U_\mathbf{s}.$$
In Section~\ref{S9} we have constructed an operation~$\Psi_\mathbf{s}$ on $|\mathbf{s}|$-dimensional elements in sets with complicial identities. By Theorem~\ref{T6.11} there is a corresponding endomorphism~$\Psi_\mathbf{s}^\v$ of $\Delta(|\mathbf{s}|)$. We will now show that $S_\mathbf{s}$~is a retract of $\Delta(|\mathbf{s}|)$ by showing that $\Psi_\mathbf{s}^\v$~is idempotent with kernel~$U_\mathbf{s}$.

The method is as follows. By construction (see Notation~\ref{N9.12}), $\Psi_\mathbf{s}^\v$~is a composite,
$$\Psi_\mathbf{s}^\v=
(\psi_{1,\mathbf{s}}^\v\psi_{2,\mathbf{s}}^\v\ldots\psi_{|\mathbf{s}|-1,\mathbf{s}}^\v)
\ldots
(\psi_{1,\mathbf{s}}^\v\psi_{2,\mathbf{s}}^\v\psi_{3,\mathbf{s}}^\v)
(\psi_{1,\mathbf{s}}^\v\psi_{2,\mathbf{s}}^\v)
(\psi_{1,\mathbf{s}}^\v).$$
We will construct subcomplexes~$U_\mathbf{s}^j$ of $\Delta(|\mathbf{s}|)$ for $0\leq j<|\mathbf{s}|$ such that
\begin{align*}
&U_\mathbf{s}=U_\mathbf{s}^0+\ldots+U_\mathbf{s}^{|\mathbf{s}|-1},\\
&\psi_{i,\mathbf{s}}^\v U_\mathbf{s}^j\subset U_\mathbf{s}^j\quad 
(0<i<j<|\mathbf{s}|),\\
&\psi_{j,\mathbf{s}}^\v U_\mathbf{s}^j\subset U_\mathbf{s}^{j-1}\quad 
(0<j<|\mathbf{s}|),\\
&U_\mathbf{s}^0=0,
\end{align*}
from which it will follow that 
$U_\mathbf{s}\subset\ker\Psi_\mathbf{s}^\v$.
We will also show that
$$(\psi_{i,\mathbf{s}}^\v-\id)\Delta(|\mathbf{s}|)\subset U_\mathbf{s}\quad 
(0<i<|\mathbf{s}|),$$
from which it will follow that 
$(\Psi_\mathbf{s}^\v-\id)\Delta(|\mathbf{s}|)\subset U_\mathbf{s}$.
From these inclusions it will indeed follow that $\Psi_\mathbf{s}^\v$~is idempotent with kernel~$U_\mathbf{s}$ as required.

The subcomplexes~$U_\mathbf{s}^j$ are defined by induction on the number of terms in~$\mathbf{s}$. In the many-term case $\mathbf{s}=(\mathbf{s'},q,p)$ recall from Notation~\ref{N8.4} that 
$$U_\mathbf{s}
=(\d_{|\mathbf{s}|-p+1}^\v)^{p-q}U_\mathbf{s'}
+(\d_0^\v)^{|\mathbf{s}|-p}U_{(p)}
+V_\mathbf{s},$$
where $V_\mathbf{s}$~is the subcomplex of $\Delta(|\mathbf{s}|)$ generated by the basis elements
$$[i_0,\ldots,i_{r-1},|\mathbf{s}|-p,i_{r+1},\ldots,i_m]$$
with $0\leq i_{r-1}<|\mathbf{s}|-p<i_{r+1}\leq |\mathbf{s}|-q$.

\begin{notation} \label{N11.1}
For $0\leq j<p$ let $U_{(p)}^j$ be the subcomplex of $\Delta(p)$ generated by the basis elements $[i_0,\ldots,i_m]$ with at least two terms less than or equal to~$j$ and with no term equal to $j+1$.

For $\mathbf{s}=(\mathbf{s'},q,p)$ with $q=0$, let
$$U_\mathbf{s}^j=
\begin{cases}
(\d_{|\mathbf{s}|-p+1}^\v)^{p-q}U_\mathbf{s'}^j& 
(0\leq j<|\mathbf{s}|-p),\\
(\d_0^\v)^{|\mathbf{s}|-p}U_{(p)}^{j-|\mathbf{s}|+p}
+ V_\mathbf{s}&
(|\mathbf{s}|-p\leq j<|\mathbf{s}|).
\end{cases}$$

For $\mathbf{s}=(\mathbf{s'},q,p)$ with $q>0$, let
$$
U_\mathbf{s}^j=\begin{cases}
(\d_{|\mathbf{s}|-p+1}^\v)^{p-q}U_\mathbf{s'}^j& 
(0\leq j<|\mathbf{s}|-p),\\
(\d_{|\mathbf{s}|-p+1}^\v)^{p-q}U_\mathbf{s'}^{|\mathbf{s}|-p}\\
\qquad{}+(\d_0^\v)^{|\mathbf{s}|-p}U_{(p)}^{j-|\mathbf{s}|+p}
+V_\mathbf{s}
&(|\mathbf{s}|-p\leq j<|\mathbf{s}|-q),\\
(\d_{|\mathbf{s}|-p+1}^\v)^{p-q}U_\mathbf{s'}^{j-p+q}\\
\qquad{}+(\d_0^\v)^{|\mathbf{s}|-p}U_{(p)}^{j-|\mathbf{s}|+p}
+V_\mathbf{s}&
(|\mathbf{s}|-q\leq j<|\mathbf{s}|).
\end{cases}$$
\end{notation}

We begin with the following result.

\begin{proposition} \label{P11.2}
Let $\mathbf{s}$ be an up-down vector. Then
\begin{align*}
&U_\mathbf{s}=U_\mathbf{s}^0+\ldots+U_\mathbf{s}^{|\mathbf{s}|-1},\\
&U_\mathbf{s}^0=0.
\end{align*}
\end{proposition}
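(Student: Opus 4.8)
The plan is to argue by induction on the number of terms in~$\mathbf{s}$, with one-term vectors as the base case. For the base case $\mathbf{s}=(p)$ one works directly with generating basis elements. The equality $U_{(p)}^0=0$ is immediate, since a basis element of $\Delta(p)$ has strictly increasing entries and so cannot have two terms $\leq 0$. For $U_{(p)}=U_{(p)}^0+\ldots+U_{(p)}^{p-1}$ one checks both inclusions on generators. If $[i_0,\ldots,i_m]$ generates $U_{(p)}^j$, it has at least two terms $\leq j$ and no term equal to $j+1$, so its terms exceeding~$j$ lie in $\{j+2,\ldots,p\}$; hence $m-1\leq p-j-1$, so $i_1\leq j\leq p-m$, which shows $[i_0,\ldots,i_m]$ generates $U_{(p)}$ and hence $U_{(p)}^j\subset U_{(p)}$. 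Conversely, if $b=[i_0,\ldots,i_m]$ generates $U_{(p)}$, so $m>0$ and $i_1\leq p-m$, then $\{i_1+1,\ldots,p\}$ has $p-i_1>m-1$ elements while at most $m-1$ of them (namely $i_2,\ldots,i_m$) are terms of~$b$; picking $j+1\in\{i_1+1,\ldots,p\}$ which is not a term of~$b$ exhibits $b$ as a generator of $U_{(p)}^j$ with $0\leq j<p$. Since $\sum_j U_{(p)}^j$ is a subcomplex containing every generator of $U_{(p)}$, it contains $U_{(p)}$, and the base case follows.

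For the inductive step write $\mathbf{s}=(\mathbf{s'},q,p)$ and recall from Notation~\ref{N8.4} that
$$
U_\mathbf{s}=(\d_{|\mathbf{s}|-p+1}^\v)^{p-q}U_\mathbf{s'}+(\d_0^\v)^{|\mathbf{s}|-p}U_{(p)}+V_\mathbf{s},
\qquad |\mathbf{s}|-p=|\mathbf{s'}|-q.
$$
The plan is to sum the defining formulas of Notation~\ref{N11.1} over $j=0,\ldots,|\mathbf{s}|-1$ and reindex. When $q=0$, $j$ runs over $0\leq j<|\mathbf{s'}|$ and $|\mathbf{s'}|\leq j<|\mathbf{s}|$; the first range gives $(\d_{|\mathbf{s}|-p+1}^\v)^{p-q}\sum_j U_\mathbf{s'}^j=(\d_{|\mathbf{s}|-p+1}^\v)^{p-q}U_\mathbf{s'}$ by the inductive hypothesis, and the second gives $(\d_0^\v)^{|\mathbf{s}|-p}\sum_k U_{(p)}^k+V_\mathbf{s}=(\d_0^\v)^{|\mathbf{s}|-p}U_{(p)}+V_\mathbf{s}$ by the base case. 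When $q>0$ there are three ranges, and the point is that the $U_\mathbf{s'}$-indices they produce are $0,\ldots,|\mathbf{s'}|-q-1$, then the single index $|\mathbf{s'}|-q$, then $|\mathbf{s'}|-q,\ldots,|\mathbf{s'}|-1$, whose union is $\{0,\ldots,|\mathbf{s'}|-1\}$, so these contributions sum to $(\d_{|\mathbf{s}|-p+1}^\v)^{p-q}U_\mathbf{s'}$ by the inductive hypothesis; likewise the $U_{(p)}$-indices are $0,\ldots,p-q-1$ and $p-q,\ldots,p-1$, summing to $(\d_0^\v)^{|\mathbf{s}|-p}U_{(p)}$ by the base case; and $V_\mathbf{s}$ occurs in at least one range. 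In either case the three pieces reassemble to $U_\mathbf{s}$. Finally, since $|\mathbf{s}|-p=|\mathbf{s'}|-q$ and $|\mathbf{s'}|$ is at least the last term of~$\mathbf{s'}$, which exceeds~$q$, we have $|\mathbf{s}|-p\geq 1$; hence $0$ lies in the first range and $U_\mathbf{s}^0=(\d_{|\mathbf{s}|-p+1}^\v)^{p-q}U_\mathbf{s'}^0=0$ by the inductive hypothesis.

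The substantive step is the base case, and within it the combinatorial matching of the generators of $U_{(p)}$ with those of the $U_{(p)}^j$ — concretely, the existence of the ``hole'' $j+1$ forced by the hypothesis $i_1\leq p-m$. The inductive step is bookkeeping; the thing to check with care is that in the $q>0$ case the three index ranges overlap in precisely the index $|\mathbf{s'}|-q$ needed to reconstitute the full sums $\sum_j U_\mathbf{s'}^j$ and $\sum_k U_{(p)}^k$, together with the elementary inequalities ($p\geq 1$, $|\mathbf{s}|-p\geq 1$, $q<p$) that keep the ranges nonempty and make $U_\mathbf{s}^0$ visibly zero.
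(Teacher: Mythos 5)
Your overall route is the same as the paper's: induction on the number of terms of $\mathbf{s}$, with the inductive step being a reindexing of the defining sums in Notation~\ref{N11.1} (which the paper dismisses as obvious) and the real content in the one-term case, handled by comparing generating sets, exactly as in the paper's proof. Your inductive step is correct (including the bookkeeping of the overlapping index $|\mathbf{s'}|-q$ and the observation $|\mathbf{s}|-p\geq 1$), as are the claims $U_{(p)}^0=0$ and the inclusion of every generator of $U_{(p)}$ into some $U_{(p)}^j$.

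However, your justification of the inclusion $U_{(p)}^j\subset U_{(p)}$ contains a step that fails as written. From ``at least two terms $\leq j$ and no term $j+1$'' you infer that the terms exceeding~$j$ lie in $\{j+2,\ldots,p\}$ and conclude $m-1\leq p-j-1$, i.e.\ $j\leq p-m$; but only the terms that actually exceed~$j$ are constrained to lie in that set, and a generator may have more than two terms $\leq j$. For instance, with $p=3$ and $j=2$ the basis element $[0,1,2]$ generates $U_{(3)}^2$, yet $m-1=1>0=p-j-1$ and $j=2>p-m=1$, so the chain $i_1\leq j\leq p-m$ breaks down. The conclusion you need, namely $i_1\leq p-m$, is nevertheless true, and the correct count is the one you already use in the converse direction: since $i_1\leq j$, the integer $j+1$ lies in $\{i_1+1,\ldots,p\}$ and is not a term of the element, so $i_1<i_2<\ldots<i_m$ together with $j+1$ are $m+1$ distinct elements of $\{i_1,\ldots,p\}$, whence $p-i_1+1\geq m+1$, i.e.\ $i_1\leq p-m$ (and $m>0$ is clear). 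With that one-line repair your argument is complete and coincides with the paper's.
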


\begin{proof}
We use induction  on the number of terms in~$\mathbf{s}$. The inductive step is obvious; we will therefore consider the one-term case $\mathbf{s}=(p)$. 

Recall from Notation~\ref{N8.4} that $U_{(p)}$~is the subcomplex of $\Delta(p)$ generated by the basis elements $[i_0,\ldots,i_m]$ with $m>0$ and $i_1\leq p-m$. It is easy to see that the generating set for~$U_{(p)}$ is the union of the generating sets for $U_{(p)}^0,\ldots,U_{(p)}^{p-1}$; therefore $U_{(p)}=U_{(p)}^0+\ldots+U_{(p)}^{p-1}$. It is also easy to see that the generating set for~$U_{(p)}^0$ is empty; therefore $U_{(p)}^0=0$.

This completes the proof.
\end{proof}

We will now give three lemmas aimed at describing the subcomplexes~$V_\mathbf{s}$ more explicitly.

\begin{lemma} \label{L11.3}
If $k$~is a fixed integer with $0\leq k\leq m$ then $\Delta(m)$ is generated as a chain complex by the basis elements $[j_0,\ldots,j_m]$ including a term equal to~$k$.
\end{lemma}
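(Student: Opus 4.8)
The plan is to prove Lemma~\ref{L11.3} by induction on~$m$, following the structure of the join decomposition that underlies all the simplex computations in this paper. Write $\Delta(m)\cong\Delta(m-1)*\Delta(0)$, where the last copy of $\Delta(0)$ is spanned by the vertex~$[m]$; alternatively, and more conveniently here, fix the vertex~$k$ and split $\Delta(m)$ so that the distinguished vertex is isolated. The subcomplex~$V$ generated by the basis elements containing a term equal to~$k$ is exactly the subcomplex spanned by those faces $[j_0,\ldots,j_r]$ with $k\in\{j_0,\ldots,j_r\}$, and the claim is that the chain complex map $V\hookrightarrow\Delta(m)$ is surjective, i.e. that every basis element lies in the subgroup generated by~$V$ together with its boundaries.

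First I would handle the base case: if $m=0$ then $k=0$, the only basis element is $[0]$, and it contains~$k$, so there is nothing to prove. For the inductive step, take a basis element $b=[j_0,\ldots,j_r]$ not containing~$k$. The idea is to insert~$k$: let $b^+$ be the basis element obtained from~$b$ by adding the term~$k$ in the appropriate position (it is a well-defined $(r+1)$-simplex since $k\notin\{j_0,\ldots,j_r\}$). Then $\d b^+$ is the alternating sum of the $r$-faces of $b^+$; one of these faces is $\pm b$ (the one obtained by deleting~$k$), and all the other faces of $b^+$ still contain~$k$, hence lie in~$V$. Therefore $b$ is congruent modulo the boundaries of elements of~$V$ to a signed sum of basis elements of~$V$, so $b$ lies in the subgroup generated by~$V$ and its boundaries. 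This is precisely the argument used in the proof of Proposition~\ref{P8.10} for passing a basis element across $V_\mathbf{s}$, and it works verbatim here because a $0$-dimensional $b$ with no term~$k$ cannot occur (if $r=0$ then $b=[j_0]$ and $j_0\ne k$, but then $\d b^+=[k]-[j_0]$ with $[k]\in V$, so $[j_0]$ is still congruent to a member of~$V$; the augmentation causes no trouble since both $[j_0]$ and $[k]$ augment to~$1$).

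Actually, since the statement is purely about being generated as a chain complex (no directedness or augmentation condition is asserted), the cleanest route avoids induction altogether: for any basis element $b=[j_0,\ldots,j_r]$ not containing~$k$, form $b^+$ by inserting~$k$, and observe that $\d b^+ = \sum_t (-1)^t (\text{$t$-th face of }b^+)$, in which exactly one face equals $\pm b$ and every other face contains~$k$ and hence is a generator of the desired type. Rearranging gives $b$ as $\pm\d b^+$ minus a signed sum of generators containing~$k$, so $b$ is in the subcomplex generated by those elements. The main (and only) obstacle is bookkeeping the sign and the position at which~$k$ is inserted, which is routine. I expect no real difficulty; the lemma is a standard fact about simplicial chain complexes and its proof is a one-line insertion-of-a-vertex argument, entirely parallel to the congruence-modulo-$V_\mathbf{s}$ computation already carried out in Proposition~\ref{P8.10}.
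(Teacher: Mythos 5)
Your proposal is correct and, once you discard the unnecessary induction scaffolding, your ``cleanest route'' is exactly the paper's proof: insert the vertex~$k$ into a basis element~$a$ not containing it, note that the boundary of the resulting element has one term equal to~$\pm a$ while every other term contains~$k$, and conclude. No gap; the sign and insertion-position bookkeeping you mention is indeed routine and the paper does not spell it out either.
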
 

\begin{proof}
Let $a$ be a basis element not including~$k$, and let $b$ be the basis element obtained by inserting a term equal to~$k$ in~$a$. Then $\d b$ has a term equal to~$a$, and every other term of $\d b$ includes~$k$. The result follows.
\end{proof}

\begin{lemma} \label{L11.4}
If $\mathbf{s}=(\mathbf{s'},q,p)$ then
$$[w_{|\mathbf{s}|-p,p-q}^\v-\id]\Delta(|\mathbf{s}|)\subset V_\mathbf{s}.$$
\end{lemma}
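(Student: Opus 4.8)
The plan is to use the explicit formula for $w_{k,l}^\v$ from Proposition~\ref{P10.4} with $k=|\mathbf{s}|-p$ and $l=p-q$, so that
$$w_{k,l}^\v-\id
=(\d_{k+1}^\v)^l(\e_k^\v)^l
-(\d_{k+1}^\v)^l(\d_0^\v)^k(\e_0^\v)^{k+l}
+(\d_0^\v)^l(\e_0^\v)^l-\id,$$
and then to evaluate each term on an arbitrary basis element $a=[a_0,\ldots,a_n]$ of $\Delta(|\mathbf{s}|)$, checking that the result lies in $V_\mathbf{s}$. By Lemma~\ref{L11.3} (applied with the term $k=|\mathbf{s}|-p$) it in fact suffices to treat basis elements $a$ that contain a term equal to $|\mathbf{s}|-p$; this is the reduction that makes the bookkeeping manageable, because it lets me split the analysis according to whether $a$ also has a term strictly below $|\mathbf{s}|-p$ and a term in the range $|\mathbf{s}|-p+1\le\,\cdot\,\le|\mathbf{s}|-q$.

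First I would record the relevant face/degeneracy bookkeeping: the composite $(\e_k^\v)^l$ followed by $(\d_{k+1}^\v)^l$ is, on basis elements, the ``simplicial identity'' retraction that collapses the block of vertices $k+1,\ldots,k+l$ onto $k$ (it kills a basis element that meets two of those vertices after the shift, etc.), and similarly $(\e_0^\v)^l$ followed by $(\d_0^\v)^l$ collapses the initial block $0,\ldots,l-1$ onto... — more precisely I would just compute $w_{k,l}^\v a$ directly from Notation~\ref{N6.6}. The key combinatorial fact is that for a basis element $a$ \emph{not} containing a term in the bad range — i.e. one with no index strictly between $k$ and $k+l+1$ and/or no index below $k$ straddling $k$ — one gets $w_{k,l}^\v a=a$, so such terms contribute $0$ to $(w_{k,l}^\v-\id)a$; and for a basis element $a$ that does straddle $|\mathbf{s}|-p$ in the sense of the definition of $V_\mathbf{s}$, each of the four summands above is individually a sum of basis elements still containing a straddling term $|\mathbf{s}|-p$ together with neighbours in the range $(|\mathbf{s}|-p,|\mathbf{s}|-q]$, hence lies in $V_\mathbf{s}$ — or else the summands cancel in pairs against $\id$.

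The cleanest route, and the one I would actually write out, is: reduce to basis elements $a$ with a term $|\mathbf{s}|-p$ by Lemma~\ref{L11.3}; then split into the case where $a$ has \emph{no} term $i_{r+1}$ with $|\mathbf{s}|-p<i_{r+1}\le|\mathbf{s}|-q$, in which case a short computation with Notation~\ref{N6.6} shows $w_{k,l}^\v a=a$; the case where $a$ has \emph{no} term $i_{r-1}<|\mathbf{s}|-p$, where one similarly checks $w_{k,l}^\v a=a$ (here one uses that the lowest term of $a$ is $\ge k=|\mathbf{s}|-p$, so the $(\d_0^\v)^l(\e_0^\v)^l$ part is inert and the other parts reassemble to $a$); and finally the genuine straddling case, where $a$ has terms $i_{r-1}<|\mathbf{s}|-p<i_{r+1}\le|\mathbf{s}|-q$, so that $a\in V_\mathbf{s}$ by definition and moreover each basis element appearing in $w_{k,l}^\v a$ is seen to retain a straddling index $|\mathbf{s}|-p$ with a neighbour in $(|\mathbf{s}|-p,|\mathbf{s}|-q]$, hence lies in $V_\mathbf{s}$; thus $(w_{k,l}^\v-\id)a\in V_\mathbf{s}$ in every case.

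The main obstacle will be the explicit verification in the straddling case that every term of $w_{k,l}^\v a$ really does keep an index $|\mathbf{s}|-p$ flanked by indices on either side within the prescribed range — i.e. carefully tracking how the degeneracy $\e_k^\v$ (which inserts repetitions, then the join convention deletes the degenerate basis elements) and the faces interact with the boundary $\partial$ coming from ``generated as a chain complex'', since $V_\mathbf{s}$ is a \emph{subcomplex} and I must allow boundaries of its generators. I expect this to come down to the observation that the operators $(\d_{k+1}^\v)^l$, $(\e_k^\v)^l$, $(\d_0^\v)^l$, $(\e_0^\v)^l$ all fix the coordinate $|\mathbf{s}|-p=k$ and only renumber coordinates strictly above $|\mathbf{s}|-q$ or strictly below $|\mathbf{s}|-p$, so they cannot destroy a straddling configuration; once that is made precise the containment in $V_\mathbf{s}$ is immediate.
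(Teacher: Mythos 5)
Your plan is essentially the paper's own proof: reduce by Lemma~\ref{L11.3} to basis elements containing the term $|\mathbf{s}|-p$ (using that $w_{|\mathbf{s}|-p,p-q}^\v-\id$ is a chain map and $V_\mathbf{s}$ is a subcomplex), then evaluate $w_{|\mathbf{s}|-p,p-q}^\v$ on such basis elements via Proposition~\ref{P10.4}, splitting according to whether the element is a generator of $V_\mathbf{s}$. The one small inaccuracy is your anticipated mechanism in the straddling case: there the computation in fact gives $w_{|\mathbf{s}|-p,p-q}^\v a=0$ (all summands are killed), so $(w_{|\mathbf{s}|-p,p-q}^\v-\id)a=-a\in V_\mathbf{s}$ immediately, which is simpler than tracking straddling indices through the terms.
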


\begin{proof}
Let $a$ be a basis element for $\Delta(|\mathbf{s}|)$ which includes the term $|\mathbf{s}|-p$. By Proposition~\ref{P10.4}, if $a$~is a generator for~$V_\mathbf{s}$ then $w_{|\mathbf{s}|-p,p-q}^\v a=0$; if $a$~is not a generator for~$V_\mathbf{s}$ then $w_{|\mathbf{s}|-p,p-q}^\v a=a$. The result now follows because of Lemma~\ref{L11.3}.
\end{proof}

\begin{lemma} \label{L11.5}
Let $\mathbf{s}$ be an up-down vector with last term~$p$ and let $A$ be the set of basis elements $[j_0,\ldots,j_m]$ for $\Delta(|\mathbf{s}|)$ with at least two terms in the set
$$\{\,0,1,\ldots,|\mathbf{s}|-p\,\}.$$
For $0\leq j<|\mathbf{s}|$ let $B^j$ be the set of basis elements with at least two terms in the set
$$\{0,1,\ldots,j\}$$
and with no term $j+1$, and let $C^j$ be the set of basis elements with at least two terms in the set
$$\{\,|\mathbf{s}|-p,|\mathbf{s}|-p+1,\ldots,j\}$$
and with no term $j+1$.
Then $U_\mathbf{s}^j$~is generated as a chain complex by a subset of $A\cup B^j$, and $U_\mathbf{s}^j$~contains every member of~$C^j$.
\end{lemma}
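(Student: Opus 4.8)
The plan is to induct on the number of terms in $\mathbf{s}$, working through the recursive description of $U_\mathbf{s}^j$ in Notation~\ref{N11.1} one summand at a time; the two assertions are proved together. Write $n=|\mathbf{s}|-p$, and in the inductive step let $p'$ be the last term of $\mathbf{s'}$, so that $n=|\mathbf{s'}|-q>|\mathbf{s'}|-p'$ by Proposition~\ref{P8.2}; denote by $A_{\mathbf{s'}}$, $B_{\mathbf{s'}}^{j'}$, $C_{\mathbf{s'}}^{j'}$ the analogues of $A$, $B^j$, $C^j$ built from~$\mathbf{s'}$. In the base case $\mathbf{s}=(p)$ one has $n=0$, so no basis element has two distinct terms in~$\{0\}$ and hence $A=\emptyset$; moreover $B^j=C^j$, and $U_{(p)}^j$ is by definition the subcomplex generated by precisely the members of $B^j$, which gives both assertions at once.

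For the inductive step, the generation claim is the routine half. The coface operators send basis elements to basis elements, a power $(\d_{n+1}^\v)^{p-q}$ fixes the values $\leq n$ and shifts the values $\geq n+1$ up by $p-q$, and $(\d_0^\v)^n$ shifts every value up by~$n$, so one just has to track which range a generator's terms fall into. The inductive hypothesis presents each $U_{\mathbf{s'}}^{j'}$ by generators lying in $A_{\mathbf{s'}}\cup B_{\mathbf{s'}}^{j'}$; the members of $A_{\mathbf{s'}}$ have two terms $\leq|\mathbf{s'}|-p'<n$, which the coface power leaves fixed, so their images lie in~$A$, and a short case check on the size of $j'$ relative to $n$ shows the members of $B_{\mathbf{s'}}^{j'}$ have images in $A\cup B^j$. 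The generators of $(\d_0^\v)^nU_{(p)}^{j-n}$ have all terms $\geq n$ and two terms $\leq j$ with none equal to $j+1$, so they lie in $C^j\subseteq B^j$; and the generators of $V_\mathbf{s}$ (Notation~\ref{N8.4}) contain both $n$ and a term $<n$, hence have two terms in $\{0,\ldots,n\}$ and lie in~$A$. Taken together this proves the first assertion.

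The containment $C^j\subseteq U_\mathbf{s}^j$ is the substantive part, and I would split it into three cases for $a\in C^j$, handling the first two before the third. If $a$ has no term $<n$, then $a=(\d_0^\v)^na'$ with $a'$ a basis element of $\Delta(p)$ having two terms $\leq j-n$ and no term $j-n+1$; thus $a'$ generates $U_{(p)}^{j-n}$ and $a$ lies in the corresponding summand of $U_\mathbf{s}^j$. If $a$ has a term $<n$ and also contains $n$, let $w$ be the smallest term of $a$ exceeding~$n$: when $w\leq|\mathbf{s}|-q$ the element $a$ is literally a generator of $V_\mathbf{s}$; when $w>|\mathbf{s}|-q$ (which forces $q>0$ and $j>|\mathbf{s}|-q$) the element $a$ has no term in $\{n+1,\ldots,|\mathbf{s}|-q\}$, hence lies in the image of $(\d_{n+1}^\v)^{p-q}$, and its preimage $a'$ is readily checked to lie in $C_{\mathbf{s'}}^{j-p+q}$, so the inductive hypothesis puts $a'$ in $U_{\mathbf{s'}}^{j-p+q}$ and $a$ in $U_\mathbf{s}^j$. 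Finally suppose $a$ has a term $<n$ but does not contain~$n$; if $a$ has no term in $\{n+1,\ldots,|\mathbf{s}|-q\}$ the coface-preimage argument just given applies again, so assume it does. Let $u$ be the largest term of $a$ below $n$ and $v$ the smallest above it; any term of $a$ strictly between $u$ and $v$ would have to equal~$n$, so $u$ and $v$ are consecutive in $a$ and $v\leq|\mathbf{s}|-q$. Form $b$ by inserting $n$ between $u$ and $v$; then $b$ is a generator of $V_\mathbf{s}$, so $\d b\in V_\mathbf{s}\subseteq U_\mathbf{s}^j$, and in the expansion $\d b=\pm a+\sum_t\pm(b\setminus t)$ over the terms $t$ of $a$ each $b\setminus t$ again lies in $C^j$ and still contains~$n$, hence falls under one of the first two cases; therefore $a\in U_\mathbf{s}^j$.

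The step I expect to be the main obstacle is this last boundary argument, for two reasons. First, one must be sure it does not loop back: because every $b\setminus t$ still contains~$n$, it is caught by the first two cases (possibly via the inductive hypothesis on $\mathbf{s'}$, but never via case three on $\mathbf{s}$ itself), so the three-case scheme really is well founded. Second, one has to keep careful track of the value-ranges and of the clause ``no term equal to $j+1$'' as generators pass through the coface powers, since a value of a generator can map to $j+1$ only through the appropriate range. The remaining verifications are the routine index bookkeeping already seen in Proposition~\ref{P8.10} and Lemma~\ref{L11.3}.
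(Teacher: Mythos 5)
Your proof is correct, and the inductive framework and the ``generation'' half coincide with the paper's argument (the same bookkeeping of how the coface powers move the generators of the constituents into $A\cup B^j$; the paper spells out the case split on $j'$ that you compress into ``a short case check''). Where you genuinely diverge is in the containment $C^j\subset U_\mathbf{s}^j$. The paper first invokes Lemma~\ref{L11.4}, which gives $\im(w_{|\mathbf{s}|-p,p-q}^\v-\id)\subset V_\mathbf{s}$, so that it suffices to treat $w_{|\mathbf{s}|-p,p-q}^\v c$; it then expands this by the explicit formula of Proposition~\ref{P10.4} and feeds the degeneracy images $(\e_{|\mathbf{s}|-p}^\v)^{p-q}c$ and $(\e_0^\v)^{\ldots}c$ into the inductive hypothesis for $\mathbf{s'}$ and into $U_{(p)}^{j-|\mathbf{s}|+p}$. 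You instead argue entirely on basis elements: identify $a\in C^j$ directly as a generator of $(\d_0^\v)^{|\mathbf{s}|-p}U_{(p)}^{j-|\mathbf{s}|+p}$, of $V_\mathbf{s}$, or (via a coface preimage in $C_{\mathbf{s'}}^{j-p+q}$) of $(\d_{|\mathbf{s}|-p+1}^\v)^{p-q}U_{\mathbf{s'}}^{j-p+q}$, and dispose of the residual case by inserting the value $|\mathbf{s}|-p$ and taking the boundary of the resulting $V_\mathbf{s}$-generator --- the same congruence-modulo-$V_\mathbf{s}$ trick as in Proposition~\ref{P8.10}; your observation that every face $b\setminus t$ still contains $|\mathbf{s}|-p$, so the recursion never re-enters the third case, is exactly the point that keeps this well founded. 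The trade-off: your route avoids Lemma~\ref{L11.4} and the operator formula for $w_{k,l}^\v$ altogether, so the containment proof is more elementary and self-contained, at the cost of a finer case analysis and more index tracking; the paper's route is shorter to write because it reuses the chain-level machinery of Section~\ref{S10} that is needed elsewhere anyway. Both establish the statement, including the range conditions ($q>0$, $j>|\mathbf{s}|-q$, $j\geq|\mathbf{s}|-p$) that make the relevant constituents of $U_\mathbf{s}^j$ available, which you verify correctly.
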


\begin{proof}
The proof is by induction on the number of terms in~$\mathbf{s}$. 

If $\mathbf{s}=(p)$ then the results hold because $U_\mathbf{s}^j$~is generated by the members of~$C^j$ and because $B^j=C^j$.

From now on let $\mathbf{s}=(\mathbf{s'},q,p)$ and let $p'$ be the last term of~$\mathbf{s'}$. We will first show that $U_\mathbf{s}^j$ is generated by some of the members of $A\cup B^j$. We do this by considering the various constituents of~$U_\mathbf{s}^j$.

Suppose that $0\leq j<|\mathbf{s}|-p$. Since $|\mathbf{s'}|-p'<|\mathbf{s}|-p$, it follows from the inductive hypothesis that $U_\mathbf{s'}^j$~is generated by basis elements with at least two terms less than or equal to $|\mathbf{s}|-p$, and it then follows that $(\d_{|\mathbf{s}|-p+1}^\v)^{p-q}U_\mathbf{s'}^j$ is generated by members of~$A$.

Suppose that $q>0$ and $|\mathbf{s}|-p\leq j<|\mathbf{s}|-q$. Then $(\d_{|\mathbf{s}|-p+1}^\v)^{p-q}U_\mathbf{s'}^{|\mathbf{s}|-p}$ is generated by members of~$A$ as in the previous case.

Suppose that $|\mathbf{s}|-q\leq j<|\mathbf{s}|$. Then $U_\mathbf{s'}^{j-p+q}$~is generated by basis elements with at least two terms less than or equal to $|\mathbf{s}|-p$, or with at least two terms less than or equal to $j-p+q$ and with no term $j-p+q+1$. It follows that $(\d_{|\mathbf{s}|-p+1}^\v)^{p-q}U_\mathbf{s'}^{j-p+q}$ is generated by members of $A\cup B^j$.

For $|\mathbf{s}|-p\leq j<|\mathbf{s}|$ it is clear that $(\d_0^\v)^{|\mathbf{s}|-p}U_{(p)}^{j-|\mathbf{s}|+p}$ is generated by members of~$B^j$.

It is also clear that $V_\mathbf{s}$~is generated by members of~$A$.

From these results it follows in all cases that $U_\mathbf{s}^j$~is generated by members of $A\cup B^j$.

Next we show that every member~$c$ of~$C^j$ is in~$U_\mathbf{s}^j$.

There is nothing to prove in cases with $0\leq j<|\mathbf{s}|-p$, because in those cases $C^j$~is empty.

From now on, suppose that $|\mathbf{s}|-p\leq j<|\mathbf{s}|$. By Lemma~\ref{L11.4}
$$\im(w_{|\mathbf{s}|-p,p-q}^\v-\id)\subset V_\mathbf{s}\subset U_\mathbf{s}^j,$$
so it suffices to show that $w_{|\mathbf{s}|-p,p-q}^\v c\in U_\mathbf{s}^j$. We do this by considering the various terms of 
\begin{multline*}
w_{|\mathbf{s}|-p,p-q}^\v c
=(\d_{|\mathbf{s}|-p+1}^\v)^{p-q}(\e_{|\mathbf{s}|-p}^\v)^{p-q}c\\
{}+(\d_{|\mathbf{s}|-p+1}^\v)^{p-q}(\d_0^\v)^{|\mathbf{s}|-p}(\e_0^\v)^{|\mathbf{s}|-q}c
+(\d_0^\v)^{|\mathbf{s}|-p}(\e_0^\v)^{|\mathbf{s}|-p}c
\end{multline*}
(see Proposition~\ref{P10.4}).

If $|\mathbf{s}|-p\leq j<|\mathbf{s}|-q$ then it follows from the inductive hypothesis that the first term is zero or is in $(\d_{|\mathbf{s}|-p+1}^\v)^{p-q}U_\mathbf{s'}^{|\mathbf{s}|-p}$; if $|\mathbf{s}|-q\leq j<|\mathbf{s}|$ then it similarly follows from the inductive hypothesis that the first term is zero or is in $(\d_{|\mathbf{s}|-p+1}^\v)^{p-q}U_\mathbf{s'}^{j-p+q}$; in any case we see that the first term is in~$U_\mathbf{s}^j$.

For all~$j$ with $|\mathbf{s}|-p\leq j<|\mathbf{s}|$ the second and third terms are zero or are in $(\d_0^\v)^{|\mathbf{s}|-p}U_{(p)}^{j-|\mathbf{s}|+p}$, so they are also in~$U_\mathbf{s}^j$.

This completes the proof.
\end{proof}

We deduce that the morphisms~$\psi_{i,\mathbf{s}}^\v$ act in the required way.

\begin{proposition} \label{P11.6}
The morphisms~$\psi_{i,\mathbf{s}}^\v$ are such that
$$(\psi_{i,\mathbf{s}}^\v-\id)\Delta(|\mathbf{s}|)
\subset U_\mathbf{s}\quad (0<i<|\mathbf{s}|).$$
\end{proposition}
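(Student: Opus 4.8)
The plan is to argue by induction on the number of terms in~$\mathbf{s}$, using the identities collected in Section~\ref{S10} to reduce each step to the inductive hypothesis (applied to~$\mathbf{s'}$) and to the one-term case. For the base case $\mathbf{s}=(p)$ we have $\psi_{i,(p)}=\phi_{i,1}$ whenever $0<i<p$ (Notation~\ref{N9.8}), so Proposition~\ref{P10.2} gives
$$\psi_{i,(p)}^\v-\id=\d_{i+1}^\v[\e_{i-1}^\v-\e_i^\v],$$
and it suffices to check that $\d_{i+1}^\v(\e_{i-1}^\v-\e_i^\v)a\in U_{(p)}$ for every basis element~$a$ of $\Delta(p)$. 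This is a direct basis-element computation, organised by which of $i-1,i,i+1$ occur in~$a$: if $a$~omits~$i$ then $\e_{i-1}^\v a=\e_i^\v a$ and the value is~$0$; otherwise the value is a short combination of basis elements, each of which either has two sufficiently small terms (hence lies in a generating subcomplex of~$U_{(p)}$) or is the boundary of such a basis element. Here one uses Remark~\ref{R8.6}: the abelian group $U_{(p)}$ contains the boundaries of its generators, which is what puts differences such as $[\ldots,i-1,\ldots]-[\ldots,i,\ldots]$ into~$U_{(p)}$.

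For the inductive step write $\mathbf{s}=(\mathbf{s'},q,p)$ and recall from Notation~\ref{N8.4} that
$$U_\mathbf{s}=(\d_{|\mathbf{s}|-p+1}^\v)^{p-q}U_\mathbf{s'}+(\d_0^\v)^{|\mathbf{s}|-p}U_{(p)}+V_\mathbf{s}.$$
Suppose first that $0<i\leq|\mathbf{s}|-p$. By Proposition~\ref{P10.5} (if $i<|\mathbf{s}|-p$) or Propositions~\ref{P10.6} and~\ref{P10.7} (if $i=|\mathbf{s}|-p$), the difference $\psi_{i,\mathbf{s}}^\v-w_{|\mathbf{s}|-p,p-q}^\v$ either vanishes or has image in $(\d_{|\mathbf{s}|-p+1}^\v)^{p-q}[\psi_{j,\mathbf{s'}}^\v-\id]\Delta(|\mathbf{s'}|)$ for the relevant index~$j$ with $0<j<|\mathbf{s'}|$. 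By the inductive hypothesis this image lies in $(\d_{|\mathbf{s}|-p+1}^\v)^{p-q}U_\mathbf{s'}\subset U_\mathbf{s}$; and $(w_{|\mathbf{s}|-p,p-q}^\v-\id)\Delta(|\mathbf{s}|)\subset V_\mathbf{s}\subset U_\mathbf{s}$ by Lemma~\ref{L11.4}. Adding these, $(\psi_{i,\mathbf{s}}^\v-\id)\Delta(|\mathbf{s}|)\subset U_\mathbf{s}$ throughout this range.

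Now suppose $|\mathbf{s}|-p<i<|\mathbf{s}|$, so $\psi_{i,\mathbf{s}}=\phi_{i,1}$ and, as in the base case, $\psi_{i,\mathbf{s}}^\v-\id=\d_{i+1}^\v[\e_{i-1}^\v-\e_i^\v]$. This operator is additive, and by Proposition~\ref{P8.10} every element of $\Delta(|\mathbf{s}|)$ is a sum of elements of the three subcomplexes $T'=(\d_{|\mathbf{s}|-p+1}^\v)^{p-q}\Delta(|\mathbf{s'}|)$, $T''=(\d_0^\v)^{|\mathbf{s}|-p}\Delta(p)$ and~$V_\mathbf{s}$, so it is enough to treat each of these. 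On $V_\mathbf{s}$, Proposition~\ref{P10.9} gives $\psi_{i,\mathbf{s}}^\v V_\mathbf{s}\subset V_\mathbf{s}$, whence $(\psi_{i,\mathbf{s}}^\v-\id)V_\mathbf{s}\subset V_\mathbf{s}\subset U_\mathbf{s}$. On $T''$, the last formula of Proposition~\ref{P10.8} gives $\psi_{i,\mathbf{s}}^\v(\d_0^\v)^{|\mathbf{s}|-p}=(\d_0^\v)^{|\mathbf{s}|-p}\psi_{i-|\mathbf{s}|+p,(p)}^\v$, so $(\psi_{i,\mathbf{s}}^\v-\id)T''\subset(\d_0^\v)^{|\mathbf{s}|-p}U_{(p)}\subset U_\mathbf{s}$ by the base case. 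On $T'$, the first three formulae of Proposition~\ref{P10.8} give $(\psi_{i,\mathbf{s}}^\v-\id)T'=0$ when $|\mathbf{s}|-p<i\leq|\mathbf{s}|-q$ and $(\psi_{i,\mathbf{s}}^\v-\id)T'\subset(\d_{|\mathbf{s}|-p+1}^\v)^{p-q}U_\mathbf{s'}\subset U_\mathbf{s}$ when $|\mathbf{s}|-q+1<i<|\mathbf{s}|$ (again by the inductive hypothesis); and the remaining value $i=|\mathbf{s}|-q+1$ with $q>1$ is handled by writing $\psi_{i,\mathbf{s}}^\v-\id=(w_{|\mathbf{s}|-p,p-q}^\v\psi_{i,\mathbf{s}}^\v-\id)-(w_{|\mathbf{s}|-p,p-q}^\v-\id)\psi_{i,\mathbf{s}}^\v$, controlling the second summand by Lemma~\ref{L11.4} and the first by the fourth formula of Proposition~\ref{P10.8}, whose right-hand side is a sum of a term in $(\d_{|\mathbf{s}|-p+1}^\v)^{p-q}U_\mathbf{s'}$ (inductive hypothesis) and a term in $(\d_0^\v)^{|\mathbf{s}|-p}U_{(p)}$ (Proposition~\ref{P10.2} together with a basis-element computation of the same kind as in the base case). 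This completes the induction.

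The part I expect to be the main obstacle is the range $|\mathbf{s}|-p<i<|\mathbf{s}|$: unlike the range $0<i\leq|\mathbf{s}|-p$ it does not collapse to the inductive hypothesis through a single identity, but must be distributed over the pieces $T'$, $T''$ and~$V_\mathbf{s}$, with the boundary value $i=|\mathbf{s}|-q+1$ forcing the extra manoeuvre with $w_{|\mathbf{s}|-p,p-q}^\v$. The base case is also slightly delicate, since the differences of basis elements that occur are only seen to lie in~$U_{(p)}$ after one uses that $U_{(p)}$ is closed under taking boundaries of its generators.
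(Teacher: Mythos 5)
Your proposal is correct, but in the range $|\mathbf{s}|-p<i<|\mathbf{s}|$ it takes a genuinely different route from the paper. For $0<i\leq|\mathbf{s}|-p$ you argue exactly as the paper does (Propositions \ref{P10.5}--\ref{P10.7}, Lemma \ref{L11.4}, inductive hypothesis). For the upper range the paper never decomposes $\Delta(|\mathbf{s}|)$: it writes $\psi_{i,\mathbf{s}}^\v-\id=\d_{i+1}^\v(\e_{i-1}^\v-\e_i^\v)$, uses Lemma \ref{L11.3} to reduce to basis elements containing $i+1$ (legitimate since both sides are chain maps and $U_\mathbf{s}$ is a subcomplex), notes that such a basis element maps to $0$ or to a single basis element containing $i-1$ and $i$ but not $i+1$, and finishes with Lemma \ref{L11.5} ($C^i\subset U_\mathbf{s}^i$); this is uniform in $i$ and also covers the base case $\mathbf{s}=(p)$, so the ``difference of basis elements'' situations you must settle by hand via boundaries of generators (Remark \ref{R8.6}) never arise. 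Your alternative --- splitting along $\Delta(|\mathbf{s}|)=(T'+T'')\oplus V_\mathbf{s}$ from Proposition \ref{P8.10} and pushing each piece through Propositions \ref{P10.8}--\ref{P10.9} and the base case --- does go through, and is closer in spirit to the later proofs of Propositions \ref{P11.7}--\ref{P11.8}, but it isolates the exceptional index $i=|\mathbf{s}|-q+1$ (with $q>1$), where your argument rests on the claim that $\phi_{p-q+1,1}^\v(\d_1^\v)^{p-q}-(\d_1^\v)^{p-q}\phi_{1,1}^\v$ carries all of $\Delta(q)$ into $U_{(p)}$. That claim is true, and is provable by the same case analysis on which of $0,1,2$ occur in a basis element together with the boundary-of-a-generator trick, but it is slightly stronger than anything the paper establishes (in Propositions \ref{P11.7} and \ref{P11.8} the corresponding term is only controlled on restricted basis elements), so you should write that computation out in full rather than leave it as ``of the same kind as the base case.'' In short, your route buys consistency with the structural decomposition used later, at the cost of two explicit basis computations that the paper's Lemma \ref{L11.3}/\ref{L11.5} device eliminates.
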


\begin{proof}
We use induction on the number of terms in~$\mathbf{s}$.

Suppose that $\mathbf{s}=(\mathbf{s'},q,p)$ and $0<i<|\mathbf{s}|-p$. By Proposition~\ref{P10.5},
$$\im(\psi_{i,\mathbf{s}}^\v-\id)
\subset(\d_{|\mathbf{s}|-p+1}^\v)^{p-q}\im(\psi_{i,\mathbf{s'}}-\id)
+\im(w_{|\mathbf{s}|-p,p-q}^\v-\id).$$
By the inductive hypothesis,
$$(\d_{|\mathbf{s}|-p+1}^\v)^{p-q}\im(\psi_{i,\mathbf{s'}}-\id)
\subset(\d_{|\mathbf{s}|-p+1}^\v)^{p-q}U_\mathbf{s'}
\subset U_\mathbf{s};$$
by Lemma~\ref{L11.4},
$$\im(w_{|\mathbf{s}|-p,p-q}^\v-\id)\subset V_\mathbf{s}\subset U_\mathbf{s}.$$
Therefore $\im(\psi_{i,\mathbf{s}}^\v-\id)\subset U_\mathbf{s}$.

Now suppose that $\mathbf{s}=(\mathbf{s'},q,p)$ and $i=|\mathbf{s}|-p$. We can apply a similar argument, using Propositions~\ref{P10.6} and~\ref{P10.7}.

Finally suppose that $|\mathbf{s}|-p<i<|\mathbf{s}|$. By Notation~\ref{N9.8} and Proposition~\ref{P10.2},
$$\psi_{i,\mathbf{s}}^\v-\id=\phi_{i,1}^\v-\id=\d_{i+1}^\v(\e_{i-1}^\v-\e_i^\v).$$
Because of Lemma~\ref{L11.3}, it suffices to show that $\d_{i+1}^\v(\e_{i-1}^\v-\e_i^\v)a$ is in~$U_\mathbf{s}$ when $a$~is a basis element including $i+1$. If $a$~is a basis element of that form not including~$i$, then $\d_{i+1}^\v(\e_{i-1}^\v-\e_i^\v)a=0$; if $a$~is a basis element of that form including~$i$, then $\d_{i+1}^\v(\e_{i-1}^\v-\e_i^\v)a$ is a basis element including $i-1$ and~$i$ but not $i+1$. In view of Lemma~\ref{L11.5}, this suffices to show that $\d_{i+1}^\v(\e_{i-1}^\v-\e_i^\v)a$ is in~$U_\mathbf{s}$ in all cases, and this completes the proof.
\end{proof}

\begin{proposition} \label{P11.7}
The morphisms~$\psi_{i,\mathbf{s}}^\v$ are such that
$$\psi_{i,\mathbf{s}}^\v U_\mathbf{s}^j
\subset U_\mathbf{s}^j\quad (0<i<j<|\mathbf{s}|).$$
\end{proposition}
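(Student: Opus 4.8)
The plan is to argue by induction on the number of terms in~$\mathbf{s}$. In the one-term case $\mathbf{s}=(p)$ we have $\psi_{i,\mathbf{s}}^\v=\phi_{i,1}^\v=\id+\d_{i+1}^\v(\e_{i-1}^\v-\e_i^\v)$ by Notation~\ref{N9.8} and Proposition~\ref{P10.2}, and since $\phi_{i,1}^\v$ is a chain map it suffices to check that $\phi_{i,1}^\v a\in U_{(p)}^j$ for each basis element~$a$ generating~$U_{(p)}^j$, that is, for each~$a$ with at least two terms $\leq j$ and no term equal to~$j+1$. Arguing exactly as in the last part of the proof of Proposition~\ref{P11.6}, one finds that $\phi_{i,1}^\v$ sends such an~$a$ to itself, to zero, or to a sum of basis elements obtained from~$a$ by altering only its terms in~$\{i-1,i,i+1\}$, each altered term landing in~$\{i-1,i\}$. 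Because $i<j$, the integers $i-1,i$ are $\leq j$ and none of $i-1,i,i+1$ equals~$j+1$; so every basis element so produced again has at least two terms $\leq j$ and no term $j+1$, hence lies in~$U_{(p)}^j$.

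For the inductive step write $\mathbf{s}=(\mathbf{s'},q,p)$ with $p'$ the last term of~$\mathbf{s'}$, and recall that $U_\mathbf{s}^j$ is a sum of some of the subcomplexes $(\d_{|\mathbf{s}|-p+1}^\v)^{p-q}U_{\mathbf{s'}}^{j'}$, $(\d_0^\v)^{|\mathbf{s}|-p}U_{(p)}^{j-|\mathbf{s}|+p}$ and~$V_\mathbf{s}$ listed in Notation~\ref{N11.1}; I would verify that $\psi_{i,\mathbf{s}}^\v$ carries each of these into~$U_\mathbf{s}^j$. On~$V_\mathbf{s}$ this is immediate from Proposition~\ref{P10.9}, since $\psi_{i,\mathbf{s}}^\v$ annihilates~$V_\mathbf{s}$ when $i\leq|\mathbf{s}|-p$ and preserves it when $i>|\mathbf{s}|-p$. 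On $(\d_0^\v)^{|\mathbf{s}|-p}U_{(p)}^{j-|\mathbf{s}|+p}$ one uses $\psi_{i,\mathbf{s}}^\v(\d_0^\v)^{|\mathbf{s}|-p}=(\d_0^\v)^{|\mathbf{s}|-p}$ for $i\leq|\mathbf{s}|-p$ (Propositions~\ref{P10.5}--\ref{P10.7}) and $\psi_{i,\mathbf{s}}^\v(\d_0^\v)^{|\mathbf{s}|-p}=(\d_0^\v)^{|\mathbf{s}|-p}\psi_{i-|\mathbf{s}|+p,(p)}^\v$ for $i>|\mathbf{s}|-p$ (Proposition~\ref{P10.8}), the latter reducing to the one-term case applied to the indices $i-|\mathbf{s}|+p<j-|\mathbf{s}|+p$.

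For the $(\d_{|\mathbf{s}|-p+1}^\v)^{p-q}U_{\mathbf{s'}}^{j'}$ summand one uses the intertwining identities of Propositions~\ref{P10.5}, \ref{P10.7} and~\ref{P10.8}, which --- after discarding, via Lemma~\ref{L11.4}, any term that lands in~$V_\mathbf{s}$ --- rewrite $\psi_{i,\mathbf{s}}^\v(\d_{|\mathbf{s}|-p+1}^\v)^{p-q}$ as $(\d_{|\mathbf{s}|-p+1}^\v)^{p-q}\psi_{i',\mathbf{s'}}^\v$ for a suitable~$i'$; in every case except one this $i'$ satisfies $i'<j'$, so the inductive hypothesis for~$\mathbf{s'}$ gives the claim. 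The exception is $i=|\mathbf{s}|-p$, where $i'=j'=|\mathbf{s}|-p$ and $\psi_{i',\mathbf{s'}}^\v=\phi_{|\mathbf{s}|-p,1}^\v$ by Notation~\ref{N9.8} (since $|\mathbf{s}|-p=|\mathbf{s'}|-q>|\mathbf{s'}|-p'$ by Proposition~\ref{P8.2}). For this ``diagonal'' value I would argue as in the proof of Proposition~\ref{P11.6} that $(\phi_{|\mathbf{s}|-p,1}^\v-\id)$ maps $\Delta(|\mathbf{s'}|)$ into~$U_{\mathbf{s'}}^{|\mathbf{s}|-p}$: using Lemma~\ref{L11.3} one reduces to basis elements containing~$|\mathbf{s}|-p+1$, and on these the correction is zero or $\pm$ a basis element containing $|\mathbf{s}|-p-1$ and~$|\mathbf{s}|-p$ but not~$|\mathbf{s}|-p+1$, which lies in~$U_{\mathbf{s'}}^{|\mathbf{s}|-p}$ by Lemma~\ref{L11.5} (here $|\mathbf{s}|-p-1\geq|\mathbf{s'}|-p'$ because $p'>q$). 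Hence $\phi_{|\mathbf{s}|-p,1}^\v$ preserves~$U_{\mathbf{s'}}^{|\mathbf{s}|-p}$, and the summand is preserved.

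The hard part is the bookkeeping in the inductive step: $\psi_{i,\mathbf{s}}^\v$ has genuinely different descriptions in the six ranges of~$i$ cut out by $|\mathbf{s}|-p$, $|\mathbf{s}|-q$ and~$|\mathbf{s}|-q+1$, and in the transitional cases $i=|\mathbf{s}|-p$ (Proposition~\ref{P10.7}) and $i=|\mathbf{s}|-q+1$ with $q>1$ (the last identity of Proposition~\ref{P10.8}) the intertwining relations acquire correction terms --- such as $(\d_0^\v)^{|\mathbf{s}|-p}[\phi_{p-q+1,1}^\v(\d_1^\v)^{p-q}-(\d_1^\v)^{p-q}\phi_{1,1}^\v](\e_0^\v)^{|\mathbf{s}|-p}$ --- whose images must be placed, by the same basis-element analysis as in Lemma~\ref{L11.5}, inside the $(\d_0^\v)^{|\mathbf{s}|-p}U_{(p)}^{\bullet}$ and~$V_\mathbf{s}$ summands of~$U_\mathbf{s}^j$. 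Once these correction terms and the diagonal value above are dealt with, every summand of~$U_\mathbf{s}^j$ is visibly carried into~$U_\mathbf{s}^j$, which completes the induction.
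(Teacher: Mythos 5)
Your architecture is the paper's (induction on the length of $\mathbf{s}$, constituent-by-constituent analysis of $U_\mathbf{s}^j$ via the intertwining formulae of Section~\ref{S10}, with Lemmas \ref{L11.3}--\ref{L11.5} for the basis-element checks), and much of it is fine: the one-term case, the $V_\mathbf{s}$ constituent via Proposition~\ref{P10.9}, the range $i>|\mathbf{s}|-p$ on the $(\d_0^\v)^{|\mathbf{s}|-p}U_{(p)}^{\bullet}$ constituent, and your explicit treatment of the diagonal value $i'=j'=|\mathbf{s}|-p$ (which the paper passes over in silence) is correct. The genuine gap is your claim that $\psi_{i,\mathbf{s}}^\v(\d_0^\v)^{|\mathbf{s}|-p}=(\d_0^\v)^{|\mathbf{s}|-p}$ for all $i\leq|\mathbf{s}|-p$, cited to Propositions \ref{P10.5}--\ref{P10.7}. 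That identity fails exactly at $i=|\mathbf{s}|-p$ when $q>0$: Proposition~\ref{P10.7} gives the correction $(\d_{|\mathbf{s}|-p+1}^\v)^{p-q+1}\bigl[\e_{|\mathbf{s}|-p-1}^\v-\e_{|\mathbf{s}|-p}^\v\bigr](\e_{|\mathbf{s}|-p+1}^\v)^{p-q}(\d_0^\v)^{|\mathbf{s}|-p}$, which does not vanish on generators of $U_{(p)}^{j-|\mathbf{s}|+p}$ in general (already $c=[0,1]$ gives a nonzero value). This is precisely the first of the two exceptional cases the paper isolates, and it needs a real argument: because of the factor $\e_{|\mathbf{s}|-p-1}^\v$ the correction can involve the vertex $|\mathbf{s}|-p-1$ and avoids $\{|\mathbf{s}|-p+1,\ldots,|\mathbf{s}|-q+1\}$, so it need not lie in the $(\d_0^\v)^{|\mathbf{s}|-p}U_{(p)}^{\bullet}$ or $V_\mathbf{s}$ summands where your closing paragraph proposes to put the corrections; one must show (via Lemma~\ref{L11.5} applied to $\mathbf{s'}$) that it lands in the $(\d_{|\mathbf{s}|-p+1}^\v)^{p-q}U_{\mathbf{s'}}^{\bullet}$ constituent, and in the range $|\mathbf{s}|-p<j<|\mathbf{s}|-q$ only $U_{\mathbf{s'}}^{|\mathbf{s}|-p}$ is available there, so the check is genuinely constrained.

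The other transitional case, $i=|\mathbf{s}|-q+1$ with $q>1$ acting on $(\d_{|\mathbf{s}|-p+1}^\v)^{p-q}U_{\mathbf{s'}}^{j-p+q}$, you do flag, but you only assert that the Proposition~\ref{P10.8} correction term can be ``placed'' by a basis-element analysis; carrying that analysis out is the actual content of the paper's proof. There one first replaces $\psi_{|\mathbf{s}|-q+1,\mathbf{s}}^\v$ by $w_{|\mathbf{s}|-p,p-q}^\v\psi_{|\mathbf{s}|-q+1,\mathbf{s}}^\v$ modulo $V_\mathbf{s}$ (Lemma~\ref{L11.4}), then shows that the term $(\d_{|\mathbf{s}|-p+1}^\v)^{p-q}\psi_{|\mathbf{s}|-p+1,\mathbf{s'}}^\v c$ is in $U_\mathbf{s}^j$ by the inductive hypothesis (here $|\mathbf{s}|-p+1<j-p+q$), and that the correction applied to the relevant generators $c$ is zero when $c$ has two terms at most $|\mathbf{s}|-p$, and otherwise is a combination of basis elements with at least two terms in $\{|\mathbf{s}|-p,\ldots,j\}$ and no term $j+1$, hence in $U_\mathbf{s}^j$ by Lemma~\ref{L11.5}. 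Until this verification and the $i=|\mathbf{s}|-p$, $q>0$ case above are actually computed, the induction is not closed; these two computations are where the work of the proposition lies.
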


\begin{proof}
We use induction on the number of terms in~$\mathbf{s}$.

Suppose first that $\mathbf{s}=(p)$. By Notation~\ref{N9.8} and Proposition~\ref{P10.2}
$$\psi_{i,\mathbf{s}}^\v=\phi_{i,1}^\v=\d_{i+1}^\v(\e_{i-1}^\v-\e_i^\v)+\id.$$
According to Notation~\ref{N11.1}, the chain complex~$U_\mathbf{s}^j$~is generated by the basis elements with at least two terms less than or equal to~$j$ and with no term equal to $j+1$. The result now follows from a simple computation.

Now suppose that $\mathbf{s}=(\mathbf{s'},q,p)$, and let $p'$ be the last term of~$\mathbf{s'}$. Recall from Notation~\ref{N11.1} that $U_\mathbf{s}^j$~is a sum of constituents which may have one of the following forms:
$$(\d_{|\mathbf{s}|-p+1}^\v)^{p-q}U_\mathbf{s'}^{j'},\quad
(\d_0^\v)^{|\mathbf{s}-p}U_{(p)}^{j''},\quad
V_\mathbf{s}.$$
In almost all cases it follows straightforwardly from Propositions \ref{P10.5}--\ref{P10.9} and the inductive hypothesis that $\psi_{i,\mathbf{s}}^\v$~maps the constituents of~$U_\mathbf{s}^j$ into~$U_\mathbf{s}^j$. The exceptional cases are
$$\psi_{|\mathbf{s}|-p,\mathbf{s}}^\v
(\d_0^\v)^{|\mathbf{s}|-p}U_{(p)}^{j-|\mathbf{s}|+p}\quad 
(q>0,\ |\mathbf{s}|-p<j<|\mathbf{s}|)$$
and
$$\psi_{|\mathbf{s}|-q+1,\mathbf{s}}^\v(\d_{|\mathbf{s}|-p+1}^\v)^{p-q}U_\mathbf{s'}^{j-p+q}\quad
(|\mathbf{s}|-q+1<j<|\mathbf{s}|).$$
We deal with these cases as follows.

In the first case let $c$ be a generator for $U_{(p)}^{j-|\mathbf{s}|+p}$, so that $c$~is a basis element with at least two terms less than or equal to $j-|\mathbf{s}|+p$ and with no term equal to $j-|\mathbf{s}|+p+1$. By Proposition~\ref{P10.7},
$$\bigl[\psi_{|\mathbf{s}|-p,\mathbf{s}}^\v-\id\bigr](\d_0^\v)^{|\mathbf{s}|-p}c
=(\d_{|\mathbf{s}|-p+1}^\v)^{p-q}c',$$
where
$$c'=\d_{|\mathbf{s}|-p+1}^\v
\bigl[\e_{|\mathbf{s}|-p-1}^\v-\e_{|\mathbf{s}|-p}^\v\bigr]
(\e_{|\mathbf{s}|-p+1}^\v)^{p-q}(\d_0^\v)^{|\mathbf{s}|-p}c.$$
If $|\mathbf{s}|-p<j\leq |\mathbf{s}|-q$ then $c'=0$; if $|\mathbf{s}|-q<j<|\mathbf{s}|$ then $c'$~is a linear combination of basis elements with at least two terms in the set
$$\{\,|\mathbf{s}|-p-1,\,|\mathbf{s}|-p,\,\ldots,\,j-p+q\,\}$$
and with no term equal to $j-p+q+1$, so that $c'\in U_\mathbf{s'}^{j-p+q}$ by Lemma~\ref{L11.5}. In all cases it follows that $\psi_{|\mathbf{s}|-p}^\v\in U_\mathbf{s}^j$.

It remains to show that
$$\psi_{|\mathbf{s}|-q+1,\mathbf{s}}^\v
(\d_{\mathbf{s}|-p+1}^\v)^{p-q}U_\mathbf{s'}^{j-p+q}\subset U_\mathbf{s}^j\quad
(|\mathbf{s}|-q+1<j<|\mathbf{s}|).$$
Because of Lemma~\ref{L11.5} it suffices to show that $\psi_{|\mathbf{s}|-q+1,\mathbf{s}}^\v
(\d_{\mathbf{s}|-p+1}^\v)^{p-q}c$ is in~$U_\mathbf{s}^j$ when $c$~is a basis element in~$U_\mathbf{s'}^{j-p+q}$ with two terms less than or equal to $|\mathbf{s}|-p$, or with two terms less than or equal to $j-p+q$ and with no term $j-p+q+1$. By Lemma~\ref{L11.4}
$$\im(w_{|\mathbf{s}|-p,p-q}^\v-\id)\subset V_\mathbf{s}\subset U_\mathbf{s}^j;$$
it therefore suffices to show that $w_{|\mathbf{s}|-p,p-q}^\v\psi_{|\mathbf{s}|-q+1,\mathbf{s}}^\v
(\d_{\mathbf{s}|-p+1}^\v)^{p-q}c$ is in~$U_\mathbf{s}^j$ for each such basis element~$c$. By Proposition~\ref{P10.8}
\begin{multline*}
w_{|\mathbf{s}|-p,p-q}^\v
\psi_{|\mathbf{s}|-q+1,\mathbf{s}}^\v
(\d_{|\mathbf{s}|-p+1}^\v)^{p-q}c
=(\d_{|\mathbf{s}|-p+1}^\v)^{p-q}
\psi_{|\mathbf{s}|-p+1,\mathbf{s'}}^\v c\\
{}+(\d_0^\v)^{|\mathbf{s}|-p}
\bigl[\phi_{p-q+1,1}^\v(\d_1^\v)^{p-q}-(\d_1^\v)^{p-q}\phi_{1,1}^\v\bigr]
(\e_0^\v)^{|\mathbf{s}|-p}c.
\end{multline*}
The first of the terms on the right hand side is in~$U_\mathbf{s}^j$ by the inductive hypothesis. If $c$~has two terms less than or equal to $|\mathbf{s}|-p$, then the second term on the right hand side is zero. If $c$~has two terms less than or equal to $j-p+q$ and has no term $j-p+q+1$, then the second term on the right hand side is a linear combination of basis elements with at least two terms in the set
$$\{\,|\mathbf{s}|-p,|\mathbf{s}|-p+1,\ldots,j\,\}$$
and with no term $j+1$ and is in~$U_\mathbf{s}^j$ by Lemma~\ref{L11.5}. This completes the proof.
\end{proof}

\begin{proposition} \label{P11.8}
The morphisms~$\psi_{j,\mathbf{s}}^\v$ are such that
$$\psi_{j,\mathbf{s}}^\v U_\mathbf{s}^j
\subset U_\mathbf{s}^{j-1}\quad (0<j<|\mathbf{s}|).$$
\end{proposition}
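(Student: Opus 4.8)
The plan is to prove the statement by induction on the number of terms in~$\mathbf{s}$, following closely the structure of the proof of Proposition~\ref{P11.7} but now tracking the index-lowering inclusion $\psi_{j,\mathbf{s}}^\v U_\mathbf{s}^j\subset U_\mathbf{s}^{j-1}$ in place of the diagonal one $\psi_{i,\mathbf{s}}^\v U_\mathbf{s}^j\subset U_\mathbf{s}^j$.

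For the base case $\mathbf{s}=(p)$ I would use that $\psi_{j,\mathbf{s}}^\v=\phi_{j,1}^\v=\d_{j+1}^\v(\e_{j-1}^\v-\e_j^\v)+\id$ (Notation~\ref{N9.8}, Proposition~\ref{P10.2}) and that $U_{(p)}^j$ is generated as a chain complex by the basis elements having at least two terms $\leq j$ and no term equal to $j+1$ (Notation~\ref{N11.1}). Since $\phi_{j,1}^\v$ is a chain map it suffices to evaluate it on such a generator~$a$: if $a$ has no term equal to~$j$ then $\e_{j-1}^\v a=\e_j^\v a$, so $\phi_{j,1}^\v a=a$, which already lies in $U_{(p)}^{j-1}$; if $a$ contains both $j-1$ and~$j$ then $\e_{j-1}^\v a=0$ and $\d_{j+1}^\v\e_j^\v a=a$, so $\phi_{j,1}^\v a=0$; and if $a$ contains~$j$ but not $j-1$ then $\phi_{j,1}^\v a$ is the basis element obtained from~$a$ by replacing the term~$j$ by $j-1$, which again lies in $U_{(p)}^{j-1}$. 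This is the elementary computation behind the analogous step of Proposition~\ref{P11.7}.

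For the inductive step $\mathbf{s}=(\mathbf{s'},q,p)$ I would split into cases according to the size of~$j$, using that $U_\mathbf{s}^j$ is a sum of constituents of the forms $(\d_{|\mathbf{s}|-p+1}^\v)^{p-q}U_\mathbf{s'}^{j'}$, $(\d_0^\v)^{|\mathbf{s}|-p}U_{(p)}^{j''}$ and~$V_\mathbf{s}$ (Notation~\ref{N11.1}). When $0<j<|\mathbf{s}|-p$ the identity $\psi_{j,\mathbf{s}}^\v(\d_{|\mathbf{s}|-p+1}^\v)^{p-q}=(\d_{|\mathbf{s}|-p+1}^\v)^{p-q}\psi_{j,\mathbf{s'}}^\v$ (Proposition~\ref{P10.5}) reduces everything to the inductive hypothesis. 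When $j=|\mathbf{s}|-p$ one has $\psi_{|\mathbf{s}|-p,\mathbf{s}}^\v V_\mathbf{s}=0$ (Proposition~\ref{P10.9}), and, when $q>0$, the surviving constituent $(\d_{|\mathbf{s}|-p+1}^\v)^{p-q}U_\mathbf{s'}^{|\mathbf{s}|-p}$ is handled by Proposition~\ref{P10.7} and the inductive hypothesis. When $|\mathbf{s}|-p<j<|\mathbf{s}|$, so that $\psi_{j,\mathbf{s}}^\v=\phi_{j,1}^\v$, one has $\phi_{j,1}^\v V_\mathbf{s}\subset V_\mathbf{s}$ (Proposition~\ref{P10.9}); the constituent $(\d_0^\v)^{|\mathbf{s}|-p}U_{(p)}^{j-|\mathbf{s}|+p}$ is dealt with by Proposition~\ref{P10.8}, which rewrites $\phi_{j,1}^\v(\d_0^\v)^{|\mathbf{s}|-p}$ as $(\d_0^\v)^{|\mathbf{s}|-p}\phi_{j-|\mathbf{s}|+p,1}^\v$ and so reduces it to the base case; and the constituent of the form $(\d_{|\mathbf{s}|-p+1}^\v)^{p-q}U_\mathbf{s'}^{j'}$ is handled by the clean commutation formulas of Proposition~\ref{P10.8} ($\psi_{j,\mathbf{s}}^\v(\d_{|\mathbf{s}|-p+1}^\v)^{p-q}=(\d_{|\mathbf{s}|-p+1}^\v)^{p-q}$ when $|\mathbf{s}|-p<j\leq|\mathbf{s}|-q$, and $\psi_{j,\mathbf{s}}^\v(\d_{|\mathbf{s}|-p+1}^\v)^{p-q}=(\d_{|\mathbf{s}|-p+1}^\v)^{p-q}\psi_{j-p+q,\mathbf{s'}}^\v$ when $|\mathbf{s}|-q+1<j<|\mathbf{s}|$) together with the inductive hypothesis --- in every case except $j=|\mathbf{s}|-q+1$.

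The main obstacle will be that exceptional case $j=|\mathbf{s}|-q+1$ (which forces $q\geq 2$), acting on $(\d_{|\mathbf{s}|-p+1}^\v)^{p-q}U_\mathbf{s'}^{|\mathbf{s}|-p+1}$, and I would treat it exactly as the corresponding passage in the proof of Proposition~\ref{P11.7}. First I would use Lemma~\ref{L11.4} to replace $\psi_{|\mathbf{s}|-q+1,\mathbf{s}}^\v$ by $w_{|\mathbf{s}|-p,p-q}^\v\psi_{|\mathbf{s}|-q+1,\mathbf{s}}^\v$ modulo $V_\mathbf{s}\subset U_\mathbf{s}^{|\mathbf{s}|-q}$, and then apply the fourth formula of Proposition~\ref{P10.8}. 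Its first summand yields $(\d_{|\mathbf{s}|-p+1}^\v)^{p-q}\psi_{|\mathbf{s}|-p+1,\mathbf{s'}}^\v U_\mathbf{s'}^{|\mathbf{s}|-p+1}\subset(\d_{|\mathbf{s}|-p+1}^\v)^{p-q}U_\mathbf{s'}^{|\mathbf{s}|-p}$ by the inductive hypothesis, hence a subset of $U_\mathbf{s}^{|\mathbf{s}|-q}$. The delicate point is the second summand $(\d_0^\v)^{|\mathbf{s}|-p}\bigl[\phi_{p-q+1,1}^\v(\d_1^\v)^{p-q}-(\d_1^\v)^{p-q}\phi_{1,1}^\v\bigr](\e_0^\v)^{|\mathbf{s}|-p}c$: here I would invoke Lemma~\ref{L11.5} to restrict attention to the two kinds of basis element~$c$ that can generate $U_\mathbf{s'}^{|\mathbf{s}|-p+1}$, namely those with at least two terms $\leq|\mathbf{s}|-p$ (for which $(\e_0^\v)^{|\mathbf{s}|-p}c=0$ and the summand vanishes) and those with at least two terms $\leq|\mathbf{s}|-p+1$ and no term equal to $|\mathbf{s}|-p+2$; a short direct calculation then shows that in the remaining case the summand is a linear combination of basis elements with at least two terms in $\{\,|\mathbf{s}|-p,\ldots,|\mathbf{s}|-q\,\}$ and no term equal to $|\mathbf{s}|-q+1$, which lies in $U_\mathbf{s}^{|\mathbf{s}|-q}$ by the second half of Lemma~\ref{L11.5}. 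Assembling the cases, and recalling that $U_\mathbf{s}^0=0$ (Proposition~\ref{P11.2}), completes the proof; together with Propositions~\ref{P11.6} and~\ref{P11.7} this supplies the last of the inclusions needed to conclude, at the end of Section~\ref{S11}, that $\Psi_\mathbf{s}^\v$ is idempotent with kernel~$U_\mathbf{s}$.
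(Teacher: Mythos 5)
Your proposal is correct and follows essentially the same route as the paper: induction on the number of terms in $\mathbf{s}$, the explicit computation of $\phi_{j,1}^\v$ on the generators of $U_{(p)}^j$ in the one-term case, reduction of the other constituents via Propositions~\ref{P10.5}--\ref{P10.9} and the inductive hypothesis, and the same treatment of the single delicate case $j=|\mathbf{s}|-q+1$ with $q>1$ using Lemma~\ref{L11.4}, the fourth formula of Proposition~\ref{P10.8}, and Lemma~\ref{L11.5}. Your case analysis is, if anything, slightly more explicit than the paper's, and the conclusions drawn in each case match.
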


\begin{proof}
This is similar. Again we use induction on the number of terms in~$\mathbf{s}$.

Suppose that $\mathbf{s}=(p)$. By definition, $U_\mathbf{s}^j$~is generated by the basis elements with at least two terms less than or equal to~$j$ and with no term $j+1$. If $a$~is such a basis element then 
$$\psi_{j,\mathbf{s}}^\v a
=\phi_{j,1}^\v a
=\d_{j+1}^\v(\e_{j-1}^\v-\e_j^\v)a+a,$$
and this is a linear combination of basis elements with at least two terms less than or equal to $j-1$ and with no term~$j$. Therefore $\psi_{j,\mathbf{s}}^\v U_\mathbf{s}^j\subset U_\mathbf{s}^{j-1}$.

Now suppose that $\mathbf{s}=(\mathbf{s'},q,p)$. In almost all cases it follows from  Propositions \ref{P10.5}--\ref{P10.9} and the inductive hypothesis that $\psi_{j,\mathbf{s}}^\v$~maps each constituent of~$U_\mathbf{s}^j$ into~$U_\mathbf{s}^{j-1}$. The only difficulty is to show that
$$\psi_{|\mathbf{s}|-q+1,\mathbf{s}}^\v
(\d_{\mathbf{s}|-p+1}^\v)^{p-q}U_\mathbf{s'}^{|\mathbf{s}|-p+1}\subset U_\mathbf{s}^{|\mathbf{s}|-q}$$
in the case $q>1$. By Lemma~\ref{L11.5}, it suffices to show that 
$$\psi_{|\mathbf{s}|-q+1,\mathbf{s}}^\v
(\d_{\mathbf{s}|-p+1}^\v)^{p-q}c\in U_\mathbf{s}^{|\mathbf{s}|-q}$$
when $c$~is a basis element in~$U_\mathbf{s'}^{|\mathbf{s}|-p+1}$ with two terms less than or equal to $|\mathbf{s}|-p$, or with two terms less than or equal to $|\mathbf{s}|-p+1$ and with no term $|\mathbf{s}|-p+2$. As in the proof of Proposition~\ref{P11.7} it suffices to show that
\begin{multline*}
(\d_{|\mathbf{s}|-p+1}^\v)^{p-q}
\psi_{|\mathbf{s}|-p+1,\mathbf{s'}}^\v c\\
{}+(\d_0^\v)^{|\mathbf{s}|-p}
\bigl[\phi_{p-q+1,1}^\v(\d_1^\v)^{p-q}-(\d_1^\v)^{p-q}\phi_{1,1}^\v\bigr]
(\e_0^\v)^{|\mathbf{s}|-p}c
\end{multline*}
is in~$U_\mathbf{s}^{|\mathbf{s}|-q}$.

The first of these terms is in~$U_\mathbf{s}^{|\mathbf{s}|-q}$ by the inductive hypothesis. If $c$~has two terms less than or equal to $|\mathbf{s}|-p$, then the second term is zero. If $c$~has two terms less than or equal to $|\mathbf{s}|-p+1$ and has no term $|\mathbf{s}|-p+2$, then the second term is a linear combination of basis elements with terms $|\mathbf{s}|-p$ and $|\mathbf{s}|-q$ and with no term $|\mathbf{s}|-q+1$, and is therefore in~$U_\mathbf{s}^{|\mathbf{s}|-q}$ by Lemma~\ref{L11.5}. This completes the proof.
\end{proof}

It follows from Propositions \ref{P11.2} and \ref{P11.6}--\ref{P11.8} that $\Psi_\mathbf{s}^\v$~is idempotent with kernel~$U_\mathbf{s}$. Recall from Theorem~\ref{T6.11} that if $X$~is a set with complicial identities then $X_{|\mathbf{s}|}\cong\Hom[\lambda\Delta(|\mathbf{s}|),X]$. We draw the following conclusions.

\begin{proposition} \label{P11.9}
Let $\mathbf{s}$ be an up-down vector and let $X$ be a set with complicial identities. Then $\Psi_\mathbf{s}$~is an idempotent operation on~$X_{|\mathbf{s}|}$. There is a natural bijection
$$\Psi_\mathbf{s}X_{|\mathbf{s}|}\cong\Hom[\lambda S_\mathbf{s},X],$$
where
$$S_\mathbf{s}=\Delta(|\mathbf{s}|)/U_\mathbf{s},$$
and the inclusion of $\Psi_\mathbf{s}X_{|\mathbf{s}|}$ in
$$X_{|\mathbf{s}|}\cong\Hom[\lambda\Delta(|\mathbf{s}|),X]$$
is induced by the quotient homomorphism $\Delta(|\mathbf{s}|)\to S_\mathbf{s}$. The image $\Psi_\mathbf{s}X_{|\mathbf{s}|}$ is the subset of~$X_{|\mathbf{s}|}$ consisting of the elements~$x$ such that
$$\psi_{1,\mathbf{s}}x=\ldots=\psi_{|\mathbf{s}|-1,\mathbf{s}}x=x.$$
\end{proposition}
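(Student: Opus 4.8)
The plan is to reduce everything to properties of the endomorphism $\Psi_\mathbf{s}^\v$ of $\Delta(|\mathbf{s}|)$, which, by Propositions~\ref{P11.2} and \ref{P11.6}--\ref{P11.8} as noted just above, is idempotent with kernel~$U_\mathbf{s}$. Recall from Theorem~\ref{T6.11} that $\lambda$ restricts to a full embedding of~$\Or$, that $X_{|\mathbf{s}|}$ is naturally identified with $\Hom[\lambda\Delta(|\mathbf{s}|),X]$ via $x\mapsto\hat x$, where $\hat x$~is the unique morphism sending the generating $|\mathbf{s}|$-dimensional element to~$x$, and that under this identification a unary operation~$\theta$ on $|\mathbf{s}|$-dimensional elements acts by $h\mapsto h\circ\lambda\theta^\v$. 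Since composition of operations corresponds contravariantly to composition of the endomorphisms~$\theta^\v$, and since the assignment $\theta\mapsto\theta^\v$ is injective, any identity between operations on $|\mathbf{s}|$-dimensional elements can be verified after passing to endomorphisms of $\Delta(|\mathbf{s}|)$.

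First I would build the retract. Since $\Psi_\mathbf{s}^\v$ is idempotent with kernel~$U_\mathbf{s}$, it factors as $\Psi_\mathbf{s}^\v=\bar\Psi\pi$ through the quotient homomorphism $\pi\colon\Delta(|\mathbf{s}|)\to S_\mathbf{s}=\Delta(|\mathbf{s}|)/U_\mathbf{s}$, and the resulting chain map $\bar\Psi\colon S_\mathbf{s}\to\Delta(|\mathbf{s}|)$ satisfies $\pi\bar\Psi=\id_{S_\mathbf{s}}$ by the usual properties of idempotents. Moreover $\bar\Psi$ is a morphism of augmented directed complexes: it preserves augmentations because $\Psi_\mathbf{s}^\v$ and~$\pi$ do, and it carries the prescribed submonoid of~$S_\mathbf{s}$ into that of $\Delta(|\mathbf{s}|)$ because, by Remark~\ref{R8.6}, this submonoid is generated by the classes~$\pi a$ of the basis elements~$a$, and $\bar\Psi\pi a=\Psi_\mathbf{s}^\v a$ lies in the prescribed submonoid of $\Delta(|\mathbf{s}|)$. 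Thus $S_\mathbf{s}$~is a retract of $\Delta(|\mathbf{s}|)$ in $\textbf{adc}$. Applying the functor~$\lambda$ and then $\Hom[-,X]$, one finds that $\Hom[\lambda S_\mathbf{s},X]$ is a retract of $\Hom[\lambda\Delta(|\mathbf{s}|),X]\cong X_{|\mathbf{s}|}$, with section $(\lambda\pi)^*$ and retraction $(\lambda\bar\Psi)^*$, and with associated idempotent $(\lambda\pi)^*\circ(\lambda\bar\Psi)^*$ equal to $\Psi_\mathbf{s}$ because $\bar\Psi\pi=\Psi_\mathbf{s}^\v$. It follows formally that $\Psi_\mathbf{s}$~is idempotent on~$X_{|\mathbf{s}|}$, that $(\lambda\pi)^*$ is injective with image exactly $\Psi_\mathbf{s}X_{|\mathbf{s}|}$, and hence that there is a natural bijection $\Psi_\mathbf{s}X_{|\mathbf{s}|}\cong\Hom[\lambda S_\mathbf{s},X]$ whose inclusion into $X_{|\mathbf{s}|}$ is the one induced by~$\pi$. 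This establishes the first two assertions.

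For the last assertion I would use that an idempotent operation has image equal to its set of fixed points, so $\Psi_\mathbf{s}X_{|\mathbf{s}|}=\{\,x:\Psi_\mathbf{s}x=x\,\}$. If $\psi_{i,\mathbf{s}}x=x$ for all~$i$ with $0<i<|\mathbf{s}|$, then $\Psi_\mathbf{s}x=x$, since by Notation~\ref{N9.12} $\Psi_\mathbf{s}$~is an iterated composite of the operations~$\psi_{i,\mathbf{s}}$. For the converse I would show that $\psi_{i,\mathbf{s}}\Psi_\mathbf{s}=\Psi_\mathbf{s}$ as operations for each such~$i$; by injectivity of $\theta\mapsto\theta^\v$ this amounts to the identity $\Psi_\mathbf{s}^\v\psi_{i,\mathbf{s}}^\v=\Psi_\mathbf{s}^\v$ of endomorphisms of $\Delta(|\mathbf{s}|)$, and this holds because $\Psi_\mathbf{s}^\v$ is additive and $(\psi_{i,\mathbf{s}}^\v-\id)\Delta(|\mathbf{s}|)\subset U_\mathbf{s}=\ker\Psi_\mathbf{s}^\v$ by Proposition~\ref{P11.6}. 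Then for $x\in\Psi_\mathbf{s}X_{|\mathbf{s}|}$ we have $x=\Psi_\mathbf{s}x$, whence $\psi_{i,\mathbf{s}}x=\psi_{i,\mathbf{s}}\Psi_\mathbf{s}x=\Psi_\mathbf{s}x=x$.

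The only point needing genuine care is the verification that $\bar\Psi$ respects the prescribed submonoids, so that $S_\mathbf{s}$ is a retract of $\Delta(|\mathbf{s}|)$ in $\textbf{adc}$ and not merely in chain complexes; once this and the dictionary between operations and morphisms between simplexes are secured, the remainder is formal manipulation with the idempotent $\Psi_\mathbf{s}^\v$ and its kernel.
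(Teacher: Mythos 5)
Your proposal is correct and follows essentially the same route as the paper: everything is reduced to the fact (from Propositions~\ref{P11.2} and \ref{P11.6}--\ref{P11.8}) that $\Psi_\mathbf{s}^\v$ is idempotent with kernel~$U_\mathbf{s}$, making $S_\mathbf{s}$ a retract of $\Delta(|\mathbf{s}|)$ in augmented directed complexes, and the fixed-point characterisation is obtained exactly as in the paper from $\im(\psi_{i,\mathbf{s}}^\v-\id)\subset U_\mathbf{s}=\ker\Psi_\mathbf{s}^\v$ together with $\Psi_\mathbf{s}$ being an iterated composite of the~$\psi_{i,\mathbf{s}}$. You merely spell out the retraction and the operation--morphism dictionary in more detail than the paper does, and those details are accurate.
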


\begin{proof}
We need only prove the final statement. To do this we first observe that if $0<i<|\mathbf{s}|$ then
$$\im(\psi_{i,\mathbf{s}}^\v-\id)\subset U_\mathbf{s}=\ker\Psi_\mathbf{s}^\v$$
by Proposition~\ref{P11.6}, hence $\psi_{i,\mathbf{s}}\Psi_\mathbf{s}=\Psi_\mathbf{s}$. We then recall from Notation~\ref{N9.12} that $\Psi_\mathbf{s}$~is an iterated composite of the operations~$\psi_{i,\mathbf{s}}$. The result follows.
\end{proof}

\section{The pull-back property} \label{S12}

Let $X$ be a set with complicial identities. According to Proposition~\ref{P11.9} there is a functor $S_\mathbf{s}\mapsto\Psi_\mathbf{s}X_{|\mathbf{s}|}$ from simple chain complexes to sets. According to Theorem~\ref{T8.12} (see also Notations \ref{N8.3} and~\ref{N8.9}) the image of an $\mathbf{s}$-simple square is given by
$$\xymatrix{
\Psi_\mathbf{s}X_{|\mathbf{s}|}
\ar_{\d_{|\mathbf{s}|-p+1}^{p-q}}[d] 
\ar^{\d_0^{|\mathbf{s}|-p}}[r]
& \Psi_{(p)}X_p
\ar^{\d_1^{p-q}}[d]
\\
\Psi_\mathbf{s'}X_{|\mathbf{s'}|}
\ar_{\d_0^{|\mathbf{s}|-p}}[r]
& \Psi_{(q)}X_q.
}$$
In this section we show that the functor yields an $\omega$-category; that is, we show that the images of simple squares are pull-back squares (see Proposition~\ref{P5.8}). 

Recall from Proposition~\ref{P11.9} that the image of~$\Psi_\mathbf{s}$ is the intersection of the fixed point sets of the operations~$\psi_{i,\mathbf{s}}$. We begin the proof by making the following observations.

\begin{proposition} \label{P12.1}
Let $\mathbf{s}$ be an up-down vector with last term~$p$ and let $k$ be an integer with $|\mathbf{s}|-p\leq k<|\mathbf{s}|$. Then 
$$\psi_{k+1,\mathbf{s}}x=\psi_{k+2,\mathbf{s}}x=\ldots
=\psi_{|\mathbf{s}|-1,\mathbf{s}}x=x$$
if and only if
$$\d_{k+2}x=\e_k\d_{k+1}^2 x,\ \d_{k+3}x=\e_k^2\d_{k+1}^3 x,\ \ldots,\ 
\d_{|\mathbf{s}|}x=\e_k^{|\mathbf{s}|-k-1}\d_{k+1}^{|\mathbf{s}|-k}x.$$
\end{proposition}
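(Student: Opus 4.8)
The plan is to strip the operators $\psi_{i,\mathbf{s}}$ down to face operators using Notation~\ref{N9.8} and Proposition~\ref{P9.3}, and then to unwind a short recursion with the simplicial identities of Definition~\ref{D3.1}(3). First I would note that since $k\ge|\mathbf{s}|-p$, every index $i$ with $k+1\le i\le|\mathbf{s}|-1$ satisfies $|\mathbf{s}|-p<i<|\mathbf{s}|$, so $\psi_{i,\mathbf{s}}x=\phi_{i,1}x$ by Notation~\ref{N9.8}; if $k=|\mathbf{s}|-1$ both conditions in the proposition are vacuous, so I may assume the range is nonempty. The case $j=1$ of Proposition~\ref{P9.3} says that $\phi_{i,1}x=x$ is equivalent to $\d_{i+1}x\in\im\e_{i-1}$. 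Hence the left-hand condition of the proposition is equivalent to
$$\d_j x\in\im\e_{j-2}\quad(k+2\le j\le|\mathbf{s}|),$$
and it remains to show that this is in turn equivalent to the system $\d_j x=\e_k^{\,j-k-1}\d_{k+1}^{\,j-k}x$ for $k+2\le j\le|\mathbf{s}|$.

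For this last equivalence, the forward implication is immediate: iterating $\e_a\e_b=\e_{b+1}\e_a$ (for $a\le b$) gives $\e_k^{\,m}=\e_{k+m-1}\e_{k+m-2}\cdots\e_{k+1}\e_k$, so the stated value of $\d_j x$ visibly lies in $\im\e_{j-2}$. For the converse I would induct on $j$. If $\d_j x=\e_{j-2}z$, then applying $\d_{j-1}$ and using $\d_{j-1}\e_{j-2}=\id$ together with $\d_{j-1}\d_j=\d_{j-1}^2$ gives $z=\d_{j-1}^2 x$, hence $\d_j x=\e_{j-2}\d_{j-1}^2 x$; for $j=k+2$ this is already the desired identity. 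For $j>k+2$ I would substitute the inductive expression for $\d_{j-1}x$, commute $\d_{j-1}$ successively past the degeneracies of $\e_k^{\,j-k-2}$ using $\d_i\e_l=\e_l\d_{i-1}$ (legitimate because the face index exceeds the degeneracy index by at least $2$ at every step) to obtain $\d_{j-1}^2 x=\e_k^{\,j-k-2}\d_{k+1}^{\,j-k}x$, and then absorb the outer $\e_{j-2}$ using $\e_{k+m}\e_k^{\,m}=\e_k^{\,m+1}$ to reach $\d_j x=\e_k^{\,j-k-1}\d_{k+1}^{\,j-k}x$.

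The only delicate point is bookkeeping rather than a genuine obstacle: one must keep the simplicial identities of Definition~\ref{D3.1}(3) straight through the induction, in particular checking that each face operator commuted past a degeneracy keeps an index large enough for $\d_i\e_l=\e_l\d_{i-1}$ to apply and that the iterated degeneracy $\e_k^{\,m}$ genuinely absorbs the leftover $\e_{j-2}$ on the left. Once this is done the two directions combine with the first paragraph to give the claimed equivalence.
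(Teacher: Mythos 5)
Your proposal is correct and follows essentially the same route as the paper: identify $\psi_{i,\mathbf{s}}$ with $\phi_{i,1}$ via Notation~\ref{N9.8}, use Proposition~\ref{P9.3} to turn the fixed-point conditions into $\d_{i+1}x\in\im\e_{i-1}$, i.e.\ $\d_{i+1}x=\e_{i-1}\d_i^2x$, and then iterate with the simplicial identities to obtain (and conversely read off) the explicit formulas $\d_{i+1}x=\e_k^{i-k}\d_{k+1}^{i-k+1}x$. Your inductive bookkeeping with $\d_i\e_l=\e_l\d_{i-1}$ and $\e_{k+m}\e_k^{m}=\e_k^{m+1}$ is exactly the substitution chain carried out in the paper's proof.
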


\begin{proof}
For $k<i<|\mathbf{s}|$ we have $\psi_{i,\mathbf{s}}=\phi_{i,1}$ (Notation~\ref{N9.8}), hence, by Proposition~\ref{P9.3},
$$\psi_{i,\mathbf{s}}x=x\iff \d_{i+1}x\in\im\e_{i-1}.$$
Note also that $\d_i\e_{i-1}=\id$ and $\d_i\d_{i+1}=\d_i\d_i$, hence
$$\d_{i+1}x\in\im\e_{i-1}
\iff \d_{i+1}x=\e_{i-1}\d_i\d_{i+1}x
\iff\d_{i+1}x=\e_{i-1}\d_i\d_i x.$$

It follows from this that if $\psi_{i,\mathbf{s}}x=x$ for all~$i$ with $k<i<|\mathbf{s}|$ then
\begin{align*}
&\d_{k+2}x=\e_k\d_{k+1}\d_{k+1}x=\e_k\d_{k+1}^2 x,\\
&\d_{k+3}x
=\e_{k+1}\d_{k+2}\d_{k+2}x
=\e_{k+1}\e_k\d_{k+2}\d_{k+1}^2 x
=\e_k^2\d_{k+1}^3 x,\\
&\ldots,\\
&\d_{|\mathbf{s}|}x=\e_k^{|\mathbf{s}|-k-1}\d_{k+1}^{|\mathbf{s}|-k}x.
\end{align*}
Conversely, if $\d_{i+1}x=\e_k^{i-k}\d_{k+1}^{i-k+1}x$ for all~$i$ with $k<i<|\mathbf{s}|$ then $\d_{i+1}x\in\im\e_{i-1}$ for all~$i$ with $k<i<|\mathbf{s}|$, hence $\psi_{i,\mathbf{s}}x=x$ for all~$i$ with $k<i<|\mathbf{s}|$. This completes the proof.
\end{proof}

\begin{proposition} \label{P12.2}
Let $x$ be an $n$-dimensional element in the image of an operation~$w_{k,l}$. 
If $k<i\leq k+l$ then
$$[\d_{i+1}x=\e_k^{i-k}\d_{k+1}^{i-k+1}x]
\iff[\d_{i-k+1}\d_0^k x=\e_0^{i-k}\d_1^{i-k+1}\d_0^k x].$$
If $k+l<i<n$ then
\begin{multline*}
[\d_{i+1}x=\e_k^{i-k}\d_{k+1}^{i-k+1}x] 
\iff [\d_{i-l+1}\d_{k+1}^l x=\e_k^{i-k-l}\d_{k+1}^{i-k-l+1}\d_{k+1}^l x\\
\textit{and}\ 
\d_{i-k+1}\d_0^k x=\e_0^{i-k}\d_1^{i-k+1}\d_0^k x].
\end{multline*}
\end{proposition}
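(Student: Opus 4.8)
The plan is to work with the decomposition of $x$ supplied by Proposition~\ref{P9.5}. Since $x$ lies in the image of the idempotent operation $w_{k,l}$, we may write $x=y\w_{k,l}z$ with $y=\d_{k+1}^l x$, $z=\d_0^k x$ and $\d_0^k y=\d_1^l z$, and the operators $\d_{k+1}^l$ and $\d_0^k$ recover $y$ and $z$ from $x$. For each of the two displayed equivalences I would prove the forward direction by applying $\d_{k+1}^l$ and $\d_0^k$ to the equation $\d_{i+1}x=\e_k^{i-k}\d_{k+1}^{i-k+1}x$ and simplifying, and the converse by reconstructing that equation from explicit formulas for the faces and degeneracies of $y\w_{k,l}z$; when the residual dimension $q=n-k-l$ is positive the converse can alternatively be deduced from the injectivity half of the pull-back square of Proposition~\ref{P9.7}.

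For the forward direction I would first record the relevant cases of the simplicial identities of Definition~\ref{D3.1}(3): $\d_{k+1}\e_k=\id$, hence $\d_{k+1}^j\e_k^m=\e_k^{m-j}$ for $j\le m$; $\d_0^k\e_k^m=\e_0^m\d_0^k$; $\d_0^k\d_{k+1}^r=\d_1^r\d_0^k$; $\d_{k+1}^m\d_{k+m+1}=\d_{k+1}^{m+1}$; and the commutations $\d_{k+1}^l\d_{i+1}=\d_{i-l+1}\d_{k+1}^l$ for $i>k+l$ and $\d_0^k\d_{i+1}=\d_{i-k+1}\d_0^k$. Applying $\d_0^k$ to the equation then turns its two sides into $\d_{i-k+1}z$ and $\e_0^{i-k}\d_1^{i-k+1}z$, giving the condition on $z$ in both ranges of $i$. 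Applying $\d_{k+1}^l$ turns the equation, when $i>k+l$, into $\d_{i-l+1}y=\e_k^{i-k-l}\d_{k+1}^{i-k-l+1}y$, the asserted condition on $y$; when $k<i\le k+l$ it turns it instead into the trivial identity $\d_{k+1}^{l+1}x=\d_{k+1}^{l+1}x$, which is why only the condition on $z$ survives in the lower range.

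The converse is the substantial part. Here I would establish, by induction on $k+l$ from the defining relations of Notation~\ref{N9.4} and the wedge axioms of Definition~\ref{D3.1}(4), the face- and degeneracy-commutation formulas for the iterated wedge: chiefly $\d_{k+1}^j(y\w_{k,l}z)=y\w_{k,l-j}\d_1^j z$ for $0\le j\le l$, together with formulas expressing $\d_{i+1}(y\w_{k,l}z)$ and $\e_k^{i-k}\d_{k+1}^{i-k+1}(y\w_{k,l}z)$, for $i$ outside the range of wedge indices actually occurring, as $\w_{k,l}$-wedges of faces and degeneracies of $y$ and $z$. Once these are in hand, the hypothesised conditions on $y$ and $z$ make the two sides of the displayed equation literally equal (and show each of them to be $w_{k,l}$-fixed, which is what is needed for the alternative argument via Proposition~\ref{P9.7}). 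I expect the main obstacle to be exactly this induction: keeping track of which wedge index each face and degeneracy must carry after commuting it through $\w_{k,l}$, and disposing of the boundary values of $i$ near $k+l$ and the degenerate parameter values $k=0$, $l=0$ and $q=0$, in which $w_{k,l}$ degenerates to the identity or to a one-sided projection and the wedge axioms of Definition~\ref{D3.1}(4) are invoked in their border cases $j\le i-1$ and $j\ge i+3$ rather than in the interior.
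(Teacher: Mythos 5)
Your plan is essentially sound, and in fact your ``alternative'' argument is the paper's actual proof: the paper recalls the chain-level formula for $w_{k,l}^\v$ from Proposition~\ref{P10.4}, checks by direct computation the commutations $\d_{i+1}w_{k,l}=w_{k,l-1}\d_{i+1}$ and $\e_k^{i-k}\d_{k+1}^{i-k+1}w_{k,l}=w_{k,l-1}\e_k^{i-k}\d_{k+1}^{i-k+1}$ for $k<i<k+l$ (with $w_{k,l}$ in place of $w_{k,l-1}$ when $k+l\le i<n$), and then, since for $x\in\im w_{k,l}$ both sides of $\d_{i+1}x=\e_k^{i-k}\d_{k+1}^{i-k+1}x$ lie in $\im w_{k,l-1}$ (resp.\ $\im w_{k,l}$), it concludes from Proposition~\ref{P9.7} that the equation holds if and only if it holds after applying $\d_{k+1}^{l-1}$ (resp.\ $\d_{k+1}^{l}$) and $\d_0^k$; these applied equations simplify, exactly as in your forward direction, to the stated conditions on $\d_{k+1}^l x$ and $\d_0^k x$, the first being automatic in the lower range. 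The genuine difference is where you propose to invest the work: you would derive the face and degeneracy commutation formulas for the iterated wedge by induction on $(k,l)$ from Notation~\ref{N9.4} and Definition~\ref{D3.1}(4), which is exactly the bookkeeping you flag as the main obstacle, whereas the paper obtains the same facts in a couple of lines from the explicit morphism $w_{k,l}^\v$ of Proposition~\ref{P10.4}, the transfer back to identities of operations being licensed by Theorem~\ref{T6.11}; since Propositions \ref{P10.3} and~\ref{P10.4} are already available, that route saves you the induction entirely. Two small corrections to your statement of the plan: the restriction $q=n-k-l>0$ on the pull-back route is unnecessary, since Proposition~\ref{P9.7} holds for all $q\geq 0$ and applies in dimension $n-1$ in both ranges of~$i$; and for $k<i<k+l$ the elements $\d_{i+1}x$ and $\e_k^{i-k}\d_{k+1}^{i-k+1}x$ are fixed by $w_{k,l-1}$, not $w_{k,l}$ (they are $\w_{k,l-1}$-wedges of their ends), which is precisely why the condition on $\d_{k+1}^l x$ disappears from the statement in the lower range.
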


\begin{proof}
Recall from Proposition~\ref{P10.4} that
$$w_{k,l}^\v
=(\d_{k+1}^\v)^l(\e_k^\v)^l
-(\d_{k+1}^\v)^l(\d_0^\v)^k(\e_0^\v)^{k+l}
+(\d_0^\v)^k(\e_0^\v)^k.$$
Direct computations show that
$$\d_{i+1}w_{k,l}=\begin{cases}
w_{k,l-1}\d_{i+1}& (k<i<k+l),\\
w_{k,l}\d_{i+1}& (k+l\leq i<n)
\end{cases}$$
and
$$\e_k^{i-k}\d_{k+1}^{i-k+1}w_{k,l}=\begin{cases}
w_{k,l-1}\e_k^{i-k}\d_{k+1}^{i-k+1}& (k<i<k+l),\\
w_{k,l}\e_k^{i-k}\d_{k+1}^{i-k+1}& (k+l\leq i<n).
\end{cases}$$
Recall from Notation~\ref{N9.6} and Proposition~\ref{P9.7} that
$$w_{k,l}x=\d_{k+1}^l x\w_{k,l}\d_0^k x,\ 
\d_{k+1}^l w_{k,l}x=\d_{k+1}^l x,\
\d_0^k w_{k,l}x=\d_0^k x.$$
For $x\in\im w_{k,l}$ and for $k<i<k+l$ it follows that $\d_{i+1}x=\e_k^{i-k}\d_{k+1}^{i-k+1}x$ if and only if
$$\d_{k+1}^{l-1}\d_{i+1}x=\d_{k+1}^{l-1}\e_k^{i-k}\d_{k+1}^{i-k+1}x,\quad
\d_0^k\d_{i+1}x=\d_0^k\e_k^{i-k}\d_{k+1}^{i-k+1}x;$$
for $k+l\leq i<n$ it follows that $\d_{i+1}x=\e_k^{i-k}\d_{k+1}^{i-k+1}x$ if and only if
$$\d_{k+1}^l\d_{i+1}x=\d_{k+1}^l\e_k^{i-k}\d_{k+1}^{i-k+1}x,\quad
\d_0^k\d_{i+1}x=\d_0^k\e_k^{i-k}\d_{k+1}^{i-k+1}x.$$
This gives the result. (In cases with $k<i\leq k+l$ the first condition is omitted from the statement of the proposition because it is satisfied automatically).
\end{proof}

\begin{proposition} \label{P12.3}
Let $\mathbf{s}=(\mathbf{s'},q,p)$ be an up-down vector with more than one term, let $X$ be a set with complicial identities, let $x$ be be a member of~$X_{|\mathbf{s}|}$ such that $w_{|\mathbf{s}|-p,p-q}x=x$, and let
$y=\d_{|\mathbf{s}|-p+1}^{p-q}x$, $z=\d_0^{|\mathbf{s}|-p}x$. Then $x\in\Psi_\mathbf{s}X_{|\mathbf{s}|}$ if and only if $y\in\Psi_\mathbf{s'}X_{|\mathbf{s'}|}$ and $z\in\Psi_{(p)}X_p$.
\end{proposition}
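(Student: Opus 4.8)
The plan is to reduce the statement to the behaviour of the operators $\psi_{i,\mathbf{s}}$ under the two face maps $\d_{|\mathbf{s}|-p+1}^{p-q}$ and $\d_0^{|\mathbf{s}|-p}$, combining Proposition~\ref{P11.9} (the image of $\Psi_\mathbf{s}$ is the common fixed-point set of $\psi_{1,\mathbf{s}},\ldots,\psi_{|\mathbf{s}|-1,\mathbf{s}}$) with the commutation identities recorded in Section~\ref{S10}. First I would split the index range $0<i<|\mathbf{s}|$ into three blocks according to how $\psi_{i,\mathbf{s}}$ was defined: the ``low'' block $0<i\leq|\mathbf{s}|-p$ (Notations~\ref{N9.9} and~\ref{N9.11}), the ``middle'' block $|\mathbf{s}|-p<i<|\mathbf{s}|-q$, and the ``high'' block $|\mathbf{s}|-q\leq i<|\mathbf{s}|$ (all of the form $\phi_{i,1}$ by Notation~\ref{N9.8}). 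The goal is to show that the fixed-point conditions $\psi_{i,\mathbf{s}}x=x$ for the low and middle indices are together equivalent, given $w_{|\mathbf{s}|-p,p-q}x=x$, to the fixed-point conditions for $y$ in $\Psi_{\mathbf{s'}}X_{|\mathbf{s'}|}$, while the high indices are controlled by $z$ in $\Psi_{(p)}X_p$.

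For the low block, the key input is Proposition~\ref{P10.5} (and its companions \ref{P10.6}, \ref{P10.7} at $i=|\mathbf{s}|-p$), which expresses $\psi_{i,\mathbf{s}}^\v-\id$ in terms of $(\d_{|\mathbf{s}|-p+1}^\v)^{p-q}[\psi_{i,\mathbf{s'}}^\v-\id](\e_{|\mathbf{s}|-p}^\v)^{p-q}$ together with a correction lying in $\im(w_{|\mathbf{s}|-p,p-q}^\v-\id)$. On the hypothesis $w_{|\mathbf{s}|-p,p-q}x=x$ that correction term vanishes when evaluated at $x$, so $\psi_{i,\mathbf{s}}x=x$ becomes equivalent to $\psi_{i,\mathbf{s'}}\d_{|\mathbf{s}|-p+1}^{p-q}x=\d_{|\mathbf{s}|-p+1}^{p-q}x$, i.e.\ $\psi_{i,\mathbf{s'}}y=y$. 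Here I would need the dual fact that on $\im w_{|\mathbf{s}|-p,p-q}$ one has $x=\d_{|\mathbf{s}|-p+1}^{p-q}x\w_{|\mathbf{s}|-p,p-q}\d_0^{|\mathbf{s}|-p}x$ (Notation~\ref{N9.6}, Proposition~\ref{P9.7}), so that the operation $\psi_{i,\mathbf{s}}$ reconstructs $x$ from $y$ and $z$ via its defining formula in Notation~\ref{N9.11}; this gives the reverse implication. For the high block, $\psi_{i,\mathbf{s}}=\phi_{i,1}$ and the last formula of Proposition~\ref{P10.8}, $\psi_{i,\mathbf{s}}^\v(\d_0^\v)^{|\mathbf{s}|-p}=(\d_0^\v)^{|\mathbf{s}|-p}\psi_{i-|\mathbf{s}|+p,(p)}^\v$, immediately translates $\psi_{i,\mathbf{s}}x=x$ (again using $x\in\im w_{|\mathbf{s}|-p,p-q}$ so that $\d_0^{|\mathbf{s}|-p}$ is injective on the relevant fixed-point set, via Proposition~\ref{P12.2}) into $\psi_{i-|\mathbf{s}|+p,(p)}z=z$, which runs over exactly the indices $0<i-|\mathbf{s}|+p<p$ cutting out $\Psi_{(p)}X_p$.

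The delicate point, and the one I expect to be the main obstacle, is the middle block $|\mathbf{s}|-p<i<|\mathbf{s}|-q$ together with the ``boundary'' index $i=|\mathbf{s}|-q$ when $q>0$. For these, Proposition~\ref{P10.8} gives $\psi_{i,\mathbf{s}}^\v(\d_{|\mathbf{s}|-p+1}^\v)^{p-q}=(\d_{|\mathbf{s}|-p+1}^\v)^{p-q}$ and $\psi_{|\mathbf{s}|-q,\mathbf{s}}^\v(\d_{|\mathbf{s}|-p+1}^\v)^{p-q}=(\d_{|\mathbf{s}|-p+1}^\v)^{p-q}$ outright, meaning these $\psi_{i,\mathbf{s}}$ are automatically the identity on anything of the form needed and impose no condition on $y$; one must instead check carefully that on $\im w_{|\mathbf{s}|-p,p-q}$ these same operators are also automatically the identity on $x$, so they contribute nothing on either side of the claimed equivalence. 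I would verify this by applying $\d_{|\mathbf{s}|-p+1}^{p-q}$ and $\d_0^{|\mathbf{s}|-p}$ to $(\psi_{i,\mathbf{s}}^\v-\id)a$ for a basis element $a$ and using that $w_{|\mathbf{s}|-p,p-q}^\v$ kills $V_\mathbf{s}$ (Lemma~\ref{L11.4}); the case $q>1$ requires the fourth formula of Proposition~\ref{P10.8} with its extra $\phi$-term, exactly as in the proof of Proposition~\ref{P11.7}, and this bookkeeping — matching up which face of which $\psi_{i,\mathbf{s}}$ survives — is where the real work lies. Once all three blocks are accounted for, the two directions assemble: $x\in\Psi_\mathbf{s}X_{|\mathbf{s}|}$ iff all $\psi_{i,\mathbf{s}}x=x$ iff ($\psi_{i,\mathbf{s'}}y=y$ for all relevant $i$) and ($\psi_{i,(p)}z=z$ for all relevant $i$) iff $y\in\Psi_{\mathbf{s'}}X_{|\mathbf{s'}|}$ and $z\in\Psi_{(p)}X_p$, using Proposition~\ref{P11.9} at both ends.
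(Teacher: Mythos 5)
There is a genuine gap in your treatment of the indices $i>|\mathbf{s}|-p$. For the ``middle'' block $|\mathbf{s}|-p<i<|\mathbf{s}|-q$ you read the identity $\psi_{i,\mathbf{s}}^\v(\d_{|\mathbf{s}|-p+1}^\v)^{p-q}=(\d_{|\mathbf{s}|-p+1}^\v)^{p-q}$ of Proposition~\ref{P10.8} as saying that $\psi_{i,\mathbf{s}}$ is ``automatically the identity'' on the relevant elements. It says no such thing: dualized it reads $\d_{|\mathbf{s}|-p+1}^{p-q}\psi_{i,\mathbf{s}}=\d_{|\mathbf{s}|-p+1}^{p-q}$, i.e.\ $\psi_{i,\mathbf{s}}$ does not disturb that particular iterated face, which is exactly why these indices impose no condition on $y$ --- but $\psi_{i,\mathbf{s}}x=x$ is still a genuine constraint, namely $\d_{i+1}x\in\im\e_{i-1}$ (Proposition~\ref{P9.3}), and for $x\in\im w_{|\mathbf{s}|-p,p-q}$ the batch of these constraints translates (first case of Proposition~\ref{P12.2}) into constraints on $z=\d_0^{|\mathbf{s}|-p}x$. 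Already for $\mathbf{s}=(3,1,3)$, where the middle block is the single index $i=3$, the condition $\d_4x\in\im\e_2$ restricts $z$ nontrivially. Since your high block $|\mathbf{s}|-q\leq i<|\mathbf{s}|$ can only produce the conditions $\psi_{j,(p)}z=z$ with $j\geq p-q$, declaring the middle block vacuous loses the remaining conditions defining $\Psi_{(p)}X_p$, and the ``only if'' direction collapses.

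The second failure is the claim that $\d_0^{|\mathbf{s}|-p}$ is injective on the relevant fixed-point set. By Proposition~\ref{P9.7} an element of $\im w_{|\mathbf{s}|-p,p-q}$ is determined by the \emph{pair} $(y,z)=(\d_{|\mathbf{s}|-p+1}^{p-q}x,\d_0^{|\mathbf{s}|-p}x)$, not by $z$ alone; correspondingly, for $|\mathbf{s}|-q<i<|\mathbf{s}|$ the condition $\psi_{i,\mathbf{s}}x=x$ is equivalent (second case of Proposition~\ref{P12.2}) to a condition on $y$ \emph{and} a condition on $z$. With your allocation the conditions $\psi_{j,\mathbf{s'}}y=y$ for $|\mathbf{s}|-p<j<|\mathbf{s'}|$, which exist whenever $q\geq2$, are never produced, so the ``if'' direction also fails. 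The paper avoids all of this by treating every index $i>|\mathbf{s}|-p$ at once: Proposition~\ref{P12.1} converts that whole batch of fixed-point conditions into face identities $\d_{i+1}x=\e_{|\mathbf{s}|-p}^{i-|\mathbf{s}|+p}\d_{|\mathbf{s}|-p+1}^{i-|\mathbf{s}|+p+1}x$, Proposition~\ref{P12.2} splits each of these between $y$ and $z$, and Proposition~\ref{P12.1} applied to $y$ and to $z$ converts back. Your low block and the boundary index $i=|\mathbf{s}|-p$ are essentially right in spirit, but note that ``the correction term vanishes when evaluated at $x$'' is not literally meaningful, since a difference of dual chain maps is not an operation on $X$; the clean argument is the paper's element-level one, $x=y\w_{|\mathbf{s}|-p,p-q}z$ and $\psi_{i,\mathbf{s}}x=\psi_{i,\mathbf{s'}}y\w_{|\mathbf{s}|-p,p-q}z$ (Notation~\ref{N9.11}), with Proposition~\ref{P9.5} showing the wedge determines its factors.
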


\begin{proof}
According to Proposition~\ref{P11.9} we must show that
$$\psi_{1,\mathbf{s}}x=\ldots=\psi_{|\mathbf{s}|-1,\mathbf{s}}x=x$$
if and only if
$$\psi_{1,\mathbf{s'}}y=\ldots=\psi_{|\mathbf{s'}|-1,\mathbf{s'}}y=y,\quad
\psi_{1,(p)}z=\ldots=\psi_{p-1,(p)}z=\d_0^{|\mathbf{s}|-p}z.$$
We will consider $\psi_{i,\mathbf{s}}x$ for $i>|\mathbf{s}|-p$, then for $i<|\mathbf{s}|-p$, then for $i=|\mathbf{s}|-p$.

Let $p'$ be the last term in~$\mathbf{s'}$, and recall that $|\mathbf{s}|-p>|\mathbf{s'}|-p'$. It follows from Propositions \ref{P12.1} and~\ref{P12.2}  that
$$\psi_{|\mathbf{s}|-p+1,\mathbf{s}}x=\ldots=\psi_{|\mathbf{s}|-1,\mathbf{s}}x=x$$
if and only if
$$\psi_{|\mathbf{s}|-p+1,\mathbf{s'}}y=\ldots=\psi_{|\mathbf{s'}|-1,\mathbf{s'}}y=y,\quad
\psi_{1,(p)}z=\ldots=\psi_{p-1,(p)}z=\d_0^{|\mathbf{s}|-p}z.$$

Recall that $x=y\w_{|\mathbf{s}|-p,p-q}z$ (Notation~\ref{N9.6}). For $0<i<|\mathbf{s}|-p$ we have
$\psi_{i,\mathbf{s}}x=\psi_{i,\mathbf{s'}}y\w_{|\mathbf{s}|-p,p-q}z$
(Notation~\ref{N9.11}), hence $\psi_{i,\mathbf{s}}x=x$ if and only if $\psi_{i,\mathbf{s'}}y=y$. 

In the case $q=0$ we have $\psi_{|\mathbf{s}|-p,\mathbf{s}}x=w_{|\mathbf{s}|-p,p-q}x=x$
by hypothesis (see Notation~\ref{N9.9}). This completes the proof in the case $q=0$.

From now on suppose that $q>0$. It suffices to show that 
$$\psi_{|\mathbf{s}|-p,\mathbf{s}}x=x\iff\psi_{|\mathbf{s}|-p,\mathbf{s'}}y=y.$$
Equivalently, since $|\mathbf{s}|-p>|\mathbf{s'}|-p'$, it suffices to show that
$$w_{|\mathbf{s}|-p,p-q}\phi_{|\mathbf{s}|-p,p-q+1}x=x\iff\phi_{|\mathbf{s}|-p,1}y=y$$
(see Notations \ref{N9.9} and~\ref{N9.8}).

To do this, suppose first that $w_{|\mathbf{s}|-p,p-q}\phi_{|\mathbf{s}|-p,p-q+1}x=x$. Then
\begin{align*}
\d_{|\mathbf{s}|-p+1}y
&=\d_{|\mathbf{s}|-p+1}^{p-q+1}x\\
&=\d_{|\mathbf{s}|-p+1}^{p-q}w_{|\mathbf{s}|-p,p-q}\phi_{|\mathbf{s}|-p,p-q+1}x\\
&=\d_{|\mathbf{s}|-p+1}^{p-q}\phi_{|\mathbf{s}|-p,p-q+1}x
\end{align*}
by Proposition~\ref{P9.7}, hence $\d_{|\mathbf{s}|-p+1}y\in\im\e_{|\mathbf{s}|-p-1}$ (Proposition~\ref{P9.2}), hence $\phi_{|\mathbf{s}|-p,1}y=y$ (Proposition~\ref{P9.3}).

Conversely, suppose that $\phi_{|\mathbf{s}|-p,1}y=y$. By Proposition~\ref{P9.3} 
$$\d_{|\mathbf{s}|-p+1}^{p-q+1}x=\d_{|\mathbf{s}|-p+1}y\in\im\e_{|\mathbf{s}|-p-1},$$
hence
$$w_{|\mathbf{s}|-p,p-q}\phi_{|\mathbf{s}|-p,p-q+1}x
=w_{|\mathbf{s}|-p,p-q}x
=x.$$

This completes the proof.
\end{proof}

\begin{proposition} \label{P12.4}
If $\mathbf{s}=(\mathbf{s'},q,p)$ and if $X$~is a set with complicial identities, then the square
$$\xymatrix{
\Psi_\mathbf{s}X_{|\mathbf{s}|}
\ar_{\d_{|\mathbf{s}|-p+1}^{p-q}}[d] 
\ar^{\d_0^{|\mathbf{s}|-p}}[rr]
&& \Psi_{(p)}X_p
\ar^{\d_1^{p-q}}[d]
\\
\Psi_\mathbf{s'}X_{|\mathbf{s'}|}
\ar_{\d_0^{|\mathbf{s}|-p}}[rr]
&& \Psi_{(q)}X_q.
}$$
is a pull-back square of sets.
\end{proposition}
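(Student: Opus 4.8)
The plan is to deduce this from two earlier results: the pull-back square of Proposition~\ref{P9.7} for the operation~$w_{|\mathbf{s}|-p,p-q}$, and the fixed-point criterion of Proposition~\ref{P12.3}. Put $k=|\mathbf{s}|-p$ and $l=p-q$, and recall from Proposition~\ref{P8.2} that $|\mathbf{s}|-p=|\mathbf{s'}|-q$, so that $k+q=|\mathbf{s'}|$, $l+q=p$ and $k+l+q=|\mathbf{s}|$. Proposition~\ref{P9.7}, applied with these values (its parameter ``$q$'' being our~$q$), then says that
$$\xymatrix{
w_{k,l}X_{|\mathbf{s}|} \ar_{\d_{k+1}^l}[d] \ar^{\d_0^k}[r] & X_p \ar^{\d_1^l}[d] \\
X_{|\mathbf{s'}|} \ar_{\d_0^k}[r] & X_q
}$$
is a pull-back square of sets. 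The square in the statement is obtained from this one by restricting the four corners to the subsets $\Psi_\mathbf{s}X_{|\mathbf{s}|}$, $\Psi_{(p)}X_p$, $\Psi_\mathbf{s'}X_{|\mathbf{s'}|}$ and $\Psi_{(q)}X_q$, so it suffices to show that this restriction is again a pull-back.

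First I would record that $\Psi_\mathbf{s}X_{|\mathbf{s}|}\subset w_{k,l}X_{|\mathbf{s}|}$. Indeed, if $x\in\Psi_\mathbf{s}X_{|\mathbf{s}|}$ then $\psi_{|\mathbf{s}|-p,\mathbf{s}}x=x$ by Proposition~\ref{P11.9}; by Notation~\ref{N9.9} this equation reads $w_{k,l}x=x$ when $q=0$ and $w_{k,l}\phi_{k,l+1}x=x$ when $q>0$, and in the latter case applying $w_{k,l}$ to both sides and using its idempotence (Proposition~\ref{P9.7}) again gives $w_{k,l}x=x$. Thus every element of $\Psi_\mathbf{s}X_{|\mathbf{s}|}$ satisfies the standing hypothesis $w_{k,l}x=x$ of Proposition~\ref{P12.3}, and for any $x\in w_{k,l}X_{|\mathbf{s}|}$, writing $y=\d_{k+1}^l x$ and $z=\d_0^k x$, that proposition yields the equivalence
$$x\in\Psi_\mathbf{s}X_{|\mathbf{s}|}
\iff y\in\Psi_\mathbf{s'}X_{|\mathbf{s'}|}\ \textit{and}\ z\in\Psi_{(p)}X_p .$$

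With these two observations the rest is formal. Let $P$ be the pull-back of the cospan $\Psi_\mathbf{s'}X_{|\mathbf{s'}|}\to\Psi_{(q)}X_q\leftarrow\Psi_{(p)}X_p$ (with maps $\d_0^k$ and $\d_1^l$), and consider the map $\Psi_\mathbf{s}X_{|\mathbf{s}|}\to P$ sending~$x$ to $(\d_{k+1}^l x,\d_0^k x)$; it is well-defined and lands in~$P$ by the displayed equivalence together with the commutativity of the square. It is injective, since two elements of $\Psi_\mathbf{s}X_{|\mathbf{s}|}$ with the same image lie in $w_{k,l}X_{|\mathbf{s}|}$ and have the same pair of faces, so they agree by the uniqueness clause of the pull-back square of Proposition~\ref{P9.7}. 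It is surjective: given $(y,z)\in P$, the existence clause of that same pull-back square yields an $x\in w_{k,l}X_{|\mathbf{s}|}$ with $\d_{k+1}^l x=y$ and $\d_0^k x=z$; since $w_{k,l}x=x$, the displayed equivalence (whose right-hand side holds by the choice of $(y,z)$) gives $x\in\Psi_\mathbf{s}X_{|\mathbf{s}|}$. Hence $\Psi_\mathbf{s}X_{|\mathbf{s}|}\to P$ is a bijection, which is exactly the pull-back property to be proved.

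There is no real obstacle here: the substantive content has already been extracted in Propositions~\ref{P9.7} and~\ref{P12.3} (and, behind the latter, in Propositions~\ref{P12.1} and~\ref{P12.2}), so the proof is an assembly. The only points demanding a little care are the index bookkeeping $k=|\mathbf{s}|-p$, $l=p-q$, $k+q=|\mathbf{s'}|$, $l+q=p$, and the verification that the four maps of the square really take values in the indicated $\Psi$-subsets; for the two maps out of $\Psi_\mathbf{s}X_{|\mathbf{s}|}$ this is supplied by Proposition~\ref{P12.3}, and for the other two it is already implicit in the remark at the start of this section that the square is the image of an $\mathbf{s}$-simple square of chain complexes under a functor.
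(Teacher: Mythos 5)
Your proposal is correct and takes essentially the same route as the paper: both rest on the pull-back square of Proposition~\ref{P9.7}, the inclusion $\Psi_\mathbf{s}X_{|\mathbf{s}|}\subset w_{|\mathbf{s}|-p,p-q}X_{|\mathbf{s}|}$ obtained from Proposition~\ref{P11.9}, Notation~\ref{N9.9} and the idempotence of $w_{|\mathbf{s}|-p,p-q}$, and then Proposition~\ref{P12.3}. The only difference is that you spell out explicitly the final restriction-of-a-pull-back argument that the paper compresses into ``the result now follows from Proposition~\ref{P12.3}.''
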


\begin{proof}
By Proposition~\ref{P9.7} there is a pull-back square
$$\xymatrix{
w_{|\mathbf{s}|-p,p-q}X_{|\mathbf{s}|}
\ar_{\d_{|\mathbf{s}|-p+1}^{p-q}}[d] 
\ar^{\d_0^{|\mathbf{s}|-p}}[rr]
&& X_p
\ar^{\d_1^{p-q}}[d]
\\
X_{|\mathbf{s'}|}
\ar_{\d_0^{|\mathbf{s}|-p}}[rr]
&& X_q
}$$
with $w_{|\mathbf{s}|-p,p-q}$ idempotent. By Proposition~\ref{P11.9} and Notation~\ref{N9.9}, if $x\in\Psi_\mathbf{s}X_{|\mathbf{s}|}$ then
$$w_{|\mathbf{s}|-p,p-q}x
=w_{|\mathbf{s}|-p,p-q}\psi_{|\mathbf{s}|-p,\mathbf{s}}x
=\psi_{|\mathbf{s}|-p,\mathbf{s}}x
=x.$$
The result now follows from Proposition~\ref{P12.3}.
\end{proof}

For a set with complicial identities~$X$ it now follows from Proposition~\ref{P11.9} that the functor
$$S\mapsto\Hom[\lambda S,X]$$
takes simple squares of chain complexes to pull-back squares of sets. By Proposition~\ref{P5.8} this determines an $\omega$-category functorially in~$X$. We will use the following notation.

\begin{notation} \label{N12.5}
Let $\beta$ be the functor from sets with complicial identities to $\omega$-categories such that
$$\Hom[\nu S,\beta X]=\Hom[\lambda S,X]$$
for every simple chain complex~$S$.
\end{notation}

\section{The equivalence} \label{S13}

We have constructed functors $\alpha$~and~$\beta$ between $\omega$-categories and sets with complicial identities (see Notations \ref{N7.3} and~\ref{N12.5}). We will now show that these functors are inverse equivalences.

In particular we must show that $\alpha\beta X\cong X$ for every set with complicial identities~$X$. We will do this by showing that
$$\Psi_{(m)}(\alpha\beta X)_m\cong\Psi_{(m)}X_m\quad (m\geq 0).$$
We must therefore show that a set with complicial identities is determined up to isomorphism by the images of the operations~$\Psi_{(m)}$. We will do this by induction on~$m$: for $m>0$ we will show that an $m$-dimensional element~$c$ is equivalent to the family consisting of the image $\Psi_{(m)}c$ and of its faces.

Recall that $\Psi_{(m)}$~is a composite of the operations~$\psi_{i,(m)}$, where
$$\psi_{i,(m)}x=\phi_{i,1}x=\d_{i+1}(\e_{i-1}\d_{i+1}x\w_i x)$$
(see Notations \ref{N9.12}, \ref{N9.8} and~\ref{N9.1}). We begin by considering an individual operation~$\phi_{i,1}$.

\begin{proposition} \label{P13.1}
Let $X$ be a set with complicial identities, let $m$~and~$i$ be integers with $0<i<m$, and let $T$ be the set of triples $(a,y,z)$ in $\phi_{i,1}X_m\times X_{m-1}^2$ such that
$$\d_{i-1}y=\d_i z,\quad
\d_{i-1}a=\d_i(y\w_{i-1}z),\quad \d_{i+1}a=\e_{i-1}\d_i y.$$
Then there is a bijection $f\colon X_m\to T$ given by
$$f(c)=(\phi_{i,1}c,\,\d_{i+1}c,\,\d_{i-1}c).$$
\end{proposition}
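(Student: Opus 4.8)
The plan is to construct an explicit inverse to $f$ and then check that the two composites are identities. Given a triple $(a,y,z)\in T$, one builds a candidate $m$-dimensional element by the formula $g(a,y,z)=(\e_{i-1}z\w_{i-1}a)\w_i y$ (modelled on the chain-level formula $\phi_{i,1}^\v=\d_{i+1}^\v(\e_{i-1}^\v-\e_i^\v)+\id$, read backwards). First I would verify that the right-hand side is defined: the inner wedge $\e_{i-1}z\w_{i-1}a$ needs $\d_{i-1}(\e_{i-1}z)=\d_i a$, which follows from $\d_{i-1}\e_{i-1}=\id$ and the hypothesis $\d_{i}a=\d_{i-1}(y\w_{i-1}z)=\d_{i-1}y=\d_i z$; wait — here one must be careful and use the correct face identities from Definition~\ref{D3.1}(3) and~(4), tracking indices, to see that $\d_i a$ and $\d_{i-1}\e_{i-1}z=z$ have matching appropriate faces. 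Then the outer wedge $\w_i y$ needs the $i$-th face of $\e_{i-1}z\w_{i-1}a$ to equal $\d_{i+1}y$; one computes this face using axiom~(4) for $\w_{i-1}$ together with $\d_{i+1}a=\e_{i-1}\d_i y$ and the relation $\d_{i-1}y=\d_i z$.

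Next I would compute $f(g(a,y,z))$ and show it equals $(a,y,z)$. The three components are $\phi_{i,1}\bigl((\e_{i-1}z\w_{i-1}a)\w_i y\bigr)$, $\d_{i+1}$ of that element, and $\d_{i-1}$ of that element. For the last two, apply the face formulas of axiom~(4) directly: $\d_{i-1}$ and $\d_{i+1}$ of an $\w_i$-wedge are given by $\d_{i+1}(u\w_i y)=u$ when... actually $\d_{i+2}(u\w_i y)=u$ and $\d_i(u\w_i y)=y$, so I must instead express $\d_{i+1}$ and $\d_{i-1}$ via the distributivity clauses $\d_j(u\w_i y)=\d_j u\w_{i-1}\d_j y$ and then iterate into the inner wedge, using $a\in\phi_{i,1}X_m$ (equivalently $\d_{i+1}a\in\im\e_{i-1}$, Proposition~\ref{P9.3}) to collapse degenerate wedges back to the factors. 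For the first component, $\phi_{i,1}c=\d_{i+1}(\e_{i-1}\d_{i+1}c\w_i c)$, I would substitute $c=g(a,y,z)$ and simplify; the key simplification is that applying $\phi_{i,1}$ to an element already in the image of $\phi_{i,1}$ is the identity on the relevant piece, combined with the wedge-associativity axioms~(5)--(7) to rearrange $(\e_{i-1}z\w_{i-1}a)\w_i y$ into a form where the $\phi_{i,1}$ action is transparent.

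Then I would compute $g(f(c))=g(\phi_{i,1}c,\d_{i+1}c,\d_{i-1}c)$ and show it equals $c$. This amounts to the identity
$$\bigl(\e_{i-1}\d_{i-1}c\w_{i-1}\phi_{i,1}c\bigr)\w_i\d_{i+1}c=c,$$
which I would prove by unfolding $\phi_{i,1}c=\d_{i+1}(\e_{i-1}\d_{i+1}c\w_i c)$ and then repeatedly applying the complicial identities — in particular axiom~(3)'s relations $\e_i x=\e_i\d_{i+1}x\w_i x$ and $\e_{i+1}x=x\w_i\e_i\d_i x$, together with axioms~(5) and~(7) — to telescope the left-hand side down to $c$. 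Rather than grinding this out by hand, I would translate everything to chain maps via Theorem~\ref{T6.11}: it suffices to check the identity in $\lambda\Delta(m)$ on the universal element $\lambda\iota_m$, i.e.\ to verify the corresponding equality of morphisms $\Delta(m)\to\Delta(m)$ of augmented directed complexes, which reduces to a finite computation on basis elements using Notation~\ref{N6.6}, Notation~\ref{N6.9} and Proposition~\ref{P10.2}.

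The main obstacle I expect is bookkeeping: the formula for $g$ mixes $\w_i$ and $\w_{i-1}$, so every face computation forces an index shift and a case analysis on whether a face index lands below $i-1$, equals $i-1,i,i+1$, or exceeds $i+1$, and one must repeatedly recognize degenerate wedges (those with a factor in the image of a degeneracy) and collapse them using Proposition~\ref{P9.3}. The cleanest route, and the one I would actually write up, is to bypass hand manipulation entirely: represent the claimed bijection and its inverse by explicit morphisms between the simplex $\Delta(m)$ and the colimit $\Delta_{(k)}$-type complex encoding the domain $T$ (built as in Section~\ref{S7} from $\Delta_{(2)}$'s and $\Delta(m-1)$'s glued along face maps), show these morphisms are mutually inverse by a basis-element computation, and invoke $\lambda$ being fully faithful (Theorem~\ref{T6.11}) to transport the bijection back to sets with complicial identities; this replaces the identity-chasing with a single transparent diagram chase.
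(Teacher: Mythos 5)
There is a genuine gap, and it appears at the very first step: your candidate inverse $g(a,y,z)=(\e_{i-1}z\w_{i-1}a)\w_i y$ is not a well-formed expression in a set with complicial identities. The inner wedge $\e_{i-1}z\w_{i-1}a$ pairs two $m$-dimensional elements and so lives in $X_{m+1}$, while $y$ lies in $X_{m-1}$; the outer wedge $\w_i$ is only defined on pairs of equal dimension, and in any case every wedge raises dimension by one, so without an outer face operator no iterated wedge of $a,y,z$ can land back in $X_m$. The same dimension count shows that the displayed identity $(\e_{i-1}\d_{i-1}c\w_{i-1}\phi_{i,1}c)\w_i\d_{i+1}c=c$ you propose to verify cannot hold: its left side, if it were defined at all, would have dimension at least $m+1$. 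The paper's inverse is $g(a,y,z)=\d_i[a\w_{i-1}(y\w_{i-1}z)]$, where the outer $\d_i$ is essential, and both composites are checked directly from axioms (5) and (6) of Definition~\ref{D3.1}; your proposal never reaches a formula that could be so checked.

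Your fallback strategy also has a gap. Reducing $gf=\id$ to a computation on the universal element of $\lambda\Delta(m)$ is legitimate, because $gf$ (with the correct $g$) is an everywhere-defined unary operation and Theorem~\ref{T6.11} gives freeness of $\lambda\Delta(m)$ on $\lambda\iota_m$. But the other composite $fg=\id$ lives on the set $T$ of constrained triples, and your plan to encode $T$ by a glued complex of $\Delta_{(k)}$ type and ``transport the bijection back'' via $\lambda$ requires knowing that $\Hom[\lambda K,X]$ computes the corresponding limit of sets for such a colimit complex $K$ and an \emph{arbitrary} $X$. Theorem~\ref{T6.11} asserts full faithfulness only on the subcategory of simplexes, and the paper never proves this representability for glued complexes; indeed the retract/idempotent machinery of Sections~\ref{S8}--\ref{S12} exists precisely because such colimit-compatibility of $\Hom[\lambda(-),X]$ is not available for free. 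So the ``clean'' route presupposes a statement at least as hard as what is being proved, whereas the paper's argument is an elementary identity chase with the axioms.
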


\begin{proof}
It follows from the axioms (Definition~\ref{D3.1}) that the formula for~$f$ defines a function whose image is contained in~$T$. We will show that there is an inverse function $g\colon T\to X_m$ given by
$$g(a,y,z)=\d_i[a\w_{i-1}(y\w_{i-1}z)];$$
the wedges in this formula exist because 
$$\d_{i-1}y=\d_i z,\quad \d_{i-1}a=\d_i(y\w_{i-1}z).$$

First we show that $gf(c)=c$ for $c\in X_m$. Let
$$A=\e_{i-1}\d_{i+1}c\w_i c=(\e_{i-1}\d_i\d_{i+1}c\w_{i-1}\d_{i+1}c)\w_i c,$$
so that $\phi_{i,1}c=\d_{i+1}A$. By Definition \ref{D3.1}(6),
$$A=\d_{i+1}A\w_{i-1}(\d_{i+1}c\w_{i-1}\d_{i-1}c);$$
therefore
\begin{align*}
gf(c)
&=\d_i[\phi_{i,1}c\w_{i-1}(\d_{i+1}c\w_{i-1}\d_{i-1}c]\\
&=\d_i[\d_{i+1}A\w_{i-1}(\d_{i+1}c\w_{i-1}\d_{i-1}c)]\\
&=\d_i A\\
&=c.
\end{align*}

Conversely we will show that $fg(a,y,z)=(a,y,z)$ for $(a,y,z)$ in~$T$. Let
$$B=a\w_{i-1}(y\w_{i-1}z),$$ 
so that $g(a,y,z)=\d_i B$. Then
\begin{align*}
\d_{i+1}g(a,y,z)
&=\d_{i+1}\d_i B\\
&=\d_i\d_{i+2}B\\
&=\d_i[\d_{i+1}a\w_{i-1}\d_{i+1}(y\w_{i-1}z)]\\
&=\d_i(\e_{i-1}\d_i y\w_{i-1}y)\\
&=\d_i\e_{i-1}y\\
&=y,
\end{align*}
and
$$\d_{i-1}g(a,y,z)
=\d_{i-1}\d_i B
=\d_{i-1}\d_{i-1}B
=\d_{i-1}(y\w_{i-1}z)
=z.$$
We also deduce that
$$\e_{i-1}\d_{i+1}g(a,y,z)
=\e_{i-1}y
=\e_{i-1}\d_i y\w_{i-1}y\\
=\d_{i+1}a\w_{i-1}y.$$
By Definition \ref{D3.1}(5),
$$B=(\d_{i+1}a\w_{i-1}y)\w_i\d_i B,$$
hence
\begin{align*}
\phi_{i,1}g(a,y,z)
&=\d_{i+1}[\e_{i-1}\d_{i+1}g(a,y,z)\w_i g(a,y,z)]\\
&=\d_{i+1}[(\d_{i+1}a\w_{i-1}y)\w_i\d_i B]\\
&=\d_{i+1}B\\
&=a.
\end{align*}
Therefore $fg(a,y,z)=(a,y,z)$.

This completes the proof.
\end{proof}

According to this proposition, if $X$~is a set with complicial identities and if $0<i<m$, then an $m$-dimensional member~$c$ of~$X$ can be recovered from the image $\phi_{i,1}c$ and the faces $\d_{i+1}c$, $\d_{i-1}c$. The triples $(\phi_{i,1}c,\d_{i+1}c,\d_{i-1}c)$ that can occur are those permitted by the formulae
$$\d_{i+1}\phi_{i,1}c=\e_{i-1}\d_i\d_{i+1}c,\quad 
\d_{i-1}\phi_{i,1}c=\d_i(\d_{i+1}c\w_i\d_{i-1}c).$$
We extend this as follows.

\begin{proposition} \label{P13.2}
Let $X$ be a set with complicial identities. Then $X_0=\Psi_{(0)}X_0$. For $m>0$ the function on~$X_m$ given by
$$c\mapsto(\Psi_{(m)}c,\d_0 c,\ldots,\d_m c)$$
is injective. The image consists of the $(m+2)$-tuples
$$(a,u_0,\ldots,u_m)\in \Psi_{(m)}X_{(m)}\times X_{m-1}\times\ldots\times X_{m-1}$$
such that
$$\d_i a=F_i(u_0,\ldots,u_m)\quad (0<i<m),$$
where $F_i$~is the operation such that
$$\d_i\Psi_{(m)}c=F_i(\d_0 c,\ldots,\d_m c)$$
for all~$c$.
\end{proposition}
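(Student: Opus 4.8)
The plan is to build on Proposition~\ref{P13.1}, which handles a single operation $\phi_{i,1}$, and iterate it through the composite $\Psi_{(m)}=\psi_{1,(m)}(\psi_{2,(m)}\psi_{1,(m)})\cdots$. Since each $\psi_{i,(m)}=\phi_{i,1}$, the idea is to peel off one factor at a time, using the fact (Proposition~\ref{P11.9}) that $\Psi_{(m)}X_m$ is exactly the set of $c$ with $\phi_{i,1}c=c$ for all $0<i<m$. First I would dispose of the case $m=0$: then $|\mathbf{s}|=0$ for $\mathbf{s}=(0)$, $\Psi_{(0)}$ is the identity (Notation~\ref{N9.12}), so $X_0=\Psi_{(0)}X_0$ with nothing to prove and the ``$(m+2)$-tuple'' statement is vacuous.

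For $m>0$, the key observation is that iterating Proposition~\ref{P13.1} lets me replace $c\in X_m$ by the pair $(\phi_{m-1,1}c,\,\d_{m-2}c,\ldots)$ wait — more carefully: Proposition~\ref{P13.1} with a chosen $i$ recovers $c$ from $(\phi_{i,1}c,\d_{i+1}c,\d_{i-1}c)$. So $c$ is determined by $\phi_{i,1}c$ together with two of its faces. Now apply the same proposition to $\phi_{i,1}c$ — which lies in $\phi_{i,1}X_m$ but we may then apply some $\phi_{i',1}$ to it — and so on, at each stage recording the needed faces. After applying the appropriate sequence of operations (the reverse of the composite defining $\Psi_{(m)}$), $c$ is recovered from $\Psi_{(m)}c$ together with a finite collection of iterated faces of the intermediate elements. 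Each such iterated face of an intermediate $\phi$-image is, by the face formulae in Definition~\ref{D3.1}(3)(4) and Proposition~\ref{P9.2}, expressible in terms of the faces $\d_0 c,\ldots,\d_m c$; conversely each $\d_j c$ appears among the data. Hence the map $c\mapsto(\Psi_{(m)}c,\d_0 c,\ldots,\d_m c)$ is injective, because this larger tuple (from which $c$ is reconstructed) is a function of it.

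For the image, I would argue as follows. Given any tuple $(a,u_0,\ldots,u_m)$ with $a\in\Psi_{(m)}X_m$ and $\d_i a=F_i(u_0,\ldots,u_m)$ for $0<i<m$, I reconstruct a candidate $c$ by running the inverse construction from Proposition~\ref{P13.1} in reverse order: starting from $a$ and successively applying the maps $g$ of that proposition (in the order opposite to how the $\phi_{i,1}$ were composed), using the $u_j$'s and their faces as the auxiliary data at each step. The compatibility conditions needed to apply $g$ at each stage — conditions of the form $\d_{i-1}y=\d_i z$, $\d_{i-1}(\text{intermediate})=\d_i(y\w_{i-1}z)$, $\d_{i+1}(\text{intermediate})=\e_{i-1}\d_i y$ — must be checked to follow from the hypotheses $\d_i a=F_i(u_0,\ldots,u_m)$ together with simplicial identities among the $u_j$'s (which themselves hold because the $u_j$ are to be the faces $\d_j c$ and must satisfy $\d_i\d_j=\d_{j-1}\d_i$). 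One then verifies that the reconstructed $c$ has $\Psi_{(m)}c=a$ and $\d_j c=u_j$ for all $j$, using the $f\circ g=\mathrm{id}$ half of Proposition~\ref{P13.1} at each stage and the face formulae for $\Psi_{(m)}$ and for degeneracies.

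The main obstacle I anticipate is bookkeeping: organizing the iteration of Proposition~\ref{P13.1} so that the auxiliary faces produced at each stage are exactly (after applying simplicial identities) the $\d_j c$, and confirming that the compatibility conditions required to apply the inverse map $g$ at each stage are consequences of precisely the stated hypotheses $\d_i a = F_i(u_0,\ldots,u_m)$ and the simplicial relations among the $u_j$, with no extra conditions left over. In other words, the hard part is showing that the conditions listed in the statement are not merely necessary but \emph{sufficient}, which amounts to a careful induction on the number of factors $\phi_{i,1}$ in $\Psi_{(m)}$, tracking at each step which faces have been ``used up'' and which compatibility identities remain to be discharged.
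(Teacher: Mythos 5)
Your proposal is correct and takes essentially the same route as the paper: the paper's own proof consists of the single observation that $\Psi_{(m)}$ is a composite of the operations $\phi_{i,1}$ (empty when $m=0$) and that the result then follows by iterating Proposition~\ref{P13.1}, exactly as you describe. The bookkeeping you flag as the hard part (that all faces of the intermediate elements are expressible in the faces $\d_0 c,\ldots,\d_m c$, and that the stated conditions suffice) is left implicit in the paper as well, so your sketch is at least as detailed as the published argument.
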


\begin{proof}
Recall from Notations \ref{N9.12} and~\ref{N9.8} that $\Psi_{(m)}$~is a composite of the operations~$\phi_{i,1}$ (an empty composite in the case $m=0$). The result therefore follows from Proposition~\ref{P13.2}.
\end{proof}

\begin{theorem} \label{T13.3}
The categories of $\omega$-categories and sets with complicial identities are equivalent under the functors $\alpha$~and~$\beta$.
\end{theorem}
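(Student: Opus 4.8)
The plan is to reduce the theorem to the single natural bijection
$$\Hom[\lambda S,\alpha C]\cong\Hom[\nu S,C],$$
valid for every simple chain complex~$S$ and every $\omega$-category~$C$. Granting this, $\beta\alpha\cong\id$ is immediate: by Notation~\ref{N12.5} one has $\Hom[\nu S,\beta\alpha C]=\Hom[\lambda S,\alpha C]\cong\Hom[\nu S,C]$ for all simple chain complexes~$S$, naturally in~$C$, so $\beta\alpha C\cong C$ by the equivalence of Proposition~\ref{P5.8}. For $\alpha\beta\cong\id$, fix a set with complicial identities~$X$ and take $S=S_{(m)}$ and $C=\beta X$ in the bijection; since $S_{(m)}$~is $(m)$-simple, combining it with Proposition~\ref{P11.9} and Notation~\ref{N12.5} gives
\begin{align*}
\Psi_{(m)}(\alpha\beta X)_m
&\cong\Hom[\lambda S_{(m)},\alpha\beta X]
\cong\Hom[\nu S_{(m)},\beta X]\\
&=\Hom[\lambda S_{(m)},X]\cong\Psi_{(m)}X_m,
\end{align*}
naturally in~$X$ and compatibly with faces. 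Feeding this into Proposition~\ref{P13.2} and inducting on~$m$, with $(\alpha\beta X)_0=\Psi_{(0)}(\alpha\beta X)_0$ as base case, yields $(\alpha\beta X)_m\cong X_m$ for all~$m$, and these bijections assemble into an isomorphism $\alpha\beta X\cong X$ of sets with complicial identities.

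So it remains to prove the displayed bijection. By Section~\ref{S11} the morphism $\Psi_\mathbf{s}^\v$ is an idempotent endomorphism of $\Delta(|\mathbf{s}|)$ with kernel $U_\mathbf{s}$; since $\Psi_\mathbf{s}^\v$ is a morphism of augmented directed complexes and $S_\mathbf{s}=\Delta(|\mathbf{s}|)/U_\mathbf{s}$, it follows that $S_\mathbf{s}$ is a retract of $\Delta(|\mathbf{s}|)$ in the category of augmented directed complexes, with quotient $\rho\colon\Delta(|\mathbf{s}|)\to S_\mathbf{s}$ and a section $\iota\colon S_\mathbf{s}\to\Delta(|\mathbf{s}|)$ satisfying $\rho\iota=\id$ and $\iota\rho=\Psi_\mathbf{s}^\v$. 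Applying the functor~$\lambda$ exhibits $\lambda S_\mathbf{s}$ as a retract of $\lambda\Delta(|\mathbf{s}|)$, so $\Hom[\lambda S_\mathbf{s},\alpha C]$ is naturally isomorphic to the image of the idempotent $f\mapsto f\circ\lambda(\Psi_\mathbf{s}^\v)$ on $\Hom[\lambda\Delta(|\mathbf{s}|),\alpha C]$. Because $\lambda\Delta(|\mathbf{s}|)$ is freely generated by its $|\mathbf{s}|$-dimensional element (Theorem~\ref{T6.11}), restriction to that generator identifies $\Hom[\lambda\Delta(|\mathbf{s}|),\alpha C]$ with $(\alpha C)_{|\mathbf{s}|}=\Hom[\nu\Delta(|\mathbf{s}|),C]$, and under this identification precomposition with $\lambda(\Psi_\mathbf{s}^\v)$ becomes the operation $\Psi_\mathbf{s}$ of~$\alpha C$ on $(\alpha C)_{|\mathbf{s}|}$. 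Since the face, degeneracy and wedge operations of $\alpha C$ are computed by precomposing with the images under~$\nu$ of the representing morphisms $\d_i^\v$, $\e_i^\v$ and~$v_i^\v$ (Notation~\ref{N7.3}), and since $\nu$ carries the pushout squares $\Delta_{(2)}(m,i)$ to pushouts, the same holds for every derived operation, so $\Psi_\mathbf{s}$ acts on $(\alpha C)_{|\mathbf{s}|}$ as precomposition with $\nu(\Psi_\mathbf{s}^\v)=\nu(\iota)\nu(\rho)$. As $\nu(\rho)\nu(\iota)=\id_{\nu S_\mathbf{s}}$, the image of this idempotent on $\Hom[\nu\Delta(|\mathbf{s}|),C]$ is naturally isomorphic to $\Hom[\nu S_\mathbf{s},C]$ via $h\mapsto h\nu(\iota)$, with inverse $h'\mapsto h'\nu(\rho)$. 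Combining the identifications gives the desired bijection $\Hom[\lambda S_\mathbf{s},\alpha C]\cong\Hom[\nu S_\mathbf{s},C]$, natural in~$C$.

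The main obstacle is the step in the previous paragraph asserting that the operation $\Psi_\mathbf{s}$ of~$\alpha C$ acts on $\Hom[\nu\Delta(|\mathbf{s}|),C]$ as precomposition with $\nu(\Psi_\mathbf{s}^\v)$. This is a bookkeeping matter, but it is where the comparison actually happens: one must trace the inductive construction of $\Psi_\mathbf{s}$ out of the combined operations $\phi_{i,j}$ and $w_{k,l}$ (Notations \ref{N9.1}, \ref{N9.6}, \ref{N9.12}) through the definitions in Notation~\ref{N7.3}, using repeatedly that $\nu$ is a functor and that it takes the colimit squares $\Delta_{(2)}(m,i)$ to colimits of $\omega$-categories (Proposition~\ref{P7.2} together with the atomic presentations of Theorem~\ref{T4.4}). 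A secondary point requiring care is the compatibility of the bijections $\Psi_{(m)}(\alpha\beta X)_m\cong\Psi_{(m)}X_m$ with the faces $\d_j\colon\Psi_{(m)}X_m\to X_{m-1}$; this is what makes the reconstruction of Proposition~\ref{P13.2} patch together into an isomorphism of sets with complicial identities, and it follows from the naturality built into the bijection above but should be recorded explicitly.
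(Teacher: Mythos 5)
Your proposal is correct and follows essentially the same route as the paper: both rest on the idempotent $\Psi_\mathbf{s}^\v$ splitting $S_\mathbf{s}$ off $\Delta(|\mathbf{s}|)$ (Sections \ref{S8}--\ref{S11}, Proposition \ref{P11.9}), the freeness statements of Theorem \ref{T6.11} and Notation \ref{N7.3}, and Proposition \ref{P13.2} to recover $X$ from the sets $\Psi_{(m)}X_m$ and faces; the ``main obstacle'' you isolate (that $\Psi_{(m)}$ on $\alpha C$ is precomposition with $\nu\Psi_{(m)}^\v$) is exactly what the paper's notation $\Psi_{(m)}\Hom[\nu\Delta(m),C]$ silently encodes. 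Your packaging via the single bijection $\Hom[\lambda S,\alpha C]\cong\Hom[\nu S,C]$ and Proposition \ref{P5.8} is only an organizational variant of the paper's term-by-term chain of isomorphisms for $S_{(m)}$ (where, for that use of Proposition \ref{P5.8}, naturality in $S$ as well as in $C$ is what one should record).
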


\begin{proof}
Let $C$ be an $\omega$-category and let $X$ be a set with complicial identities. We will construct natural isomorphisms
$$\beta\alpha C\cong C,\quad \alpha\beta X\cong X.$$

Let $m$ be a nonnegative integer. By Proposition~\ref{P5.4} and Definition~\ref{D2.5}, $S_{(m)}$ is a free $\omega$-category on one $m$-dimensional generator. By Theorem~\ref{T6.11} $\lambda\Delta(m)$ is a free set with complicial identities on one $m$-dimensional generator. It follows that
$$C_m\cong\Hom[\nu S_{(m)},C],\quad X_m\cong\Hom[\lambda\Delta(m),X];$$
recall also from Propositiion~\ref{P11.9} that $\Hom[\lambda S_{(m)},X]\cong\Psi_{(m)}X_m$. It is convenient to write
$$\Psi_{(m)}\Hom[\lambda\Delta(m),X]
=\{\,x\in\Hom[\lambda\Delta(m),X]:x(\lambda\Psi_{(m)}^\v)=x\,\},$$
so that
$$\Hom[\lambda S_{(m)},X]\cong\Psi_{(m)}\Hom[\lambda\Delta(m),X].$$
Analogously we will write
$$\Psi_{(m)}\Hom[\nu\Delta(m),C]
=\{\,x\in\Hom[\nu\Delta(m),C]:x(\nu\Psi_{(m)}^\v)=x\,\},$$
so that
$$\Hom[\nu S_{(m)},C]\cong\Psi_{(m)}\Hom[\nu\Delta(m),C].$$
It follows from Notations \ref{N7.3} and~\ref{N12.5} that
\begin{align*}
(\beta\alpha C)_m
&\cong\Hom[\nu S_{(m)},\beta\alpha C]\\
&\cong\Hom[\lambda S_{(m)},\alpha C]\\
&\cong\Psi_{(m)}\Hom[\lambda\Delta(m),\alpha C]\\
&\cong\Psi_{(m)}\Hom[\nu\Delta(m),C]\\
&\cong\Hom[\nu S_{(m)},C]\\
&\cong C_m,
\end{align*}
hence $\beta\alpha C\cong C$. Analogously 
\begin{align*}
\Psi_{(m)}(\alpha\beta X)_m
&\cong\Hom[\lambda S_{(m)},\alpha\beta X]\\
&\cong\Psi_{(m)}\Hom[\lambda\Delta(m),\alpha\beta X]\\
&\cong\Psi_{(m)}\Hom[\nu\Delta(m),\beta X]\\
&\cong\Hom[\nu S_{(m)},\beta X]\\
&\cong\Hom[\lambda S_{(m)},X]\\
&\cong\Psi_{(m)}X_m,
\end{align*}
hence $\alpha\beta X\cong X$ by Proposition~\ref{P13.2}. This completes the proof.
\end{proof}

\end{document}